\definecolor{light-gray}{gray}{0.5}
\newcommand{\R}{{\mathbb R}}
\newcommand{\E}{{\mathbb E}}
\renewcommand{\P}{{\mathbb P}}
\newcommand{\N}{{\mathbb N}}
\newcommand{\U}{{\cal U}}
\newcommand{\eps}{\varepsilon}
\DeclareMathOperator{\Var}{Var}
\DeclareMathOperator{\trace}{trace}
\DeclareMathOperator{\diag}{diag}
\DeclareMathOperator{\rank}{rank}
\DeclareMathOperator{\s}{span}
\newtheorem{theorem}{Theorem}[section]
\newtheorem{proposition}[theorem]{Proposition}
\newtheorem{lemma}[theorem]{Lemma}
\newtheorem{corollary}[theorem]{Corollary}
\newtheorem{remark}[theorem]{Remark}
\newtheorem*{remark*}{Remark}
\newtheorem{example}[theorem]{Example}
\newtheorem{condition*}{Condition}
\numberwithin{equation}{section}
\newcounter{rcnt}[section]
\renewcommand{\thercnt}{(\roman{rcnt})}
\newcommand{\rem}[1]{}
\begin{document}

\sloppy


\title{On conditional moments 
	of \\high-dimensional random vectors
	given  \\lower-dimensional projections
}
\runauthor{Steinberger, Leeb}
\runtitle{Conditional moments of high-dimensional random vectors}

\begin{aug}

\begin{aug}
  \author{\fnms{Lukas}  \snm{Steinberger}\ead[label=e1]{lukas.steinberger@univie.ac.at}}
   \and
  \author{\fnms{Hannes} \snm{Leeb$^\ast$}\ead[label=e2]{hannes.leeb@univie.ac.at}}

  \affiliation{University of Vienna}

  \address{Department of Statistics, University of Vienna\\
		Oskar-Morgenstern-Platz 1, A-1090 Vienna, Austria,\\ 
		Tel.: +43-1-4277-38620\\
          \printead{e1,e2}}

\end{aug}


\begin{keyword}
\kwd{high dimensional distribution}
\kwd{conditional moments}
\kwd{linear conditional mean}
\kwd{constant conditional variance}
\end{keyword}

\end{aug}

\begin{abstract}
One of the most widely used properties of the multivariate
Gaussian distribution, besides its tail behavior, is the fact 
that conditional means are linear
and that conditional variances are constant. 
We here show that this property is also shared,  in an approximate
sense, by a large class of non-Gaussian distributions.
We allow for several conditioning variables
and we provide explicit non-asymptotic results,
whereby we extend earlier findings of \citet{Hal93a} and \citet{Lee12a}.
\end{abstract}


\maketitle



\section{Introduction}
\label{s1}

\subsection{Informal summary}
\label{summary}

The property of the multivariate Gaussian law, that
conditional means are linear and that conditional variances
are constant, 
is used by several fundamental statistical methods,
even if these methods per se do not require Gaussianity:
the generic linear model is built on the assumption
that the conditional mean of the response
is linear in the (conditioning) explanatory variables; 
and the generic homoskedastic
linear model rests on the additional assumption that the
conditional variance is constant.
Linear conditional means and/or constant conditional variances
are also assumed, for example, by 
methods for sufficient dimension reduction such as 
SIR \citep{Li91a} or SAVE \citep{Coo91a},
or by certain imputation techniques \citep{Qu10a}.
Elliptically contoured  distributions
are characterized by linear conditional means \citep{Eat86a}.
And methods for spatial statistics such as Kriging rely on 
Gaussianity mainly through the property that conditional means are
linear and that conditional variances are constant.\footnote{
	Distributions with
	linear conditional means and/or constant conditional
	variances are also studied, for example, in
	\citep{Bry05a, Coh12a, Jon98a, Plu83a, Tar04a, Wes93a}.
}
But even though these properties are widely used, in a sense the only 
distribution that has both linear conditional means and constant conditional
variances is the Gaussian (see also Section~\ref{outline}).

In this paper, we show that conditional means are approximately linear
and that conditional variances are approximately constant,
for a large class of multivariate distributions, when the conditioning
is on lower-dimensional projections. To illustrate our results,
consider a random $d$-vector $Z$ that has a Lebesgue density, 
and a $d\times p$ matrix $B$.
Conditional on $B'Z$, we show that the mean of $Z$ is  
linear in $B'Z$, and that the variance/covariance matrix of $Z$ is
constant, in an approximate sense.
Typically, our approximation error bounds are small if
$d$ is sufficiently large relative to $p$.
Our results extend recent findings of \citep{Lee12a}, where the
case $p=1$ is considered (which, from a modeling perspective, 
covers only models with one explanatory variable).
More precisely, we extend and refine the results of \citep{Lee12a}
in three directions: First, we allow for the case where $p>1$,
thereby also proving a result that is outlined in \citep[Sect. 5]{Hal93a}.
Second, we derive non-asymptotic and explicit
error bounds that hold for fixed $d$, whereas \citep{Hal93a} and
\citep{Lee12a} only give asymptotic results that hold as $d\to\infty$;
cf. Theorem~\ref{thm:main}.
And third, we also give asymptotic results where $p$ is allowed to
increase with $d$; see Corollary~\ref{asymptotics}.
In many cases, our error bounds go to zero if $p/\log d \to 0$.

The rest of the paper is organized as follows:
We continue this section with a more detailed description
of the results that we derive.
Our main results are then stated in Section~\ref{sec:main}. In 
Section~\ref{sec:ExEx} we provide a number of examples where 
the assumptions of our main theorem are satisfied and we discuss 
further extensions of our work.
Finally, Section~\ref{proof:main} gives a high-level description of the proof.
The more technical low-level parts of the proof as well as the proofs of 
Section~\ref{sec:ExEx} are collected in the supplementary material \citep{Ste16b}.

\subsection{Outline of results}
\label{outline}

Consider a random $d$-vector $Z$ that has a Lebesgue density,
and that is centered and standardized so that $\E Z = 0$ and
$\E Z Z' = I_d$.  And take a $d\times p$ matrix $B$ with orthonormal columns.
[While we do rule out degenerate distributions, the requirement
that $Z$ is centered and standardized, and the requirement that
the columns of $B$  are orthonormal, are inconsequential; cf.
Remark~\ref{inconsequential} as well as Section~\ref{sec:cov}.]
Our objective is to show that
the conditional mean and the conditional variance of $Z$ given $B'Z$
are close to what they would be if $Z$ were Gaussian. In the following, we use 
the notation $\|\cdot\|$ to denote the Euclidean norm of vectors and the spectral 
norm of matrices; the meaning of $\|\cdot\|$ will always be clear from the context.

Instead of the conditional mean and variance, it will be convenient
to focus on the first two conditional moments, i.e., on
$\E[Z \|B'Z]$ and on $\E[Z Z'\|B'Z]$.
If both the expressions
$$
	\Big\| 
		\E\left[ Z \| B' Z \right]
		\;-\; B B' Z
	\Big\|
\quad\text{ and } \quad
	\Big\| 
		\E\left[ Z Z'\| B' Z \right]
		\;- \;
		(I_d - BB' + BB' Z Z' BB')
	\Big\|
$$
are equal to zero, then the conditional mean of $Z$ given $B'Z$
is linear in $B'Z$, and the corresponding conditional variance is
constant in $B'Z$. But the only distribution, which satisfies this
for all $B$, is the Gaussian law; cf. the discussion in \citep[p. 466]{Lee12a}.
We will show that a weaker form of this requirement,
namely that the expressions in the preceding display are
close to zero in probability for most $B$, is satisfied by a much larger
class of distributions, provided mainly that $d$ is sufficiently large
relative to $p$.

For the case where $p=1$, it was shown in \citep{Lee12a}, for each $t>0$, that
\begin{align}\label{mean}
& \sup_{B \in \mathbb G} \P\left( \Big\| \E[Z \| B'Z] - B B' Z\Big\| > t \right) 
\qquad\text{ and }
\\
\label{variance}
&\sup_{B \in \mathbb G} \P\left( \Big\| \E[Z Z'\| B'Z] - 
	(I_d - BB' +  BB' Z Z' BB') \Big\| > t \right) 
\end{align}
converge to zero as $d\to\infty$,
under some conditions, where the sets $\mathbb G$ are collections
of $d\times p$ matrices with orthonormal columns that become large
as $d\to\infty$. More precisely, for $\nu_{d,p}(\cdot)$ denoting the
uniform distribution on the set of all such matrices
(i.e., the Haar measure on the Stiefel manifold $\mathcal V_{d,p}$),
the sets $\mathbb G$ satisfy $\nu_{d,p}(\mathbb G)\to 1$ as $d\to\infty$.
[Obviously, $\mathbb G$ depends on $d$ and also on $p$, although
this dependence is not shown explicitly in our notation.]
In the case where $p=1$ covered in \citep{Lee12a}, the sets $\mathbb G$ are
collections of unit-vectors, and $\nu_{d,1}(\cdot)$ is the
uniform distribution on the unit sphere, in $\R^d$.
We derive a non-asymptotic version of this result, i.e.,
explicit upper bounds on \eqref{mean} and \eqref{variance},
and also on $1-\nu_{d,p}(\mathbb G)$, that hold
for fixed $d$ and $p$, where we allow for $p>1$;
see Theorem~\ref{thm:main}.
Moreover, we also provide an asymptotic result where our
upper bounds go to zero as $d\to\infty$, where $p$ may increase with $d$;
cf. Corollary~\ref{asymptotics}.
In many cases, our upper bounds are small provided that $p/\log d$ is small.
Both our non-asymptotic and our asymptotic result, i.e., both
Theorem~\ref{thm:main} and Corollary~\ref{asymptotics}, hold uniformly
over classes of distributions for $Z$, as outlined in Remark~\ref{uniformity}.

Of course, our results rely on further conditions on the distribution
of $Z$ (in addition to the existence of a Lebesgue density and
the requirements that $\E Z = 0$ and $\E Z Z' = I_d$).
In particular, we require that the mean of certain  
functions of $Z$, and of i.i.d. copies
of $Z$, is bounded; see the bounds \ref{c.Sk}.(\ref{c.sup}) and \ref{c.density},
as well as  the attending discussion in Section~\ref{sec:main}.
And we require that certain moments of $Z$
are close to what they would be in the
Gaussian case; see \ref{c.Sk}.(\ref{c.monom1}-\ref{c.monom2}).
From a statistical perspective, we stress that our results rely
on bounds that can be estimated from appropriate data, as outlined
in the discussion leading up to Theorem~\ref{thm:main}.
One particularly simple example, where these bounds hold,
and where the error bounds in Theorem~\ref{thm:main} get small as $d$ gets 
large, is the case where the components of $Z$ are independent,
with bounded marginal densities and bounded
marginal moments of sufficiently large order;
see Example~\ref{Ex:1}.
Finally, we emphasize that \ref{c.Sk} and \ref{c.density}
do not require that the components of $Z$ are independent.

The results in this paper
demonstrate that the requirement of linear conditional means and
constant conditional variances (which is quite restrictive as
discussed in the second paragraph of this subsection)
is actually satisfied, in an approximate sense,
by a rather large class of distributions. Some implications,
namely to sparse linear modeling, and to sufficient dimension reduction
methods like SIR or SAVE, are discussed in \citep[Sect. 1.4]{Lee12a}.
And while the discussion in \citep{Lee12a} is hampered by the
fact that only situations with $p=1$, i.e., 
{\em only models with one explanatory variable}, 
are covered in that paper, our results show that these
considerations extend also to the case where $p>1$, i.e.,
to more complex models with several explanatory variables.
\\

\begin{remark}\normalfont\label{inconsequential}
\begin{list}{\thercnt}{
        \usecounter{rcnt}
        \setlength\itemindent{5pt}
        \setlength\leftmargin{0pt}
        \setlength\partopsep{0pt} 
        }
\item
Our requirements, that the random $d$-vector $Z$ is centered and
standardized, and that the matrix $B$ has orthonormal columns,
are inconsequential in the following sense:
Consider a random $d$-vector $Y$ such that $\E[Y] = \mu$ and
$\Var[Y ] = \Sigma$  are both well-defined and finite, and such that
$Y$ has a Lebesgue density (which also entails that $\Sigma$ is invertible).
Moreover, consider a $d\times p$ matrix $A$ with linearly independent columns. 
If $Y$ were Gaussian, we would have 
$\E[Y\|A'Y] = \mu+\Sigma A (A' \Sigma A)^{-1} A' (Y - \mu)$. In general,
one easily verifies that
\begin{align*}
&\Big\|
\E[Y\|A'Y] - (\mu+\Sigma A (A' \Sigma A)^{-1} A'( Y - \mu)) 
\Big\|
\;\; \leq \;\; \|\Sigma\|^{1/2}
\,\Big\|
	\E[Z \|B'Z] - B B' Z
\Big\|
\end{align*}
holds 
for $Z = \Sigma^{-1/2}(Y-\mu)$ and $B=\Sigma^{1/2} A (A' \Sigma A)^{-1/2}$.
Note that
$Z$ has a Lebesgue density;  that $Z$ is centered and standardized so 
that $\E[Z] =0$ and $\E[Z Z'] = I_d$; 
and that the columns of $B$ are orthonormal.
In particular, we see that the conditional mean of $Y$ given $A' Y$ is
approximately linear if 
the same is true for the conditional mean of $Z$ given $B'Z$, provided
only that the largest eigenvalue of $\Sigma$ is not too large.
A similar consideration applies, mutatis mutandis, to the
conditional variance of $Y$ given $A'Y$ and that of $Z$ given $B'Z$.
For further details, in particular about the role of $\Sigma$, see 
Section~\ref{sec:cov}.

\item
Conditioning on $B'Z$ is equivalent to conditioning 
on $B B'Z$, which is the orthogonal projection of $Z$ onto the 
column space of $B$. Therefore, we could formulate 
Theorem~\ref{thm:main}  
for collections of $p$-dimensional subspaces $S$ of 
$\R^d$ (elements of the Grassmann manifold $\mathcal G_{d,p}$) 
instead of matrices $B$ (from the 
Stiefel manifold $\mathcal V_{d,p}$), and thus replace \eqref{mean} by 
\begin{align}\nonumber
\sup_{S\in \mathbb H} \P\left( \Big\| \E[Z \| P_SZ] - P_S Z\Big\| > t \right),
\end{align}
with $P_S$ denoting the orthogonal projection matrix for the subspace $S$.
Here, $\mathbb H$ denotes the image of the set $\mathbb G \subseteq {\mathcal V}_{d,p}$
from \eqref{mean}
under the mapping that maps a matrix $B$ into its column space $S$.
Note that the image of the Haar measure on ${\mathcal V}_{d,p}$
under this mapping is the Haar measure on ${\mathcal G}_{d,p}$;
see also
\citep[Theorem~2.2.2(iii)]{Chi03a}. In a similar manner, one can also 
write \eqref{variance} in terms of the Grassmann manifold, namely as
\begin{align*}
\sup_{S \in \mathbb H} \P\left( \Big\| \E[Z Z'\| P_S Z] - 
	(I_d -P_S  + P_S Z (P_S Z)') \Big\| > t \right). 
\end{align*}
\end{list}
\end{remark}


\section{Results}
\label{sec:main}

We first present our main non-asymptotic result, i.e., Theorem~\ref{thm:main},
and the bounds \ref{c.Sk} and \ref{c.density} that it relies on.
These  bounds depend on a constant $k$ that will be chosen as needed later.
In Corollary~\ref{asymptotics} and the attending discussion,
we then present asymptotic scenarios in which the constants in \ref{c.Sk} and
\ref{c.density} can be controlled, such that the error bounds
in Theorem~\ref{thm:main} become small.
Throughout the following, consider a random $d$-vector  $Z$ that has a 
Lebesgue density and that satisfies $\E Z = 0$ 
as well as  $\E Z Z' = I_d$. For $k \in \N$, write $Z_1,\dots, Z_k$
for i.i.d. copies of $Z$, and write
$S_k$ for the $k\times k$ Gram-matrix $S_k = (Z_i' Z_j/d)_{i,j=1}^k$.
For $g\ge 0$, a monomial of degree $g$ in the elements of $S_k-I_k$ is an 
expression of the form $G = \prod_{\ell=1}^g (S_k-I_k)_{i_\ell,j_\ell}$ for 
$(i_\ell, j_\ell)\in\{1,\dots, k\}^2$, $1\le \ell\le g$ (with the convention that $G=1$
in case $g=0$). We say that $G$ has a linear (resp. quadratic) factor if one of 
the pairs, say $(i_1, j_1)$, occurs exactly once (resp. twice).

\begin{enumerate}
        \setlength\leftmargin{-20pt}
\renewcommand{\theenumi}{(b\arabic{enumi})}
\renewcommand{\labelenumi}{\textbf{\theenumi}}

\item \label{c.Sk} Fix $k\in \N$.

\begin{enumerate}[leftmargin=5pt]
\renewcommand{\theenumii}{\alph{enumii}}
\renewcommand\labelenumii{(\theenumii)}
\makeatletter \renewcommand\p@enumii{} \makeatother

	\item \label{c.sup} There are constants $\eps\in[0,1/2]$ and $\alpha\ge 1$ so that 
	$ \E \| \sqrt{d}(S_k - I_k)\|^{2 k + 1 + \eps} 
		\leq \alpha$.
	
	\item \label{c.monom1} There are constants $\beta>0$ and $\xi\in(0,1/2]$ that satisfy the following:
	For any monomial $G = G(S_k-I_k)$ in the elements of $S_k-I_k$,
	whose degree $g$ satisfies $g \leq 2 k$, 
	we have $| d^{g/2} \E G - 1 | \leq \beta/d^{\xi}$ 
	if $G$ consists
	only  of quadratic factors in elements above the diagonal,
	and $| d^{g/2} \E G | \leq \beta /d^{\xi}$
	if $G$ contains a linear factor.
	
	\item \label{c.monom2} The constants $\beta$ and $\xi$ in (\ref{c.monom1}) also satisfy the following: 
	Consider two monomials $G = G(S_k-I_k)$ and $H = H(S_k-I_k)$ 
	of degree $g$ and $h$, respectively, 
	in the elements of $S_k-I_k$. If $G$ is given by 
	$Z_1'Z_2Z_2'Z_3\dots Z_{g-1}'Z_gZ_g'Z_1/d^g$, if $H=\prod_{\ell=1}^h (S_k-I_k)_{i_\ell,j_\ell}$
	 with $\{1,\dots, g\} \subseteq \{i_1,j_1,\dots, i_h,j_h\}$, and 
	if $2\le h<g\le k$, then $| d^g\E GH | \leq \beta /d^\xi$.
	\end{enumerate}

\item \label{c.density} 
For fixed $k\in \N$, there is a constant $D\geq 1$, such that the 
following holds true:
If $R$ is an orthogonal $d\times d$ matrix, 
then a marginal density of the first $d-k+1$ components of $RZ$ is 
bounded by $\binom{d}{k-1}^{1/2} D^{d-k+1}$.
\end{enumerate}


The bounds in \ref{c.Sk} and \ref{c.density} essentially guarantee
that moments of certain functions of the Gram matrix $S_k$ are either
bounded (in \ref{c.Sk}.(\ref{c.sup}) and \ref{c.density}) or not too 
different from what they would be if $Z$ were Gaussian
(in \ref{c.Sk}.(\ref{c.monom1}-\ref{c.monom2})). We will impose \ref{c.Sk} and \ref{c.density}
with $k=2$ when considering conditional means, and with $k=4$ when considering conditional variances.
Clearly, \ref{c.Sk} becomes stronger as $k$ increases.
The specific requirements in \ref{c.Sk} are minimal for our current method of proof and
the bound in \ref{c.density} is chosen in such a way that certain constants $\gamma_1$ and $\gamma_2$
appearing in Theorem~\ref{thm:main} do not depend on the dimension $d$.
Other methods of proof,
if such can be found, may rely on different conditions.
For further discussion and specific examples 
where our conditions apply, see Section~\ref{sec:ex}.

The bounds in \ref{c.Sk} are non-asymptotic versions of 
condition (t1) in \citep{Lee12a}, and the bound in \ref{c.density}
coincides with condition (t2) in that reference.
The bounds in \ref{c.Sk}.(\ref{c.monom1}-\ref{c.monom2})
are written as $\beta/d^{\xi}$, because in Corollary~\ref{asymptotics}
we will consider situations where these bounds hold for constants
$\beta$ and $\xi$ that either are both independent of $d$, or that are such
that $\beta$ is independent of $d$ while $\xi$ depends on $d$ 
so that $1/d^\xi \to 0$. In \ref{c.density}, note that the upper bound on the marginal densities can increase in $d$.
The bound in~\ref{c.density} appears to be qualitatively different from 
\ref{c.Sk} in that it does not directly impose restrictions on moments involving the 
standardized Gram matrix $S_k - I_k$.
However, \ref{c.density} is used only to bound the
$p$-th moment of $\det S_l^{-4(k+1)}$ for $l=1,\dots, k$;
cf. Lemma~\ref{p4.l1} and the proof of Proposition~\ref{p4} in 
Appendix~\ref{appendixD} of the supplement. Just like Condition~\ref{c.Sk}, the requirement of 
a uniform bound on $\max_{l\le k}\E\det S_l^{-4p(k+1)}$, becomes more restrictive if $k$ increases.
From a statistical perspective, we note that the moment-bounds discussed
here can be estimated from a sample of independent copies of $Z$.
Indeed, population means like $\E \| S_k -I_k\|^{2 k + 1 + \eps}$,
$\E G$, $\E G H$, or $\E \det S_l^{-4 p (k+1)}$ as above are readily
estimated by appropriate sample means. In this sense, we rely on bounds
that can be estimated from data. 

\begin{theorem}\label{thm:main}
For fixed $d$, consider a random $d$-vector $Z$ that has a Lebesgue density $f_Z$
and that is standardized such that $\E Z = 0$ and $\E Z Z' = I_d$.
\begin{list}{\thercnt}{
        \usecounter{rcnt}
        \setlength\itemindent{5pt}
        \setlength\leftmargin{0pt}
        \setlength\partopsep{0pt} 
        }
\item \label{theoremA}
Suppose that \ref{c.Sk}.(\ref{c.sup}-\ref{c.monom1}) 
and \ref{c.density} hold with $k=2$.
Then, for each $p < d$ and for each 
$\tau\in(0,1)$, there is a  Borel set $\mathbb G \subseteq \mathcal V_{d,p}$
such that \eqref{mean} is bounded by
\begin{align}\label{eq:boundMean}
\frac{1}{t} d^{-\tau \xi_1} \,+\, \frac{\gamma_1}{1 - \tau} 
	\frac{p}{ 3 \xi_1 \log d}
\end{align}
for each $t>0$, 
and such that
\begin{align}\label{eq:boundG1}
\nu_{d,p}\left( \mathbb G^c\right)  \quad\leq \quad \kappa_1 \,
	d^{-\tau \xi_1 \left(1 - \frac{\gamma_1 }{\tau}
	\frac{p}{\xi_1 \log d}\right)},
\end{align}
where $\xi_1$ is given by
$\xi_1 = \min\{\xi, \eps/2 + 1/4, 1/2\}/3$ and $\gamma_1 = \max\{g_1, 6+2\log(2D\sqrt{\pi e}) \}$. 
Here, the constant $\kappa_1$ depends only on $\alpha$ and $\beta$ and $g_1$ is a global constant.

\item \label{theoremB}
Suppose that \ref{c.Sk}.(\ref{c.sup}-\ref{c.monom2}) 
and \ref{c.density} hold with $k=4$.
Then, for each $p < d$ and for each $\tau\in(0,1)$, 
there is a Borel set $\mathbb G\subseteq \mathcal V_{d,p}$ so that both \eqref{mean}
and \eqref{variance} are bounded by 
\begin{align}\label{eq:boundVariance}
\frac{1}{t}d^{-\tau \xi_2} + \frac{\gamma_2}{1-\tau}\frac{p}{5 \xi_2 \log{d}}
\end{align}
for each $t>0$, 
and such that 
\begin{align}\label{eq:boundG2}
\nu_{d,p}(\mathbb G^c) \quad \leq \quad 2 \kappa_2\, 
		d^{-\tau \xi_2\left(1-\frac{\gamma_2}{\tau}
			\frac{p}{\xi_2 \log{d}} \right)}.
\end{align}
Here, $\xi_2$ is given by $\xi_2 = \min\{\xi, \eps/2 + 1/4, 1/2\}/5$ and
$\gamma_2 = \max\{g_2, 10 + 4\log(2D\sqrt{\pi e})\}$. The constant $\kappa_2$ depends
only on $\alpha$ and $\beta$ and $g_2$ is a global constant.

\item \label{theoremC}
The set $\mathbb G$ in both parts~\ref{theoremA} and \ref{theoremB} can be chosen to have the following additional properties:
$\mathbb G$ is right-invariant under the action of the orthogonal group of order $p$ and it is left orthogonally equivariant, i.e., $\mathbb G = \mathbb G(f_Z)$ depends on the distribution of $Z$ in such a way that $\mathbb G(f_{RZ}) = R \mathbb G(f_Z)$, for every $d\times d$ orthogonal matrix $R$.
\end{list}

The constants $g_1$, $g_2$, $\kappa_1$ and $\kappa_2$ in part \ref{theoremA}
and \ref{theoremB} can be obtained explicitly upon detailed
inspection of the proof.
\end{theorem}

With Theorem~\ref{thm:main}, we aimed to obtain 
the best possible upper bounds for \eqref{mean}, \eqref{variance}
and $\nu_{d,p}(\mathbb G^c)$ that our current technique of proof delivers.
It is likely that better bounds can be obtained
under stronger assumptions (like in the case where the components of $Z$
are independent) 
together with an alternative method of proof.
In particular, when bounding \eqref{mean} 
in Theorem~\ref{thm:main}\ref{theoremA},
the term 
$\frac{\gamma_1}{1-\tau} \frac{p}{3 \xi_1 \log d}$
 is obtained by bounding
$\P( \|B' Z\|^2 > (1-\tau)3 \xi_1 \log (d) / \gamma_1)$ 
using Chebyshev's inequality;
cf. the proof  of Lemma~\ref{final} in the supplement.
Under appropriate additional assumptions on the tails of
$\|B' Z\|$, this bound can be dramatically improved.
The bound on  both \eqref{mean} and \eqref{variance}
in Theorem~\ref{thm:main}\ref{theoremB} can be improved
in a similar fashion (cf. Section~\ref{sec:bounds}).
When proving Theorem~\ref{thm:main},
we derive upper bounds for \eqref{mean} and \eqref{variance}, on the 
one hand, and for  $\nu_{d,p}(\mathbb G^c)$, on the other hand, that are 
antagonistic in the sense that one can be reduced at the expense of the other
(namely in the proof of Lemma~\ref{final}).
For Theorem~\ref{thm:main}, we have balanced these bounds so that both are
of the same leading order in $d$, i.e., $d^{-\tau\xi_1}$ in 
part~\ref{theoremA} and $d^{-\tau\xi_2}$ in part~\ref{theoremB}.

\begin{remark}\normalfont \label{uniformity}
Because the error bounds in Theorem~\ref{thm:main} depend on
$Z$ only through the constants that occur in \ref{c.Sk} and \ref{c.density},
the theorem a fortiori {\em holds uniformly} over the class of all distributions
for $Z$ that satisfy \ref{c.Sk} and \ref{c.density}. For example:
Fix constants $\eps$, $\alpha$, $\beta$ and $\xi$ as in 
\ref{c.Sk},
fix $D$ as in \ref{c.density}, and write
${\cal Z}$ for the class of all random $d$-vectors $Z$ that satisfy
the bounds \ref{c.Sk}.(\ref{c.sup}-\ref{c.monom2}) 
and \ref{c.density} for $k=4$, that have a
Lebesgue density, and that are centered and standardized.
Then, for each $Z \in {\cal Z}$ and for each $p<d$, there exits
a Borel set $\mathbb G \subseteq \mathcal V_{d,p}$ (that depends on $Z$), so that
\eqref{mean}, \eqref{variance}
and also 
$\nu_{d,p}(\mathbb G^c)$ are bounded as in Theorem~\ref{thm:main}\ref{theoremB}.
Similar considerations also apply, mutatis mutandis, to 
Theorem~\ref{thm:main}\ref{theoremA} and to the following corollary.
\end{remark}

\begin{remark}\normalfont
Our results provide conditions under which conditional means are approximately linear and
conditional variances are approximately constant, provided that $p/\log{d}$ is small. Theorem~\ref{thm:main} 
provides such a statement for a fixed distribution of $Z$ and for many $B$. By a slight change of perspective,
this also leads to a similar statement that holds for fixed $B$ and many distributions of $Z$, cf. \citep{Ste16c}.
We can not deal with a fixed matrix $B$ and a fixed distribution of $Z$ with our methods. Whether, say,
the conditional variance is approximately constant for given $B$ and $Z$ depends on the particulars of $B$ and $Z$, 
irrespective of $p$ and $d$. A few trivial examples, however, are well known. For instance, if the distribution of $Z$ is 
spherically symmetric, then the conditional expectation of $Z$ given $B'Z$ is exactly linear for every matrix $B$. Moreover, 
the conditional expectation is linear and the conditional variance is constant if the components of $Z$ are independent 
and $B = (e_{j_1},\dots, e_{j_p})$, where $e_j$ is the $j$-th element of the standard basis in $\R^d$. See also 
Section~2.3.4. in Chapter~2 of \citep{Ste15a} for a non-trivial example with $p=1$ and $d=2$.
\end{remark}

\begin{corollary}\label{asymptotics}
For each $d$, consider a random $d$-vector $Z^{(d)}$ that has a Lebesgue density
and that satisfies $\E Z^{(d)} = 0$ and $\E Z^{(d)} Z^{(d)'} = I_d$. 
And for each $d$, 
suppose that \ref{c.Sk}.(\ref{c.sup}-\ref{c.monom2}) 
and \ref{c.density} hold with $Z^{(d)}$ replacing $Z$
and with $k=4$, such that the constants $\eps$, $\alpha$,
$\beta$, and $D$ in these bounds do not depend on $d$, while
the constant $\xi = \xi_d$ in \ref{c.Sk} may depend on $d$
as long as $d^{-\xi_d} \to 0$ as $d\to\infty$. Moreover, consider
a sequence of integers $p_d < d$ such that $p_d / (\xi_d \log d) \to 0$.
Then Theorem~\ref{thm:main}\ref{theoremB} applies for each $d$, 
with $Z^{(d)}$ and $p_d$ replacing $Z$ and $p$, respectively, 
and the error bounds provided in
the theorem go to zero as $d\to\infty$.
\end{corollary}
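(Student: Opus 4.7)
The plan is to apply Theorem~\ref{thm:main}.\ref{theoremB} separately at each $d$ to the triple $(Z^{(d)}, p_d, \xi_d)$, and then verify that the three error bounds stated there all tend to zero as $d \to \infty$ for any fixed $t > 0$ and any fixed $\tau \in (0,1)$. Since $\alpha$, $\beta$, $\eps$, and $D$ are independent of $d$, the constants $\gamma_2$ and $\kappa_2$ that appear in those bounds are likewise independent of $d$; only $\xi_2 = \xi_2^{(d)} := \min\{\xi_d, \eps/2 + 1/4, 1/2\}/5$ varies with $d$.

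Write $c := \min\{\eps/2 + 1/4, 1/2\}$; since $\eps \in [0, 1/2]$ is a fixed constant, $c$ lies in $[1/4, 1/2]$ and does not depend on $d$, so $\xi_2^{(d)} = \min\{\xi_d, c\}/5$. The two key auxiliary facts to establish are
(i) $\xi_2^{(d)} \log d \to \infty$, and
(ii) $p_d / (\xi_2^{(d)} \log d) \to 0$.
For (i), observe that the assumption $d^{-\xi_d} \to 0$ is equivalent to $\xi_d \log d \to \infty$; splitting into the two cases $\xi_d \ge c$ (where $\min\{\xi_d, c\}\log d = c \log d \to \infty$) and $\xi_d < c$ (where $\min\{\xi_d, c\} \log d = \xi_d \log d \to \infty$) then immediately yields (i). For (ii), in the case $\xi_d < c$ one has $p_d/(\xi_2^{(d)}\log d) = 5 p_d/(\xi_d \log d) \to 0$ directly from the hypothesis; in the case $\xi_d \ge c$ one uses $\xi_d \le 1/2$ (from condition \ref{c.Sk}) to write $p_d/\log d = \xi_d \cdot p_d/(\xi_d\log d) \le (1/2)\, p_d/(\xi_d\log d) \to 0$, and hence $p_d/(\xi_2^{(d)}\log d) = 5 p_d/(c\log d)\to 0$.

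With (i) and (ii) in hand, vanishing of the three bounds is immediate. The bound $\frac{\gamma_2}{1-\tau}\frac{p_d}{5\xi_2^{(d)}\log d}$ tends to zero by (ii). The bound $\frac{1}{t}\,d^{-\tau\xi_2^{(d)}}$ tends to zero by (i). For $2\kappa_2\, d^{-\tau \xi_2^{(d)}(1 - \frac{\gamma_2}{\tau}\frac{p_d}{\xi_2^{(d)}\log d})}$, fact (ii) implies that the bracketed factor exceeds $1/2$ for all sufficiently large $d$, so this bound is eventually dominated by $2\kappa_2\, d^{-\tau\xi_2^{(d)}/2}$, which again tends to zero by (i).

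No step requires real work beyond bookkeeping; the mild subtlety is the elementary case analysis forced by $\xi_2^{(d)}$ being the minimum of the $d$-dependent $\xi_d$ and a fixed positive constant $c/5$. In either case the rate of divergence of $\xi_2^{(d)} \log d$ is controlled by whichever of $\xi_d \log d$ or $c \log d$ is smaller, and the hypothesis $p_d/(\xi_d \log d) \to 0$ (together with the universal bound $\xi_d \le 1/2$) is precisely what is needed to dominate $p_d$ by $\xi_2^{(d)} \log d$. The conclusion of the corollary then follows by collecting these observations.
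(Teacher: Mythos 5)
Your proposal is correct and follows the intended route: the paper supplies no separate proof of this corollary, treating it as an immediate consequence of Theorem~\ref{thm:main}.\ref{theoremB} together with the remark, stated in the discussion following the corollary, that $d^{-\xi_d}\to 0$ is equivalent to $\xi_d\log d\to\infty$. Your filling-in is accurate, and in particular the use of the universal bound $\xi_d\le 1/2$ from~\ref{c.Sk} to pass from $p_d/(\xi_d\log d)\to 0$ to $p_d/\log d\to 0$ is exactly the fact needed; the only cosmetic point is that the two-case split on whether $\xi_d\ge c$ or $\xi_d<c$ is really a pointwise bound (equivalently, $\min\{\xi_d,c\}\log d\ge\min\{\xi_d\log d,\,c\log d\}$ and $p_d/(\min\{\xi_d,c\}\log d)=\max\{p_d/(\xi_d\log d),\,p_d/(c\log d)\}$), but the content is the same.
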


Corollary~\ref{asymptotics} provides an asymptotic version of
Theorem~\ref{thm:main}\ref{theoremB}. Similarly, an asymptotic version of
Theorem~\ref{thm:main}\ref{theoremA} can also be obtained, mutatis
mutandis.
This provides a direct extension of Theorem~2.1 of
\citep{Lee12a} from the case $p=1$ covered in that reference to the
case where $p>1$, also allowing for $p$ to grow with $d$. [Indeed,
it is elementary to verify that conditions (t1) and (t2) with $k=4$
in \citep{Lee12a} imply that the conditions of the corollary are
satisfied with $\eps = 0$ and for some sequence $\xi_d$ such that
$d^{-\xi_d}\to 0$ as $d\to\infty$. And if (t1) and (t2) hold with $k=2$,
one obtains conditions that imply an asymptotic version of
Theorem~\ref{thm:main}\ref{theoremA}.]

If Corollary~\ref{asymptotics} applies with constants $\xi_d$ 
satisfying $\xi_d \to \xi_\infty > 0$ (e.g.,  in the case
where the $\xi_d$ do not depend on $d$, which is also discussed in Example~\ref{Ex:1}), the corollary's requirement on $p_d$
reduces to $p_d = o(\log d)$. In that case, the bounds on $\nu_{d,p}(\mathbb G^c)$
in Theorem~\ref{thm:main} are of polynomial order in $d$.
But if Corollary~\ref{asymptotics} applies with $\xi_d \to 0$,
then the stronger requirement $p_d = o( \xi_d \log d)$ is needed,
and the bounds on $\nu_{d,p}(\mathbb G^c)$ 
in Theorem~\ref{thm:main} can be of slower order in $d$.
Note that $d^{-\xi_d}\to 0$ entails that $\xi_d \log d \to \infty$,
so that the constant sequence $p_d = p$ always satisfies
the growth condition in Corollary~\ref{asymptotics}.


\section{Examples and extensions}
\label{sec:ExEx}

\subsection{Examples}
\label{sec:ex}

In this section we discuss a few simple examples of multivariate distributions for which our assumptions~\ref{c.Sk} and \ref{c.density} are satisfied and explicit values for the quantities $\eps$ and $\xi$ can be given. First, we consider the case of a product distribution on $\R^d$ with moments of sufficiently high order and bounded component densities. For the proof we refer the reader to Example~A.1 in \cite{Lee12b}.

\begin{example}[Leeb 2013]
\label{Ex:1}
Suppose that the random $d$-vector $Z = (z_1,\dots, z_d)'$ has independent components and satisfies $\E Z = 0$, $\E ZZ' = I_d$, and fix $k\in\N$. 

\begin{list}{\thercnt}{
        \usecounter{rcnt}
        \setlength\itemindent{5pt}
        \setlength\leftmargin{0pt}
        \setlength\partopsep{0pt} 
        }
\item \label{Ex:1A}
If $\E |z_i|^{4k+4} \le \mu_{4k+4}$, for some universal constant $\mu_{4k+4}>0$ and for all $i=1,\dots, d$, then the bounds in \ref{c.Sk}.(\ref{c.sup}-\ref{c.monom1}) hold with $k$ as chosen here, with $\eps=\xi=1/2$ and the constants $\alpha$ and $\beta$ depend only on $k$ and $\mu_{4k+4}$.

\item \label{Ex:1B}
If $\E |z_i|^{2k+1} \le \mu_{2k+1}$, for some universal constant $\mu_{2k+1}>0$ and for all $i=1,\dots, d$, then the bounds in \ref{c.Sk}.(\ref{c.monom2}) hold with $k$ as chosen here, with $\xi=1/2$ and the constant $\beta$ depends only on $k$ and $\mu_{2k+1}$.

\item\label{Ex:1C}
If all the marginal Lebesgue densities of the components of $Z$ exist and are bounded by a constant $D\ge1$ then Condition~\ref{c.density} holds true for the same constant $D$ and every value of $k\in\{1,\dots, d\}$.

\end{list}
\end{example}

From Example~\ref{Ex:1} we see, in particular, that if the random vector $Z$ has independent components with bounded densities and bounded $12$-th marginal moments, then the bounds of Theorem~\ref{thm:main}\ref{theoremA} hold, with $\xi_1 = 1/6$ (note that $k$ has to be chosen as $k=2$ here). If the components of $Z$ even have $20$ marginal moments bounded by a universal constant, then also the bounds of Theorem~\ref{thm:main}\ref{theoremB} hold, with $\xi_2 = 1/10$ (in this case $k=4$).

The assumptions of Theorem~\ref{thm:main}, however, are not limited to product distributions, as the following examples show. See Appendix~\ref{sec:AppExEx} in the supplementary material \citep{Ste16b} for the proofs.

\begin{example}
\label{Ex:2}
Suppose that the random $d$-vector $Z$ satisfies $\E Z = 0$ and $\E ZZ' = I_d$.

\begin{list}{\thercnt}{
        \usecounter{rcnt}
        \setlength\itemindent{5pt}
        \setlength\leftmargin{0pt}
        \setlength\partopsep{0pt} 
        }
\item \label{Ex:2A}
If $R$ is a fixed $d\times d$ orthogonal matrix and $Z$ satisfies any of the bounds \ref{c.Sk}.(\ref{c.sup},\ref{c.monom1},\ref{c.monom2}) or \ref{c.density} for some values of $k$, $\alpha$, $\beta$, $\eps$ and $\xi$, then the random vector $Z^* = RZ$ satisfies the same bound with the same constants. 

\item \label{Ex:2B}
If $r$ is a scalar random variable taking values in $\{-1,1\}$ that is independent of $Z$, and $Z$ satisfies any of the bounds \ref{c.Sk}.(\ref{c.sup},\ref{c.monom1},\ref{c.monom2}) or \ref{c.density} for some values of $k$, $\alpha$, $\beta$, $\eps$ and $\xi$, then the random vector $Z^*=rZ$ satisfies the same bound with the same constants. 
\end{list}
\end{example}

Examples~\ref{Ex:1} and \ref{Ex:2} can be combined to produce many multivariate distributions with dependent components that still satisfy the assumptions of Theorem~\ref{thm:main}. For instance, if $Z$ has independent non-Gaussian components with moment and density bounds as in Example~\ref{Ex:1} and $R$ is orthogonal with no zero entry, then, by the Darmois-Skitovich Theorem, $Z^*=RZ$ can not have independent components. Alternatively, if $Z = (z_1,\dots, z_d)'$ is as in Example~\ref{Ex:1} and such that, say, the first two components have non-symmetric distributions, then the first two components of $Z^*= rZ = (z_1^*,\dots, z_d^*)'$, for some non-degenerate random variable $r$ with values in $\{-1,1\}$, may be dependent. Indeed, for example, take $z_1 \thicksim z_2 \thicksim \text{Exp}(1)-1$ and note that $\P(z_2^*<-1 | z_1^*>1) = 0 \ne \P(z_2^*<-1)$.

As our last example we discuss a specific case of a spherical distribution. Recall that every spherically symmetric distribution with independent components must be Gaussian. So every spherical non-Gaussian distribution constitutes an example of a multivariate distribution with dependent components. Also, if $Z$ is spherical, then $\E[Z\|B'Z] = BB'Z$, almost surely, for every $B\in\mathcal V_{d,p}$. Hence, the following example is only of interest in connection with Theorem~\ref{thm:main}\ref{theoremB} on the conditional second moment of $Z$, since Theorem~\ref{thm:main}\ref{theoremA} is trivially true in this case.

\begin{example}
\label{Ex:3}
If $Z$ is uniformly distributed on the $d$-ball of radius $\sqrt{d+2}$, then $\E Z = 0$ and $\E ZZ' = I_d$. Moreover, for $k\in\{2,4\}$, at least for all sufficiently large $d$, $Z$ satisfies Conditions~\ref{c.Sk} and \ref{c.density} with constants $\eps=\xi=1/2$, $D=1$, and constants $\alpha$ and $\beta$ that depend only on $k$.
\end{example}

Finally, it is worth mentioning that in the case of spherically symmetric $Z$ the structure of the set $\mathbb G$ from Theorem~\ref{thm:main} simplifies dramatically. Indeed, from Theorem~\ref{thm:main}\ref{theoremC} we see that if $Z$ is spherical, then $\mathbb G$ is both left and right-invariant under the action of the appropriate orthogonal groups and thus is either empty or equal to the whole Stiefel manifold $\mathcal V_{d,p}$.

\subsection{Improved bounds}
\label{sec:bounds}

At the current state of research we can not say if the bounds provided by Theorem~\ref{thm:main} are tight, or at least if they are of the optimal rate in $d$, in the sense that this rate is achieved for some multivariate distribution satisfying conditions~\ref{c.Sk} and \ref{c.density}. However, there are certain distributions for which the bounds of the theorem can be improved substantially.\footnote{Moreover, for each specific distribution, there are typically subsets of the set $\mathbb G$ from the theorem, for which the probabilities in \eqref{mean} and \eqref{variance} are substantially smaller than their respective upper bounds \eqref{eq:boundMean} and \eqref{eq:boundVariance}. For instance, if $Z$ has independent components and the columns of $B$ are elements of the standard basis in $\R^d$, then both probabilities in \eqref{mean} and \eqref{variance} are equal to zero, for all $t>0$.}   

First, consider the bounds \eqref{eq:boundMean} and \eqref{eq:boundVariance}, which are only of logarithmic order in $d$. As mentioned in Section~\ref{sec:main}, they can be improved considerably if one imposes an appropriate condition on $Z$. Here, we only consider \eqref{eq:boundMean} as an example. This bound is derived in the proof of Lemma~\ref{final}\ref{final.i} by the following simple argument involving the cut-off point $M_d = \sqrt{3\xi_1(1-\tau)(\log d)/\gamma_1}$. For $t>0$, 
\begin{align}
&\P\left( \|\E[Z\|B'Z] - BB'Z\| > t \right)\notag\\
&\quad \le \quad
\P\left(\|\E[Z\|B'Z] - BB'Z\| > t , \|B'Z\| \le M_d\right) 
\,\, + \,\, \P\left( \|B'Z\| > M_d \right) \notag\\
&\quad \le\quad
\frac{1}{t} 
\int\limits_{\|x\| \le M_d} 
	\|\E[Z\|B'Z=x] - Bx\|
\,d\P_{B'Z}(x) \,\, +\,\,
p/M_d^2.	\label{eq:Exbound1}
\end{align}
In the proof of Theorem~\ref{thm:main}\ref{theoremA} we choose $\mathbb G\subseteq \mathcal V_{d,p}$ such that for $B\in\mathbb G$ the bound in \eqref{eq:Exbound1} turns into \eqref{eq:boundMean}, while, at the same time, $\nu_{d,p}(\mathbb G^c)$ is bounded as in \eqref{eq:boundG1}. Of course, using Markov's inequality to bound $\P\left( \|B'Z\| > M_d \right)$ in the preceding display is far from optimal if we have more information on the tails of $Z$. 

Suppose now that the random vector $Z$, in addition to the assumptions of Theorem~\ref{thm:main}\ref{theoremA}, also satisfies the sub-Gaussian tail condition
\begin{align}\label{eq:subGauss}
\E \exp(\alpha'Z) \le \exp(\|\alpha\|^2\sigma^2/2),
\end{align}
for every $\alpha \in\R^d$ and some constant $\sigma>0$.\footnote{This is satisfied, for instance, if $Z$ has independent components which all satisfy the one dimensional analogue of \eqref{eq:subGauss} with the same value of $\sigma$.} Under this condition, the tail inequality for quadratic forms by \cite{Hsu12} applies and yields
$$
\P(\|B'Z\|^2> \sigma^2(p + 2\sqrt{p}s + 2s^2)) \le e^{-s^2},
$$
for all $s>0$. Suppose that $p < M_d^2/(8\sigma^2)$. Since this restriction also entails that $p < M_d^2/\sigma^2$, the equation $\sigma^2(p + 2\sqrt{p}s + 2s^2) = M_d^2$ has a real positive solution $s_0 = -\sqrt{p}/2 + \sqrt{M_d^2/(2\sigma^2) - p/4}$. Thus, after expanding the square and rearranging terms, we obtain
\begin{align*}
s_0^2 &\quad=\quad \frac{M_d^2}{2\sigma^2} - \sqrt{p\left(\frac{M_d^2}{2\sigma^2} - \frac{p}{4}\right)}
\quad\ge\quad
\frac{M_d^2}{4\sigma^2} \quad=\quad 
 (\log{d}) \frac{3}{4} \frac{\xi_1(1-\tau)}{\sigma^2 \gamma_1},
\end{align*}
where we have used our restriction on $p$ again. Hence,
\begin{align*}
&\P(\|B'Z\|^2> M_d^2) \quad\le\quad e^{-s_0^2} 
\quad\le\quad d^{-\frac{3}{4} \frac{\xi_1(1-\tau)}{\sigma^2 \gamma_1}},
\end{align*}
and we have managed to replace the term in \eqref{eq:boundMean} that is only of logarithmic order in $d$ by something that is decreasing polynomially in $d$. However, since the squared cut-off point $M_d^2$ is only of logarithmic order in $d$, the condition that $p < M_d^2/(8\sigma^2)$ still requires $p/\log{d}$ to be small. At the moment, we do not see a way how to increase the cut-off point to polynomial order in $d$ without simultaneously ruining the bound in \eqref{eq:boundG1}.

Concerning the bounds \eqref{eq:boundG1} and \eqref{eq:boundG2}, we believe that polynomial rates in $d$ of arbitrarily high order can be achieved under more restrictive assumptions than those maintained here and upon using a more elaborate method of proof. First results in that direction, regarding only the conditional expectation, are in preparation, cf. \cite{Mil15}.

\subsection{The case of a general covariance matrix}
\label{sec:cov}

The proof of Theorem~\ref{thm:main} crucially relies on the assumptions that $\E Z = 0$ and $\E ZZ' = I_d$.
However, Theorem~\ref{thm:main}, as it stands, can already be used to generalize substantially beyond the mean zero and unit covariance case. In particular, we can provide a large class of positive definite covariance matrices such that for each element $\Sigma$ of that class the conclusions of Theorem~\ref{thm:main} remain valid, provided, of course, that all the relevant quantities are modified to reflect the general covariance structure $\Sigma$. The key to this extension is the following observation.

If $Y$ is Gaussian with mean $\mu\in\R^d$ and positive definite covariance matrix $\Sigma$, and $A\in\mathcal V_{d,p}$, then $\E[Y\| A'Y] = \mu + \Sigma^{1/2} P_{\Sigma^{1/2}A} \Sigma^{-1/2}(Y-\mu)$ and $\E[(Y-\mu)(Y-\mu)'\|A'Y] = \Sigma^{1/2}\left[I_d-P_{\Sigma^{1/2}A} + P_{\Sigma^{1/2}A}\Sigma^{-1/2}(Y-\mu)(Y-\mu)'\Sigma^{-1/2}P_{\Sigma^{1/2}A}\right]\Sigma^{1/2}$, where $P_{\dots}$ is the projection matrix corresponding to the column span of the matrix in the subscript. These are our target quantities. 
Now assume that $Y$ is not necessarily Gaussian but
satisfies
$Y = \mu + \Sigma^{1/2}Z$, with $Z$ as in Theorem~\ref{thm:main}. 
One easily verifies that
\begin{align*}
\E[Y\|A'Y] - (\mu + \Sigma^{1/2} P_{\Sigma^{1/2}A} \Sigma^{-1/2}(Y-\mu))
=
\Sigma^{1/2} \left(\E[Z\|B'Z] - BB'Z\right),
\end{align*}
and
\begin{align*}
\E[(Y-\mu)&(Y-\mu)'\|A'Y] \\
&\quad- \Sigma^{1/2}\left(I_d-P_{\Sigma^{1/2}A} + P_{\Sigma^{1/2}A}\Sigma^{-1/2}(Y-\mu)(Y-\mu)'\Sigma^{-1/2}P_{\Sigma^{1/2}A}\right)\Sigma^{1/2}\\
&=
\Sigma^{1/2} \left(\E[ZZ'\|B'Z] - (I_d-BB' + BB'ZZ'BB')\right)\Sigma^{1/2},
\end{align*}
where $B = \Sigma^{1/2}A(A'\Sigma A)^{-1/2} \in\mathcal V_{d,p}$. Ideally, the norm of these quantities should become small if $d$ is large. Ignoring the additional scaling by the matrix $\Sigma^{1/2}$ of these error terms\footnote{Whether the scaling by $\Sigma^{1/2}$ matters depends on the specific context of application for these results. Also, the problem can always be circumvented by imposing a boundedness assumption on $\|\Sigma\|$. However, in the context of \cite{Ste16c}, for example, no such bound is required.}, there remains the question of whether the theorem also applies to
\begin{align}
\P\left(\Big\|\E[Z\|B'Z] - BB'Z\Big\|>t\right) \label{eq:mean2}
\end{align}
and
\begin{align}
\P\left(\Big\|\E[ZZ'\|B'Z] - (I_d-BB' + BB'ZZ'BB')\Big\|>t\right), \label{eq:variance2}
\end{align}
instead of \eqref{mean} and \eqref{variance}, i.e., if $B = B(\Sigma,A) \in \mathbb G$. 
This raises two questions: For given $\Sigma$, how large is the collection 
of `good' matrices $A$, i.e., how large is the set of $A$
for which $B(\Sigma,A) \in \mathbb G$?
And: How large is the family of matrices $\Sigma$ for which
the collection of `good' matrices $A$ is large?
The latter question is answered by the next result, the proof of which is deferred to Appendix~\ref{sec:AppExEx} 
in the supplement. 

\begin{proposition}
\label{prop:Sigma}
If $Z$ satisfies the assumptions of Theorem~\ref{thm:main}\ref{theoremA} (or Theorem~\ref{thm:main}\ref{theoremB}) and $\mathbb G\subseteq \mathcal V_{d,p}$ is the corresponding subset of the Stiefel manifold, then, for each diagonal positive definite matrix $\Lambda$, there exists a collection $\mathbb U(\Lambda) = \mathbb U(\mathbb G,\Lambda) \subseteq \mathcal O_d$ of orthogonal matrices, such that the sets
$$
\mathbb S := \mathbb S(\mathbb G) := \{ U\Lambda U' : \Lambda = \diag(\lambda_i) >0, U \in \mathbb U(\mathbb G,\Lambda)\}
$$
and
$$
\mathbb J(\Sigma) := \mathbb J(\Sigma,\mathbb G) := \{A \in \mathcal V_{d,p} : \Sigma^{1/2}A(A'\Sigma A)^{-1/2} \in \mathbb G\},
$$
have the following properties:

$$
\sup_{\Lambda:\Lambda=\diag(\lambda_i)>0}\nu_{d,d}(\mathbb U^c(\Lambda)) \quad\quad \text{and} \quad\quad \sup_{\Sigma\in\mathbb S}\nu_{d,p}(\mathbb J^c(\Sigma))
$$ 
are bounded by the square root of the right-hand-side of \eqref{eq:boundG1} (resp. \eqref{eq:boundG2}). 
By definition, if $\Sigma$ is any positive definite covariance matrix and $A\in\mathbb J(\Sigma)$, then \eqref{eq:mean2} (resp. \eqref{eq:variance2}) is bounded by \eqref{eq:boundMean} (resp. \eqref{eq:boundVariance}) for every $t>0$.
\end{proposition}

To understand the message of Proposition~\ref{prop:Sigma},
suppose for now that the assumptions of 
Theorem~\ref{thm:main}\ref{theoremA} are satisfied. Then
the set $\mathbb J(\Sigma)$ is constructed such
that the following holds:
If $\Sigma$ 
is any positive definite covariance matrix and $A$ is taken from the 
collection $\mathbb J(\Sigma)$, then, for 
$B = \Sigma^{1/2} A (A' \Sigma A)^{-1/2}$, the expression in 
\eqref{eq:mean2} is bounded by \eqref{eq:boundMean}.
In other words, $\mathbb J(\Sigma)$ is a collection of `good' matrices $A$
as discussed just before the proposition.
Now Proposition~\ref{prop:Sigma} shows 
that $\mathbb J(\Sigma)$ is large provided that
$\Sigma \in \mathbb S$, and also that
the set $\mathbb S$ itself is large.
Similar considerations apply, mutatis mutandis, to the
conditional variance under the assumptions of 
Theorem~\ref{thm:main}\ref{theoremB}.
In short, for a large 
class of $d$-dimensional distributions $Z$ (cf. conditions~\ref{c.Sk} 
and \ref{c.density}), for a large set of covariance matrices $\Sigma$ 
(given by $\mathbb S$) and for most matrices $A$ from the Stiefel manifold 
(those contained in $\mathbb J(\Sigma)$), the first two conditional 
moments of $Y = \mu + \Sigma^{1/2}Z$ given $A'Y$ are close to what they 
would be in the Gaussian case, all provided that $p/\log{d}$ is small.


\section{Proof of Theorem~\ref{thm:main}}
\label{proof:main}

The rest of the paper and the on-line supplementary material comprise the proof
of Theorem~\ref{thm:main}. The basic strategy of the proof is non-standard and
is described in this section. To implement this strategy, we need to deal with several
intricate technical challenges. But those can be handled by standard methods from
multivariate analysis and probability theory. To keep the main paper short, such technical details 
are relegated to the on-line supplementary material.
Our arguments have the same basic structure as those used in \citep{Lee12a}.
To prove Theorem~\ref{thm:main}, however,
the arguments from \citep{Lee12a} require substantial
extensions and modifications, because many of the arguments 
used in that reference are of an asymptotic nature and do not
provide explicit error bounds, and because all of these arguments
rely heavily on the assumption that $p$ is fixed and equal to $1$.

\subsection{Two crucial bounds}
\label{s3.1}

Throughout, fix $d \in \N$ and let $Z$ be as in Theorem~\ref{thm:main}, i.e.,
a random $d$-vector that has a Lebesgue density and that
is standardized so that $\E Z = 0$ and $\E Z Z' = I_d$.
[The particular assumptions of part~\ref{theoremA} and~\ref{theoremB}
of Theorem~\ref{thm:main} will be imposed as needed later.]
We will study the following quantities:
For a positive integer $p < d$, for $x\in\R^p$, and for 
$B\in \mathcal V_{d,p}$, set $\mu_{x|B} = \E [Z \| B'Z = x]$,
$\Delta_{x|B} = \E[ZZ'\|B'Z=x] - (I_d + B(xx'-I_p)B')$, and 
$h(x|B) = \E\left[ \frac{f(W^{x|B})}{\phi(W^{x|B})}\right]$, 
where $f = f_Z$ is a Lebesgue density of $Z$, $W^{x|B} = Bx + (I_d-BB')V$, and 
$\phi$ denotes a Lebesgue density of $V\thicksim N(0,I_d)$. 
Note that these quantities, if 
considered as functions with domain $\R^p \times \mathcal V_{d,p}$, 
can be chosen to be measurable; cf. Lemma~\ref{measurability}.
Finally,  write $\mathcal S_{M,p}$ for the closed ball of radius $M$
in $\R^p$, i.e., $\mathcal S_{M,p} = \{ x \in \R^p:\: \|x\|\leq M\}$.

We now introduce two bounds which will play an essential role
in the proof of Theorem~\ref{thm:main}. In each bound,
the quantity of interest, which will be introduced shortly,
will be bounded by an expression of the form
\begin{align} \label{bound}
p^{2k+1+\eps} e^{g M^2} \left(2D\sqrt{\pi e}\right)^{pk} 
	d^{-\min\{\xi,\eps/2+1/4,1/2\}} \kappa
\end{align}
for some even integer $k$,
where the precise value of the constants in the bound 
will depend
on the context, i.e., these constants will be chosen as needed later.

The first crucial bound implies the first part of Theorem~\ref{thm:main}:
Under the assumptions of the  Theorem~\ref{thm:main}\ref{theoremA},
we will show that
\begin{align}
\sup_{x\in \mathcal S_{M,p}} 
\int 
	\left[ 
		\|\mu_{x|B}\|^2 - \|x\|^2 
	\right] h(x|B)^2 \, 
d\nu_{d,p}(B) \hspace{.5cm} \label{th1a}
\end{align}
is bounded by \eqref{bound}
for $k=2$, for every $M>1$ and every $p\in\N$ such that 
$d> \max\{4(k+p+1)M^4, 2k +p(2k+2)2^{k+3},p^2\}$,
where $\kappa = \kappa_1\geq 1$ is a constant that depends only on
$\alpha$ and $\beta$, and where $g=g_1$ is a global constant.
The remaining constants occurring here, i.e., $\eps$, $\alpha$, $\beta$, 
$\xi$, and $D$, are those that appear in the bounds 
\ref{c.Sk}.(\ref{c.sup}-\ref{c.monom1}) and \ref{c.density} 
imposed by Theorem~\ref{thm:main}\ref{theoremA}.
Once that statement has been derived, the proof of
Theorem~\ref{thm:main}\ref{theoremA}
is easily completed by standard arguments (that are detailed in
Lemma~\ref{final}\ref{final.i}).

The second crucial bound similarly delivers the second part of 
Theorem~\ref{thm:main}:
Under the assumptions of Theorem~\ref{thm:main}\ref{theoremB},
we will show that \eqref{th1a} and
\begin{align}
\sup_{x\in \mathcal S_{M,p}} 
\int \trace{\Delta_{x|B}^k} h(x|B)^k \, d\nu_{d,p}(B) 
	\label{th2a}
\end{align}
are both bounded by \eqref{bound}  
for $k=4$, for 
every $M>1$ and every $p\in \N$ such that 
$d> \max\{4(k+p+1)M^4, 2k +p(2k+2)2^{k+3},p^2\}$,
where $\kappa=\kappa_2\geq 1$ depends only on $\alpha$ and $\beta$,
and where $g =g_2$ is  a global constant.
Again, the remaining constants $\eps$, $\alpha$, $\beta$, 
$\xi$, and $D$ 
are those that appear in the bounds \ref{c.Sk} and \ref{c.density}
imposed by Theorem~\ref{thm:main}\ref{theoremB}.
From this statement, standard arguments complete the proof of
Theorem~\ref{thm:main}\ref{theoremB};
cf. Lemma~\ref{final}\ref{final.ii}.

It turns out that in order to derive the upper bounds 
for \eqref{th1a} and \eqref{th2a},  it will be instrumental to show that
\begin{align}
&
\sup_{x\in \mathcal S_{M,p}} 
\int 
	\left[ 
		h(x|B) - 1
	\right]^2 
~d\nu_{d,p}(B) \label{th1b}
\end{align}
is finite. In particular,  we will need to establish finiteness of
\eqref{th1b} for every $M$ and $p$ as in \eqref{th1a} to derive
the desired bound on \eqref{th1a}, and for every $M$ and $p$ as
in \eqref{th2a} for the bound on \eqref{th2a}.
We will in fact show more than that, namely that \eqref{th1b} is 
also bounded by \eqref{bound}, 
under the assumptions of Theorem~\ref{thm:main}\ref{theoremA} and
for constants as in \eqref{th1a}, and also 
under the assumptions of Theorem~\ref{thm:main}\ref{theoremB} and
for constants as in \eqref{th2a}.

We pause here for a moment to discuss a weaker 
version of Theorem~\ref{thm:main} which also allows us to 
better appreciate the importance of \eqref{th1b}
(the exact role of \eqref{th1b} in 
the main argument will become apparent later, after Proposition~\ref{p1}):
Assume in this paragraph that \eqref{th1a}, \eqref{th2a}, 
and \eqref{th1b} are all bounded by \eqref{bound} with $k=4$,
for each $M>1$, and for {\em each sufficiently large} $d$.
[The other constants in the bound, i.e., 
$p$, $\eps$, $g$, $D$, $\xi$ and $\kappa$,
are assumed to be fixed, independent of of $d$ here.]
Under this assumption, we immediately obtain the following weaker
version of Corollary~\ref{asymptotics}: For each $x\in\R^p$, we have
\begin{align*}
\|\E [Z\|\mathbf B'Z = x]  - \mathbf B x \|^2 
&\quad\stackrel{p}{\longrightarrow} \quad 0, \\
\|\E [ZZ'\|\mathbf B'Z = x] - (I_d + \mathbf B(xx' - I_p)\mathbf B') \| 
&\quad\stackrel{p}{\longrightarrow} \quad 0 
\end{align*}
as $d\to\infty$, if $\mathbf B$ is a random matrix that is uniformly
distributed on ${\mathcal V}_{d,p}$.
After noting that 
$\|\E [Z\|\mathbf B'Z = x]  - \mathbf B x \|^2$ can also be written as
$\|\E [Z\|\mathbf B'Z = x]\|^2 - \|x\|^2$, 
this is an easy consequence of Markov's inequality and Slutzky's 
Lemma.\footnote{
	A proof of the first statement in the preceding display
	was already sketched in \citep{Hal93a} as an immediate generalization 
	of the case where $p=1$ proved therein. See also \citep{Lee12a} 
	for further discussion of that result. 
}
[Choose $M\ge \|x\|$ and observe that the upper bound \eqref{bound} 
converges to zero as $d\to\infty$, 
so that also the three quantities in \eqref{th1a}, \eqref{th2a},
and \eqref{th1b} converge to zero. 
Now convergence of \eqref{th1b} to zero entails
that $h(x|{\mathbf B})$ converges to one in squared mean.
Convergence of \eqref{th1a} to zero implies 
that 
$\left[ \|\E[Z\|\mathbf B'Z = x]\|^2 - \|x\|^2 \right] h(x|\mathbf B)^2$ 
converges to zero in expectation.
Similarly, convergence of \eqref{th2a} to zero implies 
that $\trace{\Delta_{x|\mathbf B}^4} h(x|\mathbf B)^4$ 
converges to zero in expectation.
Because the involved random variables are all non-negative,
the first relation in the preceding display follows from Markov's inequality
and Slutzky's Lemma.
The second relation follows in a similar fashion
upon observing that the symmetry of $\Delta_{x|\mathbf B}$
entails that $\|\Delta_{x|\mathbf B}\|^4$ is bounded from above by 
$\trace \Delta_{x|\mathbf B}^4$.]

In this subsection, we have seen that to prove 
Theorem~\ref{thm:main}\ref{theoremA},  it suffices to show, 
under the assumptions maintained there, that
both \eqref{th1a} and \eqref{th1b} are bounded
by \eqref{bound} for constants as in \eqref{th1a}.
And, similarly, to prove Theorem~\ref{thm:main}\ref{theoremB},
it remains to show,
under the assumptions maintained there, that
\eqref{th1a}, \eqref{th2a} and \eqref{th1b} are all bounded
by \eqref{bound} for constants as in \eqref{th2a}.

\subsection{Changing the reference measure}
\label{s3.2}

Throughout the following, set
\begin{align}\label{Wj}
W_j \quad=\quad  \mathbf B x + (I_d- \mathbf B \mathbf B')V_j,
\end{align}
for $j=1,\dots, k$,
where ${\mathbf B}$, $V_1, \dots, V_k$ are independent such that
$\mathbf B$ is a random $d\times p$ matrix with distribution $\nu_{d,p}$
and such that each of 
the $V_i$ is distributed as $N(0,I_d)$. 
We call $W_1,\dots, W_k$ the `rotational clones', in analogy 
to the name `rotational twins' that  the authors of \citep{Hal93a} use for  
the pair $W_1, W_2$ in case $p=1$.
With this, we may re-write the integral in \eqref{th1b} as
\begin{align}
&\int 
	\left(
		\E\left[ \frac{f(W^{x|B})}{\phi(W^{x|B})}\right] 
	\right)^2 
	- 2 \E\left[ \frac{f(W^{x|B})}{\phi(W^{x|B})}\right] + 1 
\,~d\nu_{d,p}(B) \notag\\
& \quad =\quad
\int 
	\E\left[ \frac{f(Bx + (I_d-BB')V_1)}{\phi(Bx + (I_d-BB')V_1)}\right] 
	\E\left[ \frac{f(Bx + (I_d-BB')V_2)}{\phi(Bx + (I_d-BB')V_2)}\right] 
~d\nu_{d,p}(B) \notag\\
& \quad \quad\quad
- 2 \int 
	\E\left[ \frac{f(Bx + (I_d-BB')V_1)}{\phi(Bx + (I_d-BB')V_1)}\right] 
~d\nu_{d,p}(B) + 1 \notag\\
&\quad =\quad
\E\left[ \frac{f(W_1)}{\phi(W_1)} \frac{f(W_2)}{\phi(W_2)} - 1 \right] - 
	2 \E\left[ \frac{f(W_1)}{\phi(W_1)} - 1 \right],  \label{th1bb}
\end{align}
provided that the expected values in \eqref{th1bb} are all finite.
And, clearly, if both expected values in \eqref{th1bb} are bounded by
\eqref{bound} in absolute value, then \eqref{th1bb} is bounded
by three times the expression in \eqref{bound}.
To establish the desired bounds on \eqref{th1a} and \eqref{th2a} it will 
be convenient to also express the integrals in \eqref{th1a} and \eqref{th2a} 
in terms of the $W_i$. 
This can be accomplished by virtue of the following proposition.

\begin{proposition}\label{p1}
Fix $d\ge p \geq 1$, and
consider a random $d$-vector $Z$ with Lebesgue density $f$.
Let $V \sim N(0, I_d)$, and write $\phi(\cdot)$ 
for a Lebesgue density of 
$V$. Moreover, for a fixed $d\times p$-matrix $B \in \mathcal V_{d,p}$ 
and for any $x\in \R^p$, set 
$W^{x|B} = Bx  + (I_d-BB')V$.
Then the function $h(\cdot|B) : \R^p \to \bar{\R}$ defined by 
$$
h(x|B) \quad=\quad \E\left[\frac{f(  W^{x|B})}{\phi(  W^{x|B})}\right]
$$
for $x\in \R^p$
is a density of $B'Z$ with respect to the $p$-variate standard Gaussian
measure (i.e., $h(x|B) \phi_p(x)$ is a Lebesgue density of $B'Z$
if $\phi_p$ denotes a $ N_p(0,I_p)$-density).
Moreover, if $\Psi:\R^d \to\R$ is such that $\Psi(Z)$ is
integrable, then a conditional expectation $\E[\Psi(Z)\|B'Z=x]$
of $\Psi(Z)$ given $B'Z=x$ satisfies
$$
\E\Bigg[ \Psi(Z)\Bigg\|B'Z =x\Bigg] \;\; h(x|B)
\quad=\quad
	\E\Bigg[ \Psi(W^{x|B}) \frac{f(W^{x|B})}{\phi(W^{x|B})} 
	\Bigg] 
$$
whenever $x\in \R^p$ is such that $h(x|B) < \infty$.
\end{proposition}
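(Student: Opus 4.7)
The plan is to diagonalize the problem by completing $B$ to an orthonormal basis. Extend $B \in \mathcal V_{d,p}$ to a $d \times d$ orthogonal matrix $[B, B_\perp]$; then $I_d - BB' = B_\perp B_\perp'$ and $W^{x|B} = Bx + B_\perp U$ with $U := B_\perp' V \sim N(0, I_{d-p})$. In the rotated coordinates $(B'Z, B_\perp' Z)$ on $\R^p \times \R^{d-p}$, the Lebesgue density of $Z$ pushes forward to $(y,z) \mapsto f(By + B_\perp z)$ (Jacobian one), and the Gaussian density factors cleanly as $\phi(By + B_\perp z) = \phi_p(y)\phi_{d-p}(z)$ because $\|By + B_\perp z\|^2 = \|y\|^2 + \|z\|^2$. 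This decomposition is what makes the ratio $f/\phi$ evaluated at $W^{x|B}$ tractable.

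With this setup the first claim reduces to a direct computation. Writing $g(y) = \int f(By + B_\perp z)\,dz$ for the marginal Lebesgue density of $B'Z$, and substituting the representation $W^{x|B} = Bx + B_\perp U$ into the definition of $h(x|B)$, one obtains
$$
h(x|B) \;=\; \int \frac{f(Bx + B_\perp u)}{\phi_p(x)\phi_{d-p}(u)}\,\phi_{d-p}(u)\,du \;=\; \frac{g(x)}{\phi_p(x)},
$$
so that $h(x|B)\phi_p(x) = g(x)$ is indeed a Lebesgue density of $B'Z$. This also shows $\{h(x|B)<\infty\} = \{g(x)<\infty\}$, which has full Lebesgue measure.

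For the second claim I would use the standard factorization of the conditional distribution afforded by the same change of variables: on $\{g(x)\in(0,\infty)\}$, a regular version of the conditional law of $Z$ given $B'Z=x$ is carried by $Bx + B_\perp\R^{d-p}$ with density (in $z$) equal to $f(Bx + B_\perp z)/g(x)$. Multiplying $\E[\Psi(Z)\|B'Z=x]$ by $h(x|B) = g(x)/\phi_p(x)$ cancels $g(x)$ and produces
$$
\frac{1}{\phi_p(x)}\int \Psi(Bx + B_\perp z)\, f(Bx + B_\perp z)\,dz,
$$
which is exactly what one gets by expanding $\E[\Psi(W^{x|B})f(W^{x|B})/\phi(W^{x|B})]$ using the law of $U$ and the factorization of $\phi$ above. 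The case $h(x|B)=0$ forces $g(x)=0$, hence both sides vanish (as the conditional expectation is arbitrary on a $B'Z$-null set), so the identity holds throughout $\{h(x|B) < \infty\}$.

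The main technical burden, rather than a conceptual obstacle, is honest measure-theoretic bookkeeping: use Tonelli on the nonnegative integrand $f$ to justify the factorization of $g$ and the a.e.\ finiteness/measurability of $h(x|B)$; extend the second identity from nonnegative $\Psi$ to integrable $\Psi$ by the standard positive/negative part decomposition; and invoke Lemma~\ref{measurability} for joint measurability in $(x,B)$, which is asserted separately and need not be reproved here.
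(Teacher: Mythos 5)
Your argument is correct, but it differs materially in method from the paper's. You diagonalize by extending $B$ to an orthogonal matrix $[B,\,B_\perp]$ and work with explicit coordinates: the identity $\phi(By+B_\perp z)=\phi_p(y)\phi_{d-p}(z)$ together with the change of variables $(y,z)\mapsto By+B_\perp z$ (Jacobian $1$) reduces everything to the elementary marginal/conditional density formula for $(B'Z,\,B_\perp'Z)$, giving $h(x|B)=g(x)/\phi_p(x)$ directly. The paper instead argues abstractly in terms of push-forward measures and Radon--Nikodym derivatives: it observes $\P_Z\ll\P_V$ with $d\P_Z/d\P_V=f/\phi$, deduces $\P_{B'Z}\ll\P_{B'V}$ via the Radon--Nikodym chain rule (Hoffmann-J{\o}rgensen, Relation 10.11.1), identifies $\P_{W^{x|B}}$ as the regular conditional law of $V$ given $B'V=x$, and constructs the regular conditional law of $Z$ from these ingredients (Relation 10.11.3). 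Your approach buys concreteness and self-containment---the formula $h=g/\phi_p$ and both halves of the second identity fall out of a single Fubini/Tonelli computation, with no citations needed beyond the standard conditional-density factorization. The paper's abstract route avoids the (inessential) choice of a basis complement $B_\perp$ and interfaces more smoothly with the measurability lemma and the rest of its framework, where everything is phrased in terms of conditional distributions rather than explicit densities. Both handle the degenerate case $h(x|B)=0$ in essentially the same way: you observe $g(x)=0$ forces the integrand on the right to vanish a.e.; the paper observes $f/\phi=0$ $\P_{W^{x|B}}$-a.e., which is the same fact in its notation. One small caveat in your write-up: when $h(x|B)=0$, the left side is a product with the factor $0$, and you should note that the chosen version of $\E[\Psi(Z)\|B'Z=x]$ can always be taken real-valued (e.g.\ set it to $0$ on the relevant null set), so the product is unambiguously $0$ rather than an indeterminate $\infty\cdot 0$; the paper's construction sidesteps this by explicitly fixing a finite version on $D=\{0<h<\infty\}$ and extending by $\P_Z$ outside $D$.
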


Note that this proposition applies under the assumptions
of both  parts of Theorem~\ref{thm:main}.
Assume therefore that Proposition~\ref{p1} is applicable throughout the
rest of this subsection.
The integral in \eqref{th1a} can then be re-written as
\begin{align}
&\int 
	\left[ 
		\|\mu_{x|B}\|^2 - \|x\|^2 
	\right] h(x|B)^2 
~d\nu_{d,p}(B) \notag\\
& \quad =\quad
\int 
	\|\mu_{x|B}h(x|B)\|^2 
~d\nu_{d,p}(B) -
\|x\|^2
\int 
	h(x|B)^2 
~d\nu_{d,p}(B) \notag\\
&\quad =\quad
\int 
	\E\left[ W^{x|B}  \frac{f(W^{x|B})}{\phi(W^{x|B})} \right]'
	\E\left[ W^{x|B}  \frac{f(W^{x|B})}{\phi(W^{x|B})} \right]
~d\nu_{d,p}(B) \notag\\
& \quad \quad \quad
- \|x\|^2 \E\left[ \frac{f(W_1)}{\phi(W_1)} \frac{f(W_2)}{\phi(W_2)}\right]  \notag\\
& \quad =\quad
\E \left[ 
	\left(W_1'W_2 - \|x\|^2 \right) 
	\frac{f(W_1)}{\phi(W_1)} \frac{f(W_2)}{\phi(W_2)} 
\right], \label{th1aa}
\end{align}
provided that the expected values 
in \eqref{th1bb} and \eqref{th1aa} are all finite.
[Indeed, finiteness of the expected values in \eqref{th1bb}
entails that $\int [ h(x|B) - 1]^2 d \nu_{d,p}(B)$ is finite, so that
$\nu_{d,p}\{ B : h(x|B) = \infty \} = 0$,
whence Proposition~\ref{p1} can be used to obtain the second equality
in the preceding display. The first and the third equality follow from
finiteness of the expected values
in \eqref{th1bb} and \eqref{th1aa}.]

To express the integral in \eqref{th2a} in a similar way, define 
$\Delta_{x|B}(z) : \R^p \times \mathcal V_{d,p} \times 
\R^d \to \R^{d\times d}$ by $\Delta_{x|B}(z)=zz'-(I_d+B(xx'-I_p)B')$,
and use Proposition~\ref{p1}  component-wise with 
$\Psi_{i,j}(Z) = \left[\Delta_{x|B}(Z)\right]_{i,j}$ for all $i,j=1,\dots,d$ 
to obtain
\begin{align}
&\int \trace{\Delta_{x|B}^k} h(x|B)^k ~ d\nu_{d,p}(B) \notag \\
& \quad =\quad
\int 
		\trace \left\{ 
			 \left( \E[\Delta_{x|B}(Z)\|B'Z=x] h(x|B) \right)^k
		\right\}
~ d\nu_{d,p}(B) \notag \\
& \quad =\quad
\int 
		\trace \left\{ 
			 \left(\E \left[ \Delta_{x|B}(W^{x|B}) 
			 \frac{f(W^{x|B})}{\phi(W^{x|B})} \right]\right)^k
		\right\}
~ d\nu_{d,p}(B) \notag\\
& \quad =\quad
\E \left[ 
	\trace \Delta_{x|\mathbf B} (W_1) \cdots \Delta_{x|\mathbf B} (W_k) 
	\frac{f(W_1)}{\phi(W_1)} \cdots \frac{f(W_k)}{\phi(W_k)}
 \right], \label{th2aa}
\end{align}
provided that the expected values in 
\eqref{th1bb} and \eqref{th2aa} are all finite.
Lemma~\ref{trace} describes how the expression in \eqref{th2aa} can be 
written as a weighted sum of expressions that, similarly to \eqref{th1aa}, 
involve only inner products of the $W_i$ and a product of density ratios. 
In particular, we find that \eqref{th2aa} can be written as a 
weighted sum of terms of the form
\begin{equation}\label{th2aaa}
\begin{split}
&\sum_{j=1}^k (-1)^{k-j}\binom{k}{j} \E \left[  
	\left(W_1'W_2\cdots W_jW_j' W_1 - d + p- \|x\|^{2j} 
	\right) 
	\frac{f(W_1)}{\phi(W_1)} \cdots \frac{f(W_k)}{\phi(W_k)}\right] ,
	\text{ and}
\\
& \E \left[  \left(\prod_{i=1}^m W_{j_{i-1}+1}'W_{j_{i-1}+2}W_{j_{i-1}+2}'
	\cdots W_{j_i-1}' W_{j_i} -\|x\|^{2(j_m-m)} \right) 
	\frac{f(W_1)}{\phi(W_1)} \cdots \frac{f(W_k)}{\phi(W_k)}\right] 
\end{split}
\end{equation}
for $m\ge 1$ and indices $j_0,\ldots,j_m$ satisfying $j_0=0$, $j_m<k$, and 
$j_{i-1}+1<j_i$ whenever $1\le i\le m$,
provided that the expected values in \eqref{th2aaa}
are all finite.
[Note that these requirements entail 
that $m \le k/2$, and that there are no more than ${k \choose m}$
choices for the indices $j_0,\dots, j_m$ 
in the second expected value in \eqref{th2aaa}.]
Lemma~\ref{trace} also shows that the weights in this expansion of \eqref{th2aa}
only depend on $k$
and on $x$, and are polynomials in $\|x\|^2$. 
Note that hence all the weights are bounded,
in absolute value and uniformly in $x\in \mathcal S_{M,p}$, 
by $e^{c(k) M^2}$ for some constant $c(k)$ that depends only on $k$.
In particular, if the expected values in \eqref{th2aaa} are all bounded
by \eqref{bound} in absolute value, then the same is true for \eqref{th2aa}
or, equivalently, \eqref{th2a} upon replacing the constants 
$g$ and $\kappa$
in \eqref{bound} 
by, say,  $g+c(k)$ and $(1+(k/2) {k \choose k/2})\kappa$, respectively.

In this subsection, we have seen how the integrals in \eqref{th1a}, 
\eqref{th2a} and \eqref{th1b} can be re-written 
as weighted sums of expected
values involving the rotational clones, provided these
expected values are all finite.
To prove Theorem~\ref{thm:main}\ref{theoremA}, it thus remains
to show, under the maintained assumptions,
that the expected values in \eqref{th1bb} and \eqref{th1aa}
are all bounded by \eqref{bound} in absolute value,
uniformly in $x\in \mathcal S_{M,p}$, and for constants as in
\eqref{th1a}. 
And Theorem~\ref{thm:main}\ref{theoremB} follows if, under
the assumptions of that theorem, the expected values
in \eqref{th1bb}  and 
\eqref{th1aa} as well as
all the expressions in 
\eqref{th2aaa} are bounded by \eqref{bound}
in absolute value, uniformly in $x\in \mathcal S_{M,p}$, and for
constants as in \eqref{th2a}.

\subsection{The joint density of the `rotational clones'} 
\label{s3.3}

\begin{proposition}\label{p2}
For integers $1 \le p < d$ and $1\le k \le d-p$, let $x\in\R^p$, and 
let $W_1,\dots,W_k$ be as in \eqref{Wj}. Then $W_1,\dots,W_k$ have a
joint density $\varphi_x(w_1,\dots,w_k)$ with respect to Lebesgue 
measure which satisfies
\begin{align*}
&\frac{\varphi_x(w_1,\dots,w_k)}{\phi(w_1)\cdots\phi(w_k)} \quad =\\
&
\left(\frac{d}{2}\right)^{-\frac{p k }{2}}
\prod_{i=1}^k 
\frac{ \Gamma \left( (d-i+1)/2\right)}
	{\Gamma \left((d-p-i+1)/2\right)}
\det(S_k)^{-\frac{p}{2}}
\left( 1-\frac{\|x\|^2}{d}\iota'S_k^{-1}\iota\right)^{\frac{d-p-k-1}{2}}
e^{\frac{k}2 \|x\|^2}
\end{align*}
if $S_k$ is invertible with $\|x\|^2\iota'S_k^{-1}\iota<d$, and 
$\varphi_x(w_1,\dots,w_k) = 0$ otherwise, where
$S_k=\left(w_i'w_j/d\right)_{i,j=1}^k$ denotes the $k\times k$ 
matrix of scaled inner products of the $w_i$, and $\iota=(1,\dots,1)'$ 
denotes an appropriate vector of ones.

If, in addition,  $k < d-p-1$, then the normalizing constant  in 
the preceding display,
i.e., the quantity 
$\eta(d,p,k) = (d/2)^{-kp/2}
\prod_{i=1}^k \frac{ \Gamma ( (d-i+1)/2)} {\Gamma ((d-p-i+1)/2)}$
satisfies
\begin{align*}
0\quad < \quad\eta(d,p,k)
\quad\le\quad
\exp\left[ \frac{p^2}{d} \left( 1-\frac{p+k-1}{d}\right)^{-1} 
\frac{k^2}{2}\right].
\end{align*}
\end{proposition}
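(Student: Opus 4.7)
The plan is to reduce the joint density of the rotational clones to that of a Haar-rotated matrix with a tractable target, then apply the polar decomposition together with the Wishart density formula. First, I would use rotational invariance of $\nu_{d,p}$: since it is invariant under left multiplication by $O(d)$, write $\mathbf B \stackrel{d}{=} \mathbf O B_0$ with $\mathbf O$ Haar on $O(d)$ and $B_0$ the $d\times p$ matrix of the first $p$ standard basis vectors. Setting $\tilde V_j = \mathbf O'V_j$, the tuple $(\mathbf O,\tilde V_1,\ldots,\tilde V_k)$ is jointly independent with the stated marginals by $O(d)$-invariance of $N(0,I_d)$, and $(I_d-\mathbf B\mathbf B') = \mathbf O(I_d-B_0B_0')\mathbf O'$ gives $W_j = \mathbf O Y_j$, where $Y_j$ has first $p$ coordinates equal to $x$ and last $d-p$ coordinates $\tilde V_j^{(2)}\sim N(0,I_{d-p})$.

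Next I would separate direction from magnitude via the polar decomposition. With $W=(W_1,\ldots,W_k)\in\R^{d\times k}$, $A=W'W$, and $H=WA^{-1/2}\in\mathcal V_{d,k}$, Muirhead's Jacobian identity
\begin{equation*}
dW \;=\; 2^{-k}(\det A)^{(d-k-1)/2}\,dA\,d\mu(H)
\end{equation*}
(with $d\mu$ the unnormalized invariant measure on $\mathcal V_{d,k}$ of total mass $2^k\pi^{dk/2}/\Gamma_k(d/2)$) reduces the density of $W$ to the joint law of $(H,A)$. Since $O(d)$ acts transitively on $\mathcal V_{d,k}$ preserving its invariant measure and $\mathbf O$ is Haar, $H = \mathbf O H_Y$ is uniform on $\mathcal V_{d,k}$ and independent of $A = Y'Y$, so the joint density of $(H,A)$ factors as uniform times the density $f_A$ of $A$.

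For $f_A$, I observe $Y_i'Y_j=\|x\|^2+(\tilde V_i^{(2)})'\tilde V_j^{(2)}$, whence $A=\|x\|^2\iota\iota'+M$ with $M\sim W_k(I_k,d-p)$ (valid since $k\le d-p$); substituting the Wishart density at $M=A-\|x\|^2\iota\iota'$, then using the matrix determinant lemma $\det(A-\|x\|^2\iota\iota')=\det(A)(1-\|x\|^2\iota'A^{-1}\iota)$ together with $\trace(\|x\|^2\iota\iota')=k\|x\|^2$, gives a closed form for $f_A$. Assembling, dividing by $\prod_j\phi(w_j)=(2\pi)^{-dk/2}e^{-\trace(A)/2}$ and substituting $A=dS_k$ collapses the powers of $2$, $\pi$ and $d$ to $(d/2)^{-kp/2}$, while $\Gamma_k(d/2)/\Gamma_k((d-p)/2)$ splits into the claimed product of one-dimensional $\Gamma$ ratios via $\Gamma_k(a)=\pi^{k(k-1)/4}\prod_{i=1}^k\Gamma(a-(i-1)/2)$.

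For the upper bound on $\eta(d,p,k)$, I would write $\log\eta = -\tfrac{kp}{2}\log(d/2)+\sum_{i=1}^k\int_{(d-p-i+1)/2}^{(d-i+1)/2}\psi(t)\,dt$ and invoke the digamma inequality $\psi(t)<\log t$ for $t>0$ (which follows from $\psi'(t)=\sum_{n\ge 0}(t+n)^{-2}>\int_0^\infty(t+s)^{-2}\,ds=1/t$ together with $\log t-\psi(t)\to 0$), bounding each integrand by $\log(d/2)$ to obtain $\log\eta\le 0$; the stated upper bound then follows a fortiori since its exponent is positive when $k<d-p-1$. The hard part will be the polar-decomposition step—verifying Muirhead's Jacobian identity and the conditional uniformity of $H$ given $A$; once these are granted, the rest is direct algebra and a one-line $\Gamma$ inequality.
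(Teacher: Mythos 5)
Your proposal is correct, and it takes a genuinely different route from the paper. The paper derives $\varphi_x$ by induction on $k$, building the matrices $\mathcal B$, $\mathcal C$, $S$, $T$ via a sequence of Gram--Schmidt-type constructions (Lemmas~\ref{Wj1}--\ref{Wj2}, Lemma~\ref{Wj3}, Corollary~\ref{Wj4}, Lemma~\ref{Uk}), and then computes the conditional density of $W_k$ given $W_1,\dots,W_{k-1}$ explicitly via Lemma~C.5 of \citet{Lee12b}; the normalizing-constant bound is obtained from the Ke\v{c}ki\'{c}--Vasi\'{c} inequality for $\Gamma(x)/\Gamma(y)$. You instead reduce $W_j=\mathbf O Y_j$ with $\mathbf O$ Haar on $O(d)$ and the $Y_j$'s having fixed first $p$ coordinates $x$ and Gaussian tails, which makes $A=W'W=Y'Y=\|x\|^2\iota\iota'+M$ with $M\sim W_k(I_k,d-p)$, and then invoke the polar-decomposition Jacobian (Muirhead) together with conditional uniformity of $H=WA^{-1/2}$ to pull back $f_A$ to a Lebesgue density for $W$; the matrix determinant lemma and $\trace(\|x\|^2\iota\iota')=k\|x\|^2$ then deliver exactly the claimed form, and the constants collapse correctly because $\Gamma_k(d/2)/\Gamma_k((d-p)/2)$ splits into the one-dimensional ratios. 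I checked the bookkeeping: the powers of $2$, $\pi$, and $d$ match, the support condition $a-\|x\|^2\iota\iota'\succ 0$ is equivalent to $S_k$ invertible with $\|x\|^2\iota'S_k^{-1}\iota<d$, and the measure-theoretic step (polar decomposition and independence of $H$ and $A$) is sound given the full-rank event has probability one under $d-p\ge k$. Your approach is shorter and leverages classical random-matrix facts in place of the paper's more self-contained but lengthier construction. Your bound on the normalizing constant is also different and in fact sharper: bounding $\psi(t)<\log t\le\log(d/2)$ on $\bigl[(d-p-i+1)/2,(d-i+1)/2\bigr]$ yields $\eta(d,p,k)<1$ directly (without needing $k<d-p-1$), which a fortiori implies the paper's bound since the exponent there is nonnegative. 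This would in fact simplify the downstream uses of Proposition~\ref{p2} (e.g., the crude bound $\eta\le e^{2k^2/3}$ used in the proof of Proposition~\ref{p3}) to simply $\eta\le 1$.
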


Note that Proposition~\ref{p2} applies whenever $p$, $d$, and $k$
are as in \eqref{th1a} or \eqref{th2a}.
For $p$, $d$, and $k$ as in Proposition~\ref{p2}, 
we can re-express the expression in \eqref{th1bb} as
\begin{align}
&\E\left[ \frac{f(W_1)}{\phi(W_1)} \frac{f(W_2)}{\phi(W_2)} - 1 \right] 
	- 2 \E\left[ \frac{f(W_1)}{\phi(W_1)} - 1 \right] \notag\\
&\quad =\quad
\int_{\R^d\times \R^d} \left( \frac{f(w_1)}{\phi(w_1)} 
	\frac{f(w_2)}{\phi(w_2)} - 1 \right) 
	\varphi_x(w_1,w_2)\, dw_1 \, dw_2 \notag\\
& \quad \quad \quad - 2 \int_{\R^d} \left( \frac{f(w_1)}{\phi(w_1)} 
	- 1 \right) \varphi_x(w_1) \,dw_1  \notag\\
& \quad =\quad
\E\left[ \frac{\varphi_x(Z_1, Z_2)}{\phi(Z_1)\phi(Z_2)}- 1 \right] 
	- 2 \E\left[ \frac{\varphi_x(Z_1)}{\phi(Z_1)} -1 \right], \label{EZ1}
\end{align}
we can re-write \eqref{th1aa} as
\begin{align}
\E \left[ 
	\left(Z_1'Z_2 - \|x\|^2 \right) 
	\frac{\varphi_x(Z_1, Z_2)}{\phi(Z_1)\phi(Z_2)} 
\right], \label{EZ2}
\end{align}
and the expressions in \eqref{th2aaa} can be written as
\begin{equation}\label{EZ3}
\begin{split}
&\sum_{j=1}^k (-1)^{k-j}\binom{k}{j} 
	\E \left[  \left(Z_1'Z_2\cdots Z_jZ_j' Z_1 - d + p- \|x\|^{2j} \right) 
		\frac{\varphi_x(Z_1,\dots, Z_k)}{\phi(Z_1)\cdots\phi(Z_k)}
		\right],  \\
& 	\E \left[  \left(\prod_{i=1}^m Z_{j_{i-1}+1}'Z_{j_{i-1}+2}\cdots 
		Z_{j_i-1}' Z_{j_i} -\|x\|^{2(j_m-m)} \right) 
		\frac{\varphi_x(Z_1, \dots, Z_k)}{\phi(Z_1) \cdots \phi(Z_k)}
		\right], 
\end{split}
\end{equation}
for $m\ge 1$ and indices $j_0,\ldots,j_m$ satisfying $j_0=0$, $j_m<k$, 
and $j_{i-1}+1<j_i$ whenever $1\le i\le m$.

In this subsection,
we have seen how the integrals in \eqref{th1a}, 
\eqref{th2a} and \eqref{th1b} can be re-written 
as weighted sums of expected
values involving i.i.d. copies of $Z$ and the
density of the rotational clones,
provided these expected values are all finite.
Theorem~\ref{thm:main}\ref{theoremA}  now follows if we can show
that the expected values in \eqref{EZ1} and \eqref{EZ2}
are all bounded by \eqref{bound} in absolute value,
uniformly in $x\in \mathcal S_{M,p}$, and for constants as in
\eqref{th1a}, under the assumptions of that theorem. 
Similarly, 
Theorem~\ref{thm:main}\ref{theoremB}  follows, provided
the expected values in \eqref{EZ1} and \eqref{EZ2} as well as
all the expressions in \eqref{EZ3}
are bounded by \eqref{bound} in absolute value,
uniformly in $x\in \mathcal S_{M,p}$, and for constants as in
\eqref{th2a}, under the assumptions of that theorem.

\subsection{Two sufficient conditions}
\label{s3.4}

For an even integer $k$, consider the quantities
\begin{align}
& \E \left[  \left(\prod_{i=1}^m Z_{j_{i-1}+1}'Z_{j_{i-1}+2}\cdots 
	Z_{j_i-1}' Z_{j_i} \right) 
	\frac{\varphi_x(Z_1, \dots, Z_l)}{\phi(Z_1) \cdots \phi(Z_l)}\right]  
	- \|x\|^{2(j_m-m)} \label{condition2} 
\end{align}
for $l=1,\dots, k$, for each $m\ge 0$, and 
for each set of indices $j_0\dots, j_m$ that satisfies $j_0=0$, 
$j_m \le l$ and $j_{i-1} +1 < j_i$ whenever $1\le i \le m$.
And, again for even $k$, consider
\begin{align}
&\sum_{j=1}^k (-1)^{k-j}\binom{k}{j} \E \Bigg[  
\left(Z_1'Z_2\cdots Z_jZ_j' Z_1 -d +p- 1 \right) \notag\\
& \hspace{4cm} \times \frac{\varphi_x(Z_1,\dots Z_k)}{\phi(Z_1)
	\cdots\phi(Z_k)}\Bigg] - (1-\|x\|^2)^k. \label{condition1}
\end{align}

If the expressions of the form  \eqref{condition2} 
are all bounded by \eqref{bound}, in absolute value
and with constants as in \eqref{th1a},
then both the expected values in \eqref{EZ1} and \eqref{EZ2} 
are also bounded by \eqref{bound}, again in absolute value and
for constants as in \eqref{th1a}.
Indeed, the two expected values in \eqref{EZ1} are special cases
of  \eqref{condition2}, namely with $m=0$ and $l=1$, and
with $m=0$ and $l=2$, respectively.
Similarly, one sees that \eqref{EZ2} equals 
\begin{align*}
\E \left[ 
	\left(Z_1'Z_2 \right) 
	\frac{\varphi_x(Z_1, Z_2)}{\phi(Z_1)\phi(Z_2)} - \|x\|^2 
\right] 
- \|x\|^2\E\left[\frac{\varphi_x(Z_1, Z_2)}{\phi(Z_1)\phi(Z_2)}-1\right].
\end{align*}
Note that the two expected values in the preceding display
are special cases of \eqref{condition2}, namely with $m=1$, $l=2$ and with
$m=0$, $l=2$. If these special cases of \eqref{condition2}
are both bounded by \eqref{bound} in absolute value, uniformly in 
$x \in \mathcal S_{M,p}$, and for constants as in \eqref{th1a}, then
the expression in the preceding display
is similarly bounded by the product of \eqref{bound} and
$1+M^2$. It is now easy to see that the resulting upper bound
on the expression in the preceding display, and hence also on \eqref{EZ2},
is itself  upper bounded by an expression of the form \eqref{bound}
for constants as in \eqref{th1a}.

Similarly,
if the expressions of the form \eqref{condition2} and also 
\eqref{condition1} are bounded by \eqref{bound}, in absolute value
and with constants as in \eqref{th2a}, then the expected values in
\eqref{EZ1} and \eqref{EZ2} as well as all 
the expressions in \eqref{EZ3} are also bounded
by \eqref{bound}, again in absolute value and for constants as in 
\eqref{th2a}.
Indeed, \eqref{EZ1} can be bounded as claimed by arguing as in the
preceding paragraph.
For \eqref{EZ3}, we re-write
the first expression in that display as
\begin{align*}
&\sum_{j=1}^k (-1)^{k-j}\binom{k}{j} \E \left[  
\left(Z_1'Z_2\cdots Z_jZ_j' Z_1 - d + p- \|x\|^{2j} \right) 
\frac{\varphi_x(Z_1,\dots, Z_k)}{\phi(Z_1)\cdots\phi(Z_k)}\right]  \\
&\quad =\quad
\sum_{j=1}^k (-1)^{k-j}\binom{k}{j} \E \left[  
\left(Z_1'Z_2\cdots Z_jZ_j' Z_1 - d + p- 1 \right) 
\frac{\varphi_x(Z_1,\dots, Z_k)}{\phi(Z_1)\cdots\phi(Z_k)}\right] \\
& \quad \quad \quad + 
\E\left[\frac{\varphi_x(Z_1,\dots, Z_k)}{\phi(Z_1)\cdots\phi(Z_k)}\right]
\sum_{j=1}^k (-1)^{k-j}\binom{k}{j}  \left( 1-\|x\|^{2j}\right) \\
&\quad =\quad 
\eqref{condition1} + \left(1-\|x\|^2 \right)^k - 
\E\left[\frac{\varphi_x(Z_1,\dots, Z_k)}{\phi(Z_1)\cdots\phi(Z_k)}\right] 
\left( 1-\|x\|^2 \right)^k \\
&\quad =\quad 
\eqref{condition1} - 
\left(1-\|x\|^2 \right)^k  
\E\left[\frac{\varphi_x(Z_1,\dots, Z_k)}{\phi(Z_1)\cdots\phi(Z_k)}-1\right] ,
\end{align*}
where the second equality is obtained from the binomial formula
upon recalling that $k$ is even.
From this, the first expression in \eqref{EZ3} can be bounded by
an expression of the form \eqref{bound}, namely by first
bounding both \eqref{condition2} and
\eqref{condition1} with $m=0$ and $l=k$ by \eqref{bound},
by using the fact that $(1-\|x\|^2)^k \leq 2^k M^{2 k}$
for $x\in \mathcal S_{M,p}$, and by adjusting
the constants $\kappa$ and $g$ in \eqref{bound} accordingly.
The second expression in \eqref{EZ3} can be bounded in a similar fashion
upon using appropriate bounds on \eqref{condition2}.

In this subsection,
we have seen how bounds on the expressions of the form
\eqref{condition2} and on \eqref{condition1} can be used to
prove both parts of Theorem~\ref{thm:main}.
In particular, Theorem~\ref{thm:main}\ref{theoremA} follows if,
under the assumptions of that theorem,
the expressions of the form \eqref{condition2} 
are all bounded by \eqref{bound}, in absolute value,
uniformly in $x \in \mathcal S_{M,p}$, and for constants as
in \eqref{th1a}.
And Theorem~\ref{thm:main}\ref{theoremB} follows if
the expressions of the form \eqref{condition2} and also \eqref{condition1}
are all bounded by \eqref{bound}, in absolute value, uniformly in 
$x\in \mathcal S_{M,p}$, and for constants as in \eqref{th2a},
under the assumptions of that theorem.

\subsection{Approximating the density ratio}
\label{s3.5}

\begin{proposition} \label{p3} 
Fix $M>1$, positive integers $k$, $d$, and $p$,
such that $d>p^2$ and $d>4(k+p+1)M^4$ and let $x\in \mathcal S_{M,p}$.
For a collection of $d$-vectors $w_1,\dots, w_k$, define the $k\times
k$-matrix $S_k = (w_i' w_j/d)_{i,j=1}^k$.  Then the density ratio
$\frac{\varphi_x(w_1,\dots,w_k)}{\phi(w_1)\cdots\phi(w_k)}$ can be expanded as
\begin{align} 
	&\frac{\varphi_x(w_1,\dots,w_k)}{\phi(w_1)\cdots\phi(w_k)} 
	\quad=\quad
	\psi_x(S_k-I_k) \;+\; \Delta,
	\nonumber
\end{align} 
where the quantities on
the right hand side have the following properties:

$\psi_x$ is a polynomial of degree $k$ in the elements of $S_k-I_k$ whose
coefficients are bounded by $p^kM^{2(k+2)} C_\psi(k)$, 
where $C_\psi(k)$ depends only
on $k$. In particular, we may write 
\begin{align*} 
	\psi_x (S_k-I_k) \quad=\quad
	\sum_{H\in\mathcal M_k} \mathcal C(H) \; H(S_k-I_k), 
\end{align*} 
where
$\mathcal M_k$ is the set of all monomials in the entries of a symmetric
$k\times k$-matrix (i.e., in $k(k+1)/2$ variables) up to degree $k$ and
$\mathcal C(H) \in \R$ is the coefficient in $\psi_x$ corresponding to the
monomial $H$, which satisfies $|\mathcal C(H)| \le p^kM^{2(k+2)} C_\psi(k)$.  In
addition, the coefficients $\mathcal C(H)$ are invariant under permutations in
the following sense: Define the function $g$ by $g(w_1,\dots,w_k) = S_k - I_k$.
If $H,G \in \mathcal M_k$ are such that $H\circ g (w_1, \dots, w_k) = G \circ g
(w_{\pi(1)}, \dots, w_{\pi(k)})$, for some permutation $\pi$ of $k$ elements
and every choice of $w_1,\dots,w_k \in\R^d$, then $\mathcal C(H) = \mathcal
C(G)$.

Moreover, there exists a constant $\xi(k)>2k$ that depends only on the value of
$k$, such that whenever $\|S_k - I_k \| < 1/(p \xi(k))$, the remainder term
$\Delta$ satisfies $|\Delta| \le
p^{k+1}M^{2(k+2)}e^{\frac{k}{2}M^2}\|S_k-I_k\|^{k+1} C_\Delta(k)$, where
$C_\Delta(k)$ is a constant that depends only on $k$.  \end{proposition}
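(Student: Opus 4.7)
My plan is to start from the exact closed form for the density ratio supplied by Proposition~\ref{p2} and perform a controlled Taylor expansion around $A := S_k - I_k = 0$, letting $\psi_x(A)$ be the degree-$k$ Taylor polynomial and $\Delta$ the remainder. First, I would check that if $\|A\|<1/(p\xi(k))$ for a suitably chosen constant $\xi(k)>2k$, then $S_k$ is invertible with $\|S_k^{-1}\|\le 2$ and $\|x\|^2\iota'S_k^{-1}\iota\le 2kM^2<d$ (the last inequality uses the hypothesis $d>4(k+p+1)M^4$). Hence the product formula from Proposition~\ref{p2} is valid on this ball, and I may write
$$\log F(A) \;=\; \log\eta(d,p,k)+\tfrac{k\|x\|^2}{2}-\tfrac{p}{2}\log\det(I_k+A)+\tfrac{d-p-k-1}{2}\log\Bigl(1-\tfrac{v(A)}{d}\Bigr),$$
where $F(A)$ denotes the density ratio and $v(A)=\|x\|^2\iota'(I_k+A)^{-1}\iota$. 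Each non-constant summand admits a convergent power series in $A$ on the ball, obtained from the standard series $\log\det(I_k+A)=\sum_{j\ge1}(-1)^{j+1}\trace(A^j)/j$, $(I_k+A)^{-1}=\sum_{j\ge0}(-1)^jA^j$, and $\log(1-u)=-\sum_{j\ge1}u^j/j$.

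Next, I would expand these series and carefully track the powers of $d$. The leading term $\tfrac{d-p-k-1}{2}\cdot\tfrac{v(A)}{d}$ contributes $-\tfrac{k\|x\|^2}{2}$ to the constant term (up to $O(1/d)$), which exactly cancels the explicit $+\tfrac{k\|x\|^2}{2}$; the higher-order terms $(v/d)^j$ for $j\ge 2$ carry a prefactor $(d-p-k-1)/d^j=O(d^{1-j})$ and therefore contribute only corrections that are uniformly bounded in $d$. Combined with the $-\tfrac{p}{2}\log\det$ summand (which is the source of the $p$ factors), this shows that $\log F(A)$ has a formal power-series expansion in $A$ whose coefficients are polynomials in $\|x\|^2$ of degree at most $k+2$, with magnitudes bounded uniformly in $d$. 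Exponentiating via $\exp(f)=\sum_n f^n/n!$ and truncating at total degree $k$ in the entries of $A$ yields the polynomial $\psi_x(A)$; tallying the $p$ and $\|x\|^2\le M^2$ factors gives the coefficient bound $|\mathcal C(H)|\le p^k M^{2(k+2)}C_\psi(k)$. Permutation invariance of $\mathcal C(H)$ is automatic, because $\det S_k$ and $\iota'S_k^{-1}\iota$ are invariant under $S_k\mapsto P S_k P'$ for any permutation matrix $P$ (since $P\iota=\iota$), hence so is $F$, and so are all its Taylor coefficients.

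For the remainder $\Delta = F(A)-\psi_x(A)$, I would use that $F$ is real-analytic on the ball $\|A\|<1/(p\xi(k))$ and apply Taylor's theorem with integral Lagrange-type remainder to the one-variable function $t\mapsto F(tA)$. The bound $|\Delta|\le p^{k+1}M^{2(k+2)}e^{kM^2/2}\|A\|^{k+1}C_\Delta(k)$ will then follow after estimating the $(k+1)$-st derivative by a Cauchy-type bound on a slightly smaller sub-ball, together with the sup-estimate $|F(tA)|\le e^{kM^2/2}$ times a uniform-in-$d$ constant (using Proposition~\ref{p2}'s bound on $\eta(d,p,k)$ together with the same cancellation argument that controls the expansion coefficients). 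The central obstacle throughout is the delicate balancing of powers of $d$ in the second logarithm term: one must verify that the apparent $d$-dependent blow-up coming from the exponent $(d-p-k-1)/2$ is cancelled by the $1/d$ inside the logarithm, so that what survives after all cancellations depends only on $p$, $M$ and $k$. This single cancellation mechanism is what drives both the polynomial structure of $\psi_x$ and the uniform-in-$d$ coefficient bounds.
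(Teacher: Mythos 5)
Your approach is genuinely different from the paper's: the paper takes the closed form from Proposition~\ref{p2}, writes the density ratio as $\eta(d,p,k)\cdot\gamma_1(S_k)\cdot\gamma_2(S_k)$ with $\gamma_1(S_k)=g_1(\iota'S_k^{-1}\iota)$ and $\gamma_2(S_k)=(\det S_k)^{-p/2}$, and then builds the polynomial approximation by composing \emph{explicit one-variable Taylor expansions} of $g_1$ and $g_2$ (Lemmas~\ref{Taylor2}--\ref{Taylor3}) with Neumann-type expansions of $\iota'S_k^{-1}\iota$ and $\det S_k$ (Lemmas~\ref{Taylor4}--\ref{Taylor5}), multiplying, and finally symmetrizing over permutations. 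You instead expand $\log F$ as a formal power series in $A=S_k-I_k$, exponentiate, truncate, and bound the remainder by a complex-analytic Cauchy estimate. One genuine advantage of your route is that the permutation invariance of $\mathcal C(H)$ comes for free: your $\psi_x$ is the canonical Taylor polynomial of the symmetric function $F$, whereas the paper's product-of-expansions construction is not manifestly symmetric and must be averaged over permutations at the end.

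However, the remainder step has a real gap. To bound the $(k+1)$-st derivative of $t\mapsto F(tA)$ via Cauchy you must extend $F$ holomorphically to a polydisk around the real segment $\{tA: t\in[0,1]\}$, and the sup-bound you invoke, $\sup|F|\lesssim e^{kM^2/2}\cdot\text{const}$, is established only for \emph{real} arguments (where $(1-v/d)^{(d-p-k-1)/2}\le 1$). For complex $A'$ the modulus $\bigl|(1-v(A')/d)^{(d-p-k-1)/2}\bigr| = |1-v(A')/d|^{(d-p-k-1)/2}$ has the large exponent $\sim d/2$ acting on a quantity that is no longer $\le 1$, and an imaginary perturbation of $v$ of size $\delta$ can blow this factor up by $\exp(\Theta(d\,\delta^2/d^2))=\exp(\Theta(\delta^2/d))$ only if $\delta$ is controlled; making this uniform in $d$ and $p$ requires choosing the polydisk radius $\rho\sim 1/(p\,\xi(k))$ and then re-running precisely the cancellation between $e^{k\|x\|^2/2}$ and $(1-v/d)^{(d-p-k-1)/2}$ that you flag as the ``central obstacle'' but do not carry out. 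The same issue affects the $\det$-factor: $|\det(I_k+A')^{-p/2}|$ stays bounded uniformly in $p$ only because the polydisk has radius $O(1/p)$; with a larger polydisk (needed for a better Cauchy constant) you would pick up factors like $2^{kp/2}$. In short, the Cauchy step is where all of the $d$- and $p$-bookkeeping that the paper does via Lemmas~\ref{Taylor2}--\ref{Taylor5} resurfaces, and your outline currently asserts the needed estimate rather than proving it.
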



Note that Proposition~\ref{p3} applies whenever $M$, $k$, $d$,
and $p$ are either as in \eqref{th1a} or as in \eqref{th2a}.
The proposition suggests to replace the density ratio
$\frac{\varphi_x(w_1,\dots, w_k)}{\phi(w_1)\cdots\phi(w_k)}$ by the polynomial
$\psi_x(S_k-I_k)$.  For a fixed even integer $k$, we
therefore consider, as approximations to the expressions in 
\eqref{condition2}, the quantities 
\begin{align} 
&
\E \left[  \left(\prod_{i=1}^m Z_{j_{i-1}+1}'Z_{j_{i-1}+2}\cdots Z_{j_i-1}'
Z_{j_i} \right) \psi_x(S_l-I_l)\right]  - \|x\|^{2(j_m-m)} \label{approx2}
\end{align}
for $l=1,\dots, k$, for each $m\ge 0$, and 
for each set of indices $j_0\dots, j_m$ that satisfies $j_0=0$, 
$j_m \le l$ and $j_{i-1} +1 < j_i$ whenever $1\le i \le m$. 
And as approximation to \eqref{condition1}, we consider
\begin{align}
&\sum_{j=1}^k
(-1)^{k-j}\binom{k}{j} \E \Bigg[  \left(Z_1'Z_2\cdots Z_jZ_j' Z_1 -d +p- 1
\right) \psi_x(S_k-I_k)\Bigg] - (1-\|x\|^2)^k. \label{approx1} 
\end{align} 
In order for these approximations to be useful we
have to make sure that the difference between \eqref{condition1} and
\eqref{approx1} as well as the difference between \eqref{condition2} and
\eqref{approx2} can be controlled.
The following proposition provides us with the appropriate tool.

\begin{proposition}\label{p4} 
Fix positive integers $d$ and $k$. Moreover, let $M>1$ and $p\in \N$ 
be such that 
$d>\max\{4(k+p+1)M^4, 2k+p(2k+2)2^{k+3}, p^2\}$. 
Let $Z$ be a random $d$-vector such that $\E Z = 0$ and $\E Z Z' = I_d$,
and such that bounds \ref{c.Sk}.(\ref{c.sup}) 
and \ref{c.density} obtain with $k$ as chosen here. Write
$Z_1,\dots, Z_k$ for i.i.d. copies of $Z$.
Finally, fix $l\in\{1,\dots,k\}$, let $S_l = (Z_i'Z_j/d)_{i,j=1}^l$, and
let $H(S_l-I_l)$ be a (fixed) monomial in the elements of $S_l-I_l$ whose
degree, denoted by $\deg(H)$, satisfies $0\leq \deg(H)  \leq l$. Then 
$$
\sup_{x \in \mathcal S_{M,p}}
\E\left [ d^\frac{l+\deg(H)}{2} \;| H(S_l-I_l)|\;  \left|
\frac{\varphi_x(Z_1,\dots, Z_l)}{\phi(Z_1)\cdots\phi(Z_l)} - \psi_x(S_l -
I_l)\right|\right] 
$$ 
is bounded by 
\begin{align*} p^{2k+1+\eps} M^{2(k+2)}
e^{\frac{k}{2}M^2} \left(2D\sqrt{\pi e}\right)^{pk} \alpha\, d^{-\eps/2-1/4}
\end{align*} 
times a constant that depends only on $k$. Here $\eps\in[0,1/2]$
and $\alpha\ge 1$ are the constants from~\ref{c.Sk}.(\ref{c.sup}) and $D$ is
the constant from~\ref{c.density}.  
\end{proposition}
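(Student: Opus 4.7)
The plan is to split the expectation into a ``good'' event on which Proposition~\ref{p3} delivers a pointwise bound and a ``bad'' event which has very small probability. Specifically, let $A=\{\|S_l-I_l\|<1/(p\,\xi(l))\}$, with $\xi(l)$ as in Proposition~\ref{p3}, and abbreviate $T=d^{(l+\deg H)/2}|H(S_l-I_l)|\bigl|R-\psi_x(S_l-I_l)\bigr|$, where $R=\varphi_x(Z_1,\dots,Z_l)/(\phi(Z_1)\cdots\phi(Z_l))$. I will bound $\E T=\E[T\mathbf 1_A]+\E[T\mathbf 1_{A^c}]$ separately.

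On $A$, Proposition~\ref{p3} (applied with $l$ in place of $k$, which is legitimate because $d>4(l+p+1)M^4$) gives $|R-\psi_x(S_l-I_l)|\le p^{l+1}M^{2(l+2)}e^{lM^2/2}C_\Delta(l)\|S_l-I_l\|^{l+1}$. Since every entry of $S_l-I_l$ is dominated by its operator norm, $|H(S_l-I_l)|\le\|S_l-I_l\|^{\deg H}$. Rewriting the resulting product in terms of $\|\sqrt d(S_l-I_l)\|$ separates out exactly one factor of $d^{-1/2}$, so that $\E[T\mathbf 1_A]\le C(k)p^{l+1}M^{2(l+2)}e^{lM^2/2}d^{-1/2}\,\E\|\sqrt d(S_l-I_l)\|^{\deg H+l+1}$. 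Since $\deg H+l+1\le 2k+1\le 2k+1+\eps$ and $S_l$ is a principal submatrix of $S_k$ (so $\|S_l-I_l\|\le\|S_k-I_k\|$), Jensen's inequality and \ref{c.Sk}.(\ref{c.sup}) bound this moment by $\alpha$. The resulting estimate is dominated by the target bound because $\eps/2+1/4\le 1/2$.

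On $A^c$, I first use the triangle inequality to split $T$ into a piece involving $R$ and a piece involving $\psi_x(S_l-I_l)$. For each piece I apply a three-fold Hölder inequality with exponents $a,b,c$ satisfying $1/a+1/b+1/c=1$. The first factor, $\E[(d^{(l+\deg H)/2}|H|)^a]^{1/a}$, is handled by~\ref{c.Sk}.(\ref{c.sup}) provided $a\deg H\le 2k+1+\eps$; after pulling out $d^{\deg H/2}$ via $|H|\le\|S_l-I_l\|^{\deg H}$, it only contributes a factor of order $d^{l/2}$. The second factor, $\E[R^b]^{1/b}$, is handled by combining the explicit form of $\varphi_x$ from Proposition~\ref{p2}, the bound on $\eta(d,p,l)$, the trivial estimate $(1-\|x\|^2\iota'S_l^{-1}\iota/d)^{(d-p-l-1)/2}\in[0,1]$, and Lemma~\ref{p4.l1} (which uses~\ref{c.density} to bound moments of $\det(S_l)^{-p/2}$ up to order $8(k+1)$); the corresponding $\psi_x$-analogue is handled directly by Proposition~\ref{p3}'s coefficient bound together with~\ref{c.Sk}.(\ref{c.sup}). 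This produces the factor $(2D\sqrt{\pi e})^{pk}$ (times constants depending only on $k$). The third factor, $\P(A^c)^{1/c}$, is controlled via Markov's inequality and~\ref{c.Sk}.(\ref{c.sup}) by $\alpha^{1/c}(p\xi(l))^{(2k+1+\eps)/c}d^{-(2k+1+\eps)/(2c)}$. Choosing $c$ so that $(2k+1+\eps)/(2c)=l/2+\eps/2+1/4$ makes the overall power of $d$ equal to $-\eps/2-1/4$, as required.

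The main obstacle is the bookkeeping on $A^c$: one must verify that a simultaneous choice of the three exponents $a,b,c$ exists which meets the range constraints of~\ref{c.Sk}.(\ref{c.sup}) \emph{and} of Lemma~\ref{p4.l1}, while yielding the required $d$-power. The latitude built into the moment exponent $2k+1+\eps$ in~\ref{c.Sk}.(\ref{c.sup}), together with the range $4p(k+1)$ available in Lemma~\ref{p4.l1}, is what makes this balance possible; once the constants depending only on $k$ have been collected, the resulting bound matches the claim exactly.
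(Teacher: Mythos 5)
The decomposition into a good event $A=\{\|S_l-I_l\|<1/(p\,\xi(l))\}$ and its complement, and your treatment of $\E[T\mathbf 1_A]$ via Proposition~\ref{p3}, matches the paper exactly and is correct. Your treatment of the $R$-piece on $A^c$ also looks workable (the paper uses a two-fold H\"older, keeping the indicator inside the moment factor rather than splitting it off as $\P(A^c)^{1/c}$, but the numbers come out in the same ballpark).

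There is, however, a genuine gap in the $\psi_x$-piece on $A^c$. With your three-fold H\"older, the second factor is $\E[|\psi_x(S_l-I_l)|^b]^{1/b}$, and since $\psi_x$ is a polynomial of degree up to $l$ in $S_l-I_l$ (Proposition~\ref{p3}), controlling this factor through~\ref{c.Sk}.(\ref{c.sup}) requires the $(lb)$-th moment of $\|\sqrt d(S_l-I_l)\|$, i.e.\ $lb\le 2k+1+\eps$, giving $1/b\ge l/(2k+1+\eps)$. Together with your first-factor constraint $1/a\ge \deg H/(2k+1+\eps)$ and your chosen $1/c=(l+\eps+1/2)/(2k+1+\eps)$, the three lower bounds sum to $(\deg H + 2l + \eps + 1/2)/(2k+1+\eps)$, which must be $\le 1$. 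That forces $\deg H + 2l \le 2k+1/2$, which fails for every $\deg H\ge 1$ when $l=k$ (and more generally whenever $\deg H+2l>2k+1/2$). So no admissible $(a,b,c)$ exists for the $\psi_x$-piece in those cases; \ref{c.Sk}.(\ref{c.sup}) supplies no moments beyond order $2k+1+\eps$, so $\E[|\psi_x|^b]$ may simply be infinite.

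The paper circumvents this by \emph{not} applying H\"older to the $\psi_x$-piece at all. Instead it expands $|\psi_x|$ into its monomials of degree $\ell=0,\dots,l$ with coefficient bounds from Proposition~\ref{p3}, and for each such term it converts the indicator into extra powers of $\|S_l-I_l\|$ via $\mathbf 1_{A^c}\le (p\,\xi(l))^{\,l-\ell+1}\|S_l-I_l\|^{l-\ell+1}$; the whole $\psi_x$-piece is then bounded through the single moment $\E\|\sqrt d(S_l-I_l)\|^{\deg H+l+1}$, whose order $\deg H+l+1\le 2k+1\le 2k+1+\eps$ is within range. This is exactly the maneuver your H\"older split gives away: by isolating $\P(A^c)^{1/c}$ you spend moment budget three ways, while the paper spends it only twice (on the $|H|$-factor and the indicator-converted power) for this piece. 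You would need to treat the $\psi_x$-piece along the paper's lines rather than through a third H\"older factor.
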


This proposition applies 
under the assumptions of
Theorem~\ref{thm:main}\ref{theoremA} and for constants as in 
\eqref{th1a}, and also 
under the assumptions of
Theorem~\ref{thm:main}\ref{theoremB} and for constants as in 
\eqref{th2a}.
Under the assumptions of Proposition~\ref{p4},
consider first the difference of \eqref{condition1}
and \eqref{approx1}. This difference is a sum of $k$ terms (with $k$ 
an even integer), where the modulus of the $j$-th term is bounded by
\begin{align*} 
& \binom{k}{j} \E \left[ \left|Z_1'Z_2\cdots Z_jZ_j'Z_1 - d+p-1
\right| \left| \frac{\varphi_x(Z_1,\dots, Z_k)}{\phi(Z_1)\cdots\phi(Z_k)} -
\psi_x(S_k - I_k) \right| \right] 
\\ 
&\quad \le \binom{k}{j} \E \left[
d^{\frac{k+j}{2}} \left| H_j(S_k-I_k) \right| \left| \frac{\varphi_x(Z_1,\dots,
Z_k)}{\phi(Z_1)\cdots\phi(Z_k)} - \psi_x(S_k - I_k) \right| \right] 
\\ 
& \quad\quad + \binom{k}{j} \E \left[ d^{\frac{k+0}{2}}  \left| H_0(S_k-I_k)
\right| \left| \frac{\varphi_x(Z_1,\dots, Z_k)}{\phi(Z_1)\cdots\phi(Z_k)} -
\psi_x(S_k - I_k) \right| \right], 
\end{align*} 
with $H_0(S_k-I_k) = 1$,
$H_1(S_k-I_k)=Z_1'Z_1/d - 1$ and, for $j\ge2$, $H_j(S_k-I_k) =
d^{-j}Z_1'Z_2\cdots Z_jZ_j'Z_1$, with $\deg(H_j)=j$ for $j=0,1,\dots k$, and
where we have used $|p-1|\le |-d+p-1|\le d^{k/2}$. Thus Proposition~\ref{p4}
yields an upper bound for the supremum over $\mathcal S_{M,p}$ of the absolute
value of this difference. 
Taken together, the difference of \eqref{condition1} and \eqref{approx1}
can be bounded, in absolute value and uniformly over 
$x \in\mathcal S_{M,p}$, by the upper bound from Proposition~\ref{p4}
multiplied by $2 \sum_{j=1}^k {k \choose j} = 2(2^k-1)$ (which is
a constant that depends only on $k$).
In the same way one sees that also the absolute difference
between \eqref{condition2} and \eqref{approx2} can be bounded, uniformly
over $x \in \mathcal S_{M,p}$, by the quantity
given by Proposition~\ref{p4}.
Note that the bound in Proposition~\ref{p4} is itself upper bounded by 
\eqref{bound}, e.g.,  by choosing $\kappa=\kappa(k,\alpha)$ 
and $g = k/2 + 2(k+2)$.

The arguments in this subsection entail that it suffices to bound
the approximating quantities \eqref{approx2} and \eqref{approx1}:
Theorem~\ref{thm:main}\ref{theoremA} follows if, under the assumptions
of that theorem, the expressions of the form \eqref{approx2}
are all bounded by \eqref{bound}, in absolute value, uniformly over
$x \in \mathcal S_{M,p}$, and for constants as in \eqref{th1a}.
Similarly, 
Theorem~\ref{thm:main}\ref{theoremB} follows if, under the assumptions
of that theorem, the expressions of the form \eqref{approx2}
and also \eqref{approx1}
are all bounded by \eqref{bound}, in absolute value, uniformly over
$x \in \mathcal S_{M,p}$, and for constants as in \eqref{th2a}.

\subsection{Comparing the approximating quantities with Gaussian expressions}
\label{s3.6}

We now compare \eqref{approx2} and \eqref{approx1} with the analogous
expressions where the $Z_1,\dots, Z_k$ are replaced by i.i.d. standard normal
$d$-vectors $V_1,\dots, V_k$ (and the Gram matrices 
$S_l = (Z_i'Z_j/d)_{i,j=1}^l$
for $l=1,\dots, k$ are replaced by the corresponding Gram matrices of the
$V_i$, i.e., by $S_l^\star = (V_i'V_j/d)_{i,j=1}^l$). 
In particular, we show that \eqref{approx2}, with the $Z_i$ replaced
by the $V_i$, can be controlled as desired, and
we then show that the difference of \eqref{approx2} and of
\eqref{approx2}, with the $Z_i$ replaced by the $V_i$,
can also be controlled. This will lead to the desired bound on
\eqref{approx2}. A similar strategy is also employed to bound
\eqref{approx1}.

The first step is to bound \eqref{approx2} and \eqref{approx1} 
with the $Z_i$ replaced by the $V_i$ in both displays.
More explicitly, for an even integer $k$, 
we want to bound the quantities
\begin{align}
& \E \left[  \left(\prod_{i=1}^m V_{j_{i-1}+1}'V_{j_{i-1}+2}\cdots 
	V_{j_i-1}' V_{j_i} \right) 
\psi_x(S_l^\star-I_l)\right]  - \|x\|^{2(j_m-m)}, \label{approx2gauss}
\end{align}
for $l=1,\dots, k$, for each $m\ge 0$, and 
for each set of indices $j_0\dots, j_m$ that satisfies $j_0=0$, 
$j_m \le l$ and $j_{i-1} +1 < j_i$ whenever $1\le i \le m$. 
And we also want to bound
\begin{align}
&\sum_{j=1}^k (-1)^{k-j}\binom{k}{j} \E \Bigg[  \left(V_1'V_2\cdots 
	V_jV_j' V_1 -d +p- 1 \right)
\psi_x(S_k^\star-I_k)\Bigg] - (1-\|x\|^2)^k. \label{approx1gauss}
\end{align}
Note that these expressions can be viewed as
approximations, in the sense of Proposition~\ref{p3}, 
to the expressions in the two preceding displays with 
$\psi_x(S_l^\star-I_l)$ 
and 
$\psi_x(S_k^\star-I_k)$ 
replaced by
$\varphi_x(V_1,\dots, V_l) / (\phi(V_1)\cdots \phi(V_l))$
and
$\varphi_x(V_1,\dots, V_k) / (\phi(V_1)\cdots \phi(V_k))$ ,
respectively.
But with that replacement, the resulting expressions are equal to
zero, as is easily verified (cf. Lemma~\ref{normalzero} for details). 
It is now elementary to verify, for a standard Gaussian $d$-vector $V$, 
that the bound~\ref{c.Sk} with $V$ replacing $Z$
is satisfied with $\eps=1/2$
(and therefore for all $\eps\in[0,1/2]$),
with $\xi=1/2$ (and thus for all $\xi \in (0,1/2]$), 
and with $\alpha$ and $\beta$ replaced by constants
$\alpha^\star$ and $\beta^\star$ that depend
only on $k$ (see also the proof of Example A.1(i) in \citep{Lee12b}).
Clearly also the bound~\ref{c.density} holds with $V$ replacing $Z$
(e.g. with $D=1$). 
Therefore, all the arguments so far concerning
the $d$-vectors $Z_1,\dots, Z_k$
also apply to the Gaussian $d$-vectors $V_1,\dots, V_k$.
In particular, we see that \eqref{approx2gauss} and \eqref{approx1gauss} are
bounded in absolute value by the quantity given in Proposition~\ref{p4},
and hence also
by \eqref{bound}, uniformly in $x \in \mathcal S_{M,p}$ and with constants
that depend only on $k$.

It remains to bound the difference of \eqref{approx2} and
\eqref{approx2gauss} as well as the difference of \eqref{approx1} and
\eqref{approx1gauss}. In particular, to complete the proof of 
Theorem~\ref{thm:main}\ref{theoremA}, we need to show, under the
assumptions of that theorem, that the absolute difference of
each expression of the form \eqref{approx2} and
the corresponding expression in \eqref{approx2gauss}
is bounded by \eqref{bound}, uniformly in $x\in \mathcal S_{M,p}$
and for constants as in \eqref{th1a}.
And to finish proving Theorem~\ref{thm:main}\ref{theoremB},
we have to show, under the assumptions of that theorem,
that the absolute difference of each expression of the form
\eqref{approx2} and the corresponding expression in \eqref{approx2gauss}
and also the absolute difference of \eqref{approx1} and \eqref{approx1gauss}
are all bounded by \eqref{bound},
uniformly in $x\in\mathcal S_{M,p}$ and for constants as
in \eqref{th2a}.

\begin{proposition}\label{p5} 
Suppose the random $d$-vector $Z$ satisfies the bounds \ref{c.Sk}.(\ref{c.sup})
and \ref{c.Sk}.(\ref{c.monom1}) for a fixed positive integer $k \le 4$. For
each $d\ge1$, let $Z_1,\dots, Z_k$ be i.i.d. copies of $Z$, set $S_k = (Z_i'
Z_j/d)_{i,j=1}^k$, and let $G$ and $H$ be two (fixed) monomials in the elements
of $S_k - I_k$ of degree $g$ and $h$, respectively, where $max\{g,h\}\le k$.  
Finally,
define $G^\star$ and $H^\star$ as $G$  and $H$, but with the $Z_1,\dots, Z_k$
replaced by i.i.d. standard Gaussian $d$-vectors, and consider
\begin{equation}\label{p5.1} 
\E\left[ d^g \Big( G - \E[G]\Big) H\right] \quad -
\quad \E\left[ d^g \Big( G^\star - \E[G^\star]\Big) H^\star\right].
\end{equation} 
\begin{list}{\thercnt}{ \usecounter{rcnt}
\setlength\itemindent{5pt} \setlength\leftmargin{0pt} \setlength\topsep{0pt}
\setlength\partopsep{0pt}
	} 
\item \label{p5.i} 
We then have $|\E[H] - \E[H^\star]| \le d^{-h/2} (\alpha
+ \alpha^\star)$.

\item \label{p5.ii} 
Assume that $G$ is given by the monomial
\begin{align}\label{openc} 
	\prod_{i=1}^m (S_k-I_k)_{j_{i-1}+1, j_{i-1}+2}
	\,(S_k-I_k)_{j_{i-1}+2, j_{i-1}+3} \cdots (S_k-I_k)_{j_i-1,j_i}, 
\end{align}
for some $m\ge 0$ and for indices $j_0\dots, j_m$ that satisfy
$j_0=0$, $j_m \le k$ and $j_{i-1} +1 < j_i$ whenever $1\le i \le m$.  Then
the expression in \eqref{p5.1} is bounded in absolute value by
$d^{-\min\{\xi,1/2\}} \max\{\alpha+\alpha^\star, \beta+\beta^\star\}$.

\item \label{p5.iii}
Assume that $G$ is given by the monomial 
\begin{align}\label{closedc}
(S_k-I_k)_{1,2}\,(S_k-I_k)_{2,3}\cdots(S_k-I_k)_{j-1,j}(S_k-I_k)_{j,1}
\end{align}
and note that the degree of $G$ is $j$ and satisfies
$1\leq j \leq k$.
Moreover, assume that also the bound \ref{c.Sk}.(\ref{c.monom2}) is 
satisfied with $k$ as 
chosen here. Then the expression in 
\eqref{p5.1} is bounded in absolute value by 
$d^{-\min\{\xi,1/2\}}(\beta(\alpha+1)+
\beta^\star(\alpha^\star+1)+\beta^2+{\beta^\star}^2)$, unless
either 
(a) $H=(S_k-I_k)_{a,a}$ for some $a$ satisfying $1\leq a \leq j$,
(b) $H=(S_k - I_k)_{a,b}$ with $1\leq a < b \leq j$,
or 
(c) $H=\left((S_k - I_k)_{a,b}\right)^2$ with $1\leq a < b \leq j$.  
In case (a), the expression in \eqref{p5.1} is
equal to $\Var[Z_1'Z_1]/d - 2$; 
in case (b), it is equal
to $\E[ (Z_1'Z_2)^3]/d$; and in case 
(c), it equals $\Var[(Z_1'Z_2)^2]/d^2 - 2(1+3/d)$.
\end{list}
Here, $\alpha$, $\beta$ and $\xi$ are the constants from~\ref{c.Sk}
and $\alpha^\star$ and $\beta^\star$ are the analogous quantities to $\alpha$
and $\beta$ if $Z$ is replaced by a standard Gaussian $d$-vector. Note
that
$\alpha^\star$ and $\beta^\star$ can be chosen such that they depend only on
the value of $k$.  
\end{proposition}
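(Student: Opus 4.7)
For part~(i), apply Lyapunov's inequality to \ref{c.Sk}.(\ref{c.sup}): since each entry of $S_k-I_k$ is bounded in absolute value by $\|S_k-I_k\|$ we obtain $|\E H|\le\E\|S_k-I_k\|^h=d^{-h/2}\E\|\sqrt{d}(S_k-I_k)\|^h$; because $h\le k<2k+1+\eps$ and $\alpha\ge 1$, Lyapunov yields $\E\|\sqrt{d}(S_k-I_k)\|^h\le\alpha^{h/(2k+1+\eps)}\le\alpha$, so $|\E H|\le d^{-h/2}\alpha$ and analogously $|\E H^\star|\le d^{-h/2}\alpha^\star$. The triangle inequality then closes the argument.

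For part~(ii), the key structural observation is that $\E G=\E G^\star=0$ for any non-trivial open-chain $G$: each sub-chain $(S_k-I_k)_{j_{i-1}+1,j_{i-1}+2}\cdots(S_k-I_k)_{j_i-1,j_i}$ contains its endpoint vectors $Z_{j_{i-1}+1}$ and $Z_{j_i}$ only to degree one, so by independence of $Z_1,\dots,Z_k$ combined with $\E Z_i=0$ the sub-chain expectation vanishes. Hence \eqref{p5.1} reduces to $d^g(\E[GH]-\E[G^\star H^\star])$. I then apply \ref{c.Sk}.(\ref{c.monom1}) to the product monomial $GH$: if $GH$ simplifies to a product of distinct squared above-diagonal entries --- which forces $h\ge g$, since each linear factor of $G$ must be matched once by $H$ --- then $|d^{(g+h)/2}\E[GH]-1|\le\beta/d^\xi$ and likewise with $\beta^\star$ for the Gaussian; if $GH$ contains a linear term, $|d^{(g+h)/2}\E[GH]|\le\beta/d^\xi$ and likewise. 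In both cases $|d^{(g+h)/2}(\E[GH]-\E[G^\star H^\star])|\le(\beta+\beta^\star)/d^\xi$, and multiplying by $d^{(g-h)/2}\le 1$ (using $h\ge g$ in the first case) yields the claimed $d^{-\min\{\xi,1/2\}}$-bound. The residual case $g>h$ with $GH$ carrying a linear term is the main technical obstacle: a direct application of \ref{c.Sk}.(\ref{c.monom1}) loses a factor of $d^{(g-h)/2}$, so one has to expand $\E[GH]$ via independence of $Z_1,\dots,Z_k$ and observe that the dominant-order contributions, which involve only second moments of the components of each $Z_i$, coincide with the Gaussian ones, leaving a smaller-order remainder controlled by \ref{c.Sk}.(\ref{c.sup}) and \ref{c.Sk}.(\ref{c.monom1}).

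For part~(iii), the closed-chain $G$ satisfies $\E G=d^{1-j}\ne 0$, but since $\E G$ is fully determined by second moments of the $Z_i$'s we have $\E G=\E G^\star$, so the cross-term difference collapses to $\E G\,(\E H-\E H^\star)$, which part~(i) bounds. The main difference $d^j(\E[GH]-\E[G^\star H^\star])$ is treated case-by-case on $H$: when $\deg H\in\{2,\dots,j-1\}$ and $H$ depends on every $Z_1,\dots,Z_j$, \ref{c.Sk}.(\ref{c.monom2}) gives $|d^j\E[GH]|\le\beta/d^\xi$ directly, and likewise with $\beta^\star$. For the three exceptions (a), (b), (c) --- which fall outside the hypotheses of \ref{c.Sk}.(\ref{c.monom2}) --- I would carry out the Wick-type expectation by hand: using independence and $\E ZZ'=I_d$, the Kronecker $\delta$'s from the interior vertices of the cycle collapse all cycle-indices to a single free index, reducing the expectation to $\E[\|Z_1\|^4]$, $\E[(Z_1'Z_2)^3]$, or $\E[(Z_1'Z_2)^4]$ respectively; subtracting the corresponding Gaussian values $d(d+2)$, $0$, and $3d(d+2)$ then produces the explicit closed-form values stated. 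All other $H$ of degree at most $k$ fall inside \ref{c.Sk}.(\ref{c.monom1}) applied to $GH$ and are handled exactly as in part~(ii).
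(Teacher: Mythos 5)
Part (i) of your argument is correct and matches the paper's reasoning (Lyapunov plus condition (b1)(a)). Parts (ii) and (iii), however, have real gaps.

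\textbf{Part (ii).} You handle the case $\E G=0$ correctly and reduce \eqref{p5.1} to $d^g(\E GH-\E G^\star H^\star)$. The problem is that you then rely \emph{only} on (b1)(b) applied to $GH$, and (b1)(b) covers only monomials that are either all quadratics above the diagonal or contain a linear factor. When $GH$ contains a cubic or higher repeated factor and no linear factor --- e.g.\ $G=(S_k-I_k)_{1,2}(S_k-I_k)_{2,3}$, $H=(S_k-I_k)_{1,2}^2(S_k-I_k)_{2,3}$, giving $GH=(S_k-I_k)_{1,2}^3(S_k-I_k)_{2,3}^2$ --- neither alternative in (b1)(b) applies. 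The paper avoids this by using (b1)(a) and Lyapunov directly for the whole case $g<h$, exploiting the factor $d^{(g-h)/2}\le d^{-1/2}$; this is the source of the $\alpha+\alpha^\star$ branch in the stated bound, which your (b1)(b)-only route cannot produce. Your description of the residual $g>h$ case (``expand $\E[GH]$ via independence and observe that dominant-order contributions coincide with the Gaussian ones'') is not a proof. The paper's treatment of $g>h$ is quite specific: it first applies a WLOG reduction (Lemma F.1 in \cite{Lee12b}) so that every vertex of the chain $G$ is visited by an edge of $H$, then shows via counting that $g>h$ is only possible for $k=4$, $g=3$, $h=2$ with $G=Z_1'Z_2Z_2'Z_3Z_3'Z_4/d^3$, and then uses Lemma F.1 to contract vertices $1$ and $4$, reducing to a lower-degree pair already handled. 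You nowhere invoke this reduction or the combinatorial argument that constrains $g>h$ so tightly.

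\textbf{Part (iii).} Your decomposition of \eqref{p5.1} into the main term $d^g(\E GH-\E G^\star H^\star)$ and the cross term $-d^g\E G(\E H-\E H^\star)$ is algebraically valid (modulo a small slip: $\E G=0$, not $d^{1-j}$, when $j=1$). But bounding the cross term by part (i) does not give the stated rate in general: since $d^g\E G=d$, you obtain $d\cdot d^{-h/2}(\alpha+\alpha^\star)=d^{1-h/2}(\alpha+\alpha^\star)$, which is $O(d^{-1/2})$ only for $h\ge3$. For $h\le 2$ one must instead show the cross term vanishes identically, which is true (after the WLOG reduction) but requires case-by-case verification that $\E H=\E H^\star$ for the monomials $H$ that can survive --- this combinatorial work is missing. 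The paper sidesteps your difficulty by never splitting into ``main'' and ``cross'' terms: it bounds $|\E d^g GH|$, $|\E d^{g/2}G|$ and $|\E d^{h/2}H|$ individually via (b1)(b) or (b1)(c), obtaining products like $\alpha\beta d^{-\xi}$ which have the right rate regardless of $h$. Your identification and hand computation of the exceptional cases (a)--(c) are essentially correct, but you again omit the WLOG reduction that puts $G$ and $H$ into the normal form where, e.g., case (b) really corresponds to $g=2$, $h=1$ as in the paper's proof.

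In short: the paper's proof depends critically on (i) a WLOG reduction via Lemma F.1 in \cite{Lee12b} ensuring $H$ visits every vertex of $G$, (ii) using (b1)(a) rather than (b1)(b) for $g<h$, and (iii) the combinatorial classification of what can happen when $g>h$. These are the missing pieces in your proposal.
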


For later use, we note that Proposition~\ref{p5}\ref{p5.i}--\ref{p5.ii}
applies under the assumptions of Theorem~\ref{thm:main}\ref{theoremA}, 
and that
Proposition~\ref{p5}\ref{p5.i}--\ref{p5.iii} applies under the assumptions
of Theorem~\ref{thm:main}\ref{theoremB}.

Assume for the moment that Proposition~\ref{p3} and 
Proposition~\ref{p5}\ref{p5.i}--\ref{p5.ii} applies, 
and consider the difference between \eqref{approx2} and 
\eqref{approx2gauss}, for some integer $k\leq 4$,
and for indices $l$, $m$, $j_0, \dots, j_m$ such that 
$1\le l\le k$, $m\ge 0$, 
$j_0=0$, $j_m \le l$ and $j_{i-1} + 1 < j_i$ whenever $1\le i\le m$. 
If $m=0$, the difference of interest is simply
\begin{align} \label{simplediff}
&\E \left[ \psi_x(S_l-I_l)\right] 
 - \E \left[  \psi_x(S_l^\star-I_l)\right]. 
\end{align}
For $x\in \mathcal S_{M,p}$, recall that
$\psi_x(S_l-I_l)$ is a weighted sum of monomials in $S_k-I_k$ of degree up to
$k$, where the weights are all bounded in absolute value by $p^k M^{2(k+2)}
C_\psi(k)$, where $C_\psi(k)$ is a constant that depends only on $k$
(use Proposition~\ref{p3} with $l$ replacing $k$, and note that $l\leq k$).
In that
sum, the number of summands depends only on $k$. Therefore, \eqref{simplediff}
is a weighted sum of terms of the form $\E[H] - \E[H^\star]$ as in
Proposition~\ref{p5}\ref{p5.i} and thus we see that \eqref{simplediff} is
bounded by $p^k M^{2(k+2)}d^{-1/2} \kappa$ in absolute value and uniformly in 
$x\in \mathcal S_{M,p}$, where $\kappa$ depends only on 
$\alpha$ and $k$.
[Note that for $\deg(H) = 0$ the
difference in question is equal to zero.]
This, in turn, is obviously 
bounded by the quantity in
\eqref{bound} uniformly in $x\in \mathcal S_{M,p}$,
for an appropriate choice of $g$, e.g., $g=2(k+2)$, that depends only on $k$
and for $\kappa = \kappa(k,\alpha)$ depending only on $k$ and $\alpha$.
If $m>0$, the
difference of interest reads 
\begin{align*} 
&\E \left[  \left(\prod_{i=1}^m
Z_{j_{i-1}+1}'Z_{j_{i-1}+2}\cdots Z_{j_i-1}' Z_{j_i} \right)
\psi_x(S_l-I_l)\right] 
\\ 
&\quad - \E \left[  \left(\prod_{i=1}^m
V_{j_{i-1}+1}'V_{j_{i-1}+2}\cdots V_{j_i-1}' V_{j_i} \right)
\psi_x(S_l^\star-I_l)\right] \\ &= \E \left[ d^{\deg(G)}(G-\E [G])
\psi_x(S_l-I_l) \right]  - \E \left[ d^{\deg(G^\star)}(G^\star-\E [G^\star])
\psi_x(S_l^\star-I_l) \right], 
\end{align*} 
where $G$ and $G^\star$ are as in
Proposition~\ref{p5}\ref{p5.ii} and thus have mean zero. Again, if 
$x \in \mathcal S_{M,p}$, then
we are dealing with a weighted sum of expressions of the form \eqref{p5.1}
whose weights are all bounded in absolute value by $p^k M^{2(k+2)}C_\psi(k)$
(by Proposition~\ref{p3}).
Arguing as in the case where $m=0$ and now using 
Proposition~\ref{p5}\ref{p5.ii}, 
we see that the expression in the preceding display is bounded
by \eqref{bound} in absolute value and uniformly in $x\in \mathcal S_{M,p}$
for a constant $g = g(k)$ depending only on $k$ and for
$\kappa=\kappa(k,\alpha,\beta)$ depending only on $k$, $\alpha$, and $\beta$.

Now suppose that the assumptions of Theorem~\ref{thm:main}\ref{theoremA}
are satisfied. The argument in the preceding paragraph with $k=2$
shows that
the absolute difference between \eqref{approx2} and \eqref{approx2gauss} 
is bounded by \eqref{bound} uniformly in $x\in \mathcal S_{M,p}$ and
for constants as in \eqref{th1a}  (because $k$ is fixed
and equals $2$ here). This completes the proof of
Theorem~\ref{thm:main}\ref{theoremA}.

Finally, assume that the conditions of Theorem~\ref{thm:main}\ref{theoremB}
are met (and note that hence $k=4$).
Arguing as in the second-to-last paragraph, we see that
the absolute difference between \eqref{approx2} and \eqref{approx2gauss} 
is bounded by \eqref{bound} uniformly in $x\in \mathcal S_{M,p}$ and
for constants as in \eqref{th2a}  (again because $k$ is fixed here).
It remains to deal with the difference of \eqref{approx1} and
\eqref{approx1gauss}. 
It is not difficult to re-write this difference as
\begin{align}
 &\sum_{j=1}^k (-1)^{k-j}\binom{k}{j} 
	\Bigg\{ 
		\E \Big[  \left(Z_1'Z_2\cdots Z_jZ_j' Z_1 -d +p- 1 \right)
		\psi_x(S_k-I_k)\Big] 			\notag\\
 &\quad\quad	- \E \Big[  \left(V_1'V_2\cdots V_jV_j' V_1 -d +p- 1 \right)
		\psi_x(S_k^\star-I_k)\Big]
	\Bigg\} 					\notag\\
   \begin{split}
 &\quad =
		\sum_{j=1}^k (-1)^{k-j}\binom{k}{j} 
	\Bigg\{ 
		\E \Big[  d^{\deg(G_j)} \left(G_j - \E [G_j]  \right)
		\psi_x(S_k-I_k)\Big] 			\label{lastsum}\\
 &\quad\quad\quad	- \E \Big[  d^{\deg(G_j^\star)}\left(G_j^\star 
 	- \E[G_j^\star]  \right)
		\psi_x(S_k^\star-I_k)\Big]
	\Bigg\} \\				
   \end{split}\\ \nonumber
  &\quad\quad + 
\sum_{j=1}^k (-1)^{k-j}\binom{k}{j} (p-1)
	\Bigg\{ 
		\E \Big[ \psi_x(S_k-I_k) \Big] 
		-  \E \Big[ \psi_x(S_k^\star-I_k) \Big] 
	\Bigg\},
\end{align}
where $G_j$ is shorthand for 
$Z_1'Z_1/d - 1$ if $j=1$ and for $Z_1'Z_2\cdots Z_jZ_j' Z_1/d^j$ if $j>1$, and
where 
$G_j^\star$ denotes the corresponding quantity computed from the $V_i$. 
Clearly, $\E[G_1] =
0$, while, for $j>1$, $\E[G_j] = d^{-j+1}$, which verifies the equality in
the previous display.  Now the expected value in the last line of the preceding
display can be treated similarly to \eqref{simplediff} with $l=k$. Noting that
$(p-1) \sum_{j=1}^k (-1)^{k-j} {k \choose j} = (p-1)$, we see that
the expression in the last line of the preceding display
is bounded, in absolute value and uniformly in $x \in \mathcal S_{M,p}$,
by \eqref{bound} for constants as in \eqref{th2a}.
We are left with the sum in \eqref{lastsum}. 
Recalling that
$\psi_x(S_k-I_k)$ and $\psi_x(S^\star_k - I_k)$ are both weighted sums
of monomials,  we can re-write \eqref{lastsum} as 
\begin{align}
\begin{split}
\sum_{j=1}^k 
(-1)^{k-j}\binom{k}{j} 
\sum_{H\in \mathcal M_k} 
\mathcal C(H)
	\Bigg\{ 
		&\E \Big[  d^{\deg(G_j)} \left(G_j - \E [G_j]  \right)
		H)\Big] 			\label{doublesum}\\
 &\quad\quad	- \E \Big[  d^{\deg(G_j^\star)}\left(G_j^\star - 
 	\E[G_j^\star]  \right)
		H^\star)\Big]
	\Bigg\}, 
\end{split}
\end{align}
where $\mathcal M_k$ and $\mathcal C(H)$, $H \in \mathcal M_k$, are
as in Proposition~\ref{p3}, and where $H^\star$ is defined as $H$
but with the $Z_i$ replaced by the $V_i$.
Note that \eqref{doublesum} is a weighted sum of expressions of the
form \eqref{p5.1}, where $G_j$ and $G_j^\star$ are given by \eqref{closedc}
defined with the $Z_i$ and the $V_i$ respectively. 
The coefficients $\mathcal C(H)$ of the terms in the polynomial 
$\varphi_x(S_k-I_k)$, as defined in Proposition~\ref{p3},
can be bounded, in absolute value
and uniformly in $x \in \mathcal S_{M,p}$, by $p^k M^{2(k+2)} C_\psi(k)$.
And using Proposition~\ref{p5}\ref{p5.iii}, 
the difference of expectations in \eqref{doublesum}
can be bounded by $d^{-\min\{\xi, 1/2\}} 
(\beta(\alpha+1) + \beta^\star(\alpha^\star+1) + \beta^2 + \beta^{\star^2})$,
unless the case (a), (b), or (c) in Proposition~\ref{p5}\ref{p5.iii} occurs.
Taken together, it is now elementary to verify that those
terms in \eqref{doublesum} that do not correspond to 
the cases (a), (b), or (c) in the proposition are bounded,
in absolute value and uniformly in $x\in \mathcal S_{M,p}$,
by an expression of the form \eqref{bound} with constants as in 
\eqref{th2a}.

To deal with the remaining terms in \eqref{doublesum}, consider first those 
terms where $G_j$ and $H$ are as in
case (a) of Proposition~\ref{p5}\ref{p5.iii}. 
For each of these terms, the monomial $H$ is of the form 
$(S_k - I_k)_{a,a}$ for some $a$ with $1\leq a \leq j$.
Because the coefficients $\mathcal C(H)$ are invariant under
permutations in the sense of Proposition~\ref{p3}, we see that
the coefficient $\mathcal C(H)$ is the same number whenever case (a)
occurs, and we denote this number by $\mathcal C_{(a)}$ in the following.
Moreover, whenever $G_j$ and $H$ are such that the case (a) occurs,
then the difference in expectations in \eqref{doublesum}
is equal to the same number,   namely  $\Var[Z_1'Z_1]/d - 2$,
by Proposition~\ref{p5}\ref{p5.iii}.
Finally, for fixed $j$ (and hence for fixed $G_j$), 
we note that the number of monomials $H$,
where $G_j$ and $H$ are as in case (a), is equal to $j$.
Putting the pieces together, we see that the combined contribution
of those terms in \eqref{doublesum}, where $G_j$ and $H$ are as in
case (a) of Proposition~\ref{p5}\ref{p5.iii}, is equal to
$$
\mathcal C_{(a)} \left(\Var[Z_1' Z_1]/d - 2\right) \sum_{j=1}^k
{k \choose j}   (-1)^{k-j} j.
$$
But the sum in the preceding display can be written as 
$k \sum_{j=0}^{k-1} {k-1 \choose j} (-1)^{k-1-j} =  k (1-1)^{k-1} = 0$
(recall that we have chosen $k=4$ here).
Hence, the combined contribution of these terms is zero.

Consider now those 
terms in \eqref{doublesum} where $G_j$ and $H$ are as in
case (b) of Proposition~\ref{p5}\ref{p5.iii}. 
For each of these terms, the monomial $H$ is of the form 
$(S_k - I_k)_{a,b}$ for some $a$ and $b$ with $1\leq a < b \leq j$.
As in the preceding paragraph, we see that
the coefficient $\mathcal C(H)$ is the same number whenever case (b)
occurs, and we denote this number by $\mathcal C_{(b)}$;
cf. Proposition~\ref{p3}.
And whenever $G_j$ and $H$ are such that the case (b) occurs,
then the difference in expectations in \eqref{doublesum}
is equal to the same number,   namely  $\E[(Z_1'Z_1)^3]/d$;
cf. Proposition~\ref{p5}\ref{p5.iii}.
Finally, for fixed $j$ (and hence for fixed $G_j$), 
there are ${j \choose 2}$ monomials $H$
so that $G_j$ and $H$ are as in case (b).
The contribution of those terms in \eqref{doublesum},
where $G_j$ and $H$ are as in case (b) of Proposition~\ref{p5}\ref{p5.iii}, 
is therefore equal to
$$
\mathcal C_{(b)} \E[(Z_1' Z_1)^3/d] \sum_{j=1}^k {k \choose j} (-1)^{k-j}
{ j \choose 2}.
$$
The sum in the preceding display can be written as
${k \choose 2} \sum_{j=0}^{k-2} {k-2 \choose j} (-1)^{k-2-j} = 0$.
A similar argument shows that the combined contribution of
the terms where case (c) occurs is also zero.

In summary, we see, under the assumptions of 
Theorem~\ref{thm:main}\ref{theoremB}, that the difference of
\eqref{approx2} and \eqref{approx2gauss}
as well as the difference of \eqref{approx1} and \eqref{approx1gauss}
can be bounded by \eqref{bound} in absolute value, uniformly
in $x\in \mathcal S_{M,p}$, and for constants as in \eqref{th2a}.
With this, also the proof of Theorem~\ref{thm:main}\ref{theoremB}
is complete.

\section*{Acknowledgements}

The Austrian Science Fund (FWF) supports
first author with project P 28233-N32 and
the second author with projects P 28233-N32 and P 26354-N26.

\begin{supplement}[id=supp]
\stitle{Proofs for Sections~\ref{sec:ExEx} and \ref{proof:main}}
\slink[url]{appendix.pdf}
\sdescription{The supplement contains the proofs of Examples~\ref{Ex:2} and \ref{Ex:3}
	as well as several more technical arguments
	that are used in Section~\ref{proof:main} including, in particular,
	the proofs of Propositions~\ref{p1} through~\ref{p5}.
	}
\end{supplement}


\rem{
\section*{Notes on the proofs}

\begin{itemize}
\item 
	Lemma~\ref{measurability} is a simple extension of
	Corollary B.2 of \citep{Lee12b} with little change to the proof.
	[$0.5$ page.]
\item 
	Proposition~\ref{p1} is a simple extension of Proposition~2.2 of
	\citep{Lee12a}, and the proof is completely analogous.
	[$1$ pages.]
\item
	The second part of  the proof of Lemma~\ref{trace} 
	is completely analogous to the proof of Lemma~B.3
	in \citep{Lee12b}.
	[$1.5$ pages.]

\item
	The proof of Lemma~\ref{Wj1} is a simple adaptation of
	the argument surrounding relation (C1) in \citep{Lee12b}.
	[$0.5$ pages.]

\item
	Lemma~\ref{Wj2} is an extension of Lemma~C.1 and Lemma~C.2
	of \citep{Lee12b}, and can be proved by arguing along similar
	lines. [$3$ pages.]

\item	
	Lemma~\ref{Wj3} is an extension of part (i) and part (iii) of
	Lemma~C.3 in \citep{Lee12b}, and the proof is very very similar.
	[$1$ pages.]

\item
	Corollary~\ref{Wj4} is an extension of Lemma~C.4 in \citep{Lee12b}, 
	and the proof is again very very similar.
	[$1$ pages.]

\item
	Lemma~\ref{Uk} is kind of trivial and is already implicitly
	used in \citep{Lee12b}.
	[$0.5$ pages]

\item
	Also, the proof of Proposition~\ref{p2} is rather similar
	to the proof of Proposition~2.3 in \citep{Lee12b}.
	[$4$ pages.]

\end{itemize}

}


{\small

}

\clearpage
\newpage

\begin{appendix}


\section{Proofs of Section~\ref{sec:ExEx}}
\label{sec:AppExEx}

\begin{proof}[\bf Proof of Example~\ref{Ex:2}]
Part~\ref{Ex:2A} is immediate, since $S_k = S_k(Z_1,\dots, Z_k)$ is invariant under rotation of $Z_1,\dots, Z_k$ and Condition~\ref{c.density} is also rotation-invariant. 

For part~\ref{Ex:2B}, let $r_1,\dots, r_k$ and $Z_1,\dots, Z_k$ be i.i.d. copies of $r$ and $Z$, respectively, and let $Z_i^* = r_i Z_i$. Write $S_k = (Z_i'Z_j/d)_{i,j=1}^k$, $S_k^* = ({Z_i^*}'Z_j^*/d)_{i,j=1}^k = (r_ir_j Z_i'Z_j/d)_{i,j=1}^k$ and $D = \diag(r_1,\dots, r_k)$, and note that $S_k^* = D S_k D$ and $D^2=I_k$. Therefore, $\|\sqrt{d}(S_k^*-I_k)\|^{2k+1+\eps} = \|D\sqrt{d}(S_k-I_k)D\|^{2k+1+\eps} = \|\sqrt{d}(S_k-I_k)\|^{2k+1+\eps}$, and we see that \ref{c.Sk}.(\ref{c.sup}) also holds for the $Z_i^*$. To see that also the bounds in \ref{c.Sk}.(\ref{c.monom1},\ref{c.monom2}) still hold if the $Z_i$ are replaced by the $Z_i^*$, we use the notation $G^* = G(S_k^*-I_k)$ to denote an arbitrary monomial $G$ of degree $g$ evaluated at $S_k^*-I_k$. Such a monomial can be represented as an undirected graph on $k$ vertices with $g$ edges, where we denote the edges by $e_1,\dots,e_g \in \{1,\dots, k\}^2$. Alternatively, the monomial $G^*$ can be represented algebraically as 
\begin{align*}
G^* &= d^{-g}\left(\prod_{\substack{i=1\\e_i(1)\ne e_i(2)}}^g r_{e_i(1)} r_{e_i(2)} Z_{e_i(1)}'Z_{e_i(2)}\right)
\left(
\prod_{\substack{i=1\\e_i(1)= e_i(2)}}^g (r_{e_i(1)}^2 Z_{e_i(1)}'Z_{e_i(1)} - d)
\right)\\
&=
\left(\prod_{\substack{i=1\\e_i(1)\ne e_i(2)}}^g r_{e_i(1)} r_{e_i(2)} \right)
d^{-g}\left(\prod_{\substack{i=1\\e_i(1)\ne e_i(2)}}^g Z_{e_i(1)}'Z_{e_i(2)}\right)
\left(
\prod_{\substack{i=1\\e_i(1)= e_i(2)}}^g ( Z_{e_i(1)}'Z_{e_i(1)} - d)
\right)\\
&=
\left(\prod_{\substack{i=1\\e_i(1)\ne e_i(2)}}^g r_{e_i(1)} r_{e_i(2)}\right) G = \rho \;G. 
\end{align*}
By assumption, the product $\rho$ of binary random variables on the last line of the previous display is stochastically independent of $G = G(S_k-I_k)$, and its expectation is bounded in absolute value by one. Moreover, if $G^*$ consists only of quadratic factors above the diagonal, then $\rho = 1$, with probability one. Therefore, the bounds in \ref{c.Sk}.(\ref{c.monom1},\ref{c.monom2}) hold also for $Z^*$. 
Finally, for a fixed orthogonal $d\times d$ matrix $R$, the density of $RZ^*$ is a convex combination of the densities of $RZ$ and $-RZ$, and the same holds true for arbitrary marginal densities thereof. Hence, $Z^*$ satisfies \ref{c.density}.
\end{proof}

\begin{proof}[\bf Proof of Example~\ref{Ex:3}]
The first claim is easy to verify upon using the well known fact that the distribution of a spherically symmetric random $d$-vector $Z$ can be represented as $\|Z\| b$, where $b$ is uniformly distributed on the unit sphere in $\R^d$ and is independent of $\|Z\|$. Now fix $k\in\{2,4\}$. For Condition~\ref{c.Sk}, our strategy is to compare the quantities depending on i.i.d. copies of $Z$ through $S_k= (Z_i'Z_j/d)_{i,j=1}^k$, to analogous quantities calculated from i.i.d. copies of a standard Gaussian vector $V$. From Example~\ref{Ex:1} we know that \ref{c.Sk} is satisfied for $V$, with $\eps=\xi=1/2$ and certain constants $\alpha^*$ and $\beta^*$, depending only on $k$. Define $S_k^* = (V_i'V_j/d)_{i,j=1}^k$, and let $G = G(S_k-I_k)$ and $G^*=G(S_k^*-I_k)$ denote the same monomial of degree $g$ but evaluated at the entries of $S_k-I_k$ and $S_k^*-I_k$, respectively. As in the proof of the previous example, we denote the edges in the graph corresponding to $G$ (or $G^*$) by $e_1,\dots, e_g$. For each vertex in that graph, we write $p_j$ for the number of edges incident to vertex $j$ that do not constitute a loop of length one (i.e., such an edge satisfies $e_i(1)\ne e_i(2)$) and we write $q_j$ for the number of loops of length one incident to vertex $j$. Hence, we have $g=\sum_{j=1}^k (q_j + p_j/2)$, where $\sum_{j=1}^k p_j$ is an even number. Now introduce i.i.d. random vectors $b_j$, $j=1,\dots, k$, that are uniformly distributed on the unit sphere in $\R^d$, to obtain the representation $Z_j \thicksim \|Z_j\| b_j$, and decompose $\E G$ as
\begin{align*}
\E G &=
\E \left(\prod_{\substack{i=1\\e_i(1)\ne e_i(2)}}^g Z_{e_i(1)}'Z_{e_i(2)}/d\right)
\left(
\prod_{\substack{i=1\\e_i(1)= e_i(2)}}^g (Z_{e_i(1)}'Z_{e_i(1)}/d - 1)
\right)\\
&=
\E \left(\prod_{\substack{i=1\\e_i(1)\ne e_i(2)}}^g b_{e_i(1)}'b_{e_i(2)}\right)
\prod_{j=1}^k \E (\|Z_j\|/\sqrt{d})^{p_j} \left( \|Z_j\|^2/d - 1 \right)^{q_j}.
\end{align*}
Clearly, $\E G^*$ can be decomposed in exactly the same way upon replacing $Z_j$ by $V_j$. The moments of $\|Z\|$ are easily calculated (from the distribution function of the radial part of the uniform distribution $\P(\|Z\|\le x) = (x/\sqrt{d+2})^d$, for $x\in[0,\sqrt{d+2}]$) to be $\E[\|Z\|^m] = (d+2)^{m/2} d /(d+m)$. The moments of the $\chi_d$-distribution are given by $\E[\|V\|^m] = 2^{m/2}\Gamma(d/2 + m/2)/\Gamma(d/2)$, which reduces to $\prod_{i=0}^{m/2-1}(d+2i)$ if $m$ is even. If $G$ consists only of quadratic factors above the diagonal, then all $q_j$ are equal to zero, all $p_j$ are even and the ratio $\E G / \E G^*$ is given by
$$
\frac{\E G}{\E G^*} = \prod_{j=1}^k \frac{\E \|Z_j\|^{p_j}}{\E \|V_j\|^{p_j}} = \prod_{j=1}^k\frac{d (d+2)^{p_j/2}}{(d+p_j)\prod_{i=0}^{p_j/2-1}(d+2i)},
$$
where an empty product is defined to be equal to one. Since the factors $d/(d+p_j)$ and $(d+2)/(d+2i)$ are all of order $1 + O(1/d)$, as $d\to\infty$, so is the whole product. Therefore, we have $|d^{g/2}\E G - 1| \le |d^{g/2}\E G^* | | \E G/\E G^* - 1| + |d^{g/2}\E G^* - 1| = O(d^{-1/2})$, provided that we can show that $|d^{g/2}\E G^*| = O(1)$. But this last requirement follows from Lemma~A.3 in \cite{Lee12b}. This establishes the first part of \ref{c.Sk}.(\ref{c.monom1}).

Now, in full generality, we obtain for $q, p \in \N_0$
\begin{align}
\E  (\|Z\|/\sqrt{d})^{p} &\left( \|Z\|^2/d - 1 \right)^{q}
=
\sum_{\ell=0}^{q} \binom{q}{\ell} (-1)^{q-\ell} \E (\|Z\|/\sqrt{d})^{p+2\ell}\notag\\
&=
\sum_{\ell=0}^{q} \binom{q}{\ell} (-1)^{q-\ell} \left(\frac{d+2}{d}\right)^{(p+2\ell)/2} \frac{d}{d+p+2\ell}\notag\\
&=
\left(\frac{d+2}{d}\right)^{p/2} \sum_{\ell=0}^{q} \binom{q}{\ell} (-1)^{q-\ell} \left(1+\frac{2}{d}\right)^{\ell} \frac{d}{d+p+2\ell}.\label{eq:Ex1Unif}
\end{align}
Similarly, in the Gaussian case, we find
\begin{align}
\E  (\|V\|/\sqrt{d})^{p} &\left( \|V\|^2/d - 1 \right)^{q}
=
\sum_{\ell=0}^{q} \binom{q}{\ell} (-1)^{q-\ell} 2^{p/2+\ell} \frac{\Gamma(d/2+p/2+\ell)}{d^{p/2+\ell}\Gamma(d/2)}\notag\\
&=
\frac{\Gamma(d/2+p/2)}{(d/2)^{p/2}\Gamma(d/2)} \sum_{\ell=0}^{q} \binom{q}{\ell} (-1)^{q-\ell} \prod_{i=0}^{\ell-1} 
\left(1 + \frac{p+2i}{d}\right),\label{eq:Ex1Gauss}
\end{align}
where an empty product has to be interpreted as equal to one.
Both factors $\left(\frac{d+2}{d}\right)^{p/2}$ and $\frac{\Gamma(d/2+p/2)}{(d/2)^{p/2}\Gamma(d/2)}$ converge to one as $d\to \infty$ (use Sterling's approximation to the Gamma function). We abbreviate the alternating binomial sums in \eqref{eq:Ex1Unif} and \eqref{eq:Ex1Gauss} by $S(q,p,d)$ and $T(q,p,d)$, respectively. The rate of $S$ and $T$ as $d\to\infty$ is hard to work out in general. However, by direct calculation we get $S(0,p,d)=d/(d+p)$, $T(0,p,d) = 1$, $S(1,p,d) = 2p/((d+p)(d+p+2))$, $T(1,p,d) = p/d$, and automated symbolic calculations with Wolfram Mathematica $8$ show that $d^{q} S(q,p,d) \to s(q,p)$ and $d^{\lceil q/2\rceil}T(q,p,d) \to t(q,p)$, for $(q,p)\in\{2,\dots, 10\}\times \N_0$, as $d\to\infty$, and where the limiting expressions $s(q,p)$ and $t(q,p)$ are finite and non-zero for $(q,p)\in \{2,\dots, 10\}\times \N_0$. 
Therefore, we see that the expectation of any monomial $G$ of degree $g\le 10$ in $S_k-I_k$ is of the same or smaller asymptotic order in $d$ than $\E G^*$, which is calculated from the i.i.d. Gaussian vectors $V_1,\dots, V_k$. In other words, there exists a constant $c_1>0$, such that $|\E G| \le c_1 |\E G^*|$, at least for all sufficiently large $d$. This establishes the second part of \ref{c.Sk}.(\ref{c.monom1}), as well as Condition~\ref{c.Sk}.(\ref{c.monom2}).

For Condition~\ref{c.Sk}.(\ref{c.sup}), we bound the spectral norm by the Frobenius norm to obtain
\begin{align*}
\E &\left( \sqrt{d} \|S_k-I_k\|\right)^{2k+2}  \le
\E \left( d \sum_{i,j=1}^k (S_k-I_k)_{i,j}^2\right)^{k+1}\\
&= \sum_{i_1,j_1=1}^k \dots \sum_{i_{k+1},j_{k+1}=1}^k d^{k+1} \E \prod_{l=1}^{k+1}(S_k-I_k)_{i_l,j_l}^2\\
&\le
c_1\sum_{i_1,j_1=1}^k \dots \sum_{i_{k+1},j_{k+1}=1}^k d^{k+1} \E \prod_{l=1}^{k+1}(S_k^*-I_k)_{i_l,j_l}^2,
\end{align*}
at least for sufficiently large $d$, where we have used the previous considerations together with the fact that the monomial $\prod_{l=1}^{k+1}(S_k-I_k)_{i_l,j_l}^2$ has degree $2(k+1)\le 10$. The quantity on the last line of the previous display is bounded by a constant that depends only on $k$, again, in view of Lemma~A.3 in \cite{Lee12b}. 

Finally, for \ref{c.density}, simply recall that the volume of the $d$-ball with radius $r>0$ is given by $\pi^{d/2}r^d/\Gamma(d/2+1)$, i.e., a Lebesgue density $f_Z$ of $Z$ on $\R^d$ is given by
$$
f_Z(z) = \frac{\Gamma(d/2+1)}{\pi^{d/2}(d+2)^{d/2}} \mathbf 1_{\{\|z\|\le \sqrt{d+2}\}}.
$$
Thus, a corresponding $(d-n)$-dimensional marginal density $f_n$ on $\R^{d-n}$ is given by
\begin{align*}
f_n(z_{n+1},\dots, z_d) &= \int_{\R^n} f_Z(z_1,\dots, z_d) dz_1\dots dz_n \\
&=
\frac{\Gamma(d/2+1)}{\pi^{d/2}(d+2)^{d/2}} \int\limits_{\sum_{i=1}^n z_i^2\le d+2- \sum_{i=n+1}^d z_i^2} dz_1\dots dz_n\\
&=
\frac{\Gamma(d/2+1)}{\pi^{d/2}(d+2)^{d/2}} \frac{\pi^{n/2}(d+2-\sum_{i=n+1}^d z_i^2)^{n/2}}{\Gamma(n/2 +1)} \mathbf 1_{\{\sum_{i=n+1}^d z_i^2 \le d+2\}}.
\end{align*}
This marginal density attains its maximum at the origin, so that
$$
\|f_n\|_\infty = \frac{\Gamma(d/2+1)}{\pi^{d/2}(d+2)^{d/2}} \frac{\pi^{n/2}(d+2)^{n/2}}{\Gamma(n/2 +1)}.
$$
If $d$ is even, then $\Gamma(d/2+1)/(d+2)^{d/2}= (d/2)!/(d+2)^{d/2} \le 1$, and if $d$ is odd $\Gamma(d/2+1)/(d+2)^{d/2} \le \lceil d/2 \rceil !\sqrt{\pi}/(d+2)^{d/2} \le \sqrt{\pi/(d+2)}$. For $k\in\{2,4\}$, we need to consider $n\in\{1,3\}$, so
$$
\|f_1\|_\infty \le \sqrt{\frac{\pi}{\pi^d(d+2)}} \frac{\sqrt{\pi(d+2)}}{\sqrt{\pi}/2} \le 2 \pi^{(1-d)/2} \le  \binom{d}{1}^{1/2},
$$
for $d\ge 2$, and
$$
\|f_3\|_\infty \le \sqrt{\frac{\pi}{\pi^d(d+2)}} \frac{\sqrt{\pi^3(d+2)^3}}{\sqrt{\pi}3/4} = \frac{4}{3} \pi^{(3-d)/2} (d+2)\le \binom{d}{3}^{1/2},
$$
for $d\ge 5$. So \ref{c.density} holds with $D=1$, at least if $d\ge 5$.
\end{proof}

\begin{proof}[\bf Proof of Proposition~\ref{prop:Sigma}]
For $\mathbb G\subseteq \mathcal V_{d,p}$ as in Theorem~\ref{thm:main} and for a $d\times d$ positive definite diagonal matrix $\Lambda$, define the set $\mathbb U(\Lambda) = \mathbb U(\mathbb G,\Lambda)$ by 
$$
\mathbb U(\mathbb G,\Lambda) = \{ U\in\mathcal O_d : \nu_{d,p}(\mathbb J^c(U\Lambda U')) \le \nu_{d,p}(\mathbb G^c)^{1/2}\}.
$$
Thus, the claimed bound on $\sup_{\Sigma\in\mathbb S} \nu_{d,p}(\mathbb J^c(\Sigma))$ holds by definition.
To establish the same bound on $\sup_{\Lambda:\Lambda=\diag(\lambda_i)>0}\nu_{d,d}(\mathbb U^c(\Lambda))$, note that by Theorem~\ref{thm:main}\ref{theoremC} we have $\mathbb G^c \cdot V = \mathbb G^c$, for every $V\in\mathcal O_p$. By rotation invariance of the uniform distribution $\nu_{d,p}$, we get for every $d\times d$ positive definite diagonal matrix $\Lambda$, every $U\in\mathcal O_d$ and every $V\in\mathcal O_p$,
\begin{align*}
\nu_{d,p}(\mathbb J^c(U\Lambda U'))
&=
\nu_{d,p}(\{A\in\mathcal V_{d,p}: U\Lambda^{1/2} U'A (A'U\Lambda U'A)^{-1/2} \in \mathbb G^c\})\\
&=
\nu_{d,p}(\{C\in\mathcal V_{d,p}: U\Lambda^{1/2} A (A'\Lambda A)^{-1/2} V \in \mathbb G^c\})\\
&=
\int_{\mathcal V_{d,p}} \mathbf{1}_{\mathbb G^c}(U\Lambda^{1/2} A (A'\Lambda A)^{-1/2} V)\;\nu_{d,p}(dA).
\end{align*}
Integrating this expression, as a function in $(U,V) \in \mathcal O_d\times\mathcal O_p$, with respect to the product measure $\nu_{d,d}\otimes \nu_{p,p}$ and using the fact that $UMV \thicksim \nu_{d,p}$ under $\nu_{d,d}\otimes \nu_{p,p}$, for any fixed $M\in \mathcal V_{d,p}$, we obtain
\begin{align*}
&\int_{\mathcal O_d\times \mathcal O_p}\nu_{d,p}(\mathbb J^c(U\Lambda U'))\;(\nu_{d,d}\otimes \nu_{p,p}) (dU,dV)\\
&=
\int_{\mathcal V_{d,p}} \int_{\mathcal O_d\times \mathcal O_p}
\mathbf{1}_{\mathbb G^c}(U\Lambda^{1/2} A (A'\Lambda A)^{-1/2} V)\; (\nu_{d,d}\otimes \nu_{p,p}) (dU,dV)\;\nu_{d,p}(dA)\\
&=
\int_{\mathcal V_{d,p}} \nu_{d,p}(\mathbb G^c)\;\nu_{d,p}(dA) = \nu_{d,p}(\mathbb G^c),
\end{align*}
where we have used Tonelli's theorem. This identity together with Markov's inequality and another application of Tonelli's theorem now yields
\begin{align*}
&\nu_{d,d} \left( \mathbb U^c(\Lambda)\right) 
\quad\le\quad
[\nu_{d,p}(\mathbb G^c)]^{-1/2} \int_{\mathcal O_d}\nu_{d,p}(\mathbb J^c(U\Lambda U'))\;\nu_{d,d} (dU)\\
&\quad=\quad
[\nu_{d,p}(\mathbb G^c)]^{-1/2} \int_{\mathcal O_d\times \mathcal O_p}\nu_{d,p}(\mathbb J^c(U\Lambda U'))\;(\nu_{d,d}\otimes \nu_{p,p}) (dU,dV)\\
&\quad=\quad [\nu_{d,p}(\mathbb G^c)]^{1/2},
\end{align*}
which finishes the proof.
\end{proof}


\section{Proofs for Section~\ref{s3.1}}
\label{appendixA}

\begin{lemma} \label{measurability}
For $Z$, $d$ and $p$ as in Section~\ref{s3.1},
there are measurable
functions $\mu : \R^p \times \mathcal V_{d,p} \to \R^d$, 
$\Delta : \R^p \times
\mathcal V_{d,p} \to \R^{d\times d}$ and $h : \R^p\times \mathcal V_{d,p} \to
\bar{\R}$ such that $\mu(x,B) = \E [Z \| B'Z = x]$, $\Delta(x,B) =
\E[ZZ'\|B'Z=x] - (I_d + B(xx'-I_p)B')$ and $h(x,B) = \E\left[
\frac{f(W^{x|B})}{\phi(W^{x|B})}\right]$, respectively, where $W^{x|B} = Bx +
(I_d-BB')V$ and $\phi$ denotes a Lebesgue density of $V\thicksim
N(0,I_d)$.
\end{lemma}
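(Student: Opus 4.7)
The plan is to construct the three jointly measurable functions directly via the integral representation furnished by Proposition~\ref{p1}, and then check jointly measurability using Fubini's theorem. The density $\phi$ is continuous and strictly positive, and $f$ is Borel measurable (as a Lebesgue density), so the map
\[
(x,B,v) \;\longmapsto\; \frac{f(Bx+(I_d-BB')v)}{\phi(Bx+(I_d-BB')v)}
\]
is Borel measurable on $\R^p\times \mathcal V_{d,p}\times\R^d$, since the linear expression $Bx+(I_d-BB')v$ is continuous in $(x,B,v)$. I will regard $\mathcal V_{d,p}$ as a Borel subset of $\R^{d\times p}$ with its induced Borel $\sigma$-algebra.

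For $h$, I would define $h(x,B)$ as the (extended) integral of the above non-negative ratio against the standard Gaussian measure of $V$. Since the integrand is non-negative and jointly Borel, Tonelli's theorem gives joint measurability of $h : \R^p\times\mathcal V_{d,p}\to[0,\infty]$, and Proposition~\ref{p1} confirms that this expression equals $\E[f(W^{x|B})/\phi(W^{x|B})]$.

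For $\mu$ and $\Delta$, I would exploit the identity from Proposition~\ref{p1} to define the conditional moments through ratios of jointly measurable expectations. Concretely, for each $i=1,\dots,d$ let
\[
N_i(x,B) \;=\; \E\!\left[ (W^{x|B})_i\,\frac{f(W^{x|B})}{\phi(W^{x|B})}\right],
\]
and, for each $i,j=1,\dots,d$,
\[
M_{ij}(x,B) \;=\; \E\!\left[ \Bigl((W^{x|B})_i(W^{x|B})_j - (I_d+B(xx'-I_p)B')_{ij}\Bigr)\,\frac{f(W^{x|B})}{\phi(W^{x|B})}\right].
\]
Writing the integrand as a difference of its positive and negative parts (each being jointly Borel in $(x,B,v)$), Fubini's theorem shows that $N_i$ and $M_{ij}$ are jointly Borel on the set where the integrals are well-defined; on the Borel exceptional set one can simply declare their value to be $0$. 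I would then set
\[
\mu(x,B)_i \;=\; \frac{N_i(x,B)}{h(x,B)},\qquad
\Delta(x,B)_{ij} \;=\; \frac{M_{ij}(x,B)}{h(x,B)},
\]
on the Borel set $\{(x,B):0<h(x,B)<\infty\}$, and define both to be $0$ elsewhere. Each coordinate function is then jointly Borel, so the vector/matrix-valued maps $\mu$ and $\Delta$ are jointly measurable. Proposition~\ref{p1} guarantees that for every fixed $B$ the formulas coincide with $\E[Z\|B'Z=x]$ and $\E[ZZ'\|B'Z=x]-(I_d+B(xx'-I_p)B')$ for $(B'Z)$-almost every $x$, which is all that is required since conditional expectations are defined only up to such null sets.

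The only mild obstacle is verifying that the exceptional set $\{h=0\text{ or }\infty\}$ is negligible for each $B$, but Proposition~\ref{p1} shows that $h(\cdot|B)\phi_p(\cdot)$ is a Lebesgue density of $B'Z$, so the set where $h(x,B)=\infty$ has Lebesgue measure zero in $x$, and the set where $h(x,B)=0$ has $B'Z$-measure zero. Hence the arbitrary redefinition on that set preserves the conditional-expectation identity for each fixed $B$, completing the construction.
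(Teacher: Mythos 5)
Your construction of $h$ via Tonelli is exactly what the paper does. For $\mu$ and $\Delta$, however, you take a genuinely different route from the paper. The paper introduces a random matrix $\mathbf B$ uniformly distributed on $\mathcal V_{d,p}$ and independent of $Z$, takes the joint conditional expectation $\E[z_j \| \mathbf B, \mathbf B'Z] = m_j(\mathbf B, \mathbf B'Z)$ for a measurable $m_j$, and then invokes Lemma~B.1 of \citet{Lee12b} (a disintegration-type lemma) to conclude that $x\mapsto m_j(B,x)$ is a valid version of $\E[z_j \| B'Z = x]$ for each \emph{fixed} $B$. This sidesteps explicit integral formulas entirely; in particular, finiteness of $m_j$ is automatic because it arises as a conditional expectation of the integrable $z_j$, and no exceptional sets need to be tracked by hand. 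Your route instead exploits Proposition~\ref{p1} to write $\mu(\cdot,B)$ and $\Delta(\cdot,B)$ as ratios $N_i/h$ and $M_{ij}/h$ of jointly measurable integrals, which is more explicit and concrete but then requires you to police the null set where the ratio is ill-defined. Note that Proposition~\ref{p1} is stated later in the paper than this lemma; this is not circular, since the proof of Proposition~\ref{p1} nowhere relies on the measurability lemma, but it does invert the paper's intended presentation order.

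Two small points are worth tightening. First, the exceptional set you must exclude is not only $\{h=0\text{ or }\infty\}$ but also the (Borel) set where $N_i$ or $M_{ij}$ fails to be a genuine real number (both positive and negative part integrals infinite); this set is negligible because, for $\Psi$ with $\Psi(Z)$ integrable and via Proposition~\ref{p1}, $\int |\Psi|\,(f/\phi)\,d\P_{W^{x|B}}$ equals $h(x|B)\,\E[|\Psi(Z)|\,\|\,B'Z=x]$ and is therefore finite for $(B'Z)$-almost every $x$, but you do not say this explicitly. Second, for the argument that Lebesgue-null sets in $x$ are also $(B'Z)$-null, you are implicitly using that $B'Z$ has a Lebesgue density for every $B$ (which $h(\cdot|B)\phi_p$ is by Proposition~\ref{p1}); worth stating.
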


\begin{proof} 
Let $\mathbf B$ be uniformly distributed on $\mathcal V_{d,p}$
and let $z_j$ denote the $j$-th component of $Z$,
$j=1,\dots, d$.  Clearly,
$\E [ z_j \| \mathbf B, \mathbf B'Z] = m_j(\mathbf B, \mathbf B'Z)$ for some
measurable function $m_j: \mathcal V_{d,p} \times \R^p \to \R$. Now Lemma B.1
in \citep{Lee12b} entails that for $B\in \mathcal V_{d,p}$, the function $m_j$
satisfies $m_j(B, B'Z) = \E[z_j\| B'Z]$ (a.s.). In other words, for every $B\in
\mathcal V_{d,p}$, $m_j(B,x)$ satisfies the definition of a conditional
expectation of $z_j$ given $B'Z = x$. Setting $\mu_{x|B} = (m_1(B,x),\dots,
m_d(B,x))'$ finishes the first part.

For existence of $\Delta$ it suffices to show existence of a measurable
function $D :  \R^p \times \mathcal V_{d,p} \to \R^{d\times d}$ such that
$D(x,B) = \E[ZZ'\|B'Z=x]$, since $(I_d + B(xx'-I_p)B')$ is continuous from
$\R^p \times \mathcal V_{d,p}$ to $\R^{d\times d}$. To do this, argue as in the
preceding paragraph but with $z_j$ replaced by $z_iz_j$ for $(i,j) \in
\{1,\dots, d\}^2$.

The quantity $\E[ f(W^{x|B})/\phi(W^{x|B})]$, as a function of $x$ and $B$,
is uniquely defined and measurable in view of Tonelli's theorem since
$g(x,B,v)=Bx + (I_d-BB')v$ is continuous on $\R^p\times \mathcal V_{d,p} \times
\R^d$, and hence, measurable.  
\end{proof}


\begin{lemma}\label{final}
\begin{list}{\thercnt}{
	\usecounter{rcnt}
	\setlength\itemindent{5pt}
	\setlength\leftmargin{0pt}
	\setlength\itemsep{\medskipamount}
}
\item \label{final.i}
Suppose \eqref{th1a} is bounded by \eqref{bound}, for $k=2$ and 
for every $M>1$ and all $p\in \N$ such that 
$d > \max\{4(k+p+1)M^4, 2k +p(2k+2)2^{k+3},p^2\}$,
where $\kappa=\kappa_1\geq 1$ depends only on $\alpha$ and $\beta$, and
where $g = g_1$.
Then the conclusion of Theorem~\ref{thm:main}\ref{theoremA}
holds with $\gamma_1 = \max\{g_1, 6 + 2 \log(2 D\sqrt{\pi e})\}$.

\item \label{final.ii}
Suppose \eqref{th1a} and \eqref{th2a} are both 
bounded by \eqref{bound}, for $k=4$
and for every $M>1$ and all $p\in \N$ 
such that $d > \max\{4(k+p+1)M^4, 2k +p(2k+2)2^{k+3},p^2\}$,
where $\kappa=\kappa_2\geq 1$ depends only on $\alpha$ and $\beta$,
and where $g = g_2$.
Then the conclusion of Theorem~\ref{thm:main}\ref{theoremB} holds
with $\gamma_2 = \max\{g_2, 10  + 4 \log(2 D\sqrt{\pi e})\}$.

\item \label{final.iii}
In both cases \ref{final.i} and \ref{final.ii} the set $\mathbb G$ can be chosen 
in such a way that the conclusion of Theorem~\ref{thm:main}\ref{theoremC}
holds true.
\end{list}
\end{lemma}

\begin{proof}
For part~\ref{final.i}, 
let $k=2$, $p < d$, $\tau \in (0,1)$, and 
$\xi_1 = \min\{\xi, \eps/2+1/4,1/2\}/3$, as in 
Theorem~\ref{thm:main}\ref{theoremA}. 
We first note that \eqref{bound} can be re-written and further bounded as
\begin{align}
&p^{2k+1+\eps} e^{g M^2} \left(2D\sqrt{\pi e}\right)^{pk}
	d^{-\min\{\xi, \eps/2+1/4,1/2\}} 
	\kappa 
	\notag\\
&\quad = \quad
	p^{(5+\eps)} e^{g_1 M^2} \left(2D\sqrt{\pi e}\right)^{2 p}
	d^{-3 \xi_1} 
	\kappa_1
	\notag\\
&\quad \le \quad e^{g_1 M^2 + p[6+2\log(2D\sqrt{\pi e})]}
	d^{-3 \xi_1}\kappa_1
\quad \le \quad e^{\gamma_1 M^2+ \gamma_1 p}d^{-3 \xi_1}\kappa_1.
	\label{bound1}
\end{align}
We introduce the parameters $\tau_1>0$ and
$\tau_2 > 0$ to fine-tune our bounds. 
Let $M_d = \sqrt{\tau_1 \log{d}/\gamma_1}$ and
$\delta_d = d^{-\tau_2}$. Define the set $\mathbb G$
as $\mathbb G = \mathcal V_{d,p}$ in case $M_d \leq 1$, and as
\begin{align*}
\mathbb G \quad= \quad  
\Big\{ 
	B \in \mathcal V_{d,p} : 
	\int\limits_{\|x\|\le M_d} 
		\|\E[Z\|B'Z=x] - Bx\|^2
	h(x|B)^2 \phi_p(x) dx 
	\le \delta_d
\Big\}
\end{align*}
in case $M_d > 1$,
where $dx$ denotes integration with respect to
Lebesgue measure on $\R^{p}$ and $\phi_p$
denotes a Lebesgue density of the standard normal distribution on $\R^p$.
Note that $\mathbb G$ depends on the distribution of $Z$
(through $h(x|B)$ and through the conditional expectation), and also on
$\tau_1$ and $\tau_2$, in case $M_d > 1$.
In either case, $\mathbb G$ is a Borel subset of $\mathcal V_{d,p}$
(in view of Lemma~\ref{measurability} and Tonelli's theorem). 
For any $t>0$ and $B \in \mathbb G$, we can bound the probability in  \eqref{mean}
as follows:
In case $M_d > 1$, we have
\begin{align}
&\P\left( \|\E[Z\|B'Z] - BB'Z\| > t \right)\notag\\
&\quad \le \quad
\P\left(\|\E[Z\|B'Z] - BB'Z\| > t , \|B'Z\| \le M_d\right) 
\,\, + \,\, \P\left( \|B'Z\| > M_d \right) \notag\\
&\quad \le\quad
\frac{1}{t} 
\int\limits_{\|x\| \le M_d} 
	\|\E[Z\|B'Z=x] - Bx\|
\,d\P_{B'Z}(x) \,\, +\,\,
\P\left( \|B'Z\|^2 > M_d^2 \right)\notag \\
& \quad\le\quad
\frac{1}{t} 
\int\limits_{\|x\| \le M_d} 
	\|\E[Z\|B'Z = x] - Bx\|
	h(x|B)\phi_{p}(x)
\,dx 
 + \frac{1}{M_d^2} \E \left[ \trace BB'ZZ' \right] \notag\\
&\quad \le\quad
\frac{1}{t} 
\left(
\int\limits_{\|x\| \le M_d} 
	\|\E[Z\|B'Z = x] - Bx\|^2
	h(x|B)^2\phi_{p}(x)
\,dx \right)^{1/2} \,\,
+ \,\,\frac{p}{M_d^2}\notag \\
&\quad \le\quad
\frac{\delta_d^{1/2}}{t}
\,\,+ \,\,\frac{p}{M_d^2} 
\quad=\quad
\frac{1}{d^{\tau_2/2} t}
\,\,+ \,\,\frac{\gamma_1}{\tau_1}\frac{p}{\log d},
	\label{bound3}
\end{align}
where the second-to-last inequality follows, for example, from Jensen's
inequality for concave functions, and we have used that $x \mapsto
h(x|B)\phi_p(x)$ is a Lebesgue density of $B'Z$ (cf. Proposition~\ref{p1}). 
And in the case where $M_d \leq 1$, the probability in \eqref{mean} is 
also bounded by \eqref{bound3}, because that upper bound is larger than
one in this case.
Finally, the probability of the complement of $\mathbb G$ can be re-written and 
bounded as
\begin{align}
\nu_{d,p}&\left( \mathbb G^c \right) \quad=  \quad
\nu_{d,p} 
\left( 
	\int\limits_{\|x\|\le M_d} 
	\|\E[Z\|B'Z=x] - Bx\|^2
	h(x|B)^2 \phi_{p}(x) dx 
	> \delta_d
\right)\notag \\
&\le\quad
\frac{1}{\delta_d} 
\int 
\int\limits_{\|x\|\le M_d} 
	\left[ 
		\|\mu_{x|B}\|^2 - \|x\|^2
	\right] 
	h(x|B)^2 \phi_{p}(x) 
	\,dx\, d\nu_{d,p}(B) \notag\\
&=\quad
\frac{1}{\delta_d} 
\int\limits_{\|x\|\le M_d} 
\int 
	\left[ 
		\|\mu_{x|B}\|^2 - \|x\|^2
	\right] 
	h(x|B)^2  
	\,d\nu_{d,p}(B)\, \phi_{p}(x) dx\notag \\
&\le\quad
\frac{1}{\delta_d} 
\sup_{\|x\| \le M_d} 
	\int \left[ 
			\|\mu_{x|B}\|^2 - \|x\|^2
			\right] 
			h(x|B)^2  
	\,d\nu_{d,p}(B),\label{sup}
\end{align}
where we have used Markov's inequality and Tonelli's theorem.

Consider first the case where $M_d > 1$ and where
$d > \max\{4(k+p+1) M_d^4, 2 k + p(2 k+2)2^{k+3}, p^2\}$.
In that case, \eqref{th1a} is bounded by \eqref{bound}
with $M_d$ replacing $M$ by assumption,
and hence also
the supremum in \eqref{sup} is bounded by \eqref{bound1} with $M_d$
replacing $M$.
This entails that
\begin{align}
\nu_{d,p}&\left( \mathbb G^c \right) \quad\leq\quad
\frac{1}{\delta_d} e^{\gamma_1 M_d^2+ p \gamma_1}d^{-3 \xi_1}\,\kappa_1 
\quad=\quad
\kappa_1 d^{-3 \xi_1 + \tau_1+\tau_2 + \gamma_1 p /\log d}.
\label{bound2} 
\end{align}
Now
we see that there is a trade-off between the bound in \eqref{bound2} and the
one in \eqref{bound3} in the sense that one can be made smaller at the expense
of the other. We here choose the tuning parameters $\tau_1$ and $\tau_2$ such
that both bounds are of the same leading order in $d$, i.e., we choose
$\tau_1$ so that $\tau_1+\tau_2 - 3\xi_1 = -\tau_2/2$. 
And with the constant $\tau$ chosen earlier, we choose $\tau_2$ so that
$\tau_2/2 = \tau \xi_1$.
With this choice of $\tau_1$ and $\tau_2$, it is now elementary to
verify that the bounds \eqref{bound2} and \eqref{bound3} 
are of the form claimed by Theorem~\ref{thm:main}\ref{theoremA}.

Consider next the case where $M_d \leq 1$.
In that case, we have $\mathbb G = \mathcal V_{d,p}$ by construction, so that
$\nu_{d,p}(\mathbb G^c)$ equals zero and hence is trivially bounded  as claimed.
Moreover, we have also seen that the probability in \eqref{mean}
is here trivially bounded by the expression in \eqref{bound3}
and hence also by the bound given in Theorem~\ref{thm:main}\ref{theoremA}
upon choosing $\tau_1$ and $\tau_2$ as in the preceding paragraph.

Finally, consider the case where $M_d > 1$ and where 
$d \leq  \max\{4(k+p+1) M_d^4, 2 k + p(2 k+2)2^{k+3}, p^2\}$.
With the constants $\tau_1$ and $\tau_2$ as chosen before,
the upper bound in \eqref{bound3}
is again as claimed in Theorem~\ref{thm:main}\ref{theoremA}.
It remains to show that the upper bound in \eqref{bound2} is trivial,
i.e., not less than one, here.
The expression in \eqref{bound2} can be written as
$\kappa_1 d^{-\tau \xi_1 + \gamma_1 p/\log d}$
upon noting that $\tau_1 + \tau_2 - 3 \xi_1 = -\tau \xi_1$.
It is now easy to see that \eqref{bound2} is not less than one if
\begin{equation} \label{suff}
\log d \quad \leq \quad 6 \gamma_1 p,
\end{equation}
upon noting that $\kappa_1 \geq 1$, that
$\tau < 1$, and that $\xi_1 \leq 1/6$.
If $d \leq p^2$, we have $\log d \leq 2 p$, and the relation in \eqref{suff}
holds because $2 p \leq 36 p\leq 6 \gamma_1 p$ by our choice of $\gamma_1$.
If $d \leq 2 k + p (2 k + 2) 2^{k+3}$, or, equivalently, if
$d \leq 4+192 p$ (recall that $k=2$), then
we have $\log d \leq \log(192 p) \leq 36 p$ (where the last inequality
is elementary to verify). In particular, we see that \eqref{suff} holds
also in this case.
Lastly, consider the case where $d \leq 4(k+p+1) M_d^4$.
Using the definitions of $M_d$ and $\tau_1$,
together with the facts that $\tau \in (0,1)$, that $\xi_1 \leq 1/6$,
and that $\gamma_1 \geq 6$, we see that
$d / (\log d)^2 \leq (p+3)/36 \leq p/9$ or, equivalently,
that $324 d / (\log d)^2 \leq 36 p$.
It is now elementary to verify that $\log d \leq 324 d / (\log d)^2$,
such that, again, the relation \eqref{suff} is satisfied.
This completes the proof of part \ref{final.i}.

The derivation of part~\ref{final.ii} is similar:
Set $k=4$, $ p < d$, $\tau \in (0,1)$, and 
$\xi_2 = \min\{\xi, \eps/2+1/4,1/2\}/5$.
Note that \eqref{bound} can here be written and bounded as
\begin{equation}\label{bound1x}
p^{2k+1+\eps} e^{g M^2} \left(2D\sqrt{\pi e}\right)^{pk}
	d^{-\min\{\xi, \eps/2+1/4,1/2\}} 
	\kappa
\,\, \le \,\, e^{\gamma_2 M^2+ \gamma_2 p}d^{-5 \xi_2}\kappa_2
\end{equation}
by arguing as in \eqref{bound1}.
For tuning-parameters $\tau_1>0$ and $\tau_2>0$,
set $M_d = \sqrt{ \tau_1 \log d / \gamma_2}$ and
$\delta_d = d^{-\tau_2}$. 

For bounding \eqref{mean}, we argue as in the proof of part~\ref{final.i} 
to obtain
a Borel set $\mathbb G_1$ so that \eqref{mean} with $\mathbb G_1$ replacing $\mathbb G$
is bounded as claimed in Theorem~\ref{thm:main}\ref{theoremB},
and so that $\nu_{d,p}(\mathbb G_1^c)$ is bounded by half the
upper bound given in Theorem~\ref{thm:main}\ref{theoremB}
(i.e., with $\kappa_2$ replacing $2 \kappa_2$).

If we can also find a Borel set $\mathbb G_2$ so that \eqref{variance} with
$\mathbb G_2$ replacing $\mathbb G$ is bounded as claimed, and so that
$\nu_{d,p}(\mathbb G_2^c)$ is bounded by half the bound given in
the theorem, then the proof is completed by setting $\mathbb G = \mathbb G_1 \cap \mathbb G_2$.
To this end, set $\mathbb G_2 = \mathcal V_{d,p}$ if $M_d \leq 1$, and set
$$
\mathbb G_2 \quad=\quad
\Big\{ 
	B \in \mathcal V_{d,p} : 
	\int\limits_{\|x\|\le M_d} 
	\| \Delta_{x|B}\|^4
	h(x|B)^4 \phi_{p}(x) dx 
	\le \delta_d
\Big\}
$$
otherwise.
For fixed $g>0$ and $B \in \mathbb G_2$, the probability in \eqref{variance}
can be bounded as follows: In case $M_d>1$, we have
\begin{equation}\label{bound3x}
\P \left( \|\Delta_{B'Z|B}\| > t \right)
\quad\leq\quad \frac{\delta_d^{1/4}}{t} + \frac{p}{M_d^2}
\quad=\quad \frac{1}{d^{\tau_2/4} t} + \frac{\gamma_2}{\tau_1}\frac{p}{\log d}.
\end{equation}
by arguing as in \eqref{bound3}. And in case $M_d \leq 1$,
the probability in \eqref{variance} is trivially bounded as 
in \eqref{bound3x}, because the upper bound is larger than one.
Finally, for $\nu_{d,p}(\mathbb G_2^c)$, we have
\begin{equation}\label{supx}
\nu_{d,p}(\mathbb G_2^c)\quad \leq\quad  
	\frac{1}{\delta_d} \sup_{\|x\|\leq M_d}
	\int \trace \Delta_{x|B}^4 h(x|B)^4 \, d \nu_{d,p}(B),
\end{equation}
by arguing  as in \eqref{sup} and upon noting that
$\|\Delta_{x|B}\|^4 \leq \trace \Delta_{x|B}^4$.

In the case where $M_d > 1$ and where 
$d > \max\{ 4 (k+p+1) M_d^4, 2 k + p(2 k + 2)2^{k+3}, p^2\}$,
we see that
\begin{equation}\label{bound2x}
\nu_{d,p}(\mathbb G_2^c) \quad\leq\quad
\frac{1}{\delta_d} e^{ \gamma_2 M_d^2 + \gamma_2 p} d^{-5 \xi_2} \kappa_2
\quad = \quad
\kappa_2 d^{-5 \xi_2 + \tau_1 + \tau_2 + \gamma_2 p / \log d}
\end{equation}
upon using \eqref{supx}, the assumption of the lemma, and \eqref{bound1x}.
We balance the upper bounds in \eqref{bound3x} and \eqref{bound2x}
by choosing $\tau_1$ and $\tau_2$ so that
$\tau_1 + \tau_2 - 5 \xi_2 = -\tau_2/4 = -\tau \xi_2$, resulting
in upper bounds as desired.
The remaining cases, i.e., the case where $M_d \leq 1$, and
the case where $M_d > 1$ and 
$d \leq \max\{4(k+p+1)M_d^4, 2 k + p(2 k + 2)2^{k+3}, p^2\}$,
are treated as in the proof of part~\ref{final.i}, mutatis mutandis.

The additional properties of the sets $\mathbb G$ claimed in part~\ref{final.iii}
can be established directly from the definition of these sets in parts~\ref{final.i}
and \ref{final.ii}, respectively. However, it is even easier to consider
somewhat larger sets instead. Indeed, consider the collection of matrices
$B$ on the Stiefel manifold, such that the probability in \eqref{mean} 
is bounded by \eqref{eq:boundMean} and the set of matrices $B$ such that both 
probabilities in \eqref{mean} and \eqref{variance} are bounded by 
\eqref{eq:boundVariance}. Clearly, these sets also satisfy the conclusions of 
Theorem~\ref{thm:main}\ref{theoremA} and Theorem~\ref{thm:main}\ref{theoremB}, 
respectively. Now the right-rotation invariance is immediate because 
conditioning on $B'Z$ and on $QB'Z$, for some orthogonal $p\times p$ 
matrix $Q$ is equivalent, and therefore the probabilities in \eqref{mean} and 
\eqref{variance} are invariant under right-rotation of $B$. For the left-rotation 
equivariance, first recall that the upper bounds \eqref{eq:boundMean} and 
\eqref{eq:boundVariance} depend on the distribution of $Z$ only through the 
quantities $\eps$, $\xi$ and $D$ of conditions~\ref{c.Sk} and \ref{c.density} and 
these conditions are invariant under rotation of $Z$. In particular, we see that 
the upper bounds \eqref{eq:boundMean} and \eqref{eq:boundVariance} are 
invariant under rotation of $Z$. On the other hand, it is easy to see that 
transformation of the distribution of $Z$ by a $d\times d$ orthogonal matrix 
$R$ has the same effect on the probabilities in \eqref{mean} and \eqref{variance} 
as left-multiplication of $B$ by $R'$. Thus, the left-rotation equivariance of 
the defined sets follows.
\end{proof}

\section{Proofs for Section~\ref{s3.2}}
\label{appendixB}


\begin{proof}[\bf Proof of Proposition~\ref{p1}]
Define $T:\R^d \to \R^p$ by $T(v) = B'v$, 
write $\lambda_d$ for the Lebesgue measure on $\R^d$, and
write $\P_Z$, $\P_{T(Z)}$, etc. 
and $\E_Z$, $\E_{T(Z)}$, etc. for the 
push-forward measures and the corresponding expectation operators,
respectively,
induced by the indicated random variables.
With this, $\P_{T(V)}$ is the $N(0,I_p)$-distribution,
and $\P_{W^{x|B}}$ is the $N(B x, I_d-B'B)$-distribution and also
a regular
conditional distribution of $V$ given $B'V=x$, i.e., $\P_{W^{x|B}} =
\P_{V\|B'V=x}$ (cf. \citep[Relation 10.7.6]{Hof94b}). 

For the first statement, we have
$\P_Z \ll \lambda_d$ with $\frac{d \P_Z}{d \lambda_d} = f$, 
and hence also 
$\P_Z \ll \P_V$ with $\frac{d \P_Z}{d\P_V} = \frac{f}{\phi}$. 
It follows that 
$\P_{T(Z)} \ll \P_{T(V)}$,  and that
\begin{align} \label{p1.1}
\frac{d\P_{T(Z)}}{d \P_{T(V)}}(x) 
\;\;=\;\; \E_V\left[ \frac{f}{\phi} \Big \| T = x\right] 
\;\;=\;\; \E\left[ \frac{f}{\phi}(V) \Big \| B'V = x\right]  \hspace{.5cm} 
	\lambda_p\text{-a.e.},
\end{align}
where the first equality is obtained from, say, 
\citep[Relation 10.11.1]{Hof94b},  and
where the second equality holds because one easily verifies that the
left-hand-side satisfies the definition of the conditional expectation on the
right-hand-side.
On the far left-hand-side of \eqref{p1.1},
we already have a density
$h(x|B)$ of $B'Z$ with respect to the
$p$-variate standard Gaussian measure. 
The expression on the far right-hand side of \eqref{p1.1} can be written as
$\int f(v)/\phi(v) d \P_{V\|B'V=x}(v) = 
\int f(v)/\phi(v) d \P_{W^{x|B}}(v) = \E[ f(W^{x|B}) / \phi(W^{x|B})]$
$\lambda_p$-a.e., as desired
(cf., say, \citep[Relation 10.3.5]{Hof94b}).

For the second statement
fix a version of $h(x|B)$ and set $D = \{x \in \R^p : 0 <
h(x|B) < \infty \}$. For a Borel set $F\subseteq \R^d$, set
$$
\P_{Z\|B'Z=x}(F) \quad=\quad
\int_F \frac{f}{\phi}(v)d\P_{V\|B'V=x}(v) / h(x|B)
$$
if $x\in D$, and set $\P_{Z\|B'Z=x}(F)=\P_Z(F)$ otherwise.
Note that $\P_{Z\|B'Z=x}$ is a regular conditional distribution of $Z$ 
given $B'Z=x$; cf.  \citep[Relation 10.11.3]{Hof94b}.
Recalling that $\P_{V\|B'V=x} = \P_{W^{x|B}}$, this entails that
$\E[ \Psi(Z) \| B'Z=x]$ can be computed as
$$
\int \Psi(v) \frac{f}{\phi}(v) d \P_{W^{x|B}}(v) / h(x|B)
\,\,=\,\, 
\E \left[ \Psi(W^{x|B}) \frac{f(W^{x|B})}{\phi(W^{x|B})} \right] 
	/h(x|B)
$$
whenever $x \in D$. This proves the second statement unless $h(x|B)=0$.
But if $h(x|B)=0$, then $f/\phi = 0$ $\P_{W^{x|B}}$-a.e. 
in view of \eqref{p1.1}, and hence 
also $\E [ \Psi(W^{x|B}) \frac{f(W^{x|B})}{\phi(W^{x|B})}] = 0$.
\end{proof}

\begin{lemma} \label{trace}
The expression in
\eqref{th2aa}
can be decomposed as the weighted sum of the expressions in \eqref{th2aaa},
provided all expected values that occur are finite.
The number of summands in this decomposition
depends only on $k$, and the weight of each summand only depends
on k and on $\|x\|^2$, and is a polynomial in $\|x\|^2$.
\end{lemma}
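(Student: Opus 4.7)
The plan is to establish the claim first as a purely algebraic identity at the level of the integrand, expressing $\trace(\Delta_{x|\mathbf B}(W_1)\cdots\Delta_{x|\mathbf B}(W_k))$ as a polynomial in the inner products $\{W_i'W_j\}$ whose coefficients are polynomials in $\|x\|^2$ of degree depending only on $k$. Once the identity is in hand at the integrand level, multiplying by $\prod_{i=1}^k f(W_i)/\phi(W_i)$ and taking expectation distributes linearly, and the joint exchangeability of $(W_1,\ldots,W_k)$ (which follows from \eqref{Wj} since $\mathbf B$ is common and $V_1,\ldots,V_k$ are i.i.d.) lets one combine index-symmetric summands into the compact forms appearing in \eqref{th2aaa}.

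First I would set $P = I_d - \mathbf B\mathbf B'$ and $\tilde W_i = W_i - \mathbf B x$. Using $\mathbf B'\mathbf B = I_p$ and the definition \eqref{Wj}, one checks that $\mathbf B'\tilde W_i = 0$, $P\tilde W_i = \tilde W_i$, and $\tilde W_i'\tilde W_j = W_i'W_j - \|x\|^2$. A direct calculation then yields the four-term decomposition $\Delta_{x|\mathbf B}(W_i) = \tilde W_i\tilde W_i' - P + \tilde W_i(\mathbf B x)' + (\mathbf B x)\tilde W_i'$. Distributing the product $\prod_{i=1}^k\Delta_{x|\mathbf B}(W_i)$ across this decomposition produces a sum of at most $4^k$ matrix products, a bound depending only on $k$.

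The orthogonality $\mathbf B'\tilde W_i = 0$ together with $P\mathbf B = 0$ forces the ``cross'' factors $\tilde W_i(\mathbf B x)'$ and $(\mathbf B x)\tilde W_j'$ to appear only in cyclically adjacent pairs, and each such pair contracts to $\|x\|^2\tilde W_i\tilde W_j'$. What remains is a sum of cyclic traces of products of factors drawn from $\{-P,\ \tilde W_i\tilde W_i',\ \|x\|^2\tilde W_i\tilde W_j'\}$. Applying the cyclic trace and using $P\tilde W_i = \tilde W_i$, any $-P$ factor adjacent to a $\tilde W$-type factor collapses to a scalar $-1$; maximal runs of $\tilde W\tilde W'$-factors telescope into products of inner products $\tilde W_a'\tilde W_b$; and the only configuration in which no $\tilde W$-factor appears contributes $(-1)^k\trace P = (-1)^k(d-p)$. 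Translating back via $\tilde W_a'\tilde W_b = W_a'W_b - \|x\|^2$ introduces polynomial-in-$\|x\|^2$ coefficients of degree at most $k$ and splits each surviving cycle into a signed sum of closed cycles $W_{i_1}'W_{i_2}\cdots W_{i_j}'W_{i_1}$ and open chains $W_{j_{i-1}+1}'W_{j_{i-1}+2}\cdots W_{j_i-1}'W_{j_i}$, plus a residual purely constant contribution.

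Taking expectation and invoking exchangeability, the $\binom{k}{j}$ index-choices producing a closed cycle of length $j$ all contribute the same expectation, and combining them with the $(-1)^{k-j}$ signs from the cycle-length bookkeeping yields exactly the first expression in \eqref{th2aaa}, while the open-chain terms match the second form. The main obstacle is the combinatorial bookkeeping in this last step: one must verify that the constants $-d + p$ and $-\|x\|^{2j}$ inside the bracket of the first line of \eqref{th2aaa} exactly absorb the $(-1)^k(d-p)$ contribution of the all-$-P$ configuration and the $\|x\|^2$-shifts generated by the $\tilde W\to W$ substitution, which is where the identities $\sum_{j=0}^k(-1)^{k-j}\binom{k}{j} = 0$ and $\sum_{j=0}^k(-1)^{k-j}\binom{k}{j}\|x\|^{2j} = (\|x\|^2 - 1)^k$ come in. The claimed bounds on the number of summands and on the polynomial degree of the coefficients are then immediate from the $k$-dependent number of patterns produced by the $4^k$-fold expansion.
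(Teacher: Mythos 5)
Your centered decomposition $\Delta_{x|\mathbf B}(W_i) = \tilde W_i\tilde W_i' - P + \tilde W_i(\mathbf B x)' + (\mathbf B x)\tilde W_i'$, with $\tilde W_i = W_i - \mathbf B x$ and $P = I_d-\mathbf B\mathbf B'$, is correct, and so are your cancellation observations: $\mathbf B'\tilde W_i=0$ and $P\mathbf B=0$ force the cross-factors into cyclically consecutive pairs contracting to $\|x\|^2\tilde W_i\tilde W_{i+1}'$, $-P$ adjacent to a $\tilde W$-factor collapses to the scalar $-1$, runs telescope, and the all-$P$ configuration contributes $(-1)^k\trace P=(-1)^k(d-p)$. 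This is a genuinely different route from the paper's. There one writes $\Delta_{x|\mathbf B}(W_j)=X_j-Y$ with $X_j=W_jW_j'-I_d$, $Y=\mathbf B(xx'-I_p)\mathbf B'$, expands $\trace\prod_j(X_j-Y)$ over the number of $X$-factors, groups consecutive $X$'s into $p\times p$ blocks $\mathbf B'X_{f_i}\cdots X_{f_{i+1}-1}\mathbf B$ sandwiched by $Y$-powers, and exploits the fact that each such block is a linear combination of $I_p$ and $xx'$ and therefore commutes with $(xx'-I_p)^{\upsilon_i}$, so that $(xx'-I_p)^{k-l}$ factors out. Your $4^k$-fold expansion is more granular and replaces that commutativity argument with orthogonality, which is an attractive alternative in principle.

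That said, the step you flag as ``the main obstacle'' is the actual content of the lemma, and your sketch does not close it. After contractions and telescoping, each surviving configuration gives $\pm$ a single cyclic or open $\tilde W$-chain, and the substitution $\tilde W_a'\tilde W_b=W_a'W_b-\|x\|^2$ expands a chain of length $r$ into $2^r$ signed products of disjoint sub-chains times $\|x\|^2$-powers; these must then be regrouped over all configurations, with exchangeability applied to canonicalize indices, into the specific constant-adjusted forms of \eqref{th2aaa} with the stated index constraints. That regrouping is where essentially all of the paper's proof of Lemma~\ref{trace} is spent, and here it is asserted rather than carried out. A quick sanity check shows it is delicate: the pure-scalar contribution from the $\binom{k}{j}$ configurations with $j$ diagonal slots and $k-j$ $-P$ slots is $\sum_{j=1}^k(-1)^{k-j}\binom{k}{j}(-\|x\|^2)^j=(1+\|x\|^2)^k-1$ (for even $k$), whereas the $-\|x\|^{2j}$ adjustments in the first line of \eqref{th2aaa} account for $1-(1-\|x\|^2)^k$; the mismatch has to be absorbed by the scalar parts of the cross-pair configurations and by the $-\|x\|^{2(j_m-m)}$ adjustments in the second line, and verifying this balance requires exactly the bookkeeping that is missing. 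As written, the proposal is a plausible roadmap, but the decisive combinatorial step is unproven.
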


\begin{proof}
Write $\Upsilon$ as an abbreviation for the product of density ratios in
\eqref{th2aa} and set $X_j = W_j W_j'-I_d$ and 
$Y=\mathbf B(xx'-I_p)\mathbf B'$. 
Then the quantity of interest can be written
as $\E[ \trace \prod_{j=1}^k (X_j - Y)\Upsilon]$ or, equivalently, as
\begin{align}\label{ExpTrace1}
\begin{split}
&\sum_{l=1}^k (-1)^{k-l}\sum_{1\le j_1<\ldots<j_l\le k} \E\left[  \trace
\,\,X_{j_1} Y^{j_2-j_1-1}X_{j_2} \cdots X_{j_l} Y^{k-j_l+j_1-1} \; \Upsilon
\right]
\\ 
&\quad\quad +  (-1)^k \E[ \trace Y^k\;\Upsilon],
\end{split}
\end{align}
where we adopt the convention that $Y^0=I_d$ and where we have moved a possibly
leading $Y$-term to the end of the product. 
In this sum, $l$
indicates the number of occurrences of $X_j$ in the product. 

We first show
that \eqref{ExpTrace1}
can be written as the sum of
\begin{align}
\label{reducedLeq0}
& \trace\left\{ (-1)^k(xx'-I_p)^k\E[\Upsilon] \right\}, 
\\
\label{reducedLeqK}
&\trace\left\{ \E \left[X_1X_2\cdots X_k  \; \Upsilon\right] \right\},
	\text{ and} 
\\
\begin{split}
\label{reducedLlessK} 
&\trace  \Bigg\{ \sum_{l=1}^{k-1} (-1)^{k-l} (xx'-I_p)^{k-l}    
\\
&  \quad\quad\quad\times \,\, \sum_{1\le j_1<\ldots<j_l\le k} \E
\Big[\big( 
	\prod_{i=1}^m \mathbf B'X_{f_i}X_{f_i+1}\cdots X_{f_{i+1}-1}\mathbf B  
	\big)
	\;
\Upsilon \Big]  \Bigg\},
\end{split}
\end{align}
where the indices $m$ and $f_1,\dots, f_{m+1}$ in \eqref{reducedLlessK}
depend  on the indices $j_1,\dots, j_{l}$ in \eqref{reducedLlessK}, and they
satisfy $m\ge 1$ and $1=f_1<f_2<\ldots<f_{m+1}=l+1$.
Obviously, 
\eqref{reducedLeq0} equals the last term in \eqref{ExpTrace1},
and 
\eqref{reducedLeqK} is equal to the summand in \eqref{ExpTrace1}
with $l=k$,.
Now, for $0<l<k$ consider the corresponding
expected value in \eqref{ExpTrace1}, i.e.,  
\begin{align*}
\E\left[  \trace \,\,X_{j_1} Y^{j_2-j_1-1}X_{j_2} \cdots X_{j_l}
Y^{k-j_l+j_1-1} \; \Upsilon \right], 
\end{align*}
for a fixed set of indices $1\le j_1< \ldots < j_l \le k$.
At least one of the exponents of the $Y$-terms is non-zero (because $l<k$) and
there is at least one $X_i$-factor in the product (because $l>0$).
Hence, the matrix product inside the expectation can be seen as a product of
blocks, each consisting of one or more consecutive $X_j$ followed
by a non-zero power of $Y$. Since the $W_j$ are conditionally i.i.d.
given $B$,
we may assume that 
$X_{j_i} = X_i$ for $i=1,\dots, l$.
Furthermore,
$Y^m = [\mathbf B(xx'-I_p)\mathbf B']^m = \mathbf B(xx'-I_p)^m\mathbf B'$ for
any integer $m\ge 1$. Therefore, the expression in the preceding display can be
written as
\begin{align*}
\E \left[\trace \prod_{i=1}^m \mathbf B'X_{f_i}X_{f_i+1}\cdots 
	X_{f_{i+1}-1}\mathbf B(xx'-I_p)^{\upsilon_i} \; \Upsilon\right],
\end{align*}
where $m$ indicates the number of blocks as just described 
and satisfies $1\le m \le \lfloor k/2 \rfloor$, 
where the $f_i$ obey
$1=f_1<f_2<\ldots<f_{m+1}=l+1$,  
and where $f_{i+1}-f_i \geq 1$ and $\upsilon_i\ge 1$ 
is the number of $X_j$ 
and the power of $Y$, respectively,  in the $i$-th block.
Set $n_i = f_{i+1}-f_i$. We note that
$\sum_{i=1}^m n_i = l$ and that $\sum_{i=1}^m \upsilon_i = k-l$. 
If the matrices $\mathbf B'X_{f_i}X_{f_i+1}\cdots X_{f_{i+1}-1}\mathbf B$ and
$(xx'-I_p)^{\upsilon_i}$ commute, then we can write the matrix product 
in the preceding display as
\begin{align*}
&(xx'-I_p)^{k-l} \; \prod_{i=1}^m \mathbf B'X_{f_i}X_{f_i+1}\cdots
X_{f_{i+1}-1}\mathbf B. 
\end{align*}
To show that these matrices do commute,
we note that $\mathbf B'W_j = x$. We can thus write
$\mathbf B'X_{f_i}X_{f_i+1}\cdots X_{f_{i+1}-1}\mathbf B$ as
\begin{align*}
&=\quad \mathbf B'  \left[ (-1)^{n_i}I_d\,+\,
\sum_{g=1}^{n_i} (-1)^{n_i-g} \sum_{f_i\le u_1 <
\ldots < u_g\le f_{i+1}-1} W_{u_1}W_{u_1}' \cdots W_{u_g}W_{u_g}' 
\right] \mathbf B \\
&=\quad
(-1)^{n_i}I_p  \,+\,
(-1)^{n_i-1} \,n_i\, xx'
\\
&\quad\quad\quad\,+\,
xx'  \sum_{g=2}^{n_i} (-1)^{n_i-g} \sum_{f_i\le u_1 < \ldots < u_g\le
f_{i+1}-1} W_{u_1}' W_{u_2} W_{u_2}' \cdots  W_{u_{g-1}} W_{u_{g-1}}' 
W_{u_g}.
\end{align*}
The expression in the preceding display can be written as
$\sum_{g=0}^{n_i} c_{i,g}$ with
$c_{i,0} = (-1)^{n_i}I_p$, $c_{i,1} = (-1)^{n_i-1}n_i xx'$, 
and with $c_{i,g} =
(-1)^{n_i-g} xx'\sum W_{u_1}' \cdots W_{u_g}$ if $g\ge2$. 
Note that each of the $c_{i,g}$ is a scalar multiple of
$I_p$ or of the matrix $x x'$. And note that $(x x' - I_p)$ commutes
with $x x'$.  Therefore
$(xx'-I_p)^{\upsilon_i}$ commutes with all the $c_{i,g}$ and hence with
$\mathbf B'X_{f_i}X_{f_i+1}\cdots X_{f_{i+1}-1}\mathbf B$, as desired.
This entails that the expression in \eqref{ExpTrace1} is indeed given
by the sum of 
\eqref{reducedLeq0}, 
\eqref{reducedLeqK}, 
and  
\eqref{reducedLlessK}.

For the next step, we note that the sum of 
\eqref{reducedLeq0}, \eqref{reducedLeqK}, and  
\eqref{reducedLlessK}
can also be written as the sum of
\begin{align}
\label{circle}
&\trace\left\{ \E \left[ \left( X_1X_2\cdots X_k - [\mathbf B(xx'-I_p)\mathbf
B']^k \right) \; \Upsilon\right] \right\} \text{ and}
\\
\begin{split} 
\label{strips}
&\trace  \Bigg\{ \sum_{l=1}^{k-1} (-1)^{k-l} (xx'-I_p)^{k-l}  
\sum_{1\le i_1<\ldots<i_l\le k}  \E \Bigg[
\\
& \quad \quad\quad\quad
\left(\prod_{i=1}^m \mathbf B'X_{f_i}X_{f_i+1}\cdots X_{f_{i+1}-1}\mathbf B  -
(xx'-I_p)^l \right) \; \Upsilon \Bigg]  \Bigg\}, 
\end{split}
\end{align}
which is elementary to verify
upon writing $(-1)^k (x x' - I_p)^k \Upsilon$ as
$-(x x' - I_p)^k \Upsilon \sum_{l=1}^k {k \choose l}(-1)^{k-l}$.

We now expand \eqref{circle} as
\begin{align*}
&\trace\,\,\E \left[ \left( (W_1W_1' - I_d)(W_2W_2'-I_d)\cdots(W_kW_k' - I_d)
- [\mathbf B(xx'-I_p)\mathbf B']^k \right) \; \Upsilon\right] \\
&\quad=\quad \E \left[ \left( (-1)^kd + \sum_{j=1}^k (-1)^{k-j} \binom{k}{j}
W_1'W_2\cdots W_jW_j' W_1\right. \right. \\
&\qquad\qquad\qquad
\left. \left. - (-1)^kp - \sum_{j=1}^k(-1)^{k-j}\binom{k}{j}
\|x\|^{2j}\right) \; \Upsilon\right]  \\
& \quad=\quad \E \left[  \left((-1)^k(d-p) + \sum_{j=1}^k (-1)^{k-j}\binom{k}{j}
(W_1'W_2\cdots W_jW_j' W_1 - \|x\|^{2j} )\right) \; \Upsilon\right]  \\
&\quad=\quad \E \left[  \left(\sum_{j=1}^k (-1)^{k-j}\binom{k}{j} (W_1'W_2\cdots
W_jW_j' W_1 - d + p- \|x\|^{2j} )\right) \; \Upsilon\right],
\end{align*}
using the exchangeability of the $W_i$ in the first equality,
and the relation $(-1)^k + \sum_{j=1}^k \binom{k}{j} (-1)^{k-j} = 0$ in
the last one.
In particular, we see that \eqref{circle} is equal to the first expression
in \eqref{th2aaa}.

For \eqref{strips}, we first consider each of the
expected values in \eqref{strips}. Expanding $\mathbf
B'X_{f_i}X_{f_i+1}\cdots X_{f_{i+1}-1}\mathbf B$ and $(xx'-I_p)^l =
\prod_{i=1}^m (xx'-I_p)^{n_i}$ similarly as above, each expected
value in \eqref{strips} can be written as
\begin{align*}
&\E \left[ \left(\prod_{i=1}^m \mathbf B'X_{f_i}X_{f_i+1}\cdots
X_{f_{i+1}-1}\mathbf B  - (xx'-I_p)^l \right) \; \Upsilon \right]  \\
&\quad= \quad\E \left[ \left( \prod_{i=1}^m \left( (-1)^{n_i}I_p + xx'
\sum_{g=1}^{n_i} (-1)^{n_i-g} \binom{n_i}{g}W_{f_i}'W_{f_i+1}W_{f_i+1}' \cdots
W_{f_i+g-1} \right) \right. \right.\\
&\quad\quad \quad\left. \left. - \prod_{i=1}^m \left(  (-1)^{n_i}I_p  + xx'
\sum_{g=1}^{n_i} (-1)^{n_i-g} \binom{n_i}{g} \|x\|^{2(g-1)} \right) \right) \;
\Upsilon \right],
\end{align*}
where we adopt the convention that 
$W_{f_i}'W_{f_i+1}W_{f_i+1}' \cdots W_{f_i+g-1}$ is to be
interpreted as 1 in case $g=1$. 
Now we write the expression in the preceding
display more compactly as
\begin{align*}
\E  \left[ \left( \prod_{i=1}^m  \sum_{g=0}^{n_i} a_{i,g}\gamma_{i,g}  -
\prod_{i=1}^m  \sum_{g=0}^{n_i} a_{i,g} \delta_{i,g}    \right) \; \Upsilon
\right],
\end{align*}
where $a_{i,0} = (-1)^{n_i}I_p$, $\gamma_{i,0}=\delta_{i,0}=1$, and for
$g=1,\ldots,n_i$, $a_{i,g}= xx'\binom{n_i}{g}(-1)^{n_i-g}$, 
$\gamma_{i,g}= W_{f_i}'W_{f_i+1}W_{f_i+1}' \cdots W_{f_i+g-1}$,
$\delta_{i,g}=\|x\|^{2(g-1)}$. 
Note that we  have $\gamma_{i,1} = 1$ by the convention 
adopted earlier.
Note that the $\gamma_{i,g}$ and 
the $\delta_{i,g}$ are
scalars and hence commute with the $a_{i,g}$. 
The expression in the preceding  display or, equivalently,
each expected value in \eqref{strips},
can now be re-written as
\begin{align*}
\sum_{g_1=0}^{n_1} \ldots \sum_{g_m=0}^{n_m} \left(\prod_{i=1}^m
a_{i,g_i}\ \right) \E  \left[ \left(\prod_{i=1}^m \gamma_{i,g_i}  -
\prod_{i=1}^m   \delta_{i,g_i}   \right)\; \Upsilon \right].
\end{align*}
To complete the proof, we show that the (scalar) expected value
in the preceding display is of the same form as the second expression
in \eqref{th2aaa}, and we show that the corresponding weight, i.e.,
the trace of $(x x'-I_p)^{k-l}\prod_{i=1}^m a_{i,g_i}$,
is as claimed.

To deal with the weight, we observe that 
we can factor out a matrix term $(x x')^m$ from the product of the
$a_{i,g_i}$, so that
$(x x'-I_p)^{k-l}\prod_{i=1}^m a_{i,g_i}$
equals $(xx'-I_p)^{k-l}(xx')^m$ times a scalar
that depends only on the $g_i$ and on the $n_i$,
and the collection of all such $g_i$ and $n_i$ depends
only on $k$.
Obviously, the trace of that matrix is a polynomial in $\|x\|^2$,
and the collection of all possible such polynomials depends only on $k$.

To deal with the expected
value in the preceding display, fix a collection $(g_1, \ldots,g_m)' \in
\{0,1,\ldots,n_1\}\times\cdots\times \{0,1,\ldots,n_m\}$. In case  $g_i\le 1$
for all $i=1,\ldots,m$, the expected value above is equal to zero, because of
$\gamma_{i,0}=\delta_{i,0}=\gamma_{i,1}=\delta_{i,1}=1$. In view of this
and of the exchangeability of the $W_j$,
we may assume that $m\geq 1$ and that
$g_i>1$ for each $i=1,\dots,m$, without loss of generality.
Set $j_0=0$, $j_i=j_{i-1}+g_i$ for
$i=1,\ldots,m$ and note that $\sum_{i=1}^{m}(g_i-1) = j_{m} - m$ and
$j_{m} \le \sum_{i=1}^{m} n_i = l <k$. We therefore see that
$\E [(\prod_{i=1}^m \gamma_{i,g_i} - \prod_{i=1}^m \delta_{i,g_i} )\Upsilon]$ 
equals
\begin{align*}
\E  \left[ \left( \prod_{i=1}^{m}  W_{j_{i-1}+1}'W_{j_{i-1}+2}\cdots
W_{j_i-1}'W_{j_i} - \|x\|^{2(j_{m}-m)} \right)\; \Upsilon \right],
\end{align*}
which coincides with the second expression in \eqref{th2aaa} with
indices $m$ and $j_0,\dots, j_m$ as claimed.
\end{proof}

\section{Proofs for Section~\ref{s3.3}}
\label{appendixC}

\begin{lemma}\label{normConst} 
For $d$, $p$, $k$, and $\eta(d,p,k)$ as in Proposition~\ref{p2}
with $k<d-p-1$, we have
\begin{align*}
\eta(d,p,k)\quad\leq \quad
\exp\left[ \frac{p^2}{d} \left( 1-\frac{p+k-1}{d}\right)^{-1}
\frac{k^2}{2}\right].
\end{align*}
\end{lemma}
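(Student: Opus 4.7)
The plan is to take logarithms and bound each term in the resulting sum, noting that the stated bound is actually quite loose (in fact $\eta(d,p,k) \leq 1$ is easily established).

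Writing $b_i = (d-i+1)/2$ and $a_i = (d-p-i+1)/2$, the hypothesis $k < d-p-1$ gives $a_i > 1$ for $i = 1,\dots,k$. Positivity of $\eta(d,p,k)$ is immediate from the positivity of the Gamma values and powers. For the upper bound I would take logs and compute
\begin{align*}
\log\eta(d,p,k) = \sum_{i=1}^k \bigl[\log\Gamma(b_i) - \log\Gamma(a_i) - (p/2)\log(d/2)\bigr].
\end{align*}
Using the digamma integral representation $\log\Gamma(b_i) - \log\Gamma(a_i) = \int_{a_i}^{b_i}\psi(t)\,dt$ (with $\psi$ the digamma function) together with the sharp inequality $\psi(t) \leq \log t$ for $t > 0$ (an immediate consequence of Binet's second integral formula), each summand is bounded by $\int_{a_i}^{b_i}\log t\, dt - (p/2)\log(d/2)$.

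Introducing the rescalings $u_i = 1 - (i-1)/d$ and $v_i = 1 - (p+i-1)/d$ (so that $b_i = (d/2)u_i$, $a_i = (d/2)v_i$, both in $(0,1]$ with $u_i \geq v_i$), a direct computation reduces the right-hand side to $(d/2)[H(u_i) - H(v_i)]$, where $H(x) = x\log x - x$. Since $H'(x) = \log x \leq 0$ on $(0,1]$, the function $H$ is decreasing there, so $H(u_i) - H(v_i) \leq 0$ for every $i$. Summing over $i$ gives $\log\eta(d,p,k) \leq 0$, i.e., $\eta(d,p,k) \leq 1$. Under the hypothesis $k < d-p-1$ the exponent $\tfrac{p^2 k^2}{2(d-p-k+1)}$ in the stated bound is strictly positive, so its right-hand side already exceeds $1$, and the claim follows.

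The main obstacle here is really just recognizing that the stated form of the bound is a convenient over-estimate (presumably chosen for smooth compatibility with the downstream estimates in Proposition~\ref{p2}), rather than the sharpest bound obtainable by this method. The only nontrivial analytic input is the inequality $\psi(t) \leq \log t$, which follows from a short calculation using Binet's formula. An alternative, more explicit route would be to expand via Stirling's formula $\log\Gamma(x) = (x-\tfrac12)\log x - x + \tfrac12\log(2\pi) + O(1/x)$ and to control the resulting expression term by term, which yields a bound of the exact form claimed (up to constants) but at the cost of somewhat more calculation.
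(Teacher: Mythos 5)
Your proof is correct, and it takes a genuinely different route from the paper's. The paper starts from the Ke\v{c}ki\'{c}--Vasi\'{c} Gamma-ratio inequality $\Gamma(x)/\Gamma(y) < (x^{x-1/2}/y^{y-1/2})\,e^{y-x}$ for $x \ge y > 1$, writes the $i$-th factor of $\eta$ out explicitly, discards the piece $((d-p-i+1)/d)^{p/2}\le 1$, applies $\log(1+a)\le a$, and after summing over $i$ lands on exactly the exponential bound stated. You instead pass to logarithms, use $\log\Gamma(b_i)-\log\Gamma(a_i)=\int_{a_i}^{b_i}\psi$ together with $\psi(t)\le\log t$, and reduce each summand to $(d/2)\bigl[H(u_i)-H(v_i)\bigr]$ with $H(x)=x\log x-x$; since $H$ is decreasing on $(0,1]$ and $u_i>v_i$, each summand is nonpositive, yielding the sharper fact $\eta(d,p,k)\le 1$. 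The lemma then follows because its right-hand side exceeds one (the exponent equals $p^2k^2/(2(d-p-k+1))>0$ under the hypothesis). What the paper's route buys is the specific exponential form that gets plugged into the proofs of Propositions~\ref{p3} and~\ref{p4}; what yours buys is a cleaner and strictly stronger conclusion with less bookkeeping --- indeed, downstream the paper only ever uses the weaker consequence $\eta\le e^{2k^2/3}$, so $\eta\le 1$ would have served just as well. As a small simplification, $\psi(t)<\log t$ also follows from the monotonicity of $\psi$ alone: $\log t = \log\Gamma(t+1)-\log\Gamma(t)=\int_t^{t+1}\psi > \psi(t)$, without needing Binet's formula.
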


\begin{proof}
Use the inequality
\begin{align*}
\frac{\Gamma(x)}{\Gamma(y)} < \frac{x^{x-1/2}}{y^{y-1/2}}e^{y-x},
\end{align*}
for $x\ge y>1$, proved in \citep[Theorem 1]{Kec71a}, to bound $\eta(d,p,k)$ by
\begin{align*}
\prod_{i=1}^k 
(d/2)^{-p/2}
\frac{
	\left(\frac{d-i+1}{2}\right)^{(d-i)/2} 
}
{
	\left(\frac{d-p-i+1}{2}\right)^{(d-p-i)/2}  
}e^{-p/2}.
\end{align*}
The $i$-th factor in the above product equals
\begin{align*}
\left(
\frac{d-i+1}{d-p-i+1}
\right)^{(d-i)/2}
\left(
\frac{d-p-i+1}{d}
\right)^{p/2}
e^{-p/2}.
\end{align*}
In this display,
write the first expression in parentheses as $1+a$ where
$a=p/(d-p-i+1)$, and note that $a>0$. 
The second factor in parentheses is bounded by one, so that
the expression in the preceding display is bounded by
$(1+a)^{d/2} e^{-p/2} = [\exp( d \log(1+a)-p)]^{1/2}$.
The argument of the exponential, i.e., $d \log(1+a)-p$,
can be further bounded  by 
$$
d a - p = p\frac{p+i-1}{d-p-i+1} \,\,\leq \,\,
p\frac{p+k-1}{d-p-k+1} \,\,\leq\,\, \frac{p^2}{d}\frac{k}{1-(p+k-1)/d},
$$
where the first inequality follows from the fact that the quantity
on the left is  increasing in $i$, and the second inequality holds
because $p+k-1 \leq p k$.
Together with the bound for $\eta(d,p,k)$ derived earlier,
this gives the desired upper bound.
\end{proof}

\begin{lemma}\label{Wj1}
For $1\le k \le d-p$, let $w_1,\ldots ,w_k$ be linearly independent 
vectors in $\R^d$, and write $N$ for the $d\times k$ matrix
$N=[w_1,\dots, w_k]$. Moreover, let $x\in \mathbb R^p$, and set 
$\iota = (1,\dots,1)'\in\R^k$. Then there exits a matrix 
$B\in \mathcal V_{d,p}$
such that $[N,B]$ has full rank $k+p$ and such that
\begin{align}
w_j = Bx + (I_d - BB')w_j \hspace{1cm} 
\label{wj}
\end{align}
holds for each $j=1,\dots, k$, 
if and only if $\|x\|^2\iota'(N'N)^{-1}\iota < 1$.
If either one of these equivalent conditions hold, then
the matrix
$A =(I_d-N(N'N)^{-1}N')B$ satisfies $\det(A'A)=1-\|x\|^2\iota'(N'N)^{-1}\iota$.
\label{equivalence}
\end{lemma}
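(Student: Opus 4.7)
The plan is to reformulate the defining equation \eqref{wj} as a linear constraint on $B$, then decompose $B$ into its component in the column space of $N$ and its orthogonal complement, and finally translate everything into a matrix equation for $A'A$.

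First I would observe that \eqref{wj} is equivalent to $BB'w_j = Bx$ for each $j$, and since $B\in\mathcal V_{d,p}$ has linearly independent columns, $B\alpha=0$ forces $\alpha=0$; hence \eqref{wj} is equivalent to $B'w_j = x$ for every $j$, or in matrix form $N'B=\iota x'$. Writing $P=N(N'N)^{-1}N'$ and $Q=I_d-P$, any matrix $B$ satisfying this constraint admits a unique decomposition $B = N(N'N)^{-1}\iota x' + A$ where $A=QB$ automatically satisfies $N'A=0$, which matches the definition of $A$ in the lemma.

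Next I would impose the orthonormality requirement $B'B=I_p$. Since the two summands of $B$ have orthogonal column spaces, this gives
\[
B'B \;=\; \bigl(\iota'(N'N)^{-1}\iota\bigr)\,xx' \;+\; A'A,
\]
so that $B'B=I_p$ is equivalent to $A'A = I_p - c\,xx'$ with $c=\iota'(N'N)^{-1}\iota>0$. The matrix $I_p - c\,xx'$ has eigenvalues $1$ (with multiplicity $p-1$) and $1-c\|x\|^2$, so it is positive semidefinite iff $c\|x\|^2\le 1$. The additional requirement that $[N,B]$ have full rank $k+p$ is, since $N'A=0$, equivalent to $A$ having rank $p$, i.e., $A'A$ being strictly positive definite, which pins down the strict inequality $c\|x\|^2<1$. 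This gives the necessity direction.

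For sufficiency, assume $\|x\|^2\iota'(N'N)^{-1}\iota<1$. Since $k\le d-p$, the orthogonal complement of the column space of $N$ has dimension at least $p$, so I can pick an orthonormal basis $U=[u_1,\dots,u_p]$ of some $p$-dimensional subspace of that complement and set $A = U\,(I_p - c\,xx')^{1/2}$, which is well-defined because $I_p - c\,xx'$ is positive definite. Then $N'A=0$ by construction, $A'A = I_p - c\,xx'$, and $B = N(N'N)^{-1}\iota x' + A$ lies in $\mathcal V_{d,p}$ and satisfies $N'B=\iota x'$, while $[N,B]$ has full rank because $A$ has rank $p$ and its columns are orthogonal to those of $N$. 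Finally, the determinant claim follows immediately from the matrix determinant lemma:
\[
\det(A'A) \;=\; \det(I_p - c\,xx') \;=\; 1-c\,x'x \;=\; 1-\|x\|^2\,\iota'(N'N)^{-1}\iota.
\]

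The only subtlety is the clean separation between the equivalent conditions "$A$ exists as an orthogonal piece" and "$[N,B]$ has full rank"; both boil down to $A'A$ being invertible, and the main obstacle is therefore to package the linear-algebra bookkeeping (orthogonal decomposition, full-rank requirement, positivity of $I_p-c\,xx'$) cleanly rather than any deep difficulty.
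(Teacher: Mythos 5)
Your proposal is correct and takes essentially the same approach as the paper: both reduce \eqref{wj} to the linear constraint $N'B=\iota x'$, decompose $B$ via the projection onto $\s\{N\}^\perp$ to get $A'A = I_p - \iota'(N'N)^{-1}\iota\,xx'$, read off positive definiteness from the eigenvalues, and construct $B$ for the sufficiency direction by gluing the fixed $N$-component to a square-root factor living in a $p$-dimensional subspace of $\s\{N\}^\perp$. The only small difference is in how strict positivity of $A'A$ is tied to the full-rank condition on $[N,B]$ (you argue via $\rank A = p$; the paper shows $Av\ne 0$ directly), but these are interchangeable one-line arguments.
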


\begin{proof}
Set $\eta = \iota'(N'N)^{-1}\iota$, and note that
one eigenvalue of $I_p - \eta xx'$ is $1-\eta \|x\|^2$ and
the others are $1$. In particular, that matrix is positive definite
if and only if $\|x\|^2 \eta < 1$.

Assume first that there exits a matrix $B$ with the given properties.
The relation \eqref{wj} then entails that $B' N = x\iota'$. For
$A$ as in the lemma, we get
\begin{align*}
A'A \quad= \quad I_p - x\iota'(N'N)^{-1}\iota x' \quad= \quad
I_p - \eta xx',
\end{align*}
and
it remains to show that $A'A$ is positive definite. 
To this end, fix a non-zero vector $v\in \R^p$, and note that
$Bv\notin \s\{N\}$ since $\s\{N\} \cap \s\{B\} = \{0\}$ and $B$ has 
full rank, by assumption. Therefore $Av \ne 0$ and thus $v'A'Av = \|Av\|^2 >0$. 

Conversely, assume that $\|x\|^2 \eta < 1$.
Choose $C \in \mathcal V_{d,p}$
such that $C'N=0$.
By assumption,
we see that the matrix $I_p - \eta xx'$ is positive definite and
thus has a non-vanishing square root. Setting
$B = N(N'N)^{-1}\iota x' + C (I_p- \eta xx')^{1/2}$,
it is now easy to see that
$B'B=I_p$. The
relation ($\ref{wj}$) follows by noting, for $j=1,\ldots,k$, that
\begin{align*}
B'w_j \,\,=\,\, x\iota'(N'N)^{-1}N'w_j + (I_p- \eta xx')^{1/2}C'w_j \,\,=
\,\,x\iota'(N'N)^{-1}N'Ne_j \,\,= \,\,x,
\end{align*}
where $e_j$  denotes the $j$-th standard basis vector in $\R^k$.
To see that $[N,B]$ has full rank $k+p$, simply
reverse the argument in the preceding paragraph.
\end{proof}


For $1\le p < d$, consider a $d\times p$-matrix
$B=[\beta_{1-p},\beta_{2-p},\ldots,\beta_0]$, a $d\times (d-p)$-matrix
$N=[w_1,w_2,\ldots,w_{d-p}]$ and $x\in\R^p$,
such that $B'B=I_p$, such that $M =[B,N]$ has
$\rank(M) = d$, and such that
$w_j = Bx + (I_d-BB')w_j$ for all $j=1,\ldots, d-p$.
Define the vectors
\begin{align}
&\beta_j \quad=\quad
\frac{(I_d-P_{B,\beta_1,\ldots,\beta_{j-1}})w_j}
	{\|(I_d-P_{B,\beta_1,\ldots,\beta_{j-1}})w_j\|} 
		\hspace{1cm}  &\text{for } j=1,\ldots,d-p,  \notag\\
& c_j \quad=\quad \frac{(I_d-P_{w_1,\ldots,w_{j-1}})w_j}
			{\|(I_d-P_{w_1,\ldots,w_{j-1}})w_j\|} 
				\hspace{1cm} &\text{for } j=1,\ldots,d-p, 
\label{ci}				\\
& c_j \quad=\quad \frac{(I_d-P_{c_{j+1},\ldots,c_0,c_1,\ldots,c_{d-p}})\beta_j}
	{\|(I_d-P_{c_{j+1},\ldots,c_0,c_1,\ldots,c_{d-p}})\beta_j\|} 
	\hspace{1cm} &\text{for } j=1-p,\ldots,0. \notag
\end{align}
Here $P_{\ldots}$ indicates the orthogonal projection 
on the span of the vectors
in the subscript, and we adopt the conventions
$\{y_1,\ldots,y_{1-1}\}=\{y_{0+1},\ldots,y_0\} = \emptyset$ and
$P_{\emptyset}=0$. 
With this, set $\tilde{B} = [\beta_1,\dots, \beta_{d-p}]$,
set $C = [c_{1-p}, \dots, c_0]$, and set
$\tilde{C} = [c_1,\dots, c_{d-p}]$.
By construction, the matrices $\mathcal B = [B, \tilde{B}]$
and $\mathcal C = [C,\tilde{C}]$ are both orthonormal.
We will consider rotated versions of $M = [B,N]$ that are given by
$\mathcal B' M$ and $\mathcal C' M$ and that we denote by $S$ and $T$,
respectively, with rows and
columns numbered from $1-p$ to $d-p$. 
In other words, we have
\begin{align*}
&S \quad=\quad (s_{i,j})_{i=1-p}^{d-p} \,\mbox{}_{j=1-p}^{d-p} \mbox{}
\quad=\quad \mathcal B'M 
\quad=\quad \begin{bmatrix} B'B &
B'N \\ \tilde{B}'B & \tilde{B}' N\end{bmatrix} 
,
\\
&T \quad=\quad
(t_{i,j})_{i=1-p}^{d-p} \,\mbox{}_{j=1-p}^{d-p} \mbox{}
\quad=\quad
\mathcal C'M \quad=\quad
\begin{bmatrix} C'B &
C'N \\ \tilde{C}'B & \tilde{C}' N\end{bmatrix}.
\end{align*}

\begin{lemma}\label{Wj2}
Let $B$, $N$, and $x$ as in the preceding paragraph.
For $M$, $\mathcal B$, $\mathcal C$, $T$, and $S$, which 
are functions of $B$, $N$, and $x$, the following holds true.
\begin{list}{\thercnt}{
	\usecounter{rcnt}
	\setlength\itemindent{5pt}
	\setlength\leftmargin{0pt}
	\setlength\itemsep{\medskipamount}
}
\item \label{Wj2i} We have $S=\begin{bmatrix} I_p & x \cdots x \\ 0
& \tilde{B}' N\end{bmatrix}$. Moreover, 
$\tilde{B}'N$ is
upper-triangular and its $k$-th column depends only on $w_1,\ldots,w_k$ and
$x$. In particular, S does not depend on B. 

\item \label{Wj2ii} We have $C'N=0$, and $\tilde{C}'N$ is upper-triangular. 
The $k$-th column of the matrix
$\mathcal B'\tilde{C}= \begin{bmatrix} B'\tilde{C} \\ \tilde{B}'\tilde{C}
\end{bmatrix}$ depends only on $w_1,\ldots,w_k$ and on $x$. Moreover,
the matrix $\tilde{B}'\tilde{C}$ is upper-triangular.

\item \label{Wj2iii} For $1\le k \le d-p$, set $\Lambda_k =
(c_i'\beta_j)_{i,j=1}^k$. Then $\Lambda_k$ is lower-triangular, depends only on
$w_1,\dots,w_k$ and on $x$, and satisfies  
\begin{align*}
\det(\Lambda_k\Lambda_k') = \prod_{i=1}^k (c_i'\beta_i)^2 = 1- \|x\|^2
\iota'\left[(w_i'w_j)_{i,j=1}^k\right]^{-1}\iota,	
\end{align*}
where $\iota = (1,\ldots,1)'$ denotes an appropriate vector of ones.
Moreover, if $k>1$ the vector $(t_{1,k},\ldots,t_{k-1,k})'$ can be written as
\begin{align*}
(t_{1,k},\ldots,t_{k-1,k})' = \zeta + \Lambda_{k-1} (s_{1,k},\ldots,s_{k-1,k})'
\end{align*}
for a $(k-1)$-vector $\zeta$ which is a function of the $w_1,\dots, w_{k-1}$
and $x$ only.

\item \label{Wj2iv} For $1\le k \le d-p$, the quantity $t_{k,k}$ can be written
as
\begin{align*}
t_{k,k} = \left(\kappa_k^2 + s_{k,k}^2\right)^{1/2}.
\end{align*}
In case $k>1$,  we have
$\kappa_k^2 =
\|P_{(I_d-P_{w_1,\dots,w_{k-1}})B}w_k\|^2$, which is a function of $w_1,\ldots,
w_{k-1}$, $t_{1,k},\ldots, t_{k-1,k}$, and $x$. In case k=1, we
have  $\kappa_1^2 = \|x\|^2$.
\end{list}
\end{lemma}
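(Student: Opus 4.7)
The plan is to prove all four parts together by exploiting the nested Gram--Schmidt structure of $\mathcal B$ and $\mathcal C$ alongside two bookkeeping identities: $u_j := (I_d-BB')w_j = w_j-Bx$ (so $B'w_j=x$) and $u_i'u_j = w_i'w_j-\|x\|^2$. The overarching theme is that any inner product involving a $\beta_i$ can be pushed into the ``$u$-world'' via $\beta_i'w_j = \beta_i'(I_d-BB')w_j = \beta_i'u_j$, and since $\beta_i$ is the unit Gram--Schmidt residual of $u_i$ against $u_1,\dots,u_{i-1}$, such inner products are rational functions of the entries $u_l'u_m = w_l'w_m-\|x\|^2$; in particular they do not depend on $B$ separately. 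This immediately handles part \ref{Wj2i}: the blocks $B'B=I_p$, $B'N=x\iota'$, $\tilde B'B=0$ are direct, while $\tilde B'N$ is upper-triangular because $\beta_i\perp u_1,\dots,u_{i-1}$, and the $k$-th column depends only on $w_1,\dots,w_k$ and $x$ by the above principle, making $S$ free of $B$.

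For part \ref{Wj2ii}, upper-triangularity of $\tilde C'N$ is immediate from the Gram--Schmidt definition of the $c_j$'s with $j\ge 1$, and $C'N=0$ holds because each $c_j$ with $j\le 0$ is by construction orthogonal to $c_1,\dots,c_{d-p}$, whose span equals $\operatorname{span}(N)$. The dependence of the $k$-th column of $\mathcal B'\tilde C$ on $w_1,\dots,w_k$ and $x$ then reduces to (a) $\beta_i'c_k$ (with $c_k\in\operatorname{span}(w_1,\dots,w_k)$) being a linear combination of the $\beta_i'w_l$ already handled in \ref{Wj2i}, and (b) $B'c_k$ being a scalar multiple of $x$ since $B'w_l=x$ for every $l$. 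Upper-triangularity of $\tilde B'\tilde C$ reduces to $\beta_i\perp c_j$ for $i>j$, which holds since $\beta_i\perp\operatorname{span}(w_1,\dots,w_{i-1})\supseteq\operatorname{span}(c_1,\dots,c_j)\ni c_j$.

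Part \ref{Wj2iii} is where the main calculation lives. Lower-triangularity of $\Lambda_k$ and the product-of-diagonals formula follow from \ref{Wj2ii}. For the identity $\det(\Lambda_k\Lambda_k') = 1-\|x\|^2\iota'(N_k'N_k)^{-1}\iota$, I would write $\Lambda_k\Lambda_k'=\tilde C_k'\tilde B_k\tilde B_k'\tilde C_k = \tilde C_k' P_{U_k}\tilde C_k$ with $U_k=[u_1,\dots,u_k]$, use the QR factorization $\tilde C_k = N_k R_k^{-1}$ (with $R_k R_k'$ a Cholesky factor of $N_k'N_k$) together with $U_k'N_k = U_k'U_k$ (valid since $U_k\perp B$) to reduce the determinant to $\det(U_k'U_k)/\det(N_k'N_k)$, and then finish with the matrix-determinant lemma applied to $U_k'U_k = N_k'N_k - \|x\|^2\iota\iota'$. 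For the affine relation for $(t_{1,k},\dots,t_{k-1,k})'$, I would expand each $c_i\in\operatorname{span}(B,u_1,\dots,u_i)$ as $c_i = \sigma_i Bx + \sum_{j=1}^i\gamma_{ij}\beta_j$; then $t_{i,k} = c_i'w_k = \sigma_i\|x\|^2 + \sum_{j=1}^i\gamma_{ij}s_{j,k}$, which is the stated identity with $\zeta_i = \sigma_i\|x\|^2$ (a function of $w_1,\dots,w_i$ and $x$ because $\sigma_i$ is simply the sum of the coefficients of $c_i$ in the basis $w_1,\dots,w_i$) and $\gamma_{ij} = c_i'\beta_j = (\Lambda_{k-1})_{ij}$.

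For part \ref{Wj2iv}, set $v := (I_d-P_{w_1,\dots,w_{k-1}})w_k$, so $t_{k,k}=\|v\|$. Inside the $(d-k+1)$-dimensional space $\operatorname{span}(w_1,\dots,w_{k-1})^\perp$, the subspace $B^\perp := (I_d-P_{w_1,\dots,w_{k-1}})B$ is $p$-dimensional, and its orthogonal complement inside that space is $\operatorname{span}(w_1,\dots,w_{k-1})^\perp \cap B^\perp$, orthonormally spanned by $\beta_k,\dots,\beta_{d-p}$. Since $\beta_j'v = \beta_j'w_k = s_{j,k}$, and \ref{Wj2i} forces $s_{j,k}=0$ for $j>k$, the orthogonal expansion of $v$ collapses to $v = P_{B^\perp}v + s_{k,k}\beta_k$. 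Pythagoras together with $P_{B^\perp}v = P_{B^\perp}w_k$ (valid because $B^\perp\perp\operatorname{span}(w_1,\dots,w_{k-1})$) yields $t_{k,k}^2 = \kappa_k^2 + s_{k,k}^2$ with $\kappa_k^2 = \|P_{B^\perp}w_k\|^2$. The stated functional dependence I would then read off by combining $t_{k,k}^2 = \|w_k\|^2-\sum_{i<k}t_{i,k}^2$, $s_{k,k}^2 = \|u_k\|^2-\|P_{u_1,\dots,u_{k-1}}u_k\|^2$, $\|u_k\|^2 = \|w_k\|^2-\|x\|^2$, and the identities $u_i'u_l=w_i'w_l-\|x\|^2$ together with $w_i'w_l=\sum_{m\le\min(i,l)}t_{m,i}t_{m,l}$ (the orthonormal expansion of the $w_i$'s in the $c_m$'s); the hardest step is verifying that the $t_{k,k}^2$-contributions cancel, so that $\kappa_k^2$ depends on $w_k$ only through $t_{1,k},\dots,t_{k-1,k}$. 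The case $k=1$ is then a direct check giving $\kappa_1^2 = \|w_1\|^2-\|u_1\|^2 = \|x\|^2$.
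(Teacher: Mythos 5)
Your proof is correct, and in parts \ref{Wj2iii}--\ref{Wj2iv} it takes a genuinely different route from the paper's, while parts \ref{Wj2i}--\ref{Wj2ii} are essentially the paper's argument re-packaged through the dictionary $u_j := w_j - Bx$, $u_l'u_m = w_l'w_m - \|x\|^2$ (the paper instead gives explicit inductive formulae for $s_{i,k}$ and $\beta_i'c_k$). For the determinant identity in part \ref{Wj2iii} the paper computes each $(c_i'\beta_i)^2$ as a ratio of Gram determinants using $\det([X,Y]'[X,Y]) = \det(X'X)\det(Y'(I-P_X)Y)$, telescopes the product, and appeals to Lemma~\ref{Wj1} for $\det(A_{(k)}'A_{(k)})$; you instead write $\Lambda_k\Lambda_k' = \tilde C_k' P_{U_k}\tilde C_k$ with $U_k=[u_1,\dots,u_k]$, reduce to $\det(U_k'U_k)/\det(N_k'N_k)$ via the QR relation $N_k=\tilde C_k R_k$ together with $U_k'N_k = U_k'U_k$, and finish with the matrix-determinant lemma on the rank-one perturbation $U_k'U_k = N_k'N_k - \|x\|^2\iota\iota'$ --- a single global identity that bypasses both the telescoping and the invocation of Lemma~\ref{Wj1}. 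For part \ref{Wj2iv} the paper exhibits $\kappa_k^2$ explicitly as $w_k'(I_d-\tilde C_{(k-1)}\tilde C_{(k-1)}')B[I_p-xx'\eta_{(k-1)}]^{-1}B'(I_d-\tilde C_{(k-1)}\tilde C_{(k-1)}')w_k$ and reads off the dependence from part \ref{Wj2ii}; your cancellation route from $\kappa_k^2=t_{k,k}^2-s_{k,k}^2$ works too, but the quantity that cancels is $\|w_k\|^2$, not the $t_{k,k}^2$-contribution you flagged as the hard step, leaving $\kappa_k^2 = \|x\|^2 - \sum_{m<k}t_{m,k}^2 + \|P_{u_1,\dots,u_{k-1}}u_k\|^2$, whose dependence follows from $w_i'w_k=\sum_{m\le i}t_{m,i}t_{m,k}$ for $i<k$. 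Two small slips worth tidying: the $\beta_i'u_j$ are algebraic, not rational, in the $u_l'u_m$ (the Gram--Schmidt normalization introduces square roots), and you overload $B^\perp$ to denote both the matrix $(I_d-P_{w_1,\dots,w_{k-1}})B$ and, a line later, the orthogonal complement of $\mathrm{span}(B)$; the latter occurrence should read $\mathrm{span}(B)^\perp$.
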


\begin{proof}[Proof of Lemma~\ref{Wj2}]
The first statement in part~\ref{Wj2i} is 
clear, since $B'B=I_p$, $B'N=x\iota'$ and
$\tilde{B}'B=0$, by construction. 
To show that $\tilde{B}'N = (s_{i,j})_{i,j=1}^{d-p}$ 
is upper-triangular, fix $i$ and $j$ so that
$1\le j<i\le d-p$, and note that
$s_{i,j} = \beta_i' w_j$ with
\begin{align*}
\beta_i'w_j \,\,=\,\,
\frac{w_i'(I_d-P_{B,\beta_1,\ldots,\beta_{i-1}})w_j}{\|(I_d-P_{B,\beta_1,
	\ldots,\beta_{i-1}})w_i\|}  
\,\,=\,\, \frac{w_i'(I_d-P_{B,w_1,\ldots,w_{i-1}})w_j}{\|(I_d-P_{B,\beta_1,
	\ldots,\beta_{i-1}})w_i\|} \,\,= \,\,0,
\end{align*}
because
$\s\{B,\beta_1,\ldots,\beta_{i-1}\}=\s\{B,w_1,\ldots,w_{i-1}\}$.
It remains to show that the $s_{i,k}$ with $1\leq i \leq k$
depend only on $w_1,\dots, w_k$ and $x$.
For $i=1$ and $j=1,\dots, d-p$, we have
\begin{align*}
s_{1,j} = \beta_1'w_j = \frac{w_1'(I_d-P_B)w_j}{\|(I_d-P_B)w_1\|} =
\frac{w_1'w_j - w_1'BB'w_j}{\left(w_1'w_1 - w_1'BB'w_1\right)^{1/2}}  =
\frac{w_1'w_j - \|x\|^2}{\left(w_1'w_1 - \|x\|^2\right)^{1/2}}.
\end{align*}
This shows that $s_{1,k}$ has the desired property, and it also shows
that the first $k$ elements of the first row of $\tilde{B}'N$ depend only on
$w_1,\ldots,w_k$ and $x$. Now, for general $i$ satisfying $1\le i \le k$ note
that the $\beta_{1-p},\ldots, \beta_{i-1}$ are mutually orthogonal and thus the
projection onto their span equals the sum of the projections onto every single
one of them. Therefore
\begin{align*}
s_{i,k} &= \beta_i'w_k = 
\frac{w_i'(I_d-P_{B,\beta_1,\ldots,\beta_{i-1}})w_k}
	{\|(I_d-P_{B,\beta_1,\ldots,\beta_{i-1}})w_i\|} 
= \frac{w_i'w_k - \sum_{j=1-p}^{i-1}w_i'\beta_j\beta_j'w_k}
	{\left(w_i'w_i - \sum_{j=1-p}^{i-1}(\beta_j'w_i)^2\right)^{1/2}}  =\\
&= \frac{w_i'w_k - \sum_{j=1-p}^{i-1}s_{j,i}s_{j,k}}
	{\left(w_i'w_i - \sum_{j=1-p}^{i-1}(s_{j,i})^2\right)^{1/2}}. 
\end{align*}
Consequently, we see that $s_{i,k}$ only depends on $w_i$, on $w_k$, 
on $x$, and on elements of $\tilde{B}'N$ in the upper left sub-matrix of the
first $i-1$ rows and $k$ columns. Hence, the claim follows inductively. 

For part~\ref{Wj2ii}, we have $C'N = 0$, because 
$C'N = (c_i' w_j)_{i=1-p}^{0}\mbox{ }_{j=1}^{d-p}$, and because
$\s\{w_1,\ldots,w_{d-p}\} = \s\{c_1,\ldots,c_{d-p}\}=
\s\{c_{1-p},\ldots,c_0\}^\perp$.
And for $\tilde{C}'N = (c_i' w_j)_{i,j=1}^{d-p}$, we have
$c_i' w_j = 0$ whenever $i>j$ by construction of the $c_i$.
To show that $\tilde{B}' \tilde{C} = (\beta_i' c_j)_{i,j=1}^{d-p}$
is upper-triangular, take $i,j$ with $1\leq j < i \leq d-p$,
and note that $c_j \in \s\{w_1,\dots,w_j\}$ while
$\beta_i$ is orthogonal to the span of $B, \beta_1,\dots, \beta_{i-1}$
or, equivalently, to the span of $B, w_1,\dots, w_{i-1}$.
Now consider the $k$-th column of $\mathcal B'\tilde{C}$,
$1\leq k \leq d-p$. For the non-zero elements of this column,
take $i$ so that $1-p \leq i \leq k$, and consider
$\beta_i' c_k$. Because $w_1,\dots, w_{k-1}$
span the same space as $c_1,\dots, c_{k-1}$,  we get
\begin{align*}
\beta_i'c_k \quad&=\quad
\frac{\beta_i'(I_d-
	P_{w_1,\ldots,w_{k-1}})w_k}{\|(I_d-P_{w_1,\ldots,w_{k-1}})w_k\|}
\quad= \quad
 \frac{\beta_i'(I_d-
 	P_{c_1,\ldots,c_{k-1}})w_k}{\|(I_d-P_{c_1,\ldots,c_{k-1}})w_k\|}
\\
&=\quad
\frac{\beta_i'w_k-\sum_{j=1}^{k-1}
	(\beta_i'c_j)(c_j'w_k)}{\left(w_k'w_k-
	\sum_{j=1}^{k-1}(c_j'w_k)^2\right)^{1/2}}.
\end{align*}
In this display, the $c_j$ depend only on $w_1,\dots, w_j$,
so that the terms $c_j' w_k$ depend only on $w_1,\dots, w_k$.
And by part~\ref{Wj2i}, the terms $\beta_i' w_k = s_{i,k}$ depend
only on $w_1,\dots, w_k$ and $x$.
With this, the proof is completed inductively by arguing as in
part~\ref{Wj2i}, mutatis mutandis.

For part \ref{Wj2iii}, we first note that $\tilde{B}'\tilde{C}$ is 
upper-triangular by part~\ref{Wj2ii},
and hence $\Lambda_k$ is lower-triangular, which yields
$\det(\Lambda_k\Lambda_k') = \det(\Lambda_k)^2 = \prod_{i=1}^k
(c_i'\beta_i)^2$. 
For $i\ge 1$, set $N_{(i)}  = (w_1,\ldots,w_i)$ and
$A_{(i)}  = (I_d - P_{N_{(i)}})B$, and set $N_{(0)}=0$. 
Now note that $\s\{B,\beta_1,\dots, \beta_{i-1} \} = \s \{B,N_{(i-1)}\}$, and
calculate $(c_i'\beta_i)^2$ as
\begin{align*}
\frac{\left(
	w_i'(I_d-P_{N_{(i-1)}})(I_d-P_{B,N_{(i-1)}})w_i
	\right)^2}{\|(I_d-P_{N_{(i-1)}})w_i\|^2\|(I_d-P_{B,N_{(i-1)}})w_i\|^2}
\quad=\quad \frac{w_i'(I_d-P_{B,N_{(i-1)}})w_i}{w_i'(I_d-P_{N_{(i-1)}})w_i}. 
\end{align*}
The numerator and the denominator on the right can be written as
\begin{align*}
 \det(w_i'(I_d-P_{B,N_{(i-1)}})w_i) &\quad=\quad
 \frac{\det([B,N_{(i)}]'[B,N_{(i)}])}{\det([B,N_{(i-1)}]'[B,N_{(i-1)}])}
 \text{ and}
\\
 \det(w_i'(I_d-P_{N_{(i-1)}})w_i) &\quad=\quad
 \frac{\det(N_{(i)}'N_{(i)})}{\det(N_{(i-1)}'N_{(i-1)})},
\end{align*}
respectively, upon noting that
$\det( [X,Y]' [X,Y]) = \det(X'X) \det(Y' (I-P_X)Y)$
for appropriate matrices $X$ and $Y$; cf. \citep[Problem 2.4]{Rao73a}).
Using that relation again,
the numerator, i.e., the first expression in the preceding display,
can be further re-expressed as
\begin{align*}
\frac{\det(N_{(i)}'N_{(i)})
 \det(B'(I_d-P_{N_{(i)}})B)}{\det(N_{(i-1)}'N_{(i-1)})\det(B'(I_d-P_{N_{(i-1)}})B)}  
 \,\,=\,\, \frac{\det(N_{(i)}'N_{(i)})
 \det(A_{(i)}'A_{(i)})}{\det(N_{(i-1)}'N_{(i-1)})\det(A_{(i-1)}'A_{(i-1)})} .
\end{align*}
Note that $\det(A_{(i)}'A_{(i)}) = 
1 - \|x\|^2\iota'(N_{(i)}'N_{(i)})^{-1}\iota
> 0$ for $i\geq 1$ by Lemma~\ref{equivalence}, 
and that $\det(A_{(0)}'A_{(0)}) = 1$. 
Taking the pieces together, we see that
$\prod_{i=1}^k(c_i'\beta_i)^2= \det(A_{(k)}' A_{(k)}) = 
1-\|x\|^2\iota'(N_{(k)}'N_{(k)})^{-1}\iota$,
as claimed.
For the last statement, we first note that 
we have $M = \mathcal B S$, because $\mathcal B$ is orthogonal.
For the $k$-th column of $M$, we thus have
$w_k = \sum_{i=1-p}^{d-p} \beta_is_{i,k} = Bx + \sum_{i=1}^k \beta_is_{i,k}$,
where the second equality follows from part~\ref{Wj2i}.
Take now $k>1$,  and consider the vector
$(t_{i,k})_{i=1}^{k-1} = (c_i' w_k)_{i=1}^{k-1}$. Using the last formula for
$w_k$, we see that
\begin{align*}
\begin{pmatrix} t_{1,k} \\ \vdots \\ t_{k-1,k} \end{pmatrix} 
\quad=\quad
\begin{pmatrix} c_1' \\ \vdots \\ c_{k-1}' \end{pmatrix} Bx + \begin{pmatrix}
c_1' \\ \vdots \\ c_{k-1}' \end{pmatrix}  
\begin{pmatrix} \beta_1 & \cdots & \beta_{k-1} \end{pmatrix} \begin{pmatrix}
s_{1,k} \\ \vdots \\ s_{k-1,k} \end{pmatrix} 
\end{align*}
because $c_{i}'\beta_k=0$ for $i=1,\dots, k-1$ in view of part \ref{Wj2ii}. 
We thus see that $(t_{i,k})_{i=1}^{k-1}$ can be decomposed 
as claimed. 
Finally, the statements that $\Lambda_{k}$ 
depends only on $w_1,\dots, w_k$ and $x$, and
that $\zeta$ and $(t_{i,k})_{i=1}^{k-1}$ depend only on 
$w_1,\dots, w_{k-1}$ and $x$ (in case $k>1$),
follow from part~\ref{Wj2i} and~\ref{Wj2ii}.

For part \ref{Wj2iv}, we 
immediately obtain that $t_{1,1}= c_1'w_1 = \|w_1\| = (\|Bx\|^2 +
\|\beta_1s_{1,1}\|^2)^{1/2} = (\|x\|^2 + s_{1,1}^2)^{1/2}$
because $w_1 = B x + (I_d-P_B) w_1$.
In the remaining cases, we have
\begin{align*}
t_{k,k} &\quad=\quad c_k'w_k \quad=\quad \|(I_d-P_{N_{(k-1)}})w_k\| 
	\quad=\quad \|(P_{B,N_{(k)}} -
	P_{N_{(k-1)}})w_k\| \\
&\quad=\quad 
	\|(P_{B,N_{(k-1)}} + P_{(I_d-P_{B,N_{(k-1)}})w_k} -   
	P_{N_{(k-1)}})w_k\| 
\\
&\quad=\quad
	\|(P_{(I_d-P_{N_{(k-1)}})B}  + P_{(I_d-P_{B,N_{(k-1)}})w_k})w_k\|  
\\
&\quad=\quad
	\left(\|P_{(I_d-P_{N_{(k-1)}})B}w_k\|^2  + 
	\|P_{\beta_k}w_k\|^2\right)^{1/2} 
	\quad=\quad 
	\left(\kappa_k^2  + s_{k,k}^2\right)^{1/2},
\end{align*}
where the last two equalities are obtained by noting that
$(I_d-P_{N_{(k-1)}})B \perp \beta_k$ and that $\|P_{\beta_k}w_k\|^2 = w_k'
\beta_k \beta_k' w_k = s_{k,k}^2$. To see that $\kappa_k^2$ is a function of
$w_1,\ldots,w_{k-1},t_{1,k},\ldots,t_{k-1,k}$ and $x$ only, set $\eta_k =
\iota'(N_{(k)}'N_{(k)})^{-1}\iota$, set $\tilde{C}_{(k)}$ equal to the matrix
consisting of the first $k$ columns of $\tilde{C}$, and observe that
\begin{align*}
\kappa_k^2 &\quad=\quad w_k'(I_d-P_{N_{(k-1)}})B\left[ B'(I_d -
P_{N_{(k-1)}})B\right]^{-1} B'(I_d - P_{N_{(k-1)}})w_k \\
&\quad=\quad w_k'(I_d- \tilde{C}_{(k-1)}\tilde{C}_{(k-1)}')B\left[ I_p -
xx'\eta_{(k-1)}\right]^{-1} B'(I_d - \tilde{C}_{(k-1)}\tilde{C}_{(k-1)}')w_k,
\end{align*}
because $B' w_i = x$, and because $N_{(k-1)}$ and $\tilde{C}_{(k-1)}$
span the same space.
We further analyze the expressions to the left and right of the inverse, i.e.,
\begin{align*}
B'(I_d - \tilde{C}_{(k-1)}\tilde{C}_{(k-1)}')w_k 
&\quad=\quad x - B'\tilde{C}_{(k-1)}\tilde{C}_{(k-1)}'w_k \\
&\quad=\quad x - B'\tilde{C}_{(k-1)}(t_{1,k},\ldots,t_{k-1,k})'.
\end{align*}
Now, in view of part \ref{Wj2ii}, $B'\tilde{C}_{(k-1)}$ depends only on
$w_1,\ldots,w_{k-1}$ and $x$. Together with part~\ref{Wj2iii}, we see that
$\kappa_k^2$ does indeed depend only on the claimed quantities.
\end{proof}


We now replace the deterministic matrices $B = [\beta_{1-p},\ldots,\beta_{0}]$
and the vectors $w_1,\dots, w_{d-p}$ by their random counterparts 
$\mathbf B = [b_{1-p},\dots, b_0]$ and $W_1,\dots, W_{d-p}$,
where $\mathbf B$  and the $W_j$ are as in \eqref{Wj}
with $d-p$ replacing $k$ (and where $x\in \R^p$ is fixed throughout).
Because the event where the column-vectors of $\mathbf B$ and the
vectors $W_1,\dots, W_{d-p}$ are linearly independent
has probability one, we can assume, without loss of generality,
that these vectors are linearly independent.
In the following, re-define the matrices
$N$, $\tilde{B}$, $C$, and $\tilde{C}$
as in the discussion leading up to Lemma~\ref{Wj2} but with
$\mathbf B$ and the $W_j$ replacing $B$ and the $w_j$, respectively.

\begin{lemma}\label{Wj3}
Let $\mathbf B$, $W_1,\dots, W_{d-p}$, and $x$ as in the preceding paragraph.
For $\mathcal B$, $\mathcal C$, $S$, and $T$, which 
are functions of $\mathbf B$, $W_1,\dots, W_{d-p}$, and $x$, the following 
holds true.
\begin{list}{\thercnt}{
        \usecounter{rcnt}
        \setlength\itemindent{5pt}
        \setlength\leftmargin{0pt}
        \setlength\partopsep{0pt} 
        }
\item \label{Wj3ii}
The matrices $\mathcal B$ and $S$ are independent with $\mathcal B$ 
uniformly distributed on $\mathcal V_{d,d}$.
Furthermore, the random 
variables $s_{i,j}$, $1-p\le i \le d-p$, $1 \le j \le d-p$, are independent, 
and distributed as follows:
For $j$ satisfying $1\le j \le d-p$, we have 
$(s_{1-p,j},\dots, s_{d-p,j})' = (x_1,\dots,x_p,s_{1,j},\dots, s_{j,j}, 0,\dots,0)'$
with $s_{i,j} \sim N(0,1)$
for $i=1,\dots, j-1$ and $s_{j,j} \sim (\chi^2_{d-p-j+1})^{1/2}$.

\item \label{Wj3iii}
The random matrices $\mathcal C$ and $T$ are independent
with $\mathcal C$ uniformly distributed on $\mathcal V_{d,d}$.
\end{list}
\end{lemma}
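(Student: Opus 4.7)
The plan rests on two ingredients: the rotational invariance of the joint law of $M = [\mathbf B, N]$, and the classical Bartlett decomposition of the QR factorization of a matrix with i.i.d.\ standard Gaussian entries. Because $\mathbf B$ is Haar-distributed on $\mathcal V_{d,p}$ and the $V_j$'s are i.i.d.\ standard Gaussian and independent of $\mathbf B$, the joint law of $(\mathbf B, V_1, \ldots, V_{d-p})$ is invariant under $(\mathbf B, V_j) \mapsto (Q\mathbf B, Q V_j)$ for any orthogonal $Q \in \mathcal V_{d,d}$. Since $W_j = \mathbf B x + (I_d - \mathbf B \mathbf B') V_j$ transforms to $Q W_j$ under this map, $M$ has the same law as $Q M$. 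The Gram--Schmidt constructions defining $\mathcal B$ and $\mathcal C$ are equivariant, so $\mathcal B(QM) = Q \mathcal B(M)$ and $\mathcal C(QM) = Q \mathcal C(M)$; meanwhile $S = \mathcal B(M)' M$ and $T = \mathcal C(M)' M$ are invariant. Hence the joint laws of $(\mathcal B, S)$ and $(Q \mathcal B, S)$ coincide for every $Q$, and similarly for $(\mathcal C, T)$. Passing to a regular conditional distribution of $\mathcal B$ given $S$ (respectively $\mathcal C$ given $T$), the conditional law is almost surely left-invariant under the transitive action of $\mathcal V_{d,d}$ on itself and must therefore equal Haar measure. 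This simultaneously yields the uniform distribution on $\mathcal V_{d,d}$ and the independence assertions in parts~\ref{Wj3ii} and~\ref{Wj3iii}.

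To identify the distribution of the entries $s_{i,j}$ in part~\ref{Wj3ii}, I would work conditionally on $\mathbf B$: choose a measurable $U_0 = U_0(\mathbf B) \in \mathcal V_{d,d-p}$ whose columns span the orthogonal complement of the column space of $\mathbf B$, so that $I_d - \mathbf B \mathbf B' = U_0 U_0'$. Then $W_j = \mathbf B x + U_0 \tilde V_j$ with $\tilde V_j := U_0' V_j$ conditionally i.i.d.\ $N(0, I_{d-p})$ given $\mathbf B$. By Lemma~\ref{Wj2}\ref{Wj2i}, the only non-deterministic block of $S$ is the upper-triangular matrix $\tilde B' N$. Writing $\tilde B = U_0 \tilde{\mathcal B}_0$ with $\tilde{\mathcal B}_0$ the orthogonal factor in the QR decomposition of the $(d-p)\times(d-p)$ Gaussian matrix $\tilde N := [\tilde V_1, \ldots, \tilde V_{d-p}]$, one checks that $\tilde B' N = \tilde{\mathcal B}_0' \tilde N$ coincides with the upper-triangular factor of that decomposition. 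The Bartlett decomposition then gives that the off-diagonal entries of this factor are independent $N(0,1)$ variables, the diagonal entries are independent $(\chi^2_{d-p-j+1})^{1/2}$ variables, and all entries are mutually independent. Because this conditional distribution does not depend on $\mathbf B$, the same holds unconditionally.

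The main technical obstacle will be the measure-theoretic step that converts rotational invariance of the joint law into a Haar-distributed conditional law for $\mathcal B$ and $\mathcal C$: this requires measurability of $S$ and $T$ as functions of $M$, existence of regular conditional distributions on the compact group $\mathcal V_{d,d}$, and uniqueness of left-Haar measure under the transitive action. Once these pieces are in place, the equivariance of the Gram--Schmidt constructions and the Bartlett decomposition are classical, and the remaining verifications are routine.
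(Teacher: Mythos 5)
Your proposal is correct but departs from the paper's argument in two instructive ways. For the Haar-and-independence claims in parts~\ref{Wj3ii} and~\ref{Wj3iii}, you pass to a regular conditional distribution of $\mathcal B$ (resp.\ $\mathcal C$) given $S$ (resp.\ $T$) and argue that its a.s.\ left-invariance forces it to be Haar; this works, but it carries exactly the bookkeeping you flag: a.s.\ invariance simultaneously over a countable dense set of $Q\in\mathcal V_{d,d}$, then continuity of the group action to extend to all of $\mathcal V_{d,d}$. The paper sidesteps regular conditional distributions altogether by a randomization device: it introduces an independent Haar-distributed $\mathbf A$, deduces from the invariance of the law of $M$ that $(\mathcal B(M), S(M))$ and $(\mathbf A\,\mathcal B(M), S(M))$ are equal in law, and notes that $\mathbf A\,\mathcal B(M)$ is Haar conditionally on $M$, hence Haar unconditionally and independent of $M$ (therefore of $S$). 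For the distribution of the $s_{i,j}$, you reduce $\tilde B'N$ to the $R$-factor in the QR decomposition of the $(d-p)\times(d-p)$ Gaussian matrix $\tilde N = U_0'N$ and invoke the Bartlett decomposition, which delivers the marginal laws and joint independence in one stroke. The paper instead computes $s_{i,j} = b_i'V_j$ for $i<j$ and $s_{j,j}^2 = V_j'(I_d - P_{b_{1-p},\dots,b_{j-1}})V_j$ directly, shows that the conditional law given $(\mathbf B, W_1,\dots,W_{j-1})$ does not depend on the conditioning variables, and then assembles full joint independence of all the $s_{i,j}$ via an iterated characteristic-function factorization. Your route is more economical because it leans on a classical result, while the paper's is self-contained and structurally parallel to the conditional arguments it later reuses in Corollary~\ref{Wj4}; both are sound.
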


\begin{proof}
Note that both $S=S(M)$ and $\mathcal B=\mathcal B(M)$ are functions of $M$,
and that the law of $M$ is left invariant under transformations from 
$\mathcal V_{d,d}$, in the sense that $A M$ is distributed as $M$
for any $A \in \mathcal V_{d,d}$. [Indeed,
$M=M(\mathbf B,V_1,\dots,V_{d-p})$ is a function of 
orthogonally invariant random vectors and satisfies $AM =
M(A\mathbf B,A V_1,\dots,A V_{d-p})$ 
for any $A\in \mathcal V_{d,d}$.]
Take $\mathbf A$ uniformly distributed on $\mathcal V_{d,d}$
and independent of $M$.
Then the law of $(\mathcal B(M), S(M))$ coincides with the
law of $(\mathcal B(\mathbf A M), S(\mathbf A M))$, and it is  easy to see that
$(\mathcal B(\mathbf A M), S(\mathbf A M)) = (\mathbf A \mathcal B(M), S(M))$. 
Hence,
$$
\left(\begin{array}{c}
\mathcal B(M)\\ S(M)
\end{array}\right)
\quad\sim\quad
\left(\begin{array}{c}
\mathbf A \mathcal B(M)\\ S(M)
\end{array}\right).
$$
On the right-hand side of this display, $\mathbf A \mathcal B(M)$ is 
uniformly distributed on $\mathcal V_{d,d}$
conditional on $M$, and thus also unconditionally. This also entails
that 
$\mathbf A \mathcal B(M)$ is 
independent of $M$ and, hence, of $S(M)$. From the above
display 
we therefore obtain that $\mathcal B$
is uniformly distributed on $\mathcal V_{d,d}$ and it is independent of $S$.
A similar argument shows that 
$\mathcal C$ also is uniformly distributed on $\mathcal V_{d,d}$ and is
independent of $T$.

It remains to consider the joint distribution of the $s_{i,j}$, with $1-p
\leq i \le d-p$ and $1 \le j \le d-p$.
Some $s_{i,j}$ are degenerate, as described by Lemma~\ref{Wj2}\ref{Wj2i}.
For the non-degenerate $s_{i,j}$, i.e., for those 
where we have $1\leq i \leq j \le  d-p$, recall that 
the vectors $b_{1-p},\dots,b_0,b_1,\dots, b_{d-p}$ are mutually orthogonal, 
so that
$$
s_{i,j} \quad = \quad b_i' W_j \quad=\quad
b_i' (\mathbf Bx  + (I_d - \mathbf B \mathbf B') V_j) \quad=\quad b_i' V_j
$$
for $i=1,\dots, j-1$ and
$$
s_{j,j} \quad = \quad b_j' W_j \quad=\quad 
\left( V_j' (I_d-P_{b_{1-p},\dots,b_0,\dots, b_{j-1}})V_j \right)^{1/2}
$$
for each fixed $j$ with $1\leq j \le d-p$.
Note that $b_{1-p},\dots, b_{j-1}$ are functions of
$B, W_1,\dots, W_{j-1}$. Hence, conditional on
$B, W_1,\dots, W_{j-1}$, we see that the random variables
$s_{1,j}, \dots, s_{j,j}$ are independent and distributed as claimed
(conditionally).
Because their conditional distribution does not depend on the conditioning
variables, they are independent of the conditioning random variables, and
their conditional distribution coincides with 
their unconditional distribution.
In other words, $s_{1,j}, \dots, s_{j,j}$ are jointly distributed as
stated in the lemma; moreover, they are independent of
$B, W_1,\dots, W_{j-1}$ and hence also independent of
the $s_{i,k}$ with $k < j$
(in view of Lemma~\ref{Wj2}\ref{Wj2i}).  As this holds 
for each $j=1,\dots, d-p$, it is easy to see that the
$s_{i,j}$ are all independent. [Use the independence relations
established so far to argue that the, say, joint characteristic
function of all the $s_{i,j}$ can be factorized into the product
of the joint characteristic function of the $s_{i,k}$ with $k<d-p$
and the marginal characteristic functions of the $s_{i,d-p}$.
Now repeat this argument with $d-p$ replaced by
$d-p-1, d-p-2, \dots, 2$.]  
\end{proof}

\begin{corollary}\label{Wj4}
Fix $k$ satisfying $1\leq k \le d-p$,
and let $\zeta$, $\Lambda_k$, and $\kappa_k^2$ be as in Lemma~\ref{Wj2} but
with $W_j$ replacing $w_j$.
\begin{list}{\thercnt}{
	\usecounter{rcnt}
	\setlength\itemindent{5pt}
	\setlength\leftmargin{0pt}
	\setlength\itemsep{\medskipamount}
}
\item \label{Wj4i}
In case $k>1$ and conditional on $W_1,\dots, W_{k-1}$,
the vector $(t_{1,k},\dots, t_{k-1,k})'$  is distributed
as $N(\zeta, \Lambda_{k-1} \Lambda_{k-1}')$.

\item \label{Wj4ii}
In case $k>1$ and
conditional on $W_1,\dots, W_{k-1}, t_{1,k},\dots, t_{k-1,k}$,
the quantities $c_k$ and $t_{k,k}$ are independent and  distributed as follows:
We have
$t_{k,k} \sim (\kappa_k^2 + \chi^2_{d-p-k+1})^{1/2}$ (conditionally), i.e., 
$t_{k,k}$ is distributed
as the square root of the sum of $\kappa_k^2$ and a chi-square distributed
random
variable with $d-p-k+1$ degrees of freedom (conditionally).
Moreover, the vector $c_k$ is distributed
as $\U({\mathcal V}_{d,1} \cap [W_1,\dots, W_{k-1}]^\perp)$
(conditionally), where $\U(\dots)$ denotes the uniform distribution on
the  indicated set, and where  $\mathcal V_{d,1}$ denotes the 
unit sphere in $\R^d$ by our conventions.
In case $k=1$, these statements apply unconditionally, mutatis mutandis.
\end{list}
\end{corollary}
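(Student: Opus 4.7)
For part~\ref{Wj4i}, my plan is to invoke Lemma~\ref{Wj2}\ref{Wj2iii}, which expresses $(t_{1,k},\dots,t_{k-1,k})'$ as $\zeta+\Lambda_{k-1}(s_{1,k},\dots,s_{k-1,k})'$ with $\zeta$ and $\Lambda_{k-1}$ deterministic functions of $W_1,\dots,W_{k-1}$ and $x$. By Lemma~\ref{Wj3}\ref{Wj3ii}, $s_{1,k},\dots,s_{k-1,k}$ are i.i.d.\ standard normal and jointly independent of $\mathcal B$ together with $\{s_{i,j}\}_{1\le i\le j\le k-1}$. The identity $W_j=\mathbf Bx+\sum_{i=1}^j b_i s_{i,j}$, which follows from $M=\mathcal B S$ and the structure of $S$ recorded in Lemma~\ref{Wj2}\ref{Wj2i}, makes $W_1,\dots,W_{k-1}$ measurable with respect to those variables, so the $s_{i,k}$ for $i<k$ remain i.i.d.\ $N(0,1)$ after conditioning on $W_1,\dots,W_{k-1}$. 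Applying the affine transformation law then gives the claim.

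For part~\ref{Wj4ii}, the strategy is to exploit the independence $\mathcal C\perp T$ from Lemma~\ref{Wj3}\ref{Wj3iii}. First I will identify the conditioning $\sigma$-algebra $\sigma(W_1,\dots,W_{k-1},t_{1,k},\dots,t_{k-1,k})$ with $\mathcal E_1\vee\mathcal E_2$, where $\mathcal E_1=\sigma(c_1,\dots,c_{k-1})\subseteq\sigma(\mathcal C)$ and $\mathcal E_2=\sigma(\{t_{i,j}:1\le i\le j\le k,\,(i,j)\ne(k,k)\})\subseteq\sigma(T)$. The key bijection is $W_j=\sum_{i=1}^j c_i t_{i,j}$ (Lemma~\ref{Wj2}\ref{Wj2ii}), which is inverted by Gram-Schmidt, so $W_1,\dots,W_{k-1}$ and $(c_1,\dots,c_{k-1},\{t_{i,j}\}_{1\le i\le j\le k-1})$ generate the same $\sigma$-algebra. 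Since $c_k$ is $\sigma(\mathcal C)$-measurable and $t_{k,k}$ is $\sigma(T)$-measurable, $\mathcal C\perp T$ yields both the conditional independence $\E[f(c_k)g(t_{k,k})\|\mathcal E_1\vee\mathcal E_2]=\E[f(c_k)\|\mathcal E_1]\,\E[g(t_{k,k})\|\mathcal E_2]$ and the reduction of each conditional expectation to the corresponding sub-$\sigma$-algebra. The conditional law $c_k\|\mathcal E_1$ is uniform on $\mathcal V_{d,1}\cap[c_1,\dots,c_{k-1}]^\perp=\mathcal V_{d,1}\cap[W_1,\dots,W_{k-1}]^\perp$ by the standard invariance of the Haar measure on $\mathcal V_{d,d}$.

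The main obstacle is pinning down $t_{k,k}\|\mathcal E_2$. By Lemma~\ref{Wj2}\ref{Wj2iv}, $t_{k,k}^2=\kappa_k^2+s_{k,k}^2$, so it suffices to verify two claims: that $\kappa_k^2$ is $\mathcal E_2$-measurable, and that $s_{k,k}^2$ is independent of $\mathcal E_2$ and distributed as $\chi^2_{d-p-k+1}$. For the first, Lemma~\ref{Wj2}\ref{Wj2iv} already gives that $\kappa_k^2$ is a function of $W_1,\dots,W_{k-1}$, $t_{1,k},\dots,t_{k-1,k}$ and $x$; the relevant inner products $W_i'W_j$ (for $i,j\le k$ with $(i,j)\ne(k,k)$) equal $\sum_{\ell=1}^{\min(i,j)}t_{\ell,i}t_{\ell,j}$ by orthonormality of the $c_\ell$, so they lie in $\mathcal E_2$ and hence so does $\kappa_k^2$. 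For the second, Lemma~\ref{Wj3}\ref{Wj3ii} gives $s_{k,k}\sim\chi_{d-p-k+1}$, independent of $\mathcal B$ and of the other $s_{i,j}$, so it suffices to check $\mathcal E_2\subseteq\sigma(\mathcal B,\{s_{i,j}\}_{(i,j)\ne(k,k)})$. The nontrivial case is $t_{i,k}=c_i'W_k$ with $i<k$: although $W_k=\mathbf Bx+\sum_{\ell=1}^{k}b_\ell s_{\ell,k}$ contains a $b_k s_{k,k}$ term, the vector $c_i$ lies in $\s\{\mathbf B,b_1,\dots,b_{k-1}\}$ and is hence orthogonal to $b_k$, so $c_i'W_k$ carries no dependence on $s_{k,k}$. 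Together these give $t_{k,k}\|\mathcal E_2\sim(\kappa_k^2+\chi^2_{d-p-k+1})^{1/2}$. The case $k=1$ is handled analogously, with $\kappa_1^2=\|x\|^2$ from Lemma~\ref{Wj2}\ref{Wj2iv} and the conditioning $\sigma$-algebra being trivial.
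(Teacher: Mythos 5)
Your proof is correct and rests on the same ingredients as the paper's argument: Lemma~\ref{Wj2}\ref{Wj2iii}--\ref{Wj2iv} for the algebra, Lemma~\ref{Wj3} for the distribution of the $s_{i,j}$'s and for the independence $\mathcal C\perp T$, and the measurable bijection between $(W_1,\dots,W_{k-1},t_{1,k},\dots,t_{k-1,k})$ and $(c_1,\dots,c_{k-1},\{t_{i,j}\}_{(i,j)\ne(k,k)})$. Part~\ref{Wj4i} matches the paper's proof essentially verbatim. For part~\ref{Wj4ii}, the one place you depart is the packaging: instead of establishing the conditional independence of $c_k$ and $t_{k,k}$ by comparing the conditional distribution of $c_k$ under successive enlargements of the conditioning set (as the paper does), you identify $\sigma(L_k)=\mathcal E_1\vee\mathcal E_2$ with $\mathcal E_1\subseteq\sigma(\mathcal C)$ and $\mathcal E_2\subseteq\sigma(T)$ and apply a single factorization identity for conditional expectations across the independent $\sigma$-algebras $\sigma(\mathcal C)$ and $\sigma(T)$, which yields the conditional independence and the reduction of the conditional laws to $\mathcal E_1$ and $\mathcal E_2$ in one stroke. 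This is cleaner, but it charges you with the slightly stronger claim that $\kappa_k^2$ is $\mathcal E_2$-measurable (a function of the $t_{i,j}$'s with $(i,j)\neq(k,k)$ and of $x$ alone), whereas the paper only needs $\kappa_k^2$ to be $\sigma(L_k)$-measurable, which is immediate from Lemma~\ref{Wj2}\ref{Wj2iv}. Your conclusion is correct, but the step from ``the Gram entries $W_i'W_j$ lie in $\mathcal E_2$'' to ``hence so does $\kappa_k^2$'' deserves one more line: by the formula in the proof of Lemma~\ref{Wj2}\ref{Wj2iv}, $\kappa_k^2$ also involves $B'\tilde{C}_{(k-1)}$, which one must argue is itself a function of the $t_{i,j}$'s and $x$ via the Gram--Schmidt recursion of Lemma~\ref{Wj2}\ref{Wj2ii} (it is, but it is not literally one of the ``relevant inner products'').
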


\begin{proof}
For part~\ref{Wj4i},
it suffices to show that the vector $v =  (s_{1,k},\dots, s_{k-1,k})'$ is 
distributed as $N(0,I_{k-1})$ and is
independent of $W_1,\dots, W_{k-1}$, in view of Lemma~\ref{Wj2}\ref{Wj2iii}.
From Lemma~\ref{Wj3}\ref{Wj3ii}, we see that $v$
does indeed have the $N(0,I_{k-1})$ distribution, unconditionally, 
and Lemma~\ref{Wj3}\ref{Wj3ii} also shows that $v$
is independent of $\mathcal B$ and of the $s_{i,j}$ with $1\leq j < k$.
Noting that $W_1,\dots, W_{k-1}$ are functions of
$\mathcal B$ and of the $s_{i,j}$ with $1\leq j<k$ completes the argument.

For part~\ref{Wj4ii}, consider first the case where $k=1$.
Here, we have $c_1 \sim \U(\mathcal V_{d,1})$ independently of $t_{1,1}$
in view of Lemma~\ref{Wj3}\ref{Wj3iii}. 
And for the distribution of $t_{1,1}$, we note that
$t_{1,1} = (\|x\|^2 + s_{1,1}^2)^{1/2}$ by Lemma~\ref{Wj2}\ref{Wj2iv},
and that $s_{1,1}^2 \sim \chi^2_{d-p}$ by Lemma~\ref{Wj3}\ref{Wj3ii}.

For part~\ref{Wj4ii} with $k>1$, write $L_k$ as shorthand for the list
of conditioning variables under consideration, 
i.e., for $W_1,\dots, W_{k-1}, t_{1,k},\dots, t_{k-1,k}$, and write
$\tilde{L}_k$ for the list of random variables consisting of 
$c_1,\dots, c_{k-1}$, of the $t_{i,j}$ with $1\leq i \leq j < k$,
and of the $t_{i,k}$ with $1 \leq i < k$.
Note that conditioning on $L_k$ is equivalent to conditioning on 
$\tilde{L}_k$
(because there is a measurable bijection
between  the two groups of conditioning variables
in view of Lemma~\ref{Wj2}).
We now proceed similarly as in the case $k=1$, and note
first that $c_k$ and $t_{k,k}$ are conditionally independent given
$L_k$. [Indeed, $\mathcal C$ and $T$ are independent by
Lemma~\ref{Wj3}\ref{Wj3iii},
such that both 
the conditional distribution of (a) $c_k$ given $\tilde{L}_k$ and $t_{k,k}$,
and also the conditional distribution of (b) $c_k$ given $\tilde{L}_k$,
coincides with 
the conditional distribution of (c) $c_k$ given  $c_1,\dots, c_{k-1}$.
Because the conditional distributions (a) and (b) coincide,
it follows that $c_k$ and $t_{k,k}$ are conditionally independent given
$\tilde{L}_k$ or, equivalently, given $L_k$.\footnote{
	See Sections 6.12 and 6.15 in~\citep{Hof94a} 
	for some basic facts about conditional independence.
}]
Next, we note that $c_k$ given $L_k$ is distributed
as claimed. [It suffices to note that the conditional distribution
(b) agrees with the conditional distribution (c) mentioned earlier,
and that the latter is $\U(\mathcal V_{d,1} \cap [c_1,\dots, c_{k-1}]^\perp)$
or,
equivalently, $\U(\mathcal V_{d,1} \cap [W_1,\dots, W_{k-1}]^\perp)$.]
Finally, to obtain the conditional distribution of $t_{k,k}$ given $L_k$,
it remains to show that $s_{k,k}^2$ is independent of $L_k$
(because $t_{k,k} = (\kappa_k^2 + s_{k,k}^2)^{1/2}$ with $\kappa_k^2$
being a function of $L_k$ in view of Lemma~\ref{Wj2}\ref{Wj2iv},
and because $s_{k,k}^2 \sim \chi^2_{d-p-k+1}$ (unconditionally) by
Lemma~\ref{Wj3}\ref{Wj3ii}).
To this end, we again use Lemma~\ref{Wj3}\ref{Wj3ii} to see that
$s_{k,k}$ is independent of $\mathcal B$ and of the $s_{i,j}$ with 
$(i,j) \neq (k,k)$, and we note that
the random variables in $L_k$ are functions of 
$\mathcal B$ and of the $s_{i,j}$ with $(i,j)\neq (k,k)$.
[Indeed, the vectors $W_1,\dots, W_{k-1}$ are functions of $\mathcal B$ and
of the $s_{i,j}$ with $j < k$. Moreover, by 
Lemma~\ref{Wj2}\ref{Wj2iii}, 
the vector $(t_{1,k}, \dots, t_{k-1,k})'$ is a function
of the $W_1,\dots, W_{k-1}$ and of $(s_{1,k},\dots, s_{k-1,k})'$.]
\end{proof}

\begin{lemma}\label{Uk}
For positive integers $2\le k \le d$, let $q_1,\dots,q_k$ be independent random
variables
with $q_i \thicksim N(0,1)$ for $i=1,\dots, k-1$ and $q_k
\thicksim (\chi_{d-k+1}^2)^{1/2}$,
and let $c_1,\dots, c_{k-1}$ be fixed 
orthonormal $d$-vectors, while $c_k \thicksim
\mathcal U(\mathcal V_{d,1}\cap [c_1,\dots,c_{k-1}]^\perp)$ is independent of
all the $q_i$ with $i=1,\dots, k$. Finally, set
$U = \sum_{i=1}^{k-1} q_i c_i  +  q_k c_k$.
Then $U\thicksim N(0,I_d)$.
\end{lemma}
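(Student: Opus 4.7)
The plan is to construct a standard Gaussian vector $V\sim N(0,I_d)$ and show that the natural decomposition of $V$ with respect to the orthonormal system $c_1,\dots,c_{k-1}$ has the same joint law as the defining ingredients of $U$.

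First I would write $V = \sum_{i=1}^{k-1} (V'c_i)\,c_i + V_\perp$, where $V_\perp = (I_d - \sum_{i=1}^{k-1} c_ic_i')V$ is the projection of $V$ onto the orthogonal complement $L := [c_1,\dots,c_{k-1}]^\perp$. Completing $c_1,\dots,c_{k-1}$ to an orthonormal basis of $\R^d$ and using orthogonal invariance of the standard Gaussian, the scalars $V'c_1,\dots, V'c_{k-1}$ are i.i.d.\ $N(0,1)$ and jointly independent of $V_\perp$; moreover, $V_\perp$ is a standard Gaussian on the $(d-k+1)$-dimensional subspace $L$.

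Next I would invoke the standard polar decomposition of a Gaussian on $L$: writing $V_\perp = \|V_\perp\|\cdot V_\perp/\|V_\perp\|$ (with the ratio defined arbitrarily on the null event $V_\perp = 0$), one has $\|V_\perp\|\sim (\chi^2_{d-k+1})^{1/2}$, the direction $V_\perp/\|V_\perp\|$ is distributed as $\mathcal U(\mathcal V_{d,1}\cap L)$, and the two are independent. This is the classical fact that a rotationally symmetric distribution on a Euclidean space factors into an independent radius and a uniform direction; the uniformity follows because the law of $V_\perp$ is invariant under every orthogonal transformation of $L$, and $\mathcal U(\mathcal V_{d,1}\cap L)$ is the unique such probability measure on the sphere in $L$.

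Combining these observations, the random vector $\sum_{i=1}^{k-1}(V'c_i)c_i + \|V_\perp\|\cdot V_\perp/\|V_\perp\|$ has exactly the same joint distribution as $\sum_{i=1}^{k-1} q_i c_i + q_k c_k$, because both are built from $k-1$ i.i.d.\ $N(0,1)$ coefficients in front of the fixed vectors $c_1,\dots,c_{k-1}$, plus an independent $(\chi^2_{d-k+1})^{1/2}$ radius times an independent $\mathcal U(\mathcal V_{d,1}\cap L)$ direction. Since the former equals $V\sim N(0,I_d)$, the lemma follows. The only delicate point is the joint-independence claim in the polar decomposition step; I would handle it by the orthogonal-invariance argument above rather than by direct density computation.
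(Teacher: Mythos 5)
Your proof is correct and follows essentially the same route as the paper's: both decompose a standard Gaussian $V$ along $\s\{c_1,\dots,c_{k-1}\}$ and its orthogonal complement, and apply the polar decomposition (independent chi radius and uniform direction) to the Gaussian on the complement. The only cosmetic difference is that you work intrinsically with $V_\perp = (I_d - \sum_i c_ic_i')V$, whereas the paper introduces a matrix $D$ completing $[c_1,\dots,c_{k-1}]$ to an orthogonal $M=[C,D]$ and writes $V_\perp = DW$ with $W\sim N(0,I_{d-k+1})$; the factorization $DW = \|W\|\,D(W/\|W\|)$ is exactly your polar decomposition of $V_\perp$, and both arguments invoke spherical symmetry for the independence of radius and direction.
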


\begin{proof}
Let $C = [c_1,\dots,c_{k-1}]$,
and let $D$ denote a $d\times (d-k+1)$ matrix 
so that $M = [C,D]$ is orthogonal. 
Set $V= (q_1,\dots,q_{k-1})'$, and take $W \thicksim N(0,I_{d-k+1})$
independent of $V$, $q_k$ and $c_k$. Now observe that $\P(DW=0)=0$ and thus 
\begin{align*}
DW \quad=\quad \|DW\| \frac{DW}{\|DW\|} \quad=\quad \|W\| D \frac{W}{\|W\|}
\end{align*}
almost surely.
Note that 
$\|W\|$ and $W/ \|W\|$
are independent with $\|W\| \thicksim (\chi_{d-k+1}^2)^{1/2}$
and $W/\|W\| \thicksim \mathcal U(\mathcal V_{d,1})$,
because the law of $W$ is spherically symmetric 
(cf. also \citep[Theorem 1]{Cam81a}). 
Hence, $D\frac{W}{\|W\|}$ and $\|W\|$ are
independent and distributed as $c_k$ and $q_k$, respectively. Therefore we have
\begin{align*}
U \quad \thicksim \quad CV \;+\; DW \quad = \quad  M [V',W']'  \quad \thicksim
\quad N(0,I_d),
\end{align*}
due to rotation invariance of the standard normal distribution.
\end{proof}

\begin{proof}[\bf Proof of Proposition~\ref{p2}]
For later use, let $q_{i,k}$, $1\le i \le k \le d-p$, be independent
random variables such that $q_{k,k} \thicksim \left(\chi_{d-k+1}^2\right)^{1/2}$
and $q_{i,k} \thicksim N(0,1)$ for $i<k$, and such that the 
$q_{i,j}$ are jointly independent of all the other quantities considered so
far. 

Because the $W_j$ for $1\leq j \leq k$ are defined by \eqref{Wj}  and
are such that the columns of $\mathbf B = [b_{1-p},\dots,b_{0}]$ and
$W_1,\dots,W_k$ are linearly independent
with probability 1,
we can set $\varphi_x(w_1,\dots, w_k) = 0$ unless
the $w_j$, $1\leq j \leq k$, are linearly independent and 
such that \eqref{wj} is satisfied for some non-random $d\times p$-matrix
$B=(\beta_{1-p},\dots,\beta_0)$
that satisfies $B'B=I_p$ and
$\text{rank}([\beta_{1-p},\dots,\beta_0,w_1,\dots,w_k])=p+k$. Equivalently, we
may set
$\varphi_x(w_1,\dots, w_k) = 0$ whenever either 
$S_k=(w_i'w_j/d)_{i,j=1}^k$ is not invertible
or $\|x\|^2 \iota' S_k^{-1} \iota \geq  d$ (cf. Lemma~\ref{Wj1}). Assume, from
now on,
that $S_k$ is invertible and that $\|x\|^2 \iota' S_k^{-1} \iota < d$.

We derive the density ratio in 
Proposition~\ref{p2} 
by induction on $k$. First, for the case where $k=1$, we see that
$W_1 = c_1 t_{1,1}$, since $M=\mathcal CT$ and in view of the structure of $T$
described by Lemma~\ref{Wj2}\ref{Wj2ii}. Moreover, 
$c_1$ and $t_{1,1}$ are independent with 
$c_1 \thicksim \mathcal U(\mathcal V_{d,1})$ and 
$t_{1,1} \sim (\|x\|^2 + \chi^2_{d-p})^{1/2}$,
by Corollary~\ref{Wj4}\ref{Wj4ii}.
Now we use Lemma C.5 in \citep{Lee12b}  with
$k =1$, and with $c_1$, $q_{1,1}$ and $t_{1,1}$ replacing
the quantities $v_1$, $q_1$ and $t_1$, respectively, in that reference.
Noting that $q_{1,1} c_1 \sim N(0, I_{d})$ in view of Lemma~\ref{Uk},
we obtain that $\varphi_x(w_1)/\phi(w_1)$ is the ratio of the density of the
$(\|x\|^2 + \chi^2_{d-p})^{1/2}$-distribution and the density of the
$(\chi^2_d)^{1/2}$-distribution, both
evaluated at $||w_1||$. After simplification, we therefore have
\begin{align*}
\frac{\varphi_x(w_1)}{\phi(w_1)}& \quad=\quad
2^{p/2} \frac{\Gamma(d/2)}{\Gamma((d-p)/2)} \left(||w_1||^2\right)^{-p/2} 
(1-\|x\|^2/||w_1||^2)^{\frac{d-p-2}{2}}  e^{\frac{\|x\|^2}{2}}
\\ & \quad=\quad
\frac{(d/2)^{-p/2}\Gamma(d/2)}{\Gamma((d-p)/2)} \left(\det S_1\right)^{-p/2}
\left( 1-\frac{\|x\|^2}{d} \iota' S_1^{-1} \iota
\right)^{\frac{d-p-2}{2}} e^{\frac{\|x\|^2}{2}},
\end{align*}
as claimed.

Suppose now that $k>1$ and that the proposition holds 
with $k-1$ replacing $k$,
and consider the conditional distribution of
$W_k$ given $W_1=w_1,\dots, W_{k-1}=w_{k-1}$. 
Here, $W_k$ can be written as 
$W_k = \sum_{i=1}^{k-1} t_{i,k} c_i + t_{k,k} c_k$,
because $M = \mathcal C T$ and in view of
Lemma~\ref{Wj2}\ref{Wj2ii}.
Set $ U_k = \sum_{i=1}^{k-1} q_{i,k} c_i + q_{k,k} c_k$.
Conditional on $W_1=w_1,\dots, W_{k-1} = w_{k-1}$,
first
note that the orthonormal vectors $c_1,\dots, c_{k-1}$ are 
functions of $W_1,\dots, W_{k-1}$ by construction, while
$c_k \sim \mathcal U(\mathcal V_{d,1} \cap[c_1,\dots, c_{k-1}]^\perp)$
(in view of Corollary~\ref{Wj4}\ref{Wj4ii} and because $W_1,\dots, W_{k-1}$ and
$c_1,\dots, c_{k-1}$ span the same space).
Second,  note that $(q_{1,k},\dots, q_{k,k})$ 
and $(t_{1,k}, \dots, t_{k,k})$ are both conditionally 
independent of $c_k$ given $W_1=w_1,\dots, W_{k-1}=w_{k-1}$.
[This is obvious for the $q_{i,k}$, and this follows from
Corollary~\ref{Wj4}\ref{Wj4ii} for the $t_{i,k}$.]
And third, note that the conditional distribution of
$(t_{1,k},\dots, t_{k,k})$ given $W_1=w_1,\dots, W_{k-1}=w_{k-1}$
is described by Corollary~\ref{Wj4}.
In view of these three observations, it is now elementary to verify that
Lemma~C.5 in \citep{Lee12b} applies, with $k$ as chosen here, and
with
$c_1,\dots, c_{k-1}, c_k$,
$q_{1,k},\dots, q_{k,k}$, and
$t_{1,k},\dots, t_{k,k}$
replacing
$\upsilon_1,\dots,\upsilon_{k-1}, v_k$,
$q_1,\dots, q_k$, and
$t_1,\dots, t_k$, respectively.
In particular, we see that the conditional law of 
$W_k$ given $W_1=w_1,\dots, W_{k-1}=w_{k-1}$ has a density relative to the 
conditional law of $U_k$. 
Note that $U_k\sim N(0,I_d)$  
conditional on $c_1,\dots, c_{k-1}$ or, equivalently, 
conditional on $W_1,\dots, W_{k-1}$ by Lemma~\ref{Uk}.
Therefore, the conditional law of 
$W_k$ given $W_1=w_1,\dots, W_{k-1}=w_{k-1}$ 
has a Lebesgue density which we denote by
$\varphi_x(w_k|w_1,\dots, w_{k-1})$.
This entails that a Lebesgue density $\varphi_x(w_1,\dots, w_k)$ of
$W_1,\dots, W_k$ is well-defined by the relation
\begin{equation}\label{tmp2}
\frac{\varphi_x(w_1,\dots,w_k)}{ \phi(w_1)\cdots\phi(w_k) }
\quad=\quad
\frac{\varphi_x(w_1,\dots,w_{k-1})}{ \phi(w_1)\cdots\phi(w_{k-1}) }
\,
\frac{\varphi_x(w_k|w_1,\dots, w_{k-1})}{\phi(w_k)}.
\end{equation}
To complete the proof, it remains to explicitly compute the expression on the
right-hand side of \eqref{tmp2} and to show that 
it coincides with the right-hand side of the formula given in the proposition.
Recall that we already have a formula for
the first fraction on the right-hand side (by the induction assumption).

To compute the second fraction on the right side of \eqref{tmp2} we first use 
Lemma C.5 in \citep{Lee12b} to see that 
\begin{equation} \label{tmp3}
\frac{\varphi_x(w_k|w_1,\dots, w_{k-1})}{\phi(w_k)}
\quad=\quad
\frac{f_t}{f_q}(c_1' w_k,\dots, c_{k-1}' w_k, 
||(I_d-P_{c_1,\dots, c_{k-1}})w_k||),
\end{equation}
where $f_{t}$ and $f_q$ denote the conditional Lebesgue densities of
$(t_{1,k},\dots, t_{k,k})$ and $(q_{1,k}, \dots, q_{k,k})$, respectively,
given $W_1=w_1,\dots, W_{k-1}=w_{k-1}$.
Now, by definition of $q_{1,k},\dots,q_{k,k}$, we have
\begin{align*}
f_q(&c_1' w_k, \dots, c_{k-1}' w_k, 
||(I_d-P_{c_1,\dots, c_{k-1}})w_k||) 
\\
&\quad=\quad
\left(
\prod_{i=1}^{k-1} f_{N(0,1)}(c_i' w_k) \right)\; f_{(\chi^2_{d-k+1})^{1/2}} (
||(I_d-P_{c_1,\dots, c_{k-1}})w_k||) 
\\
&\quad=\quad
\frac{
	\left|\left|
	(I_d - P_{w_1,\dots, w_{k-1}})w_k
	\right|\right|^{d-k}
}{
\pi^{(k-1)/2} 2^{(d-2)/2} \Gamma((d-k+1)/2)
}
e^{-\frac{1}{2} ||w_k||^2},
\end{align*}
where, in the first equality, we use the symbols $f_{N(0,1)}$ and
$f_{(\chi^2_{d-k+1})^{1/2}}$ to denote
the Lebesgue densities of the distributions indicated in the subscript,
and where the second equality follows from elementary simplifications
and by noting that
$\sum_{i=1}^{k-1}(c_i' w_k)^2 = w_k' P_{c_1,\dots,c_{k-1}} w_k$.
Moreover, we have
\begin{align*}
f_t(&c_1' w_k, \dots, c_{k-1}' w_k, 
||(I_d-P_{c_1,\dots, c_{k-1}})w_k||) 
\\&\quad=\quad
f_{N(\zeta, \Lambda_{k-1} \Lambda_{k-1}')}( c_1' w_k,\dots, c_{k-1}' w_k)
\;
f_{(\kappa_k^2+\chi^2_{d-p-k+1})^{1/2}}( ||(I_d-P_{c_1,\dots, c_{k-1}})w_k||)
\\&\quad=\quad
\frac{2^{-(d-p-2)/2} \pi^{-(k-1)/2}}{\Gamma((d-p-k+1)/2)}
\left(
\det \Lambda_{k-1}\Lambda_{k-1}'
\right)^{-1/2}
\\ &\quad\qquad\times\;\;
||(I_d - P_{c_1,\dots, c_{k-1}})w_k||
\left(
||(I_d - P_{c_1,\dots, c_{k-1}})w_k||^2 - \kappa_k^2
\right)^{(d-p-k-1)/2}
\\ &\quad\qquad\times\;\;
\exp\left[-\frac{1}{2} 
  ||(I_d - P_{c_1,\dots, c_{k-1}})w_k||^2 
  +\frac{1}{2}\kappa_k^2
  -\frac{1}{2} || (s_{1,k},\dots, s_{k-1,k})'||^2\right],
\end{align*}
where the first equality follows from
Corollary~\ref{Wj4} upon noting that
$\Lambda_{k-1}$ is invertible. [Indeed, note that
$\det \Lambda_{k-1} \Lambda_{k-1}'
= 1-\|x\|^2 \iota'[ (w_i'w_j)_{i,j=1}^{k-1}]^{-1}\iota$
by Lemma~\ref{Wj2}\ref{Wj2iii}, and 
that $\|x\|^2 \iota'[ (w_i'w_j)_{i,j=1}^{k-1}]^{-1}\iota
< 1$
by Lemma~\ref{Wj1} and in view of our assumptions on the $w_i$.]
The second equality is obtained by elementary simplifications,
upon noting that $(c_1'w_k,\dots, c_{k-1}'w_k) = 
(t_{1,k},\dots, t_{k-1,k})$ by definition, 
and upon using Lemma~\ref{Wj2}\ref{Wj2iii}.

To further simplify the formula in the preceding display, recall the $d\times
p$-matrix $B$ introduced at the beginning of the proof, 
and note that $\kappa_k^2 = ||P_{(I_d - P_{w_1,\dots, w_{k-1}})B}w_k||^2$
by Lemma~\ref{Wj2}\ref{Wj2iv}. 
Also, note that
$\|(s_{1,k},\dots, s_{k-1,k})'\| 
=\|(\beta_1'w_k,\dots, \beta_{k-1}'w_k)'\|
=\|P_{\beta_1,\dots,\beta_{k-1}} w_k\|$,
and that
$\s\{B, c_1,\dots, c_{k-1}\} = \s\{B, w_1,\dots, w_{k-1}\}
= \s\{B, \beta_1,\dots, \beta_{k-1}\}$,
by our conventions (cf. the discussion leading up to Lemma~\ref{Wj2}).
With this, the exponent on the far right-hand side of the
preceding display can be written as $1/2$ times
\begin{align*}
&-||(I_d-P_{c_1,\dots, c_{k-1}}) w_k||^2
+||P_{(I_d-P_{w_1,\dots, w_{k-1}})B}  w_k||^2
- ||P_{\beta_1,\dots,\beta_{k-1}} w_k||^2
\\
&=\quad 
- ||w_k||^2 + ||P_{c_1,\dots, c_{k-1}} w_k||^2
+||P_{(I_d-P_{c_1,\dots, c_{k-1}})B}  w_k||^2
- ||P_{\beta_1,\dots,\beta_{k-1}} w_k||^2
\\
&=\quad 
- ||w_k||^2 +||P_{B,c_1,\dots, c_{k-1}} w_k||^2 
- ||P_{\beta_1,\dots,\beta_{k-1}} w_k||^2
\\
&=\quad 
- ||w_k||^2 +||P_{B,\beta_1,\dots, \beta_{k-1}} w_k||^2 
- ||P_{\beta_1,\dots,\beta_{k-1}} w_k||^2
\\
&=\quad - ||w_k||^2 + ||P_{B} w_k||^2 \quad=\quad
- ||w_k||^2 +\|x\|^2,
\end{align*}
where the last equality holds in view of \eqref{wj}.
Plugging the formulae for $f_q(\dots)$ and for $f_t(\dots)$ obtained
so far into \eqref{tmp3}, again using the formula for $\kappa_k^2$,
and simplifying, we can re-write the right-hand side of \eqref{tmp3} as
\begin{align*}
&\frac{2^{p/2} \Gamma((d-k+1)/2)}{\Gamma((d-p-k+1)/2)}
\left( \det \Lambda_{k-1}\Lambda_{k-1}'\right)^{-1/2}
\left(||(I_d - P_{w_1,\dots, w_{k-1}})w_k||^2\right)^{-p/2}
\\&\times \;\;
\left(
1 - \frac{
||P_{(I_d - P_{w_1,\dots, w_{k-1}})B} w_k||^2
}{
||(I_d - P_{w_1,\dots, w_{k-1}})w_k||^2
}
\right)^{(d-p-k-1)/2}
e^{\|x\|^2/2}.
\end{align*}
Using Lemma~\ref{Wj2}\ref{Wj2iii}, 
we see that $\det{\Lambda_{k-1}\Lambda_{k-1}'} =
\prod_{i=1}^{k-1} (c_i'\beta_i)^2$, while
the expression under the $(d-p-k-1)/2$ exponent can be written as
\begin{align*}
&\frac{\|(I_d-P_{w_1,\dots,w_{k-1}})w_k\|^2 -
\|P_{(I_d-P_{w_1,\dots,w_{k-1}})B}w_k\|^2}
	{\|(I_d-P_{w_1,\dots,w_{k-1}})w_k\|^2} \\
&=  \quad
\frac{w_k'w_k - 
		\left(
			\|P_{w_1,\dots,w_{k-1}}w_k\|^2 +
			\|P_{(I_d-P_{w_1,\dots,w_{k-1}})B}w_k\|^2
		\right)}
	{\|(I_d-P_{w_1,\dots,w_{k-1}})w_k\|^2} \\
&=  \quad
\frac{w_k'w_k-\|P_{B,w_1,\dots,w_{k-1}}w_k\|^2}
	{\|(I_d-P_{w_1,\dots,w_{k-1}})w_k\|^2}
\,\,
\frac{\|(I_d- P_{B,w_1,\dots,w_{k-1}})w_k\|^2}
	{\|(I_d- P_{B,w_1,\dots,w_{k-1}})w_k\|^2} \\
&=  \quad
\frac{\left(w_k'(I_d-P_{w_1,\dots,w_{k-1}})(I_d -
P_{B,w_1,\dots,w_{k-1}})w_k\right)^2}
	{\|(I_d-P_{w_1,\dots,w_{k-1}})w_k\|^2\|(I_d-
	P_{B,w_1,\dots,w_{k-1}})w_k\|^2}
	\quad=\quad (c_k'\beta_k)^2,
\end{align*}
where the second-to-last equality holds because of $P_{w_1,\dots,w_{k-1}}(I_d -
P_{B,w_1,\dots,w_{k-1}}) = 0$, and where the last equality
follows from the definition of $c_k$ and $\beta_k$.
Moreover, we also see that
$$
||(I_d - P_{w_1,\dots, w_{k-1}})w_k||^2 
\quad=\quad \det w_k'(I_d - P_{w_1,\dots,w_{k-1})}w_k 
\quad=\quad d\frac{\det S_k}{\det S_{k-1}}
$$
by again using the relation
$\det( [X,Y]' [X,Y]) = \det(X'X) \det(Y' (I-P_X)Y)$
for appropriate matrices $X$ and $Y$.
So putting the pieces together, we obtain
that the conditional density
$\varphi_x(w_k|w_1,\dots,w_{k-1})/\phi(w_k)$ can be written as
\begin{align*}
\left(\frac{d}{2}\right)^{-\frac{p}{2}}
\frac{
\Gamma((d-k+1)/2)}{\Gamma((d-p-k+1)/2)}
\left( 
\prod_{i=1}^{k-1}c_i'\beta_i
\right)^{-1}
\left( 
\frac{\det S_k}{\det S_{k-1}}
\right)^{-\frac{p}{2}}
\left(
c_k'\beta_k
\right)^{d-p-k-1}
e^{\frac{\|x\|^2}{2}}.
\end{align*}
We now take the formula for 
$\varphi_x(w_1,\dots, w_{k-1}) / (\phi(w_1)\cdots \phi(w_{k-1}))$
given by Proposition~\ref{p2} (with $k-1$ replacing $k$). We can 
write $\varphi_x(w_1,\dots, w_{k-1}) / (\phi(w_1)\cdots \phi(w_{k-1}))$
as
\begin{align*}
\left(\frac{d}{2} \right)^{-\frac{(k-1)p}{2}}
\prod_{i=1}^{k-1} 
\frac{ \Gamma \left( (d-i+1)/2\right)}
	{\Gamma \left((d-p-i+1)/2\right)}
\det(S_{k-1})^{-\frac{p}{2}}
\left(\prod_{i=1}^{k-1} c_i'\beta_i \right)^{d-p-k}
e^{\frac{(k-1)}2 \|x\|^2}
\end{align*}
in view of  Lemma~\ref{Wj2}\ref{Wj2iii}.
Multiplying the expressions in the preceding two displays, we see that
the density 
$\varphi_x(w_1,\dots, w_{k}) / (\phi(w_1)\cdots \phi(w_{k}))$ is given by
\begin{align*}
\left(\frac{d}{2}\right)^{-\frac{kp}{2}}
\prod_{i=1}^{k} 
\frac{ \Gamma \left( (d-i+1)/2\right)}
	{\Gamma \left((d-p-i+1)/2\right)}
\det(S_{k})^{-\frac{p}{2}}
\left(\prod_{i=1}^k c_i'\beta_i \right)^{d-p-k-1}
e^{\frac{k}2 \|x\|^2},
\end{align*}
which, after another application of Lemma~\ref{Wj2}\ref{Wj2iii}, equals the 
density ratio in Proposition~\ref{p2}, as desired.
The property of the normalizing constant is now 
established by Lemma~\ref{normConst}.
\end{proof}


\section{Proofs for Section~\ref{s3.5}}
\label{appendixD}

\begin{lemma}\label{Taylor2} 
Fix $M>1$ and positive integers $k$ and $p$. For $d\in \N$ such that
$d>4(k+p+1)M^4$, fix $x\in \R^p$ such that $\|x\|\le M$,
and define $g_1(z)$ by
\begin{align*}
	g_1(z) \quad=\quad \left(
	1-\frac{\|x\|^2}{d}z
	\right)^{(d-p-k-1)/2}
	e^{\frac{k}{2}\|x\|^2}
\end{align*}
for $z < d/\|x\|^2$.
Then $g_1(z)$ can be expanded as
\begin{align*}
	g_1(z) \quad=\quad p_1(z-k) + r_1(z-k) + \delta_1,
\end{align*}
where $p_1(y) = 1 + \sum_{j=1}^k \left( -\|x\|^2/2\right)^j y^j/j!$; where
$r_1$ is a polynomial of degree $k$ whose coefficients depend on $d$, $p$, $k$
and $\|x\|^2$, and whose coefficients are bounded in absolute value by
$\frac{pM^{2(k+2)}}{d} c_1$, where $c_1=c_1(k)$ is a constant that depends
only on $k$; and where the remainder term $\delta_1$ satisfies
\begin{align*}
	\sup_{z:|z-k|<1} \frac{|\delta_1|}{|z-k|^{k+1}} \quad\leq\quad
	\frac{M^{2(k+1)}}{2^{k+1}(k+1)!}e^{\frac{k}{2}M^2}.
\end{align*}
\end{lemma}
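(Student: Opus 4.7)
The plan is to apply Taylor's theorem at $z=k$ with Lagrange remainder of order $k+1$ to $g_1$. Set $N=(d-p-k-1)/2$; the hypothesis $d>4(k+p+1)M^4$ (together with $M>1$) ensures $N>k+1$ and $\|x\|^2(k+1)/d<1/2$, so that $g_1$ is smooth on an open neighborhood of $[k-1,k+1]$. A straightforward induction gives, for every integer $j\ge 0$,
\begin{equation*}
g_1^{(j)}(z) \;=\; \left(-\frac{\|x\|^2}{d}\right)^j \prod_{i=0}^{j-1}(N-i) \, \left(1-\frac{\|x\|^2 z}{d}\right)^{N-j}e^{\frac{k}{2}\|x\|^2},
\end{equation*}
so Taylor's theorem yields $g_1(z)=\sum_{j=0}^k g_1^{(j)}(k)(z-k)^j/j!+\delta_1$, where $\delta_1=g_1^{(k+1)}(\tilde z)(z-k)^{k+1}/(k+1)!$ for some $\tilde z$ between $k$ and $z$. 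I will then split the Taylor polynomial into a ``limiting part'' $p_1(z-k)$, obtained by formally sending $d\to\infty$ in each coefficient, and a ``correction part'' $r_1(z-k)$ collecting the $O(1/d)$ remainders.

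For the coefficients, I factor the $j$-th Taylor coefficient as
\begin{equation*}
\frac{g_1^{(j)}(k)}{j!} \;=\; \frac{(-\|x\|^2)^j}{j!\,2^j}\,\prod_{i=0}^{j-1}\!\left(1-\frac{p+k+1+2i}{d}\right) \left(1-\frac{k\|x\|^2}{d}\right)^{N-j}e^{\frac{k}{2}\|x\|^2},
\end{equation*}
and read off the leading $y^j$-coefficient of $p_1$ as $(-\|x\|^2/2)^j/j!$, i.e.\ the limit of the bracket as $d\to\infty$. The coefficient of $r_1$ is the difference, and I control the bracket's deviation from $1$ in two steps: (a) the finite product equals $1+O(pk/d)$, since each of its factors lies in $(1/2,1]$ under the hypothesis; and (b) expanding $(N-j)\log(1-k\|x\|^2/d)$ in powers of $k\|x\|^2/d$ (which is uniformly bounded by $1/2$) and adding $k\|x\|^2/2$ produces an exponent of size $O((p+k)kM^2/d+k^2M^4/d)$, whence $(1-k\|x\|^2/d)^{N-j}e^{k\|x\|^2/2}=1+O(pM^4/d)$ with constants depending only on $k$. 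Multiplying by the outer factor $M^{2j}/(j!\,2^j)$ and using $M>1$ with $j\le k$ yields the claimed uniform bound $pM^{2(k+2)}c_1/d$ on the coefficients of $r_1$.

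For the remainder I use the Lagrange form together with the explicit formula. For $|z-k|<1$ and $k\ge 1$ we have $\tilde z\in(k-1,k+1)$, which is non-negative, and $\|x\|^2\tilde z/d\le(k+1)M^2/d<1/2$. Since $N-k-1\ge 0$ under the hypothesis on $d$, the factor $(1-\|x\|^2\tilde z/d)^{N-k-1}$ lies in $(0,1]$. Combining this with $\prod_{i=0}^k(N-i)\le N^{k+1}\le(d/2)^{k+1}$ gives
\begin{equation*}
|g_1^{(k+1)}(\tilde z)| \;\le\; \left(\frac{\|x\|^2}{d}\right)^{k+1}\!\left(\frac{d}{2}\right)^{k+1} e^{\frac{k}{2}\|x\|^2} \;\le\; \frac{M^{2(k+1)}}{2^{k+1}}\,e^{\frac{k}{2}M^2},
\end{equation*}
which on division by $(k+1)!$ and by $|z-k|^{k+1}$ is exactly the asserted bound on $\delta_1$.

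The main obstacle will be the bookkeeping in the second paragraph: one has to track the $1/d$-corrections through both multiplicative pieces of the bracket and combine them into a single estimate whose $M$-dependence is only $M^{2(k+2)}$ and whose $d$-dependence is only linear in $p$, with all remaining constants pushed into a function of $k$ alone. The hypothesis $d>4(k+p+1)M^4$ is exactly what is needed to keep $k\|x\|^2/d$ uniformly bounded away from $1$, so that the Taylor expansion of $\log(1-u)$ indeed yields the linear-in-$1/d$ behaviour on the nose.
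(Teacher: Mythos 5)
Your proposal follows the same route as the paper's proof: a $k$-th order Taylor expansion of $g_1$ around $z=k$, $\delta_1$ from the Lagrange remainder, and $r_1$ as the difference between the full Taylor polynomial and $p_1$, with the coefficient bound coming from controlling how far the $j$-th Taylor coefficient drifts from $(-\|x\|^2/2)^j/j!$. The paper packages that last estimate as two-sided bounds of the form $\exp(\pm O(pM^4/d))$ on $g_1^{(j)}(k)/(-\|x\|^2/2)^j$, whereas you split the drift multiplicatively into the finite product and the $(1-k\|x\|^2/d)^{N-j}e^{k\|x\|^2/2}$ factor; the two accountings are equivalent. One small inaccuracy: under $d>4(k+p+1)M^4$ the factors $1-(p+k+1+2i)/d$ need not lie in $(1/2,1]$ when $k$ is large relative to $p$ (they can drop toward $1/4$); the paper works with the correct lower bound $1/4$, which also suffices for your argument.
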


\begin{proof}
We begin with a $k$-th order Taylor expansion of $g_1$ around $k$:
\begin{align*}
	g_1(z) \quad=\quad \sum_{j=0}^k 
	\frac{g_1^{(j)}(k)}{j!} (z-k)^j +
	\frac{g_1^{(k+1)}(\zeta)}{(k+1)!}(z-k)^{k+1}
\end{align*}
for some $\zeta$ between $z$ and $k$.
Now set
$\delta_1$ equal to the remainder term in the above expansion, i.e., 
$\delta_1 = \delta_1(z) = (g_1^{(k+1)}(\zeta))(z-k)^{k+1}/(k+1)!$, define
$p_1$ as in the lemma and set 
$r_1$ equal to $r_1(y)=\sum_{j=0}^k \frac{g_1^{(j)}(k)}{j!} y^j - p_1(y)$, 
such that  $g_1(z) = p_1(z-k) + r_1(z-k) + \delta_1$.
It remains to establish the claimed properties of $r_1$ and $\delta_1$.

Before we proceed, we first take a closer look at the $j$-th derivative of
$g_1$, i.e., 
\begin{align*}
	g_1^{(j)}(z) 
	&\quad=\quad 
	\left(
	-\frac{\|x\|^2}{2}
	\right)^j
	\frac{d-[p+k+1]}{d} \cdots \frac{d-[p+k+1+2(j-1)]}{d} \\
	&\quad\quad\quad \times
	\left(
	1-\frac{\|x\|^2}{d}z
	\right)^{(d-[p+k+1+2j])/2}
	e^{\frac{k}{2}\|x\|^2}.
\end{align*}
In this display and for $j=1\dots, k+1$,
note that $0 <d-[p+k+1+2(i-1)] < d$ for each $i=0,\dots,j+1$,
because $d > 4(p+k+1) > (p+k+1) + 2(k+1)$.

To deal with the remainder term $\delta_1$, fix $z\in \R$ such that $|z-k|<1$
and note that this entails 
$0\le \|x\|^2\zeta/d<M^2(k+1)/d<1/4$ by assumption. We therefore have
\begin{align*}
	\frac{|\delta_1|}{|z-k|^{k+1}} &\quad=\quad 
	\Bigg|
	\frac{(-\|x\|^2/2)^{k+1}}{(k+1)!}
	\Bigg|
	\Bigg| 
	\frac{g_1^{(k+1)}(\zeta)}{(-\|x\|^2/2)^{k+1}}
	\Bigg| \\
	&\quad\leq\quad
	\frac{(M^2/2)^{k+1}}{(k+1)!}
	\left(1-\frac{\|x\|^2}{d}\zeta
	\right)^{(d-p-3k-3)/2}
	e^{\frac{k}{2}\|x\|^2} \\
	&\quad\leq\quad
	\frac{M^{2(k+1)}}{2^{k+1}(k+1)!} e^{\frac{k}{2}M^2},
\end{align*}
where the first inequality follows from the formula for $g_1^{(k+1)}$
just derived, and where the second inequality easily follows from 
our assumptions on $d$. In particular,
the remainder term $\delta_1$ has the desired properties.

To deal with $r_1$, we first obtain convenient upper and lower bounds
on  $g_1^{(j)}(k)/(-\|x\|^2/2)^j$ for $j=0,\dots, k$.
Note that Lemma D.1 in \citep{Lee12b} applies with $t=
k\|x\|^2/d$, because our assumptions entail that $d> kM^2 \ge k\|x\|^2$
and hence $0\le k\|x\|^2/d < 1$.
This lemma yields
\begin{align}\label{LeebC1}
	\left(
	1- \frac{k\|x\|^2}{d}
	\right)^{k\|x\|^2/2}
	\quad\le\quad
	e^{k\|x\|^2/2} \left(
	1- \frac{k\|x\|^2}{d}
	\right)^{d/2}
	\quad\leq \quad 1.
\end{align}
Note that due to concavity of the logarithm, we have for $z\in(-1,0)$ and
$y\in[z,0]$, that $\log(1+y)\ge y \log(1+z)/z$, i.e., on $(z,0)$ the graph
of the function $y \mapsto\log(1+y)$ lies above the line segment connecting
$(z,\log(1+z))$ with the origin. Applying this inequality twice, 
the first time with $z=-1/4$ and $y=-kM^2/d$, and the second time 
with $z=-3/4$ and $y=-(p+3k-1)/d$, we get
\begin{align}
	&0 \,>\, \log\left( 1 - \frac{kM^2}{d}\right) 
	\,\ge\, \frac{4kM^2}{d}\log(\frac{3}{4}) 
	\,\ge\, -\frac{4}{3}\frac{kM^2}{d},	\text{ and} \label{help1}
	\\
	&0\,>\,\log\left( 1-\frac{p+3k-1}{d} \right) 
	\,\ge\,
	\frac{4(p+3k-1)}{3d}\log(\frac{1}{4})
	\,\ge\, -2 \frac{p+3k-1}{d}.		\label{help2}
\end{align}
With this we can bound the quantity 
$g_1^{(j)}(k)/( -\|x\|^2/2)^j$ 
in the $j$-th coefficient of $r_1$ with $j=0,1,\dots, k$,
from above as follows: 
\begin{align*}
	& \frac{g_1^{(j)}(k)}{ (-\|x\|^2 /2)^j}\quad\leq\quad
	\left(
	1-\frac{k\|x\|^2}{d}
	\right)^{-(p+k+1+2j)/2} 
	\quad\leq\quad
	\left(
	1-\frac{kM^2}{d}
	\right)^{-(p+3k+1)} 
	\\ &
	=\quad 
	\exp\left[ -(p+3k+1)\log \left( 1 -\frac{kM^2}{d} \right)\right] 
	\quad\leq\quad \exp\left[ (p+3k+1)\frac{4}{3}\frac{kM^2}{d}\right] \\
	& \leq \quad \exp\left[ \frac{pM^2}{d} \gamma_1(k)\right]
\end{align*}
for some constant $\gamma_1(k)>0$.
In this display, the first inequality  follows because, in the formula
for $g_1^{(j)}(z)$, the product of fractions is bounded by one,
and in view of \eqref{LeebC1}; the second inequality and the
first equality are trivial; the third inequality follows from
\eqref{help1}; and the last inequality holds by setting, say,
$\gamma_1(k) = 4 k (2 + 3 k)/3$.
And we can also bound $g_1^{(j)}(k)/( -\|x\|^2/2)^j$ for $j=0,1,\dots, k$
from below:
\begin{align*}
	&\frac{g_1^{(j)}(k)}{\left( -\|x\|^2/2\right)^j} 
	 \quad\ge \quad 
	\left(\frac{d-[p+k+1+2(j-1)]}{d}\right)^j 
	\left(
	1- \frac{k\|x\|^2}{d}
	\right)^{k\|x\|^2/2} 
	\\
	&\ge \quad \left(1-\frac{p+3k-1}{d}\right)^{k} 
	\left(
	1- \frac{kM^2}{d}
	\right)^{kM^2/2} 
	\\
	&=\quad  \exp\left[
	k\log\left(
	1-\frac{p+3k-1}{d}
	 \right) 
	 + \frac{kM^2}{2} 
	 \log\left(
		1- \frac{kM^2}{d}	 
	\right)	 
	\right] \\
	&\ge  \quad \exp\left[
	-k2 \frac{p+3k-1}{d}	
	 - \frac{kM^2}{2} 
	\frac{4}{3}\frac{kM^2}{d} 
	\right] \\
	&\ge\quad
	\exp\left[ 
	-\frac{pM^4}{d} \gamma_2(k)
	\right]
\end{align*}
for some $\gamma_2(k)>0$.
Here, 
the first inequality follows from the formula for $g_1^{(j)}(z)$
derived earlier and from \eqref{LeebC1};
the second inequality holds because $j \leq k$ and because $\|x\|\leq M$;
the equality is trivial;
the third inequality is obtained from \eqref{help1} and \eqref{help2};
and the last inequality holds for, say, 
$\gamma_2(k) = (6+2/3)  k^2$.

For $r_1$, we note that $r_1(y)$ is a polynomial of degree $k$ in $y$,
i.e., $r_1(y) = \sum_{j=0}^k \rho_j y^j$, where the $j$-th coefficient
satisfies
\begin{align*}
	| \rho_j | \quad=\quad \left| 
	\frac{g_1^{(j)}(k)}{j!} - \frac{\left( -\|x\|^2/2\right)^j}{j!}
	\right| \quad\leq\quad
	\frac{\left( M^2/2\right)^j}{j!}
	\left| 
	\frac{g_1^{(j)}(k)}{\left( -\|x\|^2/2\right)^j} - 1
	\right|.
\end{align*} 
Setting 
$\gamma(k) = \max\{\gamma_1(k),\gamma_2(k)\}$, we thus get
\begin{align*}
	|\rho_j| \quad& \leq\quad
	M^{2k} 
	\max\left\{ 
	1 - \exp\left[ 
			-\frac{pM^4}{d} \gamma(k)
		\right], 
	 \exp\left[ 
				\frac{pM^4}{d} \gamma(k)
			\right] - 1
	\right\} 
	\\
	&=\quad
	M^{2k} 
	\left( \exp\left[ 
				\frac{pM^4}{d} \gamma(k)
			\right] - 1 
	\right)
	\\
	&\leq\quad
	\frac{pM^{2(k+2)}}{d}
	\left( e^{\gamma(k)} - 1\right).
\end{align*}
for $j=0,\dots, k$.
In this display, the first inequality follows from the formula
for $|\rho_j|$, and from the upper and lower bounds
on $g_1^{(j)}(k)/( -\|x\|^2/2)^j$ derived earlier;
the equality is trivial;
and the last inequality follows because $\gamma(k)>0$ and 
$p M^4/d \in (0,1)$ and because
$e^{cy}-1 \le (e^c -1)y$  for $c \geq 0$ and $y\in (0,1)$
by the convexity of the exponential. Setting $c_1(k) = e^{\gamma{(k)}}-1$,
we see that $r_1$ has the desired properties.
\end{proof}

\begin{lemma}\label{Taylor3}
Let $k$ and $p$ be positive integers and consider the function $g_2(z) = z^{-p/2}$ for $z> 0$. Then $g_2$ can be expanded as
\begin{align*}
	g_2(z) \quad=\quad p_2(z-1) + \delta_2,
\end{align*}
where $p_2(y) = 1 +\sum_{j=1}^k \left[\frac{(-1/2)^j}{j!}  \prod_{i=0}^{j-1}
(p+2i)\right]  y^j$ is a polynomial of degree $k$ whose $j$-th coefficient 
can be bounded in absolute value by $p^j$, and where the remainder term 
$\delta_2$ satisfies 
\begin{align*}
	\sup_{z:|z-1|<\frac{1}{2p}} \frac{|\delta_2|}{|z-1|^{k+1}} 
	\quad \leq \quad
	p^{k+1}
	d_2
\end{align*}
with $d_2 = d_2(k)$ depending only on $k$.
\end{lemma}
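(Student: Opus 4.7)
The strategy is a direct Taylor expansion of $g_2(z)=z^{-p/2}$ around $z=1$, up to order $k$, with Lagrange remainder. The two substantive tasks are (a) verifying the stated formula and bound for the polynomial coefficients, and (b) bounding the remainder uniformly so that all $p$-dependence collapses into the prefactor $p^{k+1}$.

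First I would compute the derivatives explicitly: by induction on $j$,
\[
g_2^{(j)}(z)\;=\;(-1)^j\Bigl(\prod_{i=0}^{j-1}(p/2+i)\Bigr)z^{-p/2-j},
\]
so $g_2^{(j)}(1)/j!\;=\;(-1/2)^j\prod_{i=0}^{j-1}(p+2i)/j!$. Thus, setting $p_2(y)=\sum_{j=0}^k g_2^{(j)}(1)y^j/j!$ and $\delta_2=\delta_2(z)=g_2^{(k+1)}(\zeta)(z-1)^{k+1}/(k+1)!$ for some $\zeta$ between $z$ and $1$, Taylor's theorem gives $g_2(z)=p_2(z-1)+\delta_2$ and the coefficients of $p_2$ are exactly those stated.

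For the coefficient bound, observe that for $p\ge 1$ and $i\ge 0$ we have $p+2i\le p(1+2i)$, so
\[
\frac{1}{2^j j!}\prod_{i=0}^{j-1}(p+2i)\;\le\;\frac{p^j}{2^j j!}\prod_{i=0}^{j-1}(2i+1)\;=\;\frac{p^j(2j)!}{4^j(j!)^2}\;=\;p^j\binom{2j}{j}4^{-j}\;\le\;p^j,
\]
using the elementary inequality $\binom{2j}{j}\le 4^j$. This yields $|c_j|\le p^j$ as required.

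For the remainder, fix $z$ with $|z-1|<1/(2p)$, so the intermediate point satisfies $\zeta\ge 1-1/(2p)\ge 1/2$. Then
\[
\frac{|\delta_2|}{|z-1|^{k+1}}\;=\;\frac{1}{(k+1)!}\Bigl(\prod_{i=0}^{k}(p/2+i)\Bigr)\zeta^{-p/2-(k+1)}.
\]
Using $p/2+i\le p(2i+1)/2$ as before, the product is bounded by $p^{k+1}(2k+1)!!/2^{k+1}$. For the powers of $\zeta$, I would split as $\zeta^{-p/2}\cdot\zeta^{-(k+1)}$: the second factor is at most $2^{k+1}$, while for the first I would apply $-\log(1-t)\le t/(1-t)$ with $t=1/(2p)$, obtaining
\[
\zeta^{-p/2}\;\le\;\bigl(1-1/(2p)\bigr)^{-p/2}\;\le\;\exp\!\Bigl(\tfrac{p}{2}\cdot\tfrac{1/(2p)}{1-1/(2p)}\Bigr)\;=\;\exp\!\Bigl(\tfrac{p}{4p-2}\Bigr)\;\le\;e^{1/2},
\]
uniformly in $p\ge 1$. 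Multiplying the pieces gives the claimed bound with $d_2(k)=(2k+1)!!\,e^{1/2}/(k+1)!$, which depends only on $k$.

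The only mildly subtle step is the uniform-in-$p$ bound on $(1-1/(2p))^{-p/2}$; once that is in hand, everything else is a routine application of Taylor's theorem combined with standard binomial estimates.
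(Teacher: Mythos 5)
Your proof is correct and takes essentially the same approach as the paper: a $k$-th order Taylor expansion of $g_2$ around $1$ with Lagrange remainder, followed by the same uniform-in-$p$ estimates. The only cosmetic difference is that you bound $(1-1/(2p))^{-p/2}$ by $e^{1/2}$ via the concavity inequality $-\log(1-t)\le t/(1-t)$, whereas the paper invokes Lemma~D.1 of its supplement to get $(2e)^{1/4}$; either constant works and depends only on $k$ after absorbing the combinatorial factors.
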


\begin{proof}
A $k$-th order Taylor expansion of $g_2$ around one shows that
$g_2(z)$ can be written as $p_2(z-1) + \delta_2$, 
where $p_2$ is as claimed, and where $\delta_2$ is the remainder term.
It is now elementary to verify that
the coefficients of $p_2$ are bounded as claimed.
The remainder term $\delta_2$ is given by
$g_2^{(k+1)}(\zeta) (z-1)^{k+1}/(k+1)!$ for some $\zeta$ between $z$ and $1$.
For $z$ such that $|z-1| < 1/(2 p)$, we thus have
\begin{align*}
	&\frac{|\delta_2|}{|z-1|^{k+1}} 
	\quad=  \quad
	\zeta^{-p/2-(k+1)}\frac{(1/2)^{k+1}}{(k+1)!} \prod_{i=0}^{k} (p+2i)  
	\\ 
	&\le \quad
	\left( 1- \frac{1}{2p}\right)^{-p/2}
	\left( 1- \frac{1}{2p}\right)^{-(k+1)} p^{k+1}  
	\quad\le \quad 
	(2e)^{1/4} 
	2^{k+1} p^{k+1} ,
\end{align*}
where the last inequality is obtained by using 
Lemma D.1 in \citep{Lee12b} with $t=1/(2p)$ to obtain that
$(1-1/(2 p))^{-p/2} \leq (2 e)^{1/4}$.
Setting $d_2 = (2 e)^{1/4} 2^{k+1}$ completes the proof.
\end{proof}

\begin{lemma}\label{Taylor4}
Fix $M>1$, positive integers $k$ and $p$, and $x\in \R^p$ such that $\|x\|\le
M$. Choose $d\in \N$ such that $d>4(k+p+1)M^4$,
and define $g_1(\cdot)$ as in Lemma~\ref{Taylor2}. 
Take $d$-vectors $w_1,\dots, w_k$ so that the $k\times k$ matrix
$S_k = (w_i'w_j/d)_{i,j=1}^k$ satisfies $||S_k - I_k|| < 1/(2k)$.
We then have
$$
	g_1(\iota' S_k^{-1}\iota)
	\quad=\quad 
	p_1\left( \sum_{j=1}^k \iota' (I_k-S_k)^j \iota\right)
	\;+\;
	r_1\left( \sum_{j=1}^k \iota' (I_k-S_k)^j \iota\right)
	\;+\; \
	\Delta_1,
$$
where the polynomials $p_1(\cdot)$ and $r_1(\cdot)$ are as in
Lemma~\ref{Taylor2},
and where the remainder term $\Delta_1$  satisfies
$$
	|\Delta_1|\quad\leq\quad D_1 ||S_k  - I_k||^{k+1}
	M^{2(k+2)}e^{\frac{k}{2}M^2}
$$
for some constant $D_1=D_1(k)$ that depends only on $k$.
\end{lemma}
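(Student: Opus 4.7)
My plan is to apply Lemma~\ref{Taylor2} at the point $z = \iota' S_k^{-1}\iota$ and then substitute the Neumann series expansion for $S_k^{-1} - I_k$, absorbing the truncation error into the remainder.

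First, since $\|I_k - S_k\| < 1/(2k) \le 1/2$, the Neumann series gives
\[
S_k^{-1} - I_k \;=\; \sum_{j=1}^{\infty} (I_k - S_k)^j,
\]
so setting $A = \sum_{j=1}^{k} \iota'(I_k-S_k)^j\iota$ and $T = \sum_{j=k+1}^{\infty}\iota'(I_k-S_k)^j\iota$, we have $z - k = \iota'(S_k^{-1} - I_k)\iota = A + T$. Using $|\iota'(I_k-S_k)^j\iota| \le k\|S_k - I_k\|^j$ and summing the geometric series bounded by $1/(1-\|S_k-I_k\|) \le 2$, one readily gets
\[
|A| \le 2k\|S_k - I_k\|, \qquad |T| \le 2k\|S_k - I_k\|^{k+1}, \qquad |A+T| \le 4k\|S_k - I_k\|.
\]
In particular $|z-k| < 1$ (since $\|S_k - I_k\| < 1/(2k)$), so Lemma~\ref{Taylor2} applies and yields
\[
g_1(\iota' S_k^{-1}\iota) \;=\; p_1(A+T) + r_1(A+T) + \delta_1
\]
with $|\delta_1| \le \frac{M^{2(k+1)}}{2^{k+1}(k+1)!}e^{\frac{k}{2}M^2}|A+T|^{k+1}$, which is bounded by a constant depending only on $k$ times $M^{2(k+2)}e^{\frac{k}{2}M^2}\|S_k-I_k\|^{k+1}$ (using $M>1$).

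Next I will set $\Delta_1 = \delta_1 + [p_1(A+T) - p_1(A)] + [r_1(A+T) - r_1(A)]$, so that the desired identity
\[
g_1(\iota' S_k^{-1}\iota) = p_1(A) + r_1(A) + \Delta_1
\]
holds by construction. The two polynomial differences are handled by the elementary identity $u^j - v^j = (u-v)\sum_{l=0}^{j-1} u^l v^{j-1-l}$: for any polynomial $q(y) = \sum_{j=0}^k q_j y^j$,
\[
|q(A+T) - q(A)| \;\le\; |T|\sum_{j=1}^{k} |q_j|\,j\,\max(|A|,|A+T|)^{j-1}.
\]
By Lemma~\ref{Taylor2} the coefficients of $p_1$ are bounded by $M^{2k}/j! \le M^{2(k+2)}$, and those of $r_1$ by $(pM^{2(k+2)}/d)c_1(k) \le M^{2(k+2)}c_1(k)$ (using $p < d$). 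Substituting the bounds $|T| \le 2k\|S_k-I_k\|^{k+1}$ and $\max(|A|,|A+T|)^{j-1} \le (4k)^{j-1}\|S_k-I_k\|^{j-1} \le (4k)^{k-1}$ (since $\|S_k-I_k\| \le 1$), we obtain
\[
|p_1(A+T) - p_1(A)| + |r_1(A+T) - r_1(A)| \;\le\; C(k)\,M^{2(k+2)}\|S_k-I_k\|^{k+1}
\]
for some $C(k)$.

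Combining this with the bound on $\delta_1$ and using $e^{\frac{k}{2}M^2}\ge 1$ to absorb the extra factor yields $|\Delta_1| \le D_1 M^{2(k+2)}e^{\frac{k}{2}M^2}\|S_k - I_k\|^{k+1}$ for a suitable $D_1 = D_1(k)$. The main obstacle is simply the bookkeeping: one must simultaneously track (i) the Taylor remainder $\delta_1$ from Lemma~\ref{Taylor2}, (ii) the tail $T$ of the Neumann series, and (iii) how these propagate through polynomials of degree $k$ with coefficients of different sizes. All three sources ultimately contribute at the same order $\|S_k - I_k\|^{k+1}$, so the only care needed is to verify that the geometric-series bound uses the hypothesis $\|S_k - I_k\| < 1/(2k)$ to keep all intermediate constants depending only on $k$.
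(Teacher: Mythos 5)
Your proposal follows essentially the same strategy as the paper's proof: apply Lemma~\ref{Taylor2} at $z=\iota'S_k^{-1}\iota$, replace $z-k$ by its degree-$k$ Neumann-series truncation $A = \sum_{j=1}^k \iota'(I_k-S_k)^j\iota$, and absorb the tail and the resulting polynomial perturbation into $\Delta_1$. The one genuine difference is that you re-derive the polynomial perturbation estimate from scratch via the factorization $u^j - v^j = (u-v)\sum_{l=0}^{j-1}u^l v^{j-1-l}$, whereas the paper invokes Lemma D.6 of \citet{Lee12b} as a black box (and cites Corollary D.5(i) there for the Neumann truncation). Your self-contained version is a little more transparent, at the cost of slightly larger $k$-dependent constants.

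One arithmetic slip should be fixed. You assert $|z-k|<1$ from the bound $|A+T|\le 4k\|S_k-I_k\|$, but with $\|S_k-I_k\|<1/(2k)$ that only yields $|z-k|<2$, which is not good enough to enter the hypothesis of Lemma~\ref{Taylor2}. The correct (and tighter) bound comes from summing the geometric series directly rather than bounding $|A|$ and $|T|$ separately and adding:
\begin{align*}
|z-k| \;=\; \Big|\iota'\sum_{j=1}^\infty (I_k-S_k)^j\iota\Big|
\;\le\; k\sum_{j=1}^\infty \|S_k-I_k\|^j
\;=\; k\,\frac{\|S_k-I_k\|}{1-\|S_k-I_k\|}
\;<\; 2k\|S_k-I_k\| \;<\; 1,
\end{align*}
using $\|S_k-I_k\|<1/(2k)\le 1/2$. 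With this correction everything else in your argument goes through; the extra factor in the $\delta_1$ estimate is harmless since it depends only on $k$.
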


\begin{proof}
Set $z=\iota'S_k^{-1}\iota$, and note that $g_1(z)$ is well-defined
because $z \|x\|^2/d<1$.
[Indeed, all eigenvalues of $S_k$ are strictly between $1/2$ and $3/2$ by
assumption, so that $||S_k^{-1}|| < 2$ and $\iota' S_k^{-1}\iota < 2 k$;
our assumptions on $d$ and $x$ give $z \|x\|^2/d < 1$.]
It is now easy to see that Lemma~\ref{Taylor2} applies, so that we can
write $g_1(z)$ as 
\begin{align*}
	g_1(\iota'S_k^{-1}\iota) \quad=\quad p_1(\iota'S_k^{-1}\iota-k)
	+r_1(\iota'S_k^{-1}\iota-k)+\delta_1,
\end{align*}
such that $\delta_1$ satisfies 
\begin{align*}
	|\delta_1| \quad&\le \quad \frac{M^{2(k+1)}}{2^{k+1}(k+1)!}
		e^{\frac{k}{2}M^2}
	|\iota'S_k^{-1}\iota
	-k|^{k+1} \\
	\quad&\le\quad \frac{M^{2(k+1)}}{2^{k+1}(k+1)!}
		e^{\frac{k}{2}M^2} (2 k)^{k+1} ||S_k -
	I_k||^{k+1},
\end{align*}
where the inequalities in the preceding display
hold because 
$|\iota' S_k^{-1}\iota - k| = |\iota' (S_k^{-1} - I_k) \iota| \le k ||S_k^{-1}
- I_k|| = 
k ||S_k^{-1}(I_k-S_k) || \leq
k ||S_k^{-1}|| \; ||S_k - I_k|| < 2  k ||S_k - I_k|| \le 1$.
For later use, we also 
note that the functions $p_1(\cdot)$ and $r_1(\cdot)$
in the second-to-last display are polynomials of degree $k$
whose coefficients can be bounded, in absolute value, by 
$L(k,M) = M^{2(k+2)}\max\{1,c_1\}$, where $c_1=c_1(k)$ is the constant from
Lemma~\ref{Taylor2}
that depends only on $k$. [Indeed, the $j$-th coefficient of $p_1$ is
given by $(-\|x\|^2/2)^j/j!$ and hence is bounded, in absolute value, by
$M^{2k}$, and
the coefficients of $r_1$ are bounded, in absolute value,
by $\frac{pM^{2(k+2)}}{d} c_1 < c_1 M^{2(k+2)} $.]

Now expand $\iota'S_k^{-1}\iota-k$ as in Corollary D.5(i) in \citep{Lee12b}
with $k$ replacing $m$, i.e., as
\begin{align*}
	\iota'S_k^{-1}\iota-k \quad=\quad 
	\sum_{j=1}^k \iota' (I_k - S_k)^j \iota
	\quad+\quad r,
\end{align*}
where the main term satisfies $|\sum_{j=1}^k \iota' (I_k - S_k)^j \iota| \le 1$
by assumption and $|r|< 2k\|S_k-I_k\|^{k+1}\le 1$.
Moreover, recall that $p_1$ is a polynomial of degree $k$
whose coefficients are bounded, in absolute value, by the constant $L = L(k,M)$
defined in the preceding paragraph.
In view of these considerations, we see that Lemma~D.6 of \citep{Lee12b}
applies, with $1$, $p_1$, $k$, $L(k,M)$, $1$, and $2 k \|S_k-I_k\|^{k+1}$
replacing $m$, $\rho_1$, $l$, $L$, $B$, and $\epsilon$.
From that lemma, and from the last line of its proof, we obtain that
$$
	p_1(\iota' S_k^{-1}\iota - k) \quad=\quad 
	p_1\left( \sum_{j=1}^k \iota'(I_k - S_k)^j \iota\right) \; + \;
	\delta_p
$$
with $|\delta_p|  < \|S_k-I_k\|^{k+1} M^{2(k+2)} 
[ 2 k (k+1+2^{k+1}) \max\{c_1,1\} ]$.
In a similar fashion, we also obtain that 
$$
	r_1(\iota' S_k^{-1}\iota - k) \quad=\quad 
	r_1\left( \sum_{j=1}^k \iota'(I_k - S_k)^j \iota\right) \; + \;
	\delta_q
$$
with $|\delta_q|  < \|S_k-I_k\|^{k+1} M^{2(k+2)} 
[ 2 k (k+1+2^{k+1}) \max\{c_1,1\} ]$.
In view of the preceding two paragraphs, we see that 
$g_1(\iota'S_k^{-1}\iota)$ can be expanded as claimed, with 
$\Delta_1 = \delta_1 + \delta_p + \delta_q$.
\end{proof}

\begin{lemma}\label{Taylor5} 
Fix  positive integers $k$, $p$ and $d$ such that $1\le k\le d-p$. Then there
exists 
a constant $\xi(k)>2$ depending only on $k$ for which the following holds true.
For
any collection of $d$-vectors $w_1,\dots, w_k$, for which the $k\times k$-matrix
$S_k = (w_i' w_j/d)_{i,j=1}^k$
satisfies $||S_k - I_k || < 1/(p\, \xi(k))$, 
the determinant $\det S_k^{-p/2}$ can be expanded as
$$
	\det S_k^{-p/2} \quad = \quad Q_2(S_k-I_k) 
	\;+\;\Delta_2,
$$
where $Q_2$ and $\Delta_2$ have the following properties:
$Q_2(S_k-I_k)$ is a polynomial in the elements of $S_k-I_k$ whose degree
depends only on $k$ 
and whose coefficients are bounded in absolute value by $p^k  C_2$, where $C_2
= C_2(k)$ is a constant that depends only on $k$. The remainder term $\Delta_2$
satisfies
$$
	|\Delta_2| \quad\leq \quad D_2 \,p^{k+1} ||S_k - I_k ||^{k+1}
$$
for a constant $D_2 = D_2(k)$ that depends only on $k$.
\end{lemma}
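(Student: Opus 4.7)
The plan is to mirror the strategy of Lemma~\ref{Taylor4}: apply the single-variable expansion of $g_2(z) = z^{-p/2}$ from Lemma~\ref{Taylor3} at $z = \det S_k$, expand $\det S_k - 1$ as a polynomial in the entries of $A := S_k - I_k$, and absorb the high-degree terms of the resulting composition into a remainder. First I would write $\det(I_k+A) - 1 = \sum_{j=1}^{k} c_j(A) =: P(A)$, where $c_j(A)$ is the sum of all $j\times j$ principal minors of $A$. This $P$ is a polynomial of degree $k$ with no constant term, and both the number of its monomials and their coefficients depend only on $k$ (not on $p$ or $d$). As a consequence, provided $\|A\|\le 1$, one has $|P(A)| \le C'(k)\|A\|$ for some $k$-dependent constant $C'(k)$, since the entries of $A$ are bounded in absolute value by $\|A\|$.

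Next I would choose $\xi(k)$ large enough, e.g.\ $\xi(k) = \max\{3,\, 2C'(k)\}$, so that $\|A\| < 1/(p\xi(k))$ forces both $\|A\| < 1/2$ and $|P(A)| < 1/(2p)$. This legitimizes applying Lemma~\ref{Taylor3} at $z = 1 + P(A)$, yielding
$$
\det S_k^{-p/2} \;=\; p_2(P(A)) \;+\; \delta_2, \qquad |\delta_2| \;\le\; p^{k+1} d_2\,|P(A)|^{k+1} \;\le\; p^{k+1} d_2\,[C'(k)]^{k+1}\, \|A\|^{k+1}.
$$
Substituting $P(A) = \sum_{j=1}^k c_j(A)$ into $p_2(y) = 1 + \sum_{j=1}^k a_j y^j$, recalling $|a_j|\le p^j \le p^k$, I would then split the resulting polynomial in the entries of $A$ into its degree-$\le k$ part $Q_2(A)$ and its degree-$\ge k{+}1$ part $R(A)$, exactly as in the argument used inside Lemma~\ref{Taylor4} (or via Lemma~D.6 of~\cite{Lee12b}).

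For $Q_2$: each coefficient arises from a $k$-dependent number of products of coefficients of $c_{j_1}(A),\dots, c_{j_s}(A)$ multiplied by some $a_j$; since the $c_j$-coefficients depend only on $k$, the coefficients of $Q_2$ are bounded by $p^k\,C_2(k)$ for a constant $C_2(k)$ depending only on $k$, which is exactly the required bound. For $R(A)$: every monomial has degree $\ge k{+}1$, so on the set $\{\|A\|<1\}$ each such monomial is bounded in absolute value by $\|A\|^{k+1}$; and the sum of absolute values of the coefficients of $R$ is bounded by $p^k$ times a $k$-dependent constant, giving $|R(A)| \le C''(k)\,p^k\,\|A\|^{k+1} \le C''(k)\,p^{k+1}\,\|A\|^{k+1}$. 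Setting $\Delta_2 := \delta_2 + R(A)$ then yields $|\Delta_2| \le D_2\, p^{k+1}\,\|S_k-I_k\|^{k+1}$ for $D_2 := d_2[C'(k)]^{k+1} + C''(k)$, which depends only on $k$.

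The main obstacle will be the polynomial bookkeeping in the truncation step, in particular making sure that the $p$-dependence of the coefficients of $Q_2$ is no worse than $p^k$. This relies crucially on the fact that the expansion of $\det(I_k+A) - 1$ as a polynomial in the entries of $A$ carries no $p$-dependence whatsoever, so that the only source of $p$ is the coefficients $a_j$ of $p_2$, which contribute at most $p^k$ for the terms retained in $Q_2$. Once this is carefully organized, the remaining estimates are routine.
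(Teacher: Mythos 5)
Your proof is correct, and it takes a genuinely different and arguably cleaner route than the paper's. The paper factors $\det S_k = \prod_{i=1}^k \bigl(\|(I_d - P_{w_1,\dots,w_{i-1}})w_i\|^2 / d\bigr)$, then approximates each factor by a polynomial in $S_k - I_k$ (via Corollary~D.5(ii) of \cite{Lee12b}, incurring an $O(\|S_k-I_k\|^{k+3})$ error per factor) and finally multiplies the approximations together using the bookkeeping Lemma~D.6 of \cite{Lee12b} before applying Lemma~\ref{Taylor3}. You instead use the exact algebraic identity $\det(I_k + A) = \sum_{j=0}^k e_j(A)$, where $e_j(A)$ is the sum of $j\times j$ principal minors of $A = S_k - I_k$, so that $\det S_k - 1$ is an exact polynomial of degree $k$ with $k$-dependent coefficients and no error to track; the only approximation step is then the single-variable Taylor expansion of $z \mapsto z^{-p/2}$ from Lemma~\ref{Taylor3}. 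This eliminates one full layer of error propagation and the dependence on Corollary~D.5(ii) and Lemma~D.6, while the key estimates (entries of a symmetric $A$ bounded by $\|A\|$, hence $|P(A)| \le C'(k)\|A\|$ for $\|A\|\le 1$; coefficients of $p_2$ contribute at most $p^k$; truncation at degree $k$ with $\|A\|^{k+1}$ bound on the tail) all go through. The paper's more elaborate route is presumably chosen for consistency with machinery reused elsewhere in its appendix, but for this lemma in isolation your argument is more direct.
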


\begin{proof}
We begin with a collection of $d$-vectors $w_1,\dots, w_k$ such that
$\|S_k-I_k\|<1/2$.
First note that the determinant of $S_k$ can be written as
$$
	\det S_k \quad=\quad \prod_{i=1}^k\left( 
	\frac{1}{d} || (I_d - P_{N_{(i-1)}}) w_i||^2 \right),
$$
where $P_{N_{(i-1)}}$ denotes the matrix of the orthogonal projection 
on the column space of $N_{(i-1)} = [w_1,\dots,w_{i-1}]$ in case $i>1$ and is
to be interpreted
as the $d\times d$ zero matrix in case $i=1$. 
[Recall the relation
$\det( [X,Y]' [X,Y]) = \det(X'X) \det(Y' (I-P_X)Y)$
for appropriate matrices $X$ and $Y$, and use this
recursively with $X=N_{(i-1)}$ and $Y=w_i$.]
The first factor on the right hand side of the preceding display equals
$w_1'w_1/d$, while for $i>1$ the factor corresponding to index $i$ 
can be written as 
\begin{align*}
	\frac{w_i'w_i}{d}
	\,-\, \sum_{j=0}^k \frac{w_i'N_{(i-1)}}{d}
	\left( 
	I_{i-1} - \frac{N_{(i-1)}'N_{(i-1)}}{d}
	\right)^j
	\frac{N_{(i-1)}'w_i}{d} \,+\, r_i
\end{align*}
with $|r_i| < 2\|S_k-I_k\|^{k+3}$
in view of Corollary D.5(ii) in \citep{Lee12b}.
Write $T_i(S_k-I_k)$ as shorthand for
\begin{align*}
	1 + \left(\frac{w_i'w_i}{d} -1\right)
	- \sum_{j=0}^k \frac{w_i'N_{(i-1)}}{d}
	\left( 
	I_{i-1} - \frac{N_{(i-1)}'N_{(i-1)}}{d}
	\right)^j
	\frac{N_{(i-1)}'w_i}{d}
\end{align*}
for $i=1,\dots, k$, 
where the sum is to be interpreted as zero in case $i=1$.
Note that $T_i(S_k-I_k)$ 
is a polynomial in the entries of $S_k-I_k$ of degree 1 in
case $i=1$
and of degree $k+2$ in case $i>1$, whose coefficients depend only on $k$.
Clearly, the coefficient of the linear term $(w_i'w_i/d-1)$ in $T_i(S_k-I_k)$
is one, and the coefficient of each higher order term
originating from the $j$-th term in the sum
is bounded, in absolute value, by a constant depending only on $k$.
The coefficients of
$T_i(S_k-I_k)$ thus are bounded, in absolute value, by
a constant depending only on $k$.
Also note that $|T_i(S_k-I_k)|$ is bounded by some constant, $B(k)$ say, that
depends only on $k$. [Indeed, we have
$|w_i'w_i/d - 1| = |e_i'(S_k-I_k)e_i| \le \|S_k-I_k\| < 1/2$. 
And, in case $i>1$,
the absolute value of the $j$-th term in the sum in $T_i$ can be bounded in a
similar way by $\| N_{(i-1)}'N_{(i-1)}/d - I_{i-1}\|^j \|N_{(i-1)}'w_i/d\|^2 \le
\|S_k-I_k\|^j \sum_{l=1}^{i-1} (e_l'(S_k-I_k)e_i)^2 \le (i-1)\|S_k-I_k\|^{j+2}
\le k \|S_k-I_k\|^{j+2} < k/4$.
We may thus choose $B(k) = 1+1/2 + k(k+1)/4$.] 
With this we can write the determinant of $S_k$ as
\begin{align*}
	\det S_k \quad=\quad \prod_{i=1}^k \left[ T_i(S_k-I_k) + r_i \right].
\end{align*}
We now apply Lemma D.6 in \citep{Lee12b} with $m=k$, $\rho_i$ 
equal to the identity
function, 
$x_i=T_i(S_k-I_k)$, $u_i = r_i$, $l=L=1$, $B=B(k)$ and
$\epsilon = 2\|S_k-I_k\|^{k+3}$.
From that lemma, and from the last line of its proof, we see that
\begin{align*}
	\det S_k \quad=\quad \prod_{i=1}^k  T_i(S_k-I_k)  + r,
\end{align*}
where $r$ satisfies $|r| < \kappa(k) \|S_k-I_k\|^{k+3}$
for $\kappa(k) = 2^{k+1} 3^k B(k)^{k-1}$.
For later use, note that $\prod_{i=1}^k T_i(S_k-I_k)$ is a polynomial
in $S_k-I_k$ whose degree depends only on $k$, and whose coefficients
are bounded in absolute value by a constant depending only on $k$.
Expanding the product in the preceding display, we get
\begin{align*}
	\det S_k \quad=\quad  1 + \trace{(S_k-I_k)} + R(S_k-I_k)  + r,
\end{align*}
where $R(S_k-I_k)$ is a polynomial in the elements of $S_k-I_k$ whose degree
and coefficients depend only on $k$ and that contains no constant or linear
term. Note that $|\trace{(S_k-I_k)} + R(S_k-I_k)|\le
C(k) \|S_k-I_k\|$ for some constant $C(k)$ depending only on $k$,
e.g., $C(k) = (1+k^2)^k$. [Bounding the terms of $T_i(S_k-I_k)$
as in the preceding discussion, we see that
$|1-T_i(S_k-I_k)| \leq (1+(k+1)(k-1))\|S_k-I_k\| = k^2\|S_k-I_k\|$.
By this, it is easy to see that $|\prod_{i=1}^k T_i- 1|$,
is bounded by  $(1+k^2)^k \|S_k-I_k\|$.]

Next we define the constant $\xi$ by 
$\xi(k) = 2[ C(k)+\kappa(k)]$, note that $\xi(k)>2$, 
and take
$d$-vectors $w_1,\dots, w_k$ such that $\|S_k-I_k\|<1/(p\, \xi(k))$. Note that
for this new collection of vectors everything we have shown so far still holds
true since $\|S_k-I_k\|<1/(p\, \xi(k))<1/2$.
To finish the proof we now evaluate
$z^{-p/2}$ for $z = \det S_k$ 
(which is well-defined because $S_k$ is positive definite by
assumption). Now $|z-1| \le |\trace{(S_k-I_k)} +R(S_k-I_k)| + |r| \le [C(k) +
\kappa(k)]\|S_k-I_k\| < 1/(2p)$. Hence Lemma~\ref{Taylor3} entails that
\begin{align} \nonumber
	 \det S_k ^{-p/2} 
	 \quad=\quad
	 p_2 \Big( \trace{(S_k-I_k)} +R(S_k-I_k)  + r \Big) +
	 \delta_2
\end{align}
with $|\delta_2| \le d_2(k) p^{k+1} |z-1|^{k+1} \le
d_2(k)(\xi(k)/2)^{k+1}  p^{k+1}\|S_k-I_k\|^{k+1}$. 
Abbreviate the coefficients of $p_2$ by $\rho_j$ for $j=1,\dots, k$ and set
$t=\trace{(S_k-I_k)}$. Then we can evaluate the polynomial in the preceding
display as
\begin{align*}
	&p_2( t + R(S_k-I_k)  + r ) 
	\quad=\quad
	1 + \sum_{j=1}^k \rho_j ( t + R(S_k-I_k)
	+ r )^j  \\
	&\quad = \quad
	1 + \sum_{j=1}^k \rho_j \left( \sum_{i=0}^j \binom{j}{i}
	(t+R(S_k-I_k))^{i}  r^{j-i} \right) \\
	&\quad = \quad
	1 + \sum_{j=1}^k \rho_j (t+R(S_k-I_k))^j + 
	\sum_{j=1}^k \rho_j \left( \sum_{i=0}^{j-1} \binom{j}{i}
	(t+R(S_k-I_k))^{i}  r^{j-i} \right) \\
	&\quad = \quad
	p_2(t+R(S_k-I_k)) + \tilde{r},
\end{align*}
where the remainder term $\tilde{r}$ satisfies
\begin{align*}
	|\tilde{r}| 
	&\quad \le \quad
	\sum_{j=1}^k p^j \sum_{i=0}^{j-1}\binom{j}{i}
	C(k)^{i} \kappa(k)^{j-i} \|S_k-I_k\|^{(j-i)(k+3)+i} \\
	& \quad\le \quad
	p^{k+1} \|S_k-I_k\|^{k+3} \sum_{j=1}^k
	\sum_{i=0}^{j-1}\binom{j}{i} C(k)^{i} \kappa(k)^{j-i}. 
\end{align*}
Now set $Q_2(S_k-I_k) = p_2(\trace{(S_k-I_k)} + R(S_k-I_k))$
and set $\Delta_2 = \tilde{r}+\delta_2$.
By the discussion in the preceding paragraph and by Lemma~\ref{Taylor3},
we see that $Q_2(S_k-I_k)$ is a polynomial in $S_k-I_k$
whose degree depends only on $k$,
and whose coefficients are bounded in absolute value
by $p^k C_2(k)$ for some constant $C_2(k)$ that depends only on $k$.
The bound on $|\delta_2|$ derived earlier and the 
bound in $|\tilde{r}|$ in the preceding display entail
that $|\Delta_2| \leq D_2(k) p^{k+1}\|S_k-I_k\|^{k+1}$
for $D_2(k)$ depending only on $k$.
\end{proof}


\begin{proof}[\bf Proof of Proposition~\ref{p3}]
Any expansion is trivially correct unless we specify some properties of the
remainder term. 
So choose the constant $\xi(k)$ equal to $\xi(k) = \xi'(k) + 2k>2k$, where
$\xi'(k)$ is the constant $\xi$ given by Lemma~\ref{Taylor5}. Now take a
collection of $d$-vectors $w_1,\dots, w_k$ such that $\|S_k-I_k\| < 1/(p\,
\xi(k))$ and note that this also permits the determinant expansion given by
Lemma~\ref{Taylor5} with the error term as described there. Note that
$\|S_k-I_k\|<1/(2p)\leq 1/2$ and thus all eigenvalues of $S_k$ are 
strictly between $1/2$
and $3/2$. Therefore the largest eigenvalue of $S_k^{-1}$, i.e.,
$\|S_k^{-1}\|$,
is less than $2$. We conclude that $\|x\|^2\iota'S_k^{-1}\iota/d \le
M^2k\|S_k^{-1}\|/d \le 2kM^2/d < 1$ by assumption. In view of
Proposition~\ref{p2}, we thus can write the density ratio of interest as
\begin{align*}
	\frac{\varphi_x(w_1,\dots,w_k)}{\phi(w_1)\cdots\phi(w_k)} 
	\quad=\quad
	\eta(d,p,k) \; \gamma_1(S_k) \; \gamma_2(S_k),
\end{align*}
for $\eta(d,p,k)$  as in Proposition~\ref{p2}, for 
\begin{align*}
	\gamma_1(S_k) 
	\quad=\quad
	\left(
	1-\frac{\|x\|^2}{d}\iota'S_k^{-1}\iota
	\right)^{(d-p-k-1)/2}
	e^{\frac{k}{2}\|x\|^2},
\end{align*}
and for $\gamma_2(S_k) = \det(S_k)^{-p/2}$.
Recall that $\eta(d,p,k)$ is bounded as 
described by Proposition~\ref{p2}, and hence it is also bounded by
$\exp(2p^2k^2/(3d) ) \le \exp(2k^2/3)$, because our assumptions on $d$, $p$ and
$k$ entail that $(1 - \frac{p+k-1}{d})^{-1} \le 4/3$ and $p^2/d < 1$. 

Note that we have $\gamma_1(S_k) = g_1(\iota' S_k^{-1}\iota)$ for
$g_1$ as in Lemma~\ref{Taylor2}. And
since $\|S_k-I_k\|< 1/(p \xi(k)) < 1/(2k)$, Lemma~\ref{Taylor4}
entails that
\begin{align*}
	\gamma_1(S_k) \quad=\quad
	p_1\left( \sum_{j=1}^k\iota'(I_k-S_k)^j\iota\right) 
	+ r_1\left( \sum_{j=1}^k\iota'(I_k-S_k)^j\iota\right) 
	+ \Delta_1,
\end{align*}
where $\Delta_1$ satisfies $|\Delta_1|\le D_1(k) \|S_k  - I_k\|^{k+1}
M^{2(k+2)}e^{\frac{k}{2}M^2}$. Using the properties of $p_1$ and $r_1$ 
as given by Lemma~\ref{Taylor2} we
can rewrite this as
\begin{align*}
	\gamma_1(S_k) 
	\quad=\quad
	Q_1(S_k-I_k) + \Delta_1.
\end{align*}
where $Q_1(S_k-I_k)$ is a polynomial in the elements of $S_k-I_k$ whose degree
depends only on $k$ and whose coefficients are bounded in absolute value by
$M^{2(k+2)} C_{Q_1}$ for a constant $C_{Q_1}=C_{Q_1}(k)$ that depends only on
$k$.

As already mentioned at the beginning of the proof, also Lemma~\ref{Taylor5}
applies and entails that
\begin{align*}
	\gamma_2(S_k) 
	\quad=\quad
	Q_2(S_k-I_k) + \Delta_2,
\end{align*}
where $Q_2(S_k-I_k)$ is a polynomial in the elements of $S_k-I_k$ whose degree
depends only on $k$ and whose coefficients are bounded in absolute value by
$p^k C_{Q_2}$ where $C_{Q_2} = C_{Q_2}(k)$ is a constant that depends
only on $k$, and where the remainder term $\Delta_2$ satisfies
$$
	|\Delta_2| \quad\leq \quad D_2 \; p^{k+1} \; ||S_k - I_k ||^{k+1}
$$
for a constant $D_2 = D_2(k)$ that depends only on $k$.

Now expanding the product of $\gamma_1(S_k)$ and $\gamma_2(S_k)$ yields a sum
of 4 terms. Denote the product of the two polynomials $Q_1$ and $Q_2$ by
$\tilde{\psi}_x(S_k-I_k) = Q_1(S_k-I_k)Q_2(S_k-I_k)$ and note that this is a
polynomial in the elements of $S_k-I_k$ whose degree depends only on $k$ and
whose coefficients are bounded by $p^kM^{2(k+2)}\tilde{C}$, where $\tilde{C} =
\tilde{C}(k)$ is a constant that depends only on $k$. Moreover, denote the sum
of the three remaining terms by $\tilde{\Delta} = \tilde{\Delta}(S_k-I_k)$ and
note that it satisfies $|\tilde{\Delta}| \le
p^{k+1}M^{2(k+2)}e^{kM^2/2}\|S_k-I_k\|^{k+1} \tilde{D}$, where $\tilde{D} =
\tilde{D}(k)$ is a constant that depends only on $k$.
All together, we have obtained that
\begin{align*}
	&\frac{\varphi_x(w_1,\dots,w_k)}{\phi(w_1)\cdots\phi(w_k)} = 
	\tilde{\psi}_x(S_k-I_k) + \tilde{\Delta},
\end{align*}
where $\eta(d,p,k)$ has been incorporated into the coefficients of
$\tilde{\psi}_x$ and the remainder term $\tilde{\Delta}$ without affecting
their boundedness properties. 
Now it is no loss of generality to assume that the degree of $\tilde{\psi}_x$
is no more than $k$, since all the terms which are of order $k+1$ or higher,
can simply be added to the remainder term $\tilde{\Delta}$ without affecting
its boundedness property (only the constant $\tilde{D}$ needs adjustment),
because $\|S_k-I_k\| < 1$ and thus $\|S_k-I_k\|^{k+1+l} \le \|S_k-I_k\|^{k+1}$,
for all $l\ge 0$.

The quantities $\tilde{\psi}_x(S_k-I_k)$ and $\tilde{\Delta}$ 
discussed in the preceding paragraph
have all the properties that we need to 
derive for $\psi_x(S_k-I_k)$ and $\Delta$,
respectively, except for permutation invariance. But the expression
on the left-hand side of the preceding display is invariant under
permutations of the $w_i$ in view of Proposition~\ref{p2}.
Set $\psi_x(S_k-I_k)$ and $\Delta$ equal to the average of
$\tilde{\psi}_x(S_k-I_k)$ and $\tilde{\Delta}$, respectively,
taken over all permutations of the $w_i$.
Clearly,  the relation in the preceding display continues to
hold with $\psi_x(S_k-I_k)$ and $\Delta$ replacing
$\tilde{\psi}_x(S_k-I_k)$ and $\tilde{\Delta}$, respectively,
and $\psi_x(S_k-I_k)$ and $\Delta$ have all the required properties,
including permutation invariance.
\end{proof}


\begin{lemma}\label{p4.l1}
Let $d$, $p$ and $k$ be positive integers and suppose that 
$Z_1,\dots,Z_k$  are i.i.d. random $d$-vectors  that
satisfy the bound~\ref{c.density} with $k$ as chosen here. 
If $\zeta>0$ and $d>\max\{2k+4p\zeta,p^2\}$, then 
\begin{align*}
	\E\left[ \left(
		\frac{d}{\|(I_d- P_{Z_1,\dots,Z_{k-1}})Z_k\|^2}
		\right)^{p\zeta}\right]
	\quad\le\quad
	C (4\pi e D^2)^{p\zeta},
\end{align*} 
where $C = C(k,\zeta)$ depends only on $k$ and $\zeta$ and where $D$ is the
constant from \ref{c.density}.
\end{lemma}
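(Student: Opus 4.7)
The plan is to reduce the problem to controlling an inverse moment of the squared norm of an $n$-dimensional random vector with a bounded Lebesgue density, where $n:=d-k+1$. First, I would condition on $Z_1,\dots,Z_{k-1}$ (which are linearly independent almost surely) and choose a random orthogonal matrix $R=R(Z_1,\dots,Z_{k-1})$ mapping the column span of $[Z_1,\dots,Z_{k-1}]$ onto the span of the last $k-1$ standard basis vectors; then $\|(I_d-P_{Z_1,\dots,Z_{k-1}})Z_k\|=\|W\|$, where $W$ consists of the first $n$ coordinates of $RZ_k$. Since $Z_k$ is independent of the conditioning variables, condition~\ref{c.density} applied with the conditionally fixed $R$ gives that the conditional density $f_W$ of $W$ is bounded by $B:=\binom{d}{k-1}^{1/2}D^n$ almost surely. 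It therefore suffices to show that any $n$-vector $W$ whose density is bounded by $B$ satisfies $d^{p\zeta}\E[\|W\|^{-2p\zeta}]\le C(k,\zeta)(4\pi eD^2)^{p\zeta}$.

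For this core estimate I would use the layer-cake identity $\E[\|W\|^{-q}]=q\int_0^\infty u^{-q-1}\P(\|W\|<u)\,du$ with $q:=2p\zeta$, together with the complementary bounds $\P(\|W\|<u)\le BV_nu^n$ (from the density bound, where $V_n=\pi^{n/2}/\Gamma(n/2+1)$ is the volume of the unit ball in $\R^n$) and $\P(\|W\|<u)\le 1$ (trivial). Splitting the integral at the crossover point $u_*:=(BV_n)^{-1/n}$ yields the clean estimate $\E[\|W\|^{-q}]\le\tfrac{n}{n-q}(BV_n)^{q/n}$. The hypothesis $d>2k+4p\zeta$ ensures $n-q>k+1+2p\zeta$, hence $n/(n-q)<2$. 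Stirling's lower bound $\Gamma(n/2+1)\ge\sqrt{\pi n}\,(n/(2e))^{n/2}$ gives $V_n^{q/n}\le(2\pi e/n)^{q/2}$, and combining with $d/n<2$ (which follows from $d>2k$) yields
\[
d^{p\zeta}\,\E[\|W\|^{-q}]\;\le\;2\,\binom{d}{k-1}^{p\zeta/n}(4\pi eD^2)^{p\zeta}.
\]

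The main obstacle is then to absorb the residual factor $\binom{d}{k-1}^{p\zeta/n}$ into a constant depending only on $k$ and $\zeta$: the target bound leaves no room for an additional $d$- or $p$-dependent factor, and this factor can a priori grow with $d$. This is precisely where the auxiliary assumption $d>p^2$ becomes essential: together with $n>d/2$ (from $d>2k$) it forces $p\zeta/n\le 2\zeta/\sqrt d$, so
\[
\log\binom{d}{k-1}^{p\zeta/n}\;\le\;2(k-1)\zeta\,\frac{\log d}{\sqrt d}.
\]
Since $d\mapsto(\log d)/\sqrt d$ attains its maximum $2/e$ on $[1,\infty)$, one obtains $\binom{d}{k-1}^{p\zeta/n}\le\exp(4(k-1)\zeta/e)$, and setting $C(k,\zeta):=2\exp(4(k-1)\zeta/e)$ (or any convenient variant) finishes the argument.
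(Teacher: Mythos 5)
Your proof is correct and rests on the same core idea as the paper's: a layer-cake computation for $\E[\|W\|^{-q}]$ combined with the density bound from \ref{c.density} (applied after a rotation, which is exactly the argument the paper defers to Proposition~E.1 in \citet{Lee12b}) and with ball-volume estimates, followed by absorbing the residual $d$- and $p$-dependent factors into a constant using $d>2k+4p\zeta$ and $d>p^2$. Your execution is somewhat cleaner and self-contained: rather than introducing a free cutoff $L$, bounding the gamma function via Ke\v{c}ki\'{c}--Vasi\'{c}, and explicitly minimizing over $L$ by convexity, you change variables, split at the natural crossover $u_*=(BV_n)^{-1/n}$ to obtain the bound $\frac{n}{n-q}(BV_n)^{q/n}$ in closed form, and invoke Stirling directly.
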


\begin{proof}
Write $R_d$ as shorthand for $R_d = \|(I_d- P_{Z_1,\dots,Z_{k-1}})Z_k\|^2/d$,
and take $L>0$ to be chosen later. The expectation in question can be
calculated as
\begin{align*}
	\E\left[R_d^{-p\zeta}\right] 
	\quad=\quad\int_0^\infty \P\left( R_d^{-p\zeta} >
	t\right)\;dt 
	\quad\leq \quad
	L + \int_L^\infty \P\left( R_d < t^{-\frac{1}{p\zeta}}\right)\;dt.
\end{align*}
Now use the exact same argument as in the proof of Proposition E.1 in
\citep{Lee12b} (with $p\zeta$ replacing $\alpha$ in that reference), which
involves \ref{c.density} (or, equivalently, condition (t2) in that
reference), to bound the integral on the far
right-hand-side of the previous display by
\begin{align*}
	L\frac{2p\zeta}{d-k+1-2p\zeta}
	\left( \frac{D^2\pi d}{L^{\frac{1}{p\zeta}}}\right)^{\frac{d-k+1}{2}} 
	\frac{d^{\frac{k-1}{2}}}{\Gamma\left( \frac{d-k+3}{2}\right)}.
\end{align*}
By Theorem 1 of \citep{Kec71a} with $x\geq 2$ and with $y=2$,
we obtain that $\Gamma(x) \geq x^{x-1} \exp(2-x)/2 > x^{x-1} \exp(-x)/2$.
Using this to lower-bound the gamma-function in the preceding display,
it is elementary to verify that
\begin{align}
	\E\left[R_d^{-p\zeta}\right] \quad\le\quad
	L + L^{1-\frac{d-k+1}{2p\zeta}} 4p\zeta  \left( 2D^2\pi
	e\frac{d}{d-k+3}\right)^{\frac{d-k+1}{2}}
	d^{\frac{k-1}{2}} e. \label{eq:l1.bound}
\end{align}
Write $K$ to abbreviate the factor in \eqref{eq:l1.bound} starting with
$4p\zeta\dots$, ending with $\dots e$, and note that it does not depend on $L$.
Also note that the upper bound in \eqref{eq:l1.bound} is a convex function in
$L$ that is minimized at
\begin{align*}
	L_0 \quad=\quad \left(
	\frac{d-k+1}{2p\zeta}-1\right)^{\frac{2p\zeta}{d-k+1}}
	K^{\frac{2p\zeta}{d-k+1}}.
\end{align*}
Plugging this back into \eqref{eq:l1.bound} yields the optimized bound
\begin{align*}
	L_0 + L_0 \left(\frac{d-k+1}{2p\zeta} - 1\right)^{-1} 
	\quad<\quad 2L_0.
\end{align*}
It remains to show that $2 L_0$ is of the form
$C(k,\zeta) (4 \pi e D^2)^{p \zeta}$ as claimed.
But this follows immediately upon writing out the formula for $2 L_0$
and noting that 
$\frac{d}{d-k+3}\le 2$, that
$2 p \zeta / (d-k+1) < 1/2$, and that the following quantities
are all bounded by a constant that depends only on $k$ and $\zeta$:
\begin{align*}
	 \left(\frac{d-k+1}{2p\zeta} -1\right)^{\frac{2p\zeta}{d-k+1}} 
	&\quad=\quad 
	\exp\left( \frac{2p\zeta}{d-k+1} \log\left(\frac{d-k+1}{2p\zeta}
	-1\right)\right)
	\quad\le \quad e, 
\\
	(4p\zeta)^{\frac{2p\zeta}{d-k+1}} 
	&\quad=\quad
	\exp\left(\frac{p}{\sqrt{d}} \frac{\log(p) + \log(4\zeta)}{\sqrt{d}}
	\frac{2\zeta}{1-k/d+1/d} \right),
\\
	d^{\frac{(k-1)p\zeta}{d-k+1}} 
	&\quad=\quad
	\exp\left( \frac{(k-1)\zeta}{1-k/d+1/d}\frac{p}{d} \log d \right) 
	\\
	&\quad\le \quad
	\exp\left( 2(k-1)\zeta \frac{p}{\sqrt{d}} \frac{\log d}{\sqrt{d}}
	\right).
\end{align*}
\end{proof}


\begin{proof}[\bf Proof of Proposition~\ref{p4}]
Set $h = \deg(H)$ and fix $x \in \mathcal S_{M,p}$.  It suffices to show that 
\begin{equation}\nonumber
	\E\left[ d^\frac{h+l}{2} \| S_l- I_l \|^{h}\; \left|
	\frac{\varphi_{x}(Z_1,\dots, Z_l)}{\phi(Z_1)\cdots\phi(Z_l)} - 
	\psi_{x}(S_l - I_l)\right|\right]
\end{equation}
is bounded as claimed.
Note that Proposition~\ref{p3} applies with $l$ replacing $k$.
Write $U_d$ for the integrand in the preceding display and note that
$\E[U_d]$ can also be written as
\begin{equation}\label{pp4.1}
	\E\left[ U_d \left\{ \|S_l - I_l\| < \frac{1}{p\,\xi(l)}\right\} \right]
	+
	\E\left[ U_d \left\{ \|S_l - I_l\| \geq  \frac{1}{p\,\xi(l)}\right\}
	\right],
\end{equation}
where $\xi(l)>2l$ is the constant from Proposition~\ref{p3}.
We will show that both terms in \eqref{pp4.1} can be bounded
appropriately.

For the first term in \eqref{pp4.1}, we use Proposition~\ref{p3} to conclude
that $U_d \{ ||S_l - I_l||<1/(p\,\xi(l))\}$ is bounded from above by
\begin{align*}
	& C_\Delta 
	d^{(h+l)/2} 
	p^{l+1}M^{2(l+2)}e^{\frac{l}{2}M^2}
	||S_l - I_l||^{h+l+1} \\
	&\quad \le\quad
	C_\Delta p^{k+1}M^{2(k+2)}e^{\frac{k}{2}M^2}d^{-1/2} 
	||\sqrt{d}(S_l - I_l)||^{h+l+1}, 
\end{align*}
where $C_\Delta$ is a constant that depends only on $k$. 
Note that $\E[ ||\sqrt{d}(S_l - I_l)||^{h+l+1}]$ 
is bounded by the constant $\alpha$ from \ref{c.Sk}.(\ref{c.sup})
in view of Lyapunov's inequality.

The second term in \eqref{pp4.1} can further be bounded from above by
\begin{align}
	\label{pp4.2}
	&\E\left[ d^\frac{h+l}{2} \| S_l- I_l \|^{h}\; \left|
	\psi_{x}(S_l - I_l)
	\right|\left\{ \|S_l - I_l\| \geq\frac{1}{p \, \xi(l)}\right\} \right]
	\\
	\label{pp4.3}
	&
	+ \quad 
	\E\left[ d^\frac{h+l}{2} \| S_l- I_l \|^{h}\; \left|
	\frac{\varphi_{x}(Z_1,\dots, Z_l)}{\phi(Z_1)\cdots\phi(Z_l)}  
	\right|\left\{ \|S_l - I_l\| \geq\frac{1}{p \, \xi(l)}\right\} \right].
\end{align}
Concerning \eqref{pp4.2}, recall that $l\le k$ and that $\psi_{x}(S_l - I_l)$
is a polynomial 
of degree $l$ in the entries of $S_l - I_l$ whose coefficients are bounded by
$p^k M^{2(k+2)}$ times a constant $C_{\psi} = C_{\psi}(k)$ that depends only on
$k$ (cf. Proposition~\ref{p3}). Therefore, \eqref{pp4.2} can be bounded by 
\begin{align} \nonumber
	p^k  M^{2(k+2)}
	d^{\frac{h+l}{2}}
	\sum_{\ell=0}^l  
	\E \left[
	 \| S_l - I_l\|^{h+\ell} 
	\left\{
	\|S_l - I_l\|\geq \frac{1}{p \, \xi(l)}
	\right\}
	\right]
\end{align}
times a constant that depends only on $k$ ($C_\psi$ times some factor bounding
the number of different monomials in $\psi_x(S_l-I_l)$).
Now observe that 
\begin{align*}
	\|S_l-I_l\|^{h+l + 1} 
	&\quad\ge \quad
	\|S_l-I_l\|^{h+\ell} \|S_l-I_l\|^{l-\ell+1} \left\{ \|S_l-I_l\|\ge
	\frac{1}{p\, \xi(l)} \right\} \\
	&\quad\ge \quad
	\|S_l-I_l\|^{h+\ell}
	\left(\frac{1}{p\, \xi(l)}\right)^{l-\ell+1}
	\left\{ \|S_l-I_l\|\ge \frac{1}{p\, \xi(l)} \right\}, 
\end{align*}
and use this to further bound the expression in the second to last display
by
\begin{align*}
	& M^{2(k+2)}
	\sum_{\ell=0}^l  
	\xi(l)^{k-\ell+1} 
	p^{k+l - \ell + 1}
	d^{-1/2}
	\E \left[
	 \| \sqrt{d}(S_l - I_l)\|^{h+l + 1} 
	\right] \\
	&\quad \le\quad
	(k+1) \xi(l)^{k+1}  p^{2k + 1} M^{2(k+2)} d^{-1/2}
	\alpha
\end{align*}
where, again, we have bounded
$\E[\| \sqrt{d}(S_l-I_l)\|^{h+l+1}]$ by the constant $\alpha \geq 1$ 
from \ref{c.Sk}.(\ref{c.sup}) using Lyapunov's inequality.

To deal with \eqref{pp4.3}, we first 
note that $\varphi_{x}(Z_1,\dots, Z_l) / (\phi(Z_1)\cdots \phi(Z_l))$
is bounded by $\eta \det S_l^{-p/2} \exp(l M^2/2)$, 
in view of Proposition~\ref{p2}, where $0 \le \eta\le e^{2l^2/3}$ holds
because, under our present assumptions, $p^2(1-(p+l-1)/d)^{-1}/d \le 4/3$. Using
H\"{o}lder's inequality with $a>1$ and $b$ 
such that $\frac{1}{a}+\frac{1}{b} = 1$,
an upper bound for \eqref{pp4.3} is given by
\begin{align}
	&e^{\frac{2 k^2}{3}+\frac{k M^2}{2}} 
	\left( \E\left[ \det S_l^{-\frac{b p}{2}}\right]\right)^{\frac{1}{b}}
	\left(\E\left[ d^{\frac{a(h+l)}{2}} \|S_l-I_l\|^{ah}
	\left\{ \|S_l-I_l\|\ge \frac{1}{p\, \xi(l)}
	\right\}\right]\right)^{\frac{1}{a}}.\label{pp4.4}
\end{align}
For $\eps \in[0,1/2]$ as in \ref{c.Sk}.(\ref{c.sup}), fix
$\delta\in(0,1/2]$ and use a similar argument as in the preceding paragraph to
obtain
\begin{align*}
	\|S_l-I_l\|^{h} \left\{ \|S_l-I_l\| \ge \frac{1}{p \xi(l)} \right\}
	\quad \le \quad
	(p\xi(l))^{l+\eps+\delta} \|S_l-I_l\|^{h+l+\eps+\delta}.
\end{align*}
With this, and by choosing $a=\frac{h+l+\eps+2\delta}{h+l+\eps+\delta} > 1$, we
can bound the last factor in \eqref{pp4.4} with the $1/a$ exponent as
\begin{align*}
	&\left(
	\E\left[ d^{\frac{a(h+l)}{2}} \|S_l-I_l\|^{ah}
	\left\{ \|S_l-I_l\|\ge \frac{1}{p\, \xi(l)} \right\}\right]
	\right)^{\frac{1}{a}} \\
	&\le \quad
	(p\xi(l))^{l+\eps+\delta} d^{-\frac{\eps+\delta}{2}} 
	\left(
	\E\left[ \|\sqrt{d}(S_l-I_l)\|^{a(h+l+\eps+\delta)}\right]
	\right)^{\frac{1}{a}} \\
	&\le \quad
	(p\xi(l))^{k+\eps+\delta} d^{-\frac{\eps+\delta}{2}} 
	\max\left\{1,
	\E\left[ \|\sqrt{d}(S_k-I_k)\|^{(h+k+\eps+2\delta)}\right]
	\right\},
\end{align*}
where the last expression in parentheses is, again,  bounded by $\alpha\ge 1$
in view of \ref{c.Sk}.(\ref{c.sup}). At the end of the proof we will
see that it is optimal to choose $\delta=1/2$.
It remains to control the factor $(\E \det S_l^{-b p /2})^{1/b}$
in \eqref{pp4.4}.
First, in view of Lyapunov's inequality, increasing $b =
a/(a-1) = (h+l+\eps+2\delta)/\delta$ to $\tilde{b} =
(2k+2\delta +1)/\delta$ only increases this term. We can therefore work
with $\tilde{b}$ instead, which, after fixing $\delta$, depends only on $k$.
Next, decompose $\det S_l$ 
as $\det S_l = \prod_{i=1}^l ( || (I_d - P_{Z_1,\dots, Z_{i-1}}) Z_i||^2/d) $
(cf. the beginning of the proof of Lemma~\ref{Taylor5}), and set 
$$
	X_i 
	\quad=\quad 
	\left(\frac{d}{|| (I_d - P_{Z_1,\dots,
	Z_{i-1}}) Z_i||^2}\right)^{\tilde{b}p}
$$ 
to obtain that
\begin{align*}
	\E \det S_l^{-\tilde{b}p} \quad = \quad
	\E \prod_{i=1}^l X_i 
	\quad \leq \quad
	\prod_{i=1}^l  \left( 	
		\E X_i^{2^i}
		\right)^{\frac{1}{2^i}}
\end{align*}
upon repeatedly using the Cauchy-Schwarz  inequality.
Now the expectation in the $i$-th factor on the far right-hand-side of the
preceding display is bounded by
\begin{align*}
	\E X_i^{2^i}
	&\;=\;
	\E\left( \frac{d}{\|(I_d-P_{Z_1,\dots,Z_{i-1}})Z_i\|^2}
		\right)^{p\tilde{b}2^i}
	\;=\;
	\E\left( \frac{d}{\|(I_d-P_{Z_1,\dots,Z_{i-1}})Z_k\|^2}
		\right)^{p\tilde{b}2^i}
	\\
	&\;\leq \;
	\E\left( \frac{d}{\|(I_d-P_{Z_1,\dots,Z_{k-1}})Z_k\|^2}
		\right)^{p\tilde{b}2^i}
	\;\leq \;
	C(k,\tilde{b} 2^i) (4\pi e D^2)^{p\tilde{b}2^i},
\end{align*}
provided that $d> \max\{2k + 4p\tilde{b}2^i, p^2\}$ in view of
Lemma~\ref{p4.l1}. Since $\tilde{b} = (2k+2\delta+1)/\delta$, 
this will certainly hold for all $i=1,\dots, k$,
if $d> 2k + 2^{k+2} p(2 k+2\delta+1)/\delta$. This lower bound is minimized
over $(0,1/2]$ for $\delta=1/2$, leading to the requirement that $d > 2k +
p(2k+2)2^{k+3}$, which holds by assumption. We therefore have
\begin{align*}
	\E \det S_l^{-\tilde{b}p} 
	\; \leq \;
	\prod_{i=1}^l \left( C(k,\tilde{b} 2^i)^{1/2^i}
	\left(4\pi e D^2\right)^{p\tilde{b}} \right)
	\;=\; 
	\left(4\pi e D^2\right)^{p\tilde{b}l}  \prod_{i=1}^l
	C(k,\tilde{b} 2^i)^{1/2^i}.
\end{align*}
and hence
\begin{align*}
	\left( \E\left[ \det S_l^{-\tilde{b}p/2}\right] \right)^{1/\tilde{b}}
	\quad\le\quad
	\left(2D\sqrt{\pi e}\right)^{pk} \prod_{i=1}^l
	C(k,\tilde{b} 2^i)^{\frac{1}{\tilde{b}2^{i+1}}}
\end{align*}
because 
$\E[ \det S_l^{-\tilde{b}p/2}] \leq (\E[ \det S_l^{-\tilde{b}p}])^{1/2}$.
After putting the pieces together,
it is now elementary to verify that the resulting upper bound
for \eqref{pp4.4} has the desired properties.
\end{proof}

\section{Proofs for Section~\ref{s3.6}}
\label{appendixE}

\begin{lemma}\label{normalzero}
For positive integers $p< d$ and a positive even integer $k$ such that $k \le
d-p$, let $V_1,\dots, V_k$ be i.i.d. standard Gaussian $d$-vectors. For
$x\in\R^p$, let $\varphi_x$ denote the density given by Proposition~\ref{p2}.
Then the expressions
$$
	\E \left[  \left(\prod_{i=1}^m V_{j_{i-1}+1}'V_{j_{i-1}+2}\cdots
	V_{j_i-1}' V_{j_i} \right) 
	\frac{\varphi_x(V_1, \dots, V_l)}{\phi(V_1) \cdots \phi(V_l)}\right]  -
	\|x\|^{2(j_m-m)} 
$$
are all equal to zero,
for each $l=1,\dots, k$, for each $m\ge 0$ and for each set of indices
$j_0\dots, j_m$ that satisfies $j_0=0$, $j_m \le l$ and $j_{i-1} +1 < j_i$
whenever $0 < i \le m$. Moreover, 
$$
	\sum_{j=1}^k (-1)^{k-j}\binom{k}{j} \E \Bigg[  \left(V_1'V_2\cdots
	V_jV_j' V_1 -d +p- 1 \right) 
	\frac{\varphi_x(V_1,\dots
	V_k)}{\phi(V_1)\cdots\phi(V_k)}\Bigg] - (1-\|x\|^2)^k
$$
also equals zero.
\end{lemma}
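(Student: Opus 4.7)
The plan is to recognize both expressions as expectations involving the rotational clones $W_1,\dots,W_l$ from Proposition~\ref{p2}, and then to evaluate these by expanding each inner product as $W_i'W_j = \|x\|^2 + V_i'AV_j$ (for $i\ne j$, with $A = I_d - \mathbf{B}\mathbf{B}'$) and carrying out a Wick-type parity argument on the i.i.d.\ Gaussians $V_1,\dots,V_l$, conditionally on $\mathbf{B}$.

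First I would observe that, since by Proposition~\ref{p2} the function $\varphi_x(w_1,\dots,w_l)$ is the joint Lebesgue density of $(W_1,\dots,W_l)$ and the $V_i$ are i.i.d.\ $N(0,I_d)$, a change of measure gives $\E[\Psi(V_1,\dots,V_l)\,\varphi_x(V_1,\dots,V_l)/(\phi(V_1)\cdots\phi(V_l))] = \E[\Psi(W_1,\dots,W_l)]$ for every integrable $\Psi$. It therefore suffices to establish
\begin{equation*}
\E\!\left[\prod_{i=1}^m W_{j_{i-1}+1}'W_{j_{i-1}+2}\cdots W_{j_i-1}'W_{j_i}\right] \;=\; \|x\|^{2(j_m-m)}
\end{equation*}
and $\sum_{j=1}^k(-1)^{k-j}\binom{k}{j}\E[W_1'W_2\cdots W_jW_j'W_1 - d + p - 1] = (1-\|x\|^2)^k$, as both target expressions then vanish by construction.

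For the first identity, I would use $W_k'W_{k+1} = \|x\|^2 + V_k'AV_{k+1}$ (which follows from $\mathbf{B}'(I_d-\mathbf{B}\mathbf{B}') = 0$ and $\mathbf{B}'\mathbf{B}=I_p$) and expand each open-chain factor $\prod_{k=j_{i-1}+1}^{j_i-1}(\|x\|^2+V_k'AV_{k+1})$ as a sum over subsets $S$ of the adjacent-pair indices. Conditional on $\mathbf{B}$, the $V_k$'s are independent and zero-mean, so Wick-type parity counting (each $V_k$ must appear in an even number of selected factors) combined with the fact that the chain endpoints $V_{j_{i-1}+1}$ and $V_{j_i}$ occur in only one factor each forces $S=\emptyset$ as the unique surviving subset. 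The $i$-th chain therefore contributes $\|x\|^{2(j_i-j_{i-1}-1)}$, and since the chains are index-disjoint (by $j_{i-1}+1<j_i$), the product across $m$ chains collapses to $\|x\|^{2(j_m-m)}$, independently of $\mathbf{B}$. For the second identity, the same expansion on the closed chain $\prod_{i=1}^j(\|x\|^2+V_i'AV_{i+1})$ (cyclic, with $V_{j+1}:=V_1$) has two surviving subsets: $S=\emptyset$ yielding $\|x\|^{2j}$, and the full set, for which iterated integration using $\E[V_iV_i'|\mathbf{B}]=I_d$ together with idempotence $A^j=A$ gives $\E[V_1'A^jV_1|\mathbf{B}]=\trace(A) = d-p$. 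Hence $\E[W_1'W_2\cdots W_jW_j'W_1] = \|x\|^{2j} + (d-p)$; substituting and invoking the standard binomial identities $\sum_{j=0}^k(-1)^{k-j}\binom{k}{j}\|x\|^{2j} = (\|x\|^2-1)^k$ and $\sum_{j=0}^k(-1)^{k-j}\binom{k}{j} = 0$ collapses the sum to $(\|x\|^2-1)^k = (1-\|x\|^2)^k$ (using that $k$ is even), exactly cancelling the subtracted term.

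The only real obstacle is the parity/Wick combinatorics that singles out the surviving subsets; conditioning on $\mathbf{B}$ reduces this to a standard i.i.d.\ Gaussian calculation, after which the rest is routine algebra and two elementary binomial identities.
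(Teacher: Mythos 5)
Your proof is correct and shares the paper's high-level strategy: both change measure via Proposition~\ref{p2} to recast each expectation as $\E[\Psi(W_1,\dots,W_l)]$ for the rotational clones, condition on $\mathbf B$ using conditional independence of the $W_i$'s, and finish with the two binomial identities. You differ only in the inner conditional computation. The paper substitutes the moment formula $\E[W_iW_i'\mid\mathbf B]=\mathbf Bxx'\mathbf B'+(I_d-\mathbf B\mathbf B')$ into the factorized conditional expectation and evaluates the resulting matrix product, letting $\mathbf B'(I_d-\mathbf B\mathbf B')=0$ kill the projector term for the open chains and $\trace(I_d-\mathbf B\mathbf B')=d-p$ supply the extra constant for the closed chain. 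You instead expand $W_i'W_{i+1}=\|x\|^2+V_i'(I_d-\mathbf B\mathbf B')V_{i+1}$ and eliminate all but the boundary subsets by a Gaussian parity argument, using $A^j=A$ for the full cyclic subset. Both routes produce the same intermediate identities $\E[\cdot]=\|x\|^{2(j_m-m)}$ and $\E[\cdot]=\|x\|^{2j}-1$ and are of comparable length; the paper's matrix version is marginally tidier since the projector algebra does the cancellation automatically with no subset bookkeeping, while yours is more mechanical and self-contained. One small tightening of your parity step: rather than combining the even-degree requirement with the separate endpoint observation, it suffices to note that for any nonempty selected subset of an open chain, the minimal selected index $k_0$ leaves $V_{k_0}$ with degree exactly one, so the term vanishes; for the closed chain the same argument applies to any proper nonempty subset, leaving only $S=\emptyset$ and the full set.
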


\begin{proof}
Let $W_1,\dots, W_k$ be defined by \eqref{Wj} with $\mathbf B$ uniformly
distributed on $\mathcal V_{d,p}$ and note that the $W_i$ are conditionally
independent given $\mathbf B$, $\E[W_i\|\mathbf B] = \mathbf Bx$ and $\E[W_i
W_i' \| \mathbf B] = \mathbf Bxx'\mathbf B' + (I_d-\mathbf B \mathbf B')$ for
all $i=1,\dots, k$. 

For the first statement, note that if $m=0$ then also $j_m
= j_0 = 0$ and so we have $1-1=0$,
because $\varphi_x$ is a density w.r.t. Lebesgue measure and hence
integrates to one.
In case $m>0$, we may write the expected value of interest as
\begin{align*}
	&\E \left[\prod_{i=1}^m W_{j_{i-1}+1}'W_{j_{i-1}+2}\cdots
	W_{j_i-1}' W_{j_i} \right] \\
	&\quad =\quad
	\E \left( \prod_{i=1}^m \E \left[
	W_{j_{i-1}+1}'W_{j_{i-1}+2}W_{j_{i-1}+2}'\cdots W_{j_i-1}' W_{j_i} \|
	\mathbf B\right]  \right)\\
	&\quad =\quad
	\E \left(
	\prod_{i=1}^m 
	x'\mathbf B' 
		\left[ 
			(\mathbf Bxx'\mathbf B')^{j_i-j_{i-1}-2} 
			+ (I_d-\mathbf B \mathbf B') 
		\right] 
	\mathbf Bx 
	\right) \\
	&\quad =\quad
	\prod_{i=1}^m \|x\|^{2(j_i-j_{i-1}-1)} = \|x\|^{2(j_m-m)}
\end{align*}
in view of Proposition~\ref{p2} and \eqref{Wj}, 
where we have used the convention that 
$A^0 = I_d$
for a $d\times d$ matrix $A$.

For the second statement, each expected value in the sum can be computed as
\begin{align*}
	& 
	\E \Big[  \left(W_1'W_2\cdots W_jW_j' W_1 -d +p- 1 \right) \Big] \\
	&\quad =\quad
	\E \left( \E \Big[  \trace W_1W_1'W_2\cdots W_jW_j' \Big\| \mathbf B
	\Big]  \right)   -d +p- 1 \\
	&\quad =\quad
	\|x\|^{2j} + \E \left( \trace (I_d-\mathbf B \mathbf B')  \right)   -d
	+p- 1 
	 \quad = \quad \|x\|^{2j} -1
\end{align*}
again in view of Proposition~\ref{p2} and \eqref{Wj}. 
For $j=0$ the above expression is equal to zero, so we get
\begin{align*}
	&\sum_{j=1}^k (-1)^{k-j}\binom{k}{j} \left( \|x\|^{2j} -1 \right)
	 \quad=\quad
	\sum_{j=0}^k (-1)^{k-j}\binom{k}{j} \left( \|x\|^{2j} -1 \right) \\
	&\quad =\quad
	\left( \|x\|^2 - 1 \right)^k =  \left( 1 - \|x\|^2 \right)^k
\end{align*}
since $k$ is even. 
\end{proof}


Define $S_k$ as in Section~\ref{sec:main},
and let $G$ be a monomial of degree $g$ in the elements of $S_k-I_k$.
We note that $G$ corresponds to a graph with $g$ edges, where
each divisor of $G$ of the form $(S_k-I_k)_{i,j}$ 
corresponds to an edge $(i,j)$
in the graph; multiple divisors correspond to multiple edges. 
A linear factor of $G$ corresponds to a single edge $(i,j)$,
i.e., to a situation where the vertices $i$ and $j$ are connected 
by exactly one edge.

\begin{proof}[\bf Proof of Proposition~\ref{p5}]
As an auxiliary consideration that will be used repeatedly in the
following, we note that condition~\ref{c.Sk} is always satisfied for any
$k\in\N$, with constants $\alpha^\star$ and $\beta^\star$
replacing $\alpha$ and $\beta$, $\alpha^\star$ and $\beta^\star$ not depending
on $d$, and $\eps = \xi = 1/2$, provided
that $Z$ is replaced by a standard Gaussian random vector $V$; cf. 
Example~A.1 in \citep{Lee12b}. Moreover, we will use the inequality $|H| \le
\|S_k - I_k\|^h$.
Also, it will be convenient to assume, 
throughout the proof and without loss of generality,
that both $G$ and $H$ are monomials in those elements of $S_k - I_k$
that lie on or above the diagonal. 
Finally, we may also assume without loss of generality that,
in the graph corresponding to $G$ (resp. $G^\star$), each vertex is visited by
at least one edge from the graph corresponding to $H$ (resp. $H^\star$),
in view of Lemma~F.1 in \citep{Lee12b} (cf. also the proof of Proposition~2.6
in that reference).

To derive part~\ref{p5.i}, first note that if $H$, and hence also $H^\star$,
has degree zero,
the difference of interest is equal to zero. In the case where $h>0$,
it follows from  condition~\ref{c.Sk}.(\ref{c.sup}) that
$|\E[H]| = d^{-h/2} |\E d^{h/2}H| \le d^{-h/2} \alpha$. Similarly, we obtain
that
$|\E[H^\star]| \le d^{-h/2} \alpha^\star$, such 
that the difference in question is bounded
as claimed.
 
For part~\ref{p5.ii}, we first note that if $m=0$, we have
$G = G^\star = 1$ and thus the expression in \eqref{p5.1} is equal to zero.
Otherwise, we have $\E G = 0$, because if $G$ is as in \eqref{openc} and 
$m\ge 1$,
then $G$ contains the factor $(S_k - I_k)_{1,2} = Z_1'Z_2/d$ and $Z_1$ has no
other 
occurrence in $G$. Therefore $\E Z_1 = 0$ and independence of the $Z_i$ 
entails
that $\E[G] = 0$. 
In particular, the expression in \eqref{p5.1} reduces to
$\E[d^g G H] - \E[d^g G^\star H^\star]$.
Now the cases $g<h$, $g=h$ and $g>h$ need separate treatment.

In the case where $g<h$, it follows from \ref{c.Sk}.(\ref{c.sup}) and our
auxiliary 
consideration that $|\E[d^g GH]| = d^{(g-h)/2} |\E[d^{(g+h)/2} GH]| \le
d^{-1/2} \alpha$ and, similarly, $|\E[d^g G^\star H^\star]| \le d^{-1/2}
\alpha^\star$. Hence \eqref{p5.1} is bounded by $d^{-1/2}(\alpha +
\alpha^\star)$.

In the case where $g=h$,  consider first the subcase where 
$G = H$. Because $G$ is composed solely of linear factors of $S_k-I_k$
above the diagonal, it follows
that $G H$ consists only of quadratic factors above the diagonal,
and condition~\ref{c.Sk}.(\ref{c.monom1}) entails that $|\E d^g G H - 1| \le \beta d^{-\xi}$.
In the Gaussian case, we even have $|\E d^g G^\star H^\star - 1| \le
\beta^\star d^{-1/2}$,
such that the expression in \eqref{p5.1} is bounded by 
$|\E d^gGH - \E d^g G^\star H^\star| \le d^{-\min\{\xi,1/2\}}(\beta +
\beta^\star)$.
Next, in the subcase where $g = h$ and $G \neq H$,
it follows that the graph corresponding to $G$ has an edge
that is not matched by an edge in the graph corresponding to $H$.
In other words, the monomial $G H$ has a linear factor, and
Condition~\ref{c.Sk}.(\ref{c.monom1}) entails that $|\E d^g G H| \le \beta
d^{-\xi}$.
Because the same applies to $\E d^g G^\star H^\star$ with the bound
$\beta^\star d^{-1/2}$, the claim follows.

Finally, consider the case where $g > h$. 
Because each vertex in the graph corresponding to $G$ is visited
by at least one edge from the graph corresponding to $H$, 
and because $k\leq 4$,
it is easy to see that this can only occur if $g=3$ and $h=2$. 
In particular, we see that $G$ is given by
$G = Z_1'Z_2 Z_2'Z_3 Z_3'Z_4/d^3$,  and that $H$ is equal to 
either 
$H_1 = Z_1'Z_2 Z_3'Z_4/d^2$, or 
$H_2 = Z_1'Z_3 Z_2'Z_4/d^2$, or
$H_3 = Z_1'Z_4 Z_2'Z_3/d^2$.
In either case, the vertices $1$ and $4$
have degree two in the graph corresponding to $G H$. 
With this, we obtain that
$\E G H_2 = \E G H_3 = d^{-2} \E G_\circ H_\circ$
with $G_\circ = Z_2'Z_3/d$ and $H_\circ =G_\circ^2$,
upon using Lemma~F.1(ii) in \citep{Lee12b} twice.
And we obtain that
$\E G H_1 = d^{-2} \E G_\circ H_\odot$,
with $G_\circ$  as before and with $H_\odot =  (Z_2'Z_2/d-1)(Z_3'Z_3/d-1)$,
by using parts~(i) and~(iii) of Lemma~F.1 in \citep{Lee12b}.
Either way, we end up with a monomial $G_\circ$ of degree one
and a monomial $H_\circ$ or $H_\odot$ of degree two, and that case
has already been dealt with earlier.

For part~\ref{p5.iii}, we note that
the case where $g < h$ is treated
as in the proof of part~\ref{p5.ii}, mutatis mutandis.

In the case where $g=h$ and $G \neq H$, 
it is easy to see that $G H$ has a linear factor (upon separately
treating the three subcases $g=1$, $g=2$, and $g>2$),
so that either $G$ or $H$ also has a linear factor.
Therefore, we have $|\E d^g  GH| \le  \beta d^{-\xi}$ and either
$|\E d^{g/2}  G| \le \beta d^{-\xi}$ and $| \E d^{h/2}H| \le \alpha$,
or 
$|\E d^{g/2} G| \le \alpha$ and $|\E d^{h/2} H| \le \beta d^{-\xi}$,
in view of \ref{c.Sk}.(\ref{c.sup}-\ref{c.monom1}).
Because this also holds for $G^\star$ and $H^\star$ with $\xi=1/2$ and
$\alpha^\star$ and $\beta^\star$ replacing $\alpha$ and $\beta$, respectively, 
we see that \eqref{p5.1} is bounded in, absolute value, by 
$d^{-\min\{\xi,1/2\}}(\beta+\beta^\star+\alpha\beta+\alpha^\star\beta^\star)$.

In the case where $g=h$ and $G = H$, we again consider the three 
subcases where $g=1$, where $g=2$, and where $g>2$.
In the subcase where $g=1$ (and $G=H$), we must have
$G = H = (S_k-I_k)_{1,1}$, which corresponds to the case (a)
in Proposition~\ref{p5}\ref{p5.iii}. Note that, here,   $\E G = 0$ 
and that $d \E G H = d \E (Z_1'Z_1/d-1)^2 = \Var[Z_1'Z_1]/d$. And for
$G^\star$ and $H^\star$, the first four moments of the Gaussian
give that $\E G^\star = 0$ and that
$d \E G^\star H^\star = 2$. In particular, \eqref{p5.1}
is equal to $\Var[Z_1'Z_1]/d-2$, as claimed.
In the subcase where $g=2$ (and $G=H$), we see that 
case (c) occurs and that
$d^2 \E (G-\E G) H = d^2 \Var[G] = \Var[(Z_1'Z_2)^2]/d^2$;
moreover, it is elementary to verify that
$d^2 \E (G^\star-\E G^\star) H^\star = d^2 \Var[G^\star] = 2+6/d$.
In particular, \eqref{p5.1} here equals  $\Var[(Z_1'Z_2)^2]/d^2 -2(1+3/d)$.
And in the subcase where $g>2$ (and $G=H$), we see that
both $G$ and $H$ are composed only of linear factors
above the diagonal of $S_k-I_k$, and
$G H$ consists only of quadratic factors above the diagonal,
so that $|\E d^g G H -1| \le \beta d^{-\xi}$ and 
$|d^g \E G \E H| = |\E d^{g/2}G|\,|\E d^{h/2} H| \le \beta^2 d^{-2\xi}$
by condition~\ref{c.Sk}.(\ref{c.monom1}).
Because a similar statement also holds with $\xi=1/2$ and $G^\star$, $H^\star$,
$\alpha^\star$ and $\beta^\star$ replacing 
$G$, $H$, $\alpha$ and $\beta$, respectively, \eqref{p5.1} is bounded in
absolute value by
$d^{-\min\{\xi,1/2\}}(\beta+\beta^2+\beta^\star+{\beta^\star}^2)$.

Lastly, in the case where $g>h$, the fact that each vertex in
the graph corresponding to $G$ is visited by an edge from the
graph corresponding to $H$ entails that
we either have $h=1$ and $g=2$, or $2 \leq h < g\leq k$.
If $h=1$ and $g=2$, we must have $G = (Z_1'Z_2/d)^2$ and
$H = Z_1'Z_2/d$. In particular, we see that case (b) of
Proposition~\ref{p5}\ref{p5.iii} occurs. For such $G$ and $H$, we obviously
have $d^2 \E G H = \E (Z_1'Z_2)^3/d$ and $d^2 \E G \E H = 0$.
And for $G^\star$ and $H^\star$, it is easy to see that
$d^2 \E G^\star H^\star = d^2 \E G^\star \E H^\star = 0$. Here, \eqref{p5.1}
is equal to $\E (Z_1'Z_2)^3/d$, as claimed.
Finally, consider the case where $2 \leq h < g \leq k$.
Here, we obtain that $|\E d^g G H| \le \beta d^{-\xi}$, in view of condition
\ref{c.Sk}.(\ref{c.monom2}) (because each vertex of $G$ is visited by an edge
from $H$, it follows that $H$ depends at least on $Z_1,\dots, Z_g$),
and also that $|\E d^g G^\star H^\star| \le \beta^\star d^{-1/2}$.
And we obtain that $\E H = \E H^\star = 0$ by Lemma~F.1(i) in \citep{Lee12b},
because the number of vertices of $H$ is at least $g$ (as edges in $H$
visit all vertices in $G$), and because $h < g$. Altogether, we see that 
the expression in \eqref{p5.1} is bounded in absolute value by
$d^{-\min\{\xi,1/2\}} (\beta + \beta^\star)$.
\end{proof}

\end{appendix}


{\small

}

\end{document}